\documentclass[a4paper,11pt]{article}

\usepackage[
    a4paper,
    top=1.05in,
    bottom=1.05in,
    left=1.00in,
    right=1.00in
]{geometry}
\usepackage[T1]{fontenc}

\usepackage{amsmath,amsthm,mathtools}
\usepackage{amssymb,mathrsfs}
\usepackage{txfonts}
\InputIfFileExists{ursfs.fd}{}{}
\DeclareFontShape{U}{rsfs}{m}{n}{<-6.5>rsfs5<6.5-8>rsfs7<8->rsfs10}{}
\DeclareMathAlphabet{\mathsf}{OT1}{phv}{m}{n}
\SetMathAlphabet{\mathsf}{bold}{OT1}{phv}{b}{n}
\usepackage{microtype}
\microtypesetup{protrusion=true,expansion=true}
\allowdisplaybreaks[4]

\usepackage{enumitem}
\setlist[enumerate]{leftmargin=2.5em,itemsep=0.25em,topsep=0.35em}
\setlist[itemize]{leftmargin=2.2em,itemsep=0.25em,topsep=0.35em}
\usepackage{booktabs,array,longtable}
\usepackage[noautomatic,quiet]{imakeidx}
\makeindex[title={Index of notation and terminology},intoc,columns=2]
\indexsetup{othercode={\small\setlength{\baselineskip}{11.5pt}}}

\usepackage{titlesec}
\usepackage[titletoc]{appendix}
\titleformat{\section}{\normalfont\Large\bfseries}{\thesection}{0.75em}{}
\titleformat{\subsection}{\normalfont\large\bfseries}{\thesubsection}{0.75em}{}
\titlespacing*{\section}{0pt}{2.6ex plus 0.8ex minus 0.2ex}{1.2ex plus 0.2ex}
\titlespacing*{\subsection}{0pt}{2.0ex plus 0.6ex minus 0.2ex}{0.9ex plus 0.2ex}

\usepackage{fancyhdr}
\fancypagestyle{plain}{%
  \fancyhf{}%
  \fancyfoot[C]{\thepage}%
}

\usepackage[dvipsnames]{xcolor}
\definecolor{linkblue}{RGB}{20,67,128}
\definecolor{citegreen}{RGB}{25,105,80}
\definecolor{urlpurple}{RGB}{100,55,125}
\usepackage{hyperref}
\hypersetup{colorlinks=true,
            linkcolor=linkblue,
            citecolor=citegreen,
            urlcolor=urlpurple,
            plainpages=false}
\usepackage{bookmark}
\usepackage[nameinlink,noabbrev,capitalise]{cleveref}
\hypersetup{
  pdftitle={Parametrization of ancient Ricci flows with smooth compact tangent shrinkers},
  pdfauthor={Yu Li},
  pdfsubject={Ancient Ricci flows with smooth compact tangent shrinkers},
  pdfkeywords={ancient Ricci flow, compact tangent shrinker, Kuranishi reduction, characteristic exponent, finite-order parametrization}
}

\numberwithin{equation}{section}
\makeatletter
\renewcommand{\theequation}{\thesection.\ifnum\value{equation}<10 0\fi\arabic{equation}}
\makeatother

\newtheorem{theorem}{Theorem}[section]
\newtheorem{proposition}[theorem]{Proposition}
\newtheorem{lemma}[theorem]{Lemma}
\newtheorem{corollary}[theorem]{Corollary}
\theoremstyle{definition}
\newtheorem{definition}[theorem]{Definition}

\theoremstyle{remark}
\newtheorem{remark}[theorem]{Remark}

\crefname{hypothesis}{Hypothesis}{Hypotheses}
\Crefname{hypothesis}{Hypothesis}{Hypotheses}
\crefname{appendix}{Appendix}{Appendices}
\Crefname{appendix}{Appendix}{Appendices}

\newcommand{\R}{\mathbb R}
\newcommand{\C}{\mathbb C}

\newcommand{\CP}{\mathbb {CP}}
\newcommand{\bg}{\bar g}
\newcommand{\bfv}{\bar f}
\newcommand{\Sli}{\mathcal S}
\newcommand{\Comp}{\mathcal C}
\newcommand{\Ob}{\mathcal O}
\newcommand{\Ent}{\mathcal E}
\newcommand{\Lie}{\mathcal L}

\newcommand{\ip}[2]{\left\langle #1,#2\right\rangle}
\newcommand{\dd}{\,\mathrm{d}}
\DeclareMathOperator{\Ric}{Ric}
\DeclareMathOperator{\Rm}{Rm}
\DeclareMathOperator{\tr}{tr}
\DeclareMathOperator{\Spec}{Spec}
\DeclareMathOperator{\Ran}{Ran}
\DeclareMathOperator{\Id}{Id}
\DeclareMathOperator{\Isom}{Isom}
\DeclareMathOperator{\Diff}{Diff}
\DeclareMathOperator{\Herm}{Herm}
\DeclareMathOperator{\Gr}{Gr}
\DeclareMathOperator{\diag}{diag}

\newcommand{\kk}{\mathfrak k}

\newcommand{\PN}{\mathscr P_N}
\newcommand{\UN}{\mathscr U_N}
\newcommand{\eps}{\epsilon}
\newcommand{\Gop}{\mathcal G}
\newcommand{\Hop}{\mathcal H}
\newcommand{\XP}{\mathsf X}
\newcommand{\YP}{\mathsf Y}

\title{Parametrization of ancient Ricci flows\\with smooth compact tangent shrinkers}
\author{Yu Li}
\date{October 22, 2026}

\begin{document}

\maketitle

\begin{abstract}
We study ancient Ricci flows whose tangent flow at backward infinity is a smooth compact Ricci shrinker. After normalization, every nontrivial solution either converges exponentially or has polynomial decay governed by neutral directions. We recover the finite-dimensional parametrization of the exponentially convergent solutions and establish finite-order asymptotic expansions for all slowly convergent solutions. For principal slow solutions, we obtain a locally bijective finite-dimensional parametrization compatible with diffeomorphisms, parabolic rescaling, and time translation.
\end{abstract}

\tableofcontents

\section{Introduction}\label{sec:intro}
\subsection*{Background and motivation}
Ancient solutions to Ricci flow arise as limits of parabolic rescalings near singularities. Their geometry is therefore central to the analysis of singularity formation and to the continuation of the flow. Hamilton's Ricci flow and Perelman's entropy and noncollapsing theorems provide the foundation for this study \cite{Hamilton,Perelman}.

The classical ancient $\kappa$-solutions admit a complete classification in dimensions two and three. Building on Hamilton's work on surfaces \cite{HamiltonSurfaces}, Perelman \cite[Section~11.3]{Perelman} identified the shrinking round sphere and its quotient as the nonflat two-dimensional $\kappa$-solutions. Daskalopoulos--Hamilton--\v{S}e\v{s}um \cite{DHS} established the full classification on $S^2$, including the collapsed King--Rosenau solution. In dimension three, Brendle \cite{Brendle3} classified the noncompact $\kappa$-solutions as shrinking round cylinders and their quotients or the Bryant soliton. Brendle--Daskalopoulos--\v{S}e\v{s}um \cite{BDS} classified the compact solutions on $S^3$ as shrinking round spheres or Perelman's ancient oval, using the asymptotic analysis of Angenent--Brendle--Daskalopoulos--\v{S}e\v{s}um \cite{ABDS}.

In dimensions $n\ge4$, substantial structure and classification results for $\kappa$-noncollapsed ancient solutions are available under additional curvature assumptions. Under uniform positive isotropic curvature and weak $\mathrm{PIC}_2$, Brendle--Naff \cite{BN} classified the noncompact ancient $\kappa$-solutions, while Brendle--Daskalopoulos--Naff--\v{S}e\v{s}um \cite{BDNS} classified those on $S^n$. Cho--Li \cite{ChoLi} classified complete noncompact $\kappa$-noncollapsed ancient solutions under uniform positive isotropic curvature in dimensions $n=4$ or $n\ge12$. Chen \cite{ChenPIC} extended this classification to dimensions $9\le n\le11$, and Cho \cite{ChoPIC} established the case $n=8$. The recent preprint of Brendle--Tsiamis \cite{BT} extends the classification to every $n\ge4$. In the K\"ahler setting, Li \cite{LiKRF} classified complete $\kappa$-noncollapsed ancient K\"ahler--Ricci flows with nonnegative bisectional curvature in all complex dimensions.

An ancient solution can itself be studied by rescaling toward backward infinity. In the noncollapsed setting, the resulting tangent flows are modeled on Ricci shrinkers, possibly with singularities; see Bamler \cite{Bamler} and Fang--Li \cite{FL}. The tangent shrinker describes the leading geometry of the ancient solution. A fixed tangent shrinker may arise from different ancient solutions, and finer asymptotic data are needed to distinguish them.

We study this question when the tangent shrinker is smooth and compact. Fix a closed normalized Ricci shrinker $(M,\bg,\bfv)$ satisfying
\begin{equation}
 \Ric_{\bg}+\nabla^2_{\bg}\bfv=\frac12\bg.
 \label{eq:shrinker}\index{background data@$\bg,\bfv$}\index{background data@$\bg,\bfv$}
\end{equation}
Throughout the paper, manifolds are connected. The ancient Ricci flows are noncollapsed in the entropy sense recalled in Section~\ref{sec:geometry}, and their backward tangent flow is the canonical shrinking flow associated with \eqref{eq:shrinker}. Dynamical instability provides a source of ancient solutions emerging from compact shrinkers. Kr\"oncke \cite[Theorem~1.3]{KronckeSolitons} proved that a compact Ricci shrinker which is not a local maximum of Perelman's shrinker entropy $\boldsymbol{\nu}$ admits a nontrivial ancient normalized Ricci flow converging to it backward, modulo diffeomorphisms. Our aim is to identify finite-dimensional data that determine these ancient flows and to characterize which data are realized.

Chan--Ma--Zhang \cite{CMZ} prove uniqueness of a smooth closed tangent flow and convergence to it after the canonical modification, using the entropy and the \L{}ojasiewicz inequality of Sun--Wang \cite{SW}. This convergence places an entire ancient tail in a neighborhood of the fixed shrinker. Choi--Lai \cite{CL} construct a harmonic map heat flow gauge and parametrize exponentially convergent ancient Ricci flows. When the shrinker is integrable, they also show that every convergent ancient flow has exponential decay. The remaining slow dynamics involve the neutral directions and their nonlinear interactions.

There are close precedents in mean curvature flow. Schulze \cite{Schulze} proves uniqueness when a tangent flow is a smooth closed multiplicity-one shrinker. Choi--Mantoulidis \cite{CMancient} construct and classify ancient gradient flows of elliptic functionals using the Morse index. Sun--Xue \cite{SXclosed} use invariant manifold theory to study the local dynamics near a nonspherical closed shrinker and the effect of perturbing the initial hypersurface. Chodosh--Choi--Mantoulidis--Schulze \cite{CCMS} establish existence and uniqueness of ancient mean curvature flows lying on one side of a compact shrinker, with applications to generic singularities. These works show how the geometry and spectrum of a shrinker organize the nearby flow.

Ancient mean curvature flows with cylindrical backward limits provide further examples of this relation. Haslhofer--Hershkovits \cite{HH} construct ancient ovals with such limits. Choi--Haslhofer--Hershkovits--White \cite{CHHW} classify ancient asymptotically cylindrical flows in the neck setting, while Du--Haslhofer \cite{DH} construct families of ancient ovals. These families illustrate why a tangent shrinker alone need not determine an ancient solution. More recently, Bamler--Lai \cite{BLodi} introduce the PDE--ODI principle and obtain asymptotic expansions to arbitrarily high order for cylindrical mean curvature flows. Their subsequent work \cite{BLclass} classifies and parametrizes ancient asymptotically cylindrical flows. The common question is how finer asymptotic data distinguish solutions with the same tangent model and determine which solutions occur. Extending the present Ricci-flow parametrization to noncompact tangent shrinkers is a natural direction for future work; it requires corresponding gauge constructions and estimates at infinity.

We work with ancient germs, meaning that solutions are identified when they agree on a sufficiently negative tail. Ricci-flow uniqueness on a closed manifold then determines their maximal smooth continuations. The geometric equivalences considered below are time-independent diffeomorphisms, parabolic rescaling, and translation of the original Ricci-flow time.

\subsection*{Exponential and slow convergence}
To study slow solutions, we need to relate the nonlinear neutral motion to the entropy and to the static obstruction of the background shrinker. This guides the choice of gauge. Choi--Lai use harmonic map heat flow to obtain a Ricci--DeTurck equation. Here we first pass to the entropy-gradient equation, called \emph{modified Ricci flow} (MRF), and then choose a time-dependent diffeomorphism that preserves the affine weighted slice
\[
 \bg+\Sli,\qquad \Sli=\ker\mathrm{div}_{\bfv}.
\]
The resulting equation is \emph{special modified Ricci flow} (SMRF). The slice removes infinitesimal diffeomorphism directions and gives a self-adjoint linearization in the fixed weighted pairing. It also places the neutral evolution and the static deformation problem in the same coordinates. We use this relation to compare the reduced motion with the entropy and the Li--Zhang obstruction. Section~\ref{sec:geometry} constructs the gauge and proves the correspondence
\begin{equation}
 \mathrm{RF}/\Diff(M)\cong\mathrm{MRF}/\Diff(M)
 \cong\mathrm{SMRF}/G,
 \qquad G=\Isom(M,\bg,\bfv),
 \label{eq:intro-quotients}\index{G isometries@$G$}\index{S slice@$\Sli$ (weighted slice)}
\end{equation}
where $\Diff(M)$ acts by time-independent diffeomorphisms. The effects of parabolic rescaling and original-time translation are tracked separately.

The normalized logarithmic time is $s=-\log(-t)$. Exponential and polynomial decay below refer to this variable. Writing the slice metric as $\bg+h$, the equation and its linearization are
\begin{equation}
 \begin{aligned}
 h_s&=\mathcal V(h),\qquad \mathrm{D}\mathcal V(0)=L=\Delta_{\bfv}+2\Rm_{\bg},\\
 \mathcal V(h)&=(\bg+h)-2\Ric_{\bg+h}-2\nabla_{\bg+h}^{2}f_{\bg+h}+\Lie_{Y(h)}(\bg+h).
 \end{aligned}
 \label{eq:intro-linearization}\index{C complement@$\Comp,\Comp_j$}\index{h@$h$}\index{K neutral@$K$}
\end{equation}
Here $f_g$ is the canonical minimizer of $\boldsymbol{\mu}(g,1)$, and $Y(h)$ is the unique small vector field, orthogonal to the background Killing fields, for which $\mathrm{div}_{\bfv}\mathcal V(h)=0$. Since $Y(0)=\mathrm{D}Y(0)=0$, this additional gauge preserves the linearization. Let $E_+$ be the sum of the positive eigenspaces of $L|_{\Sli}$, and set
\[
 K=\ker(L|_{\Sli}),\qquad \Comp=K^\perp\cap\Sli.
\]
Orthogonality and spectral projections use the fixed weighted $L^2$ pairing of $(\bg,\bfv)$. Both $E_+$ and $K$ are finite-dimensional. We write $b=\Pi_Kh$ and $|b|=\|b\|_{L^2}$.

\begin{theorem}[Exponential--neutral dichotomy]\label[theorem]{thm:dichotomy-intro}
Let $h$ be an ancient SMRF converging smoothly to zero as $s\searrow-\infty$. If $h\not\equiv0$, exactly one of the following holds on a sufficiently negative tail:
\begin{enumerate}
\item There is $\gamma>0$ such that $\|h(s)\|_{C^k}=O(e^{\gamma s})$ for every fixed $k$.
\item The neutral component is nonzero, and
\[
 h(s)=b(s)+O_{C^k}(|b(s)|^2),\qquad
 |b_s|\le C|b|^2,\qquad |b(s)|\ge\frac{c}{|s|},
\]
for every fixed $k$, with $C,c>0$.
\end{enumerate}
\end{theorem}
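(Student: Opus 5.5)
Decompose $h=b+\xi+\eta$, where $b=\Pi_Kh$, $\xi=\Pi_{E_+}h$, and $\eta$ is the projection onto the negative spectral part of $L|_{\Sli}$. Let $\lambda>0$ be a spectral gap, so that $\Spec(L|_{\Sli})\cap(-\lambda,\lambda)=\{0\}$. Write $h_s=Lh+N(h)$ with $\|N(h)\|\le C\|h\|\,\|h\|_{C^{2,\alpha}}$ in suitable weighted H\"older/Sobolev norms; this uses $\mathcal V(0)=0$, $\mathrm D\mathcal V(0)=L$ from \eqref{eq:intro-linearization} and smallness of $h$ on the tail. Since the flow is ancient and $h\to0$ as $s\to-\infty$, the projected equations give three inequalities:
\[
\tfrac{d}{ds}\|\xi\|\ge\lambda\|\xi\|-\eps\|h\|,\qquad \tfrac{d}{ds}\|\eta\|\le-\lambda\|\eta\|+\eps\|h\|,\qquad \bigl|\tfrac{d}{ds}|b|\bigr|\le\eps\|h\|,
\]
with $\eps\to0$ along the tail. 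Parabolic smoothing on unit time intervals, using interior estimates for the strictly parabolic SMRF, upgrades $L^2$ control to $C^k$ control. I would therefore work in $L^2$ throughout and convert to $C^k$ at the end.

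The next step is a Merle--Zaag type ODE lemma applied to $(\|\xi\|,|b|,\|\eta\|)$. Because the solution is ancient and decays as $s\to-\infty$, the stable component is dominated: $\|\eta\|=o(\|\xi\|+|b|)$. Indeed, if $\eta$ dominated on some interval, it would grow exponentially backward in $s$, contradicting $h\to0$. The lemma then gives exactly two cases. In the first, $|b|=o(\|\xi\|)$, so $\|\xi\|$ grows like $e^{(\lambda-\eps)s}$ forward, which means it decays exponentially as $s\to-\infty$; this gives case 1 after smoothing. In the second, $\|\xi\|+\|\eta\|=o(|b|)$. To upgrade this to the quadratic statement, I would use the improved bound $\|N(h)\|\le C\|h\|^2$ and redo the projected inequalities for $w=h-b$. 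This gives $\tfrac{d}{ds}\|\xi\|\ge\lambda\|\xi\|-C|b|^2$, and integrating backward yields $\|\xi(s)\|\le C\sup_{\sigma\le s}|b(\sigma)|^2$. Since $|b_s|\le C\|h\|^2\le C'|b|^2$, the quantity $|b|$ varies slowly on unit scales, so the supremum is comparable to $|b(s)|^2$. The same argument handles $\eta$, integrating forward from the past with its exponential damping. This gives $h=b+O(|b|^2)$ in $L^2$, and hence in $C^k$ by smoothing applied to the equation for $w$.

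Exclusivity follows because case 2 forces $|b(s)|\ge c/|s|$, which is incompatible with exponential decay. The lower bound comes from $|b_s|\le C|b|^2$. Since $b\neq0$, we have $\frac{d}{ds}(1/|b|)\ge -C$ wherever $b\ne0$, and $b$ cannot vanish at a finite time because the ODE inequality makes the vanishing set either everything or empty. Integrating from $s$ to a fixed $s_0$ gives $1/|b(s)|\le 1/|b(s_0)|+C(s_0-s)\le C'|s|$. I must also check that in case 2 $b$ is genuinely nonzero, and that $h\not\equiv0$ with $b\equiv0$ lands in case 1; both follow from the dichotomy lemma, applied with the trivial case excluded by backward uniqueness.

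\textbf{Main obstacle.} The hardest step is making the dichotomy lemma rigorous with nonlinear errors controlled only by $\eps\|h\|$ in a norm weaker than what $N$ needs. It requires a uniform $C^{2,\alpha}$ smallness on the tail, which is provided by the smooth convergence hypothesis, together with the estimate $\|h\|_{C^{2,\alpha}}\le C\|h\|_{L^2}$ on unit intervals. I would obtain the latter from parabolic interior estimates for the quasilinear SMRF, shifted backward one unit in time. This is where the slice gauge, which makes SMRF strictly parabolic, is essential. A further subtlety is that $\|h\|_{L^2}$ over the previous interval may be larger than its current value. This is handled by using the slow variation of each mode established above: $b$ varies slowly by $|b_s|\le C|b|^2$, $\xi$ only decays backward, and $\eta$ is controlled by the past.
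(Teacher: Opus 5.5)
Your proposal is correct in outline and has the same skeleton as the paper: the projected $L^2$ inequalities of \cref{lem:projected-inequalities}, the comparison of \cref{lem:two-mode}, and exclusivity through $|b|\ge c/|s|$. In that comparison the negative modes are dominated, and then either $U_++U_0$ decays exponentially or $U_++U_-=o(|b|)$.

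You genuinely differ in the neutral-case upgrade. You prove $\|h\|_3\lesssim\|h\|_{L^2}$ directly, by a perturbative interior Schauder estimate for $h_s=Lh+\mathcal U(h)$ with the quadratic term absorbed. The $L^2$ forcing is then genuinely quadratic, and scalar differential inequalities for $\|\Pi_\pm h\|_{L^2}$ finish the argument.

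The paper never proves such a nonlinear estimate. It proceeds in four linear steps:
\begin{enumerate}
\item interpolate between $L^2$ and the uniformly bounded higher norms, giving $\|h\|_3\le C_\theta|b|^{1-\theta}$ and forcing $O(|b|^{2-\theta})$;
\item apply the fixed-operator formula \eqref{eq:append-G0} in weighted $L^2$;
\item interpolate again to get $\|\Pi_{\Comp}h\|_k=o(|b|)$, hence $\|h\|_k\le C_k|b|$;
\item apply the linear bound \eqref{eq:append-fixed-bound} with weight $|b|^2$.
\end{enumerate}
Your route is more direct and avoids the two $\theta$-losses. The paper's route uses only linear estimates for the fixed operator in the weighted strip spaces, which it needs anyway for \cref{lem:slaving} and the later expansions.

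Several points need repair.
\begin{itemize}
\item \textbf{Circularity.} As written, your treatment of the enlarged past interval is circular. You invoke slow variation from $|b_s|\le C|b|^2$, but that bound comes from the quadratic forcing, which the interior estimate is supposed to supply. Use instead the first-stage consequence of \eqref{eq:ODIzero} in the neutral case, $|b_s|\le\epsilon(s)(1+o(1))|b|$. It gives $|\partial_s\log|b||\to0$ and $\|h\|_{L^2}\asymp|b|$, which is exactly what the paper uses.
\item \textbf{Positive mode.} The integration runs over $[s,s_0]$, not over $\sigma\le s$. It also leaves a terminal term $e^{\lambda(s-s_0)}\|\Pi_+h(s_0)\|_{L^2}$. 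This term and the forcing integral are controlled by exponential temperedness of $|b|^2$ with exponent below $\lambda$, not merely by comparability on unit intervals.
\item \textbf{Nonvanishing of $b$.} You do not need backward uniqueness here. Once $U_-\le U_++U_0$, the two-sided Gr\"onwall bound $|\partial_s(U_++U_0)|\le C(U_++U_0)$ suffices.
\item \textbf{The $L^2$ nonlinear bound.} The estimate $\|\mathcal U(h)\|_{L^2}\le C\|h\|_3\|h\|_{L^2}$ does not follow from $\mathcal V(0)=0$ and $\mathrm D\mathcal V(0)=L$ alone; Taylor's formula only gives $\|\mathcal U(h)\|_1\le C\|h\|_3^2$. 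It requires the integration by parts for $\bar\nabla h*\bar\nabla h$ and the elliptic estimates for the potential and gauge field behind \eqref{eq:L2-nonlinear}.
\end{itemize}
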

The linear equation $h_s=Lh$ explains the spectral alternatives. An eigenmode with eigenvalue $\lambda$ evolves as $e^{\lambda s}$. Positive modes decay exponentially as $s\searrow-\infty$, whereas negative homogeneous modes grow and cannot give the leading term of a solution converging to zero. Neutral modes have no linear evolution, so their decay is governed by the nonlinear terms. This suggests exponential decay when positive modes dominate and slow evolution when neutral modes dominate. The proof in \cref{prop:dichotomy} controls the nonlinear coupling and establishes this alternative. We call solutions in the second case neutral-dominated or slow. Their precise polynomial rates will be identified below.

\begin{theorem}[Exponential parametrization]\label[theorem]{thm:fast-intro}
At a sufficiently negative terminal time $s_*$, the exponentially convergent ancient SMRFs form a smooth local family parametrized by $\xi=\Pi_+h(s_*)\in E_+$. Their terminal values have the form
\[
 h_\xi(s_*)=\xi+\psi(\xi),\qquad
 \psi(\xi)\in E_+^\perp\cap\Sli,\qquad
 \psi(0)=\mathrm{D}\psi(0)=0.
\]
The parametrization is $G$-equivariant, and every exponentially convergent ancient germ belongs to this family after restriction to a sufficiently negative tail.
\end{theorem}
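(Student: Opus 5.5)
The plan is to construct the family as the unstable manifold of the fixed point $h=0$ for the SMRF $h_s=\mathcal V(h)$ on the slice $\Sli$, working backward in time. We split $\Sli=E_+\oplus E_+^\perp$, where $E_+^\perp\cap\Sli=K\oplus E_-$ and $E_-$ is the sum of the nonpositive-free, i.e.\ negative, eigenspaces of $L|_{\Sli}$. Since $L$ is self-adjoint in the weighted pairing with discrete spectrum, there is a spectral gap: let $\lambda_+>0$ be the smallest positive eigenvalue, and fix $0<\gamma<\lambda_+$. We write $\mathcal V(h)=Lh+N(h)$ with $N(0)=0$ and $\mathrm DN(0)=0$, and assume the quadratic estimates $\|N(h_1)-N(h_2)\|_{C^{k,\alpha}}\le C(\|h_1\|+\|h_2\|)\|h_1-h_2\|_{C^{k+2,\alpha}}$ for small $h$. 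These follow from the smooth dependence of $f_g$ and $Y(h)$ on $h$, as established in Section~\ref{sec:geometry}.

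\textbf{Step 1 (Lyapunov--Perron fixed point).} On the weighted space $X_\gamma$ of slice-valued functions on $(-\infty,s_*]$ with norm $\sup_s e^{-\gamma s}\|h(s)\|_{C^{2,\alpha}}$ (using parabolic Hölder norms in practice), we solve
\[
 h(s)=e^{(s-s_*)L}\xi+\int_{s_*}^{s}e^{(s-\sigma)L}\Pi_+N(h)\dd\sigma+\int_{-\infty}^{s}e^{(s-\sigma)L}\Pi_{\le0}N(h)\dd\sigma .
\]
On $E_+$ the semigroup $e^{(s-\sigma)L}$ for $s\le\sigma$ is finite-dimensional and bounded by $e^{\lambda_+(s-\sigma)}$. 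On $E_+^\perp$ it is the parabolic semigroup for $\sigma\le s$, and it is bounded by a constant, because the neutral part is not amplified and the negative part decays. The integral over $(-\infty,s]$ converges because $N(h)=O(e^{2\gamma\sigma})$ and $2\gamma>0$. Parabolic Schauder estimates for $\partial_s-L$ give the derivative gain. For $|\xi|$ small and $s_*$ sufficiently negative, so that the $O(e^{\gamma s})$ tail is small, the map is a contraction. Smooth dependence on $\xi$ follows from the uniform contraction principle with a smooth nonlinearity. We then set $\psi(\xi)=\Pi_{E_+^\perp}h_\xi(s_*)$, which is given by the last integral evaluated at $s_*$. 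It is quadratic in $\xi$, so $\psi(0)=\mathrm D\psi(0)=0$. Higher $C^k$ decay follows by bootstrapping the Schauder estimates on unit time intervals.

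\textbf{Step 2 (completeness).} Let $h$ be any exponentially convergent ancient SMRF, with $\|h\|_{C^k}=O(e^{\gamma' s})$. We first upgrade the rate to some $\gamma\in(0,\lambda_+)$ and show that $h$ lies in $X_\gamma$ and satisfies the integral equation above. For the projection of $h$ onto $E_+^\perp$, we use variation of constants from $s_0\to-\infty$; the boundary term $e^{(s-s_0)L}\Pi h(s_0)$ tends to zero since $\|h(s_0)\|\to0$ and the semigroup is bounded. The rate upgrade is done by iteration: if $h=O(e^{\gamma' s})$, then $N=O(e^{2\gamma' s})$, and the Duhamel formula forces $h=O(e^{\min(2\gamma',\gamma)s})$, up to the $E_+$ part which is $O(e^{\lambda_+ s})$. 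On the neutral component the integral produces no loss because $2\gamma'>0$. By uniqueness of the fixed point, $h=h_\xi$ with $\xi=\Pi_+h(s_*)$, after restricting to a tail on which $h$ is small enough. This step is the main obstacle. The neutral and negative directions carry no decay from the linear flow, so the argument must show that exponential decay of the full solution forces the neutral component to obey the integral representation. Here the bounded (non-growing) backward behaviour of the semigroup on $K\oplus E_-$ combined with the superlinear nonlinearity is the key ingredient, and the proof of \cref{prop:dichotomy} already controls this coupling.

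\textbf{Step 3 (equivariance).} $G$ acts by isometries of $(\bg,\bfv)$, which preserve $\Sli$, $L$, the spectral projections, and $\mathcal V$; the normalization of $Y(h)$ orthogonal to the Killing fields is also $G$-invariant. Hence $\phi^*h_\xi$ solves the same fixed-point equation with datum $\phi^*\xi$. Uniqueness then gives $\phi^*h_\xi=h_{\phi^*\xi}$ and $\psi(\phi^*\xi)=\phi^*\psi(\xi)$.
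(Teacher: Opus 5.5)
Your proposal is correct and follows the paper's proof of \cref{thm:fast-classification}: you use the same Lyapunov--Perron equation \eqref{eq:fast-LP} solved by contraction in exponentially weighted spaces, the terminal projection $\Pi_+h(s_*)$ as inverse, completeness by variation of constants on $K\oplus E_-$ together with fixed-point uniqueness, and equivariance from uniqueness. The only difference is minor: the paper takes the a priori rate $e^{\lambda_+s/4}$ of an arbitrary exponentially convergent solution directly from \cref{prop:dichotomy} (hence its choice $\gamma<\lambda_+/4$) and gets higher regularity from \cref{lem:interior-smoothing}, whereas you upgrade the rate by iterating the Duhamel formula, which works equally well.
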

The zero parameter corresponds to the canonical shrinking Ricci flow. This recovers the construction and classification of exponentially convergent solutions obtained by Choi--Lai \cite[Theorems~1.3 and~6.1]{CL} in the fixed slice; their classification allows nonintegrable shrinkers. We prove the fixed-slice statement in \cref{thm:fast-classification}. The main purpose of the present paper is to parametrize slow solutions.

\subsection*{Asymptotic expansions of slow solutions}
The tangent shrinker gives the limiting metric. Higher-order expansions describe how an ancient solution departs from it and which spatial tensors govern that departure. Precise ancient asymptotics have played a central role in the work of Angenent--Brendle--Daskalopoulos--\v{S}e\v{s}um on Ricci flow \cite{ABDS} and Angenent--Daskalopoulos--\v{S}e\v{s}um on mean curvature flow \cite{ADS}. Those works analyze cylindrical backward asymptotics. Here the compact tangent shrinker gives a discrete spectrum and a finite-dimensional neutral space on which to organize the expansion.

For each finite integer $N\ge2$, consider polynomial maps
\begin{equation}
 \Psi^{[N]}(b)=b+\sum_{j=2}^{N}H_j(b),\qquad
 F^{[N]}(b)=\sum_{j=2}^{N}F_j(b),
 \label{eq:intro-jets}\index{F reduced@$F^{[N]},F_j$}\index{F reduced@$F^{[N]},F_j$}\index{Psi graph@$\Psi^{[N]},H_j$ (approximate graph)}\index{Psi graph@$\Psi^{[N]},H_j$ (approximate graph)}
\end{equation}
where $H_j:K\to\Comp$ and $F_j:K\to K$ are homogeneous of degree $j$. For this slice and normalization, the coefficients depend only on $(M,\bg,\bfv)$ and agree as $N$ increases. The first map describes the metric in terms of its neutral component, and the second describes the reduced motion.

\begin{theorem}[Finite-order expansion]\label[theorem]{thm:expansion-intro}
For each integer $N\ge2$, there are unique maps of the form \eqref{eq:intro-jets} satisfying
\[
 \mathcal V(\Psi^{[N]}(b))-\mathrm{D}\Psi^{[N]}(b)F^{[N]}(b)=O(|b|^{N+1}).
\]
They are $G$-equivariant. Every neutral-dominated ancient SMRF satisfies
\[
 h(s)=\Psi^{[N]}(b(s))+O_{C^k}(|b(s)|^{N+1}),\qquad b_s=F^{[N]}(b)+O(|b|^{N+1})
\]
on a sufficiently negative tail, for every fixed spatial regularity $k$.
\end{theorem}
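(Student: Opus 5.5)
The plan has two parts. First I construct the jets $(H_j,F_j)$ by solving a homological equation order by order. Then I show that any neutral-dominated solution is shadowed by them, using the dichotomy of \cref{thm:dichotomy-intro}.

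\emph{Formal construction.} Expand $\mathcal V(h)=Lh+\sum_{m\ge2}Q_m(h)$ in homogeneous Taylor pieces; $\mathcal V$ is real-analytic near $0$ in a H\"older slice topology, and $\mathcal V$ maps the slice into $\Sli$ by the choice of $Y(h)$. Insert $\Psi^{[N]}$ and collect the degree-$j$ terms. Since $Lb=0$ for $b\in K$ and $\mathrm{D}\Psi=\Id+\sum_i \mathrm{D}H_i$, the degree-$j$ component reads
\[
LH_j(b)-F_j(b)+R_j(b)=0,
\]
where $R_j$ is a polynomial built only from $H_i,F_i$ with $i<j$: the $Q_m$ terms evaluated on $b+\sum_{i<j}H_i$, minus $\sum \mathrm{D}H_i(b)F_{j+1-i}(b)$ with $i,j+1-i<j$. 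Split with $\Pi_K$ and $\Pi_\Comp=\Id-\Pi_K$. On $\Sli$ the operator $L$ is self-adjoint with discrete spectrum and kernel $K$, so $L|_\Comp:\Comp\to\Comp$ is invertible between the appropriate H\"older spaces. This forces
\[
F_j=\Pi_KR_j,\qquad H_j=-(L|_\Comp)^{-1}\Pi_\Comp R_j .
\]
Hence $F_j$ and $H_j$ exist, are unique, and are homogeneous of degree $j$, and they are independent of $N$. Uniqueness of the whole family holds because any other solution must satisfy the same triangular system, given the constraints $H_j\in\Comp$ and $F_j\in K$. For $G$-equivariance, note that $\mathcal V$, $L$, $\Pi_K$ and $\Pi_\Comp$ commute with $G$, since $G$ preserves $\bg$, $\bfv$, the weighted pairing, and the canonical $f_g$ and $Y$. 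Conjugating a solution by $g\in G$ therefore gives another solution, and uniqueness makes it equal to the original. The remainder is $O(|b|^{N+1})$ because what is left is the Taylor tail of an analytic map evaluated at a polynomial in $b$; on the finite-dimensional $K$ all norms are equivalent.

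\emph{Shadowing.} Let $h$ be neutral-dominated, and write $h=\Psi^{[N]}(b)+w$ with $b=\Pi_Kh$. Then $w\in\Comp$, since $\Pi_K w=0$. \Cref{thm:dichotomy-intro} gives $w=O(|b|^2)$, $|b_s|\le C|b|^2$ and $|b|\ge c/|s|$. I argue by induction on $N$: assume $\|w\|_{C^k}\le C|b|^{N}$ and prove $|b|^{N+1}$. Subtracting the jet identity gives
\[
w_s=Lw+\mathrm{D}\Psi^{[N]}(b)\bigl(F^{[N]}(b)-b_s\bigr)+O(|b|^{N+1})+O(|b|\,\|w\|).
\]
Projecting to $K$ gives $b_s-F^{[N]}(b)=\Pi_K(\cdots)=O(|b|^{N+1}+|b|\|w\|)$, which settles the $b_s$ statement once $w$ is controlled. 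Projecting to $\Comp$ gives $w_s=Lw+g$ with $\|g\|\lesssim|b|^{N+1}+|b|\|w\|$.

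Now split $w=w_++w_-$ along the positive and negative spectrum of $L|_\Comp$, with spectral gap $\lambda_0>0$. The stable part is obtained from $w_+(s)=\int_{-\infty}^s e^{L(s-\sigma)}g_+\,d\sigma$; this is justified because $w_+\to0$ as $s\searrow-\infty$ and $e^{Ls}$ decays backward on $E_+$. The unstable part is obtained by integrating forward from the terminal time $s_*$, where it is bounded. Since $|b|$ is slowly varying, with $|b(\sigma)|/|b(s)|$ bounded for $|\sigma-s|\lesssim\log|s|$, the exponential weights give $\|w\|\lesssim\sup$ over a window of $\|g\|$, hence $\lesssim|b|^{N+1}+|b|\|w\|$. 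Absorbing the last term on the tail, where $|b|$ is small, closes the induction in $L^2$.

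\emph{Main obstacle.} The hardest step is making these Duhamel estimates uniform. The forward integral for $w_-$ picks up the terminal data $e^{L(s-s_*)}w_-(s_*)$, which is exponentially small in $|s|$ and therefore negligible against $|b|^{N+1}\gtrsim|s|^{-N-1}$; this is exactly where the lower bound $|b|\ge c/|s|$ is used. A second issue is comparing $|b(\sigma)|$ with $|b(s)|$ over the integration windows: from $|b_s|\le C|b|^2$ one gets $|1/|b(\sigma)|-1/|b(s)||\le C|\sigma-s|$, and this controls the weighted integrals. Finally, $C^k$ bounds follow from the $L^2$ bound by parabolic interior estimates for $w_s=Lw+g$ on unit time intervals, together with interpolation against the uniform smooth bounds on $h$.
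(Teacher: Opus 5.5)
Your formal construction is the paper's recursion \eqref{eq:recursion}, with the same triangular uniqueness and equivariance arguments, so that part is fine.

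\textbf{The error.} The Duhamel step of the shadowing argument fails as written, because the two spectral pieces are treated the wrong way round.
\begin{itemize}
\item On $E_+$ the kernel $e^{(s-\sigma)L}$ with $\sigma<s$ enlarges every vector by at least $e^{\lambda_+(s-\sigma)}$. So $\int_{-\infty}^s e^{(s-\sigma)L}g_+\,d\sigma$ diverges for forcing that decays only polynomially. The fact that $e^{sL}\to0$ on $E_+$ as $s\searrow-\infty$ is not what lets the initial time go to $-\infty$; that needs $e^{(s-a)L}\to0$ as $a\searrow-\infty$.
\item Likewise, on $E_-$ the terminal term $e^{(s-s_*)L}w_-(s_*)$ has size up to $e^{\lambda(s_*-s)}$ for eigenvalues $-\lambda\le-\lambda_-$. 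This is exponentially large, not small.
\end{itemize}

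\textbf{The fix.} Swap the roles. Integrate the infinite-dimensional negative part from $-\infty$; this is legitimate since $\|e^{(s-a)L}\Pi_-w(a)\|_{L^2}\le\|w(a)\|_{L^2}\to0$. Integrate the finite-dimensional positive part back from $s_*$. Then the only homogeneous term is $e^{(s-s_*)L}\Pi_+w(s_*)=O(e^{\lambda_+(s-s_*)})$, which your lower bound $|b|\ge c/|s|$ makes $o(|b|^{N+1})$. This is exactly the fixed-operator formula \eqref{eq:append-G0}.

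\textbf{A smaller point.} Absorbing $|b|\,\|w\|$ ``in $L^2$'' is not meaningful as stated. The term $(\mathrm{D}\mathcal V(\Psi(b))-L)w$ is second order, so it is controlled by $|b|$ times a $C^{3,1/2}$ norm of $w$, not by $|b|\,\|w\|_{L^2}$. However, your inductive hypothesis $\|w\|_{C^k}\le C|b|^N$ already bounds this term by $C|b|^{N+1}$ in $C^{k-2,1/2}$, so no absorption is needed. It is the interior Schauder estimate \eqref{eq:append-regularity}, not interpolation, that upgrades the $L^2$ bound to $\|w\|_k\le C|b|^{N+1}$. Interpolation alone would lose a power.

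\textbf{Comparison with the paper.} With these corrections your argument is a valid route, different from the paper's.
\begin{itemize}
\item The paper does not induct on $N$. In \cref{lem:exact-coordinates} it puts all dependence on $q=h-\Psi^{[N]}(b)$ into the operator $B_h=L_{\Comp}+o(1)$ and keeps only the defect $Q_N(b)E_N(b)=O(|b|^{N+1})$ as forcing. It then applies the perturbed weighted Green estimate (\cref{lem:slaving}, via \cref{prop:weighted-green}) with the tempered weight $|b|^{N+1}$. This reaches each order in one step and isolates the exponentially small homogeneous part $z_{+,N}$.
\item You keep the fixed operator $L_{\Comp}$, treat the coupling and quadratic terms as forcing controlled by the previous order, and bootstrap from the dichotomy's $O(|b|^2)$ bound. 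This needs only the fixed-operator Duhamel formula and interior Schauder estimates, with no contraction in weighted strip spaces.
\item Your comparison $|b(\sigma)|/|b(s)|\le 1+C|\sigma-s|$, from $|\partial_s|b|^{-1}|\le C$, replaces the $\eps$-tempering of the weight.
\end{itemize}
The paper's one-step version costs the perturbation theory of \cref{prop:weighted-green}, but that machinery is needed anyway for the parametrization in Section~\ref{sec:principal}.
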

This combines \cref{thm:formal,thm:actual-expansion}. To construct the coefficients, cancel the invariance defect successively in each homogeneous degree. Its $K$-component determines $F_j$, while inversion of $L$ on $\Comp$ determines $H_j$. For an actual solution, the spectral estimates control the complement by the remaining defect. Parabolic regularity then gives the expansion in each fixed spatial norm.

Sun--Xue \cite{SXcyl} obtain asymptotic normal forms for rescaled mean curvature flows approaching generalized cylinders. Their analysis identifies the neutral modes that govern the leading profile and derives geometric consequences from that profile. Theorem~\ref{thm:expansion-intro} likewise organizes the asymptotics through the neutral dynamics, here giving an expansion to every prescribed finite order around a compact Ricci shrinker.

It is an expansion in the neutral coordinate $b(s)$; further analysis of the reduced equation determines its expression in time. Exponentially small complementary data are below every finite polynomial order. Recovering these data is part of the parametrization problem.

\subsection*{Parametrization of principal slow solutions}
Choi--Hung \cite{CH} prove a version of Thom's gradient conjecture for nonlinear evolution equations, determining a limiting direction and a rational characteristic exponent for slowly converging solutions. Their finite-dimensional perturbed-gradient theorem is the model for our analysis. We verify its hypotheses for the SMRF reduction by relating the motion on $K$ to the reduced entropy and estimating the error to arbitrarily high order.

Let $r\ge2$ be the first degree for which $F_r\not\equiv0$. For every slow solution, \cref{thm:ell} gives a rational number $\ell_*\ge r+1$ and a nonzero $a_*\in K$ such that
\begin{equation}
 b(s)=|s|^{-1/(\ell_*-2)}(a_*+o(1)).
 \label{eq:intro-characteristic}\index{ell characteristic@$\ell_*,\ell^*$ (characteristic exponents)}
\end{equation}
Thus the characteristic exponent records the decay rate of the particular trajectory. We call a slow solution \emph{principal} when $\ell_*=r+1$. In this case its characteristic vector satisfies
\[
 a_*=\lim_{s\searrow-\infty}|s|^{1/(r-1)} b(s),\qquad
 F_r(a_*)=\frac{a_*}{r-1}.
\]
If the background has finite Li--Zhang obstruction order $k_{\mathrm{LZ}}$, then \cref{prop:sandwich} gives
\[
 r+1\le\ell_*\le k_{\mathrm{LZ}}+1.
\]
The first nonzero reduced term supplies the lower bound, while the uniform static obstruction supplies the upper bound. In particular, if $k_{\mathrm{LZ}}=r$, every slow solution is principal. When $k_{\mathrm{LZ}}=2$, we have $r=2$ and $\ell_*=3$, as in the even-dimensional projective applications below. The inequality $\ell_*\ge r+1$ can be strict: \cref{prop:cp-nonprincipal} constructs slow solutions with $r=2$ and $\ell_*=4$.

For each principal characteristic vector $a_*$, choose one sufficiently large finite order $N$. The required accuracy is determined by the linearization $\mathrm{D}F_r(a_*)$, as specified in \eqref{eq:indicial-gap}. The parameters consist of an exact reduced trajectory and a complementary parameter. Evaluation at one time determines the reduced trajectory, while the complementary parameter is the terminal datum of an exponentially decaying homogeneous solution of the complement equation along that trajectory. Thus both kinds of data are finite-dimensional.

\begin{theorem}[Parametrization of principal slow solutions]\label[theorem]{thm:main-intro}\index{Principal slow solutions}
Allow the principal characteristic vector $a_*$ to vary. For each bounded local family, choose a sufficiently large finite order $N$ as in \cref{thm:parametrization}. On a sufficiently negative common tail, the map
\begin{equation}
 \Theta_N(h)=\bigl(\beta_N(h),\zeta_N(h)\bigr)
 \label{eq:theta-intro}\index{Theta parametrization@$\Theta_N$}
\end{equation}
is a bijection between the bounded local solution class of \cref{thm:parametrization} and its parameter space. These bijections form a $G$-equivariant parametrization. Here $a_*=a_*(h)$ is the characteristic vector of $h$, and $\beta_N$ is the unique exact trajectory of
\[
 \beta_s=F^{[N]}(\beta),\qquad |s|^{1/(r-1)}\beta(s)\longrightarrow a_*,
\]
satisfying
\[
 |\Pi_Kh(s)-\beta_N(s)|=O(|s|^{-\kappa_N}),\qquad
 \kappa_N=\frac{N+1}{r-1}-1,
\]
and $\zeta_N$ lies in the finite-dimensional space of terminal values of exponentially decaying solutions of the complementary linear equation along $\beta_N$.
Every principal ancient germ is represented in one of these bounded local parametrizations on a common ancient interval. The corresponding geometric quotients give the unmarked Ricci-flow parametrization. If $E_+=\R\Ric_{\bg}$, original-time translation removes its scalar complementary parameter.
\end{theorem}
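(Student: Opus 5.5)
The plan is to reduce the statement to a fixed-point problem for the complementary component along an exact reduced trajectory. Fix a principal characteristic vector $a_*$ with $F_r(a_*)=a_*/(r-1)$ and choose $N$ so large that the indicial gap condition \eqref{eq:indicial-gap} holds uniformly on the bounded local family of $a_*$. The first step is the reduced ODE. Linearize $\beta_s=F^{[N]}(\beta)$ around the self-similar ray $\beta_0(s)=|s|^{-1/(r-1)}a_*$ in the logarithmic variable $\tau=\log|s|$. The equation then becomes asymptotically autonomous with linearization $\frac{1}{r-1}\Id-\mathrm{D}F_r(a_*)$ at the fixed point. Its unstable directions are exactly those allowed as $\tau\to\infty$, and they parametrize the trajectories with $|s|^{1/(r-1)}\beta\to a_*$. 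A contraction argument in weighted spaces $|s|^{-1/(r-1)-\delta}$ gives existence and uniqueness of such trajectories. Evaluation at the terminal time $s_*$ identifies them with a finite-dimensional manifold. The gap hypothesis ensures that the $O(|b|^{N+1})$ error in \cref{thm:expansion-intro} perturbs a trajectory by at most $O(|s|^{-\kappa_N})$. Here $\kappa_N$ exceeds every indicial rate, so this perturbation is below the resolution of the resonant directions.

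\emph{Construction of $\beta_N(h)$.} Given a principal solution $h$, \cref{thm:ell} and \cref{thm:expansion-intro} give $b_s=F^{[N]}(b)+O(|s|^{-(N+1)/(r-1)})$. Solve backward for the unique exact trajectory $\beta_N$ with the same asymptotic vector and $|b-\beta_N|=O(|s|^{-\kappa_N})$. To do this, integrate the error from $-\infty$ in the decaying directions of the linearized flow. Along the growing directions, match terminal data, using that a deviation of size $|s|^{-\kappa_N}$ cannot lie along an indicial mode with rate smaller than $\kappa_N$. Uniqueness of $\beta_N$ follows from the same gap: two exact trajectories differing by $O(|s|^{-\kappa_N})$ must coincide.

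\emph{Complementary parameter.} Write $h=\Psi^{[N]}(\beta_N)+w$. Then $w$ solves $w_s=L_{\beta_N}w+\mathcal R(w,s)$. Here $L_{\beta_N}=\mathrm{D}\mathcal V(\Psi^{[N]}(\beta_N))$ is projected appropriately, $L_{\beta_N}=L+O(|s|^{-1/(r-1)})$, and the forcing $\mathcal R(0,s)$ is $O(|s|^{-(N+1)/(r-1)})$.

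Exponential dichotomy for $L$ on $\Sli$, with the neutral block absorbed by the reduced-ODE analysis, is robust under the slowly decaying perturbation. It therefore yields a finite-dimensional space of exponentially decaying homogeneous solutions, modeled on $E_+$, and define $\zeta_N(h)$ as its terminal value. Conversely, given $(\beta,\zeta)$, solve the full nonlinear problem by a Lyapunov--Perron fixed point on $(-\infty,s_*]$ in a norm weighted by $|s|^{-\kappa_N}$ together with an exponential part. Parabolic Schauder estimates give $C^k$ control. This yields surjectivity, and uniqueness of the fixed point gives injectivity, hence bijectivity of $\Theta_N$.

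\emph{Equivariance and quotients.} $G$-equivariance follows because $F^{[N]}$, $\Psi^{[N]}$ and the dichotomy are equivariant and the constructions are canonical. Every principal germ lies in some bounded family by compactness of the set of admissible $a_*$ on a common tail. The geometric quotients then come from \eqref{eq:intro-quotients}. When $E_+=\R\Ric_{\bg}$, original-time translation shifts $\zeta$ along the $\Ric$ mode at first order with nonzero derivative, so it can be normalized away.

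The main obstacle is the uniqueness of $\beta_N$ and the matching in the reduced ODE. The linearization $\mathrm{D}F_r(a_*)$ can have resonant or multiple indicial roots, and these give polynomial–logarithmic corrections. I would first handle this with a Jordan-form weighted contraction, losing $|\log|s||^m$ factors that are absorbed by choosing $N$ with a strict margin in \eqref{eq:indicial-gap}.
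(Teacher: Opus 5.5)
Your overall architecture matches the paper's: an exact reduced trajectory plus an exponentially decaying complementary datum, realized by a Lyapunov--Perron contraction. There is, however, a genuine gap in the realization step.

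\textbf{The realization step.} You linearize at $\Psi^{[N]}(\beta_N)$ and treat all of $w=h-\Psi^{[N]}(\beta_N)$, including its neutral part $x=\Pi_Kh-\beta_N$, as governed by $L+O(|s|^{-1/(r-1)})$ in a single weight $|s|^{-\kappa_N}$. On $K$ this is the wrong scale.
\begin{itemize}
\item $L|_K=0$, and the neutral dynamics is only of size $|s|^{-1}A_{a_*}$.
\item The neutral inverse costs a full power of $|s|$: forcing in $\YP_{\kappa+1}$ gives only $x\in\XP_\kappa$.
\item So the neutral block is not robust under an $O(|s|^{-1/(r-1)})$ perturbation.
\end{itemize}
Concretely, in your coordinates both off-diagonal blocks, $\Pi_{\Comp}\mathrm{D}\mathcal V(\Psi(\beta))|_K\approx-L_{\Comp}\mathrm{D}H_2(\beta)$ and $\Pi_K\mathrm{D}\mathcal V(\Psi(\beta))|_{\Comp}$, have size $|\beta|\sim|s|^{-1/(r-1)}$. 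The diagonal neutral block is $\mathrm{D}F(\beta)+O(|\beta|^2)$, where the $O(|\beta|^2)$ term is generically nonzero and is cancelled only by the round trip through the complement. For $r\ge3$ these terms are at least of order $|s|^{-1}$. Consequently:
\begin{itemize}
\item your diagonal neutral block no longer has indicial operator $A_{a_*}$;
\item the coupled map has no small Lipschitz constant;
\item for $r\ge4$ the complement-to-neutral forcing is not even in $\YP_{\kappa+1}$.
\end{itemize}

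The missing idea is the one the proof of \cref{thm:parametrization} rests on. Write $h=\Psi(\beta+x)+z_{\beta,\zeta}+y$, i.e.\ use the graph-adapted complement $q=h-\Psi(\Pi_Kh)$. Because $\Psi$ removes the invariance defect through degree $N$, \cref{lem:derivative-ledger} gives $A_\beta=\mathrm{D}F(\beta)+O(|\beta|^N)$ and $D_\beta=O(|\beta|^N)$, leaving only $C_\beta=O(|\beta|)$. With the staggered weights $x\in\XP_\kappa$, $y\in\XP_{\kappa+1}$, both couplings have small norms, $|s_*|^{-1/(r-1)}$ and $|s_*|^{-(\kappa-1/(r-1))}$, and the contraction closes for every $r$. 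These coordinates also make $\zeta_N(h)=P_+^\beta(s_*)q(s_*)$ well defined through the normalized splitting $q=\Gop_\beta R_h+z_{\beta,\zeta}$ of \cref{lem:shadow-complement}. Defining $\zeta_N$ as ``its terminal value'' needs such a normalization of the forced part.

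\textbf{The construction of $\beta_N$.} This step is misdescribed, though it is repairable.
\begin{itemize}
\item Since $\kappa_N>\rho(a_*)$, every nonzero homogeneous solution of $x_s=A(s)x$ with $A\sim|s|^{-1}A_{a_*}$ decays at most like $|s|^{-\bar\rho}$, with $\bar\rho<\kappa_N$, as $s\searrow-\infty$. Hence every direction must be integrated from $-\infty$. Prescribing terminal data in any direction reinserts such a mode and destroys $|b-\beta_N|=O(|s|^{-\kappa_N})$; your own uniqueness remark says exactly this.
\item No Jordan-form or logarithmic analysis is needed. $A_{a_*}=\nabla^2\Ent_{r+1}(a_*)$ is self-adjoint by \cref{lem:gradient-obstruction}, so the energy estimate of \cref{lem:poly-evolution} gives $\|U_A(s,u)\|_{\mathrm{op}}\le(|u|/|s|)^{\bar\rho}$ uniformly under $o(|s|^{-1})$ perturbations. \Cref{thm:shadow} then obtains $\beta_N$ as a limit of exact trajectories with $\beta_i(s_i)=b(s_i)$.
\item The manifold description of all trajectories with $|s|^{1/(r-1)}\beta\to a_*$ is unnecessary, since the parameter set is $\UN(a_*)$ itself. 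It is also unjustified as stated: convergence as $\tau\to\infty$ selects the \emph{stable} directions of $\frac1{r-1}\Id-A_{a_*}$, and a contraction in the weight $|s|^{-1/(r-1)-\delta}$ may miss trajectories converging along center directions more slowly than $e^{-\delta\tau}$.
\end{itemize}

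\textbf{Coverage and time translation.} Coverage of principal germs does not come from a common tail for all of them. It comes from the germ-wise weighted bounds of \cref{lem:shadow-complement}, which place each germ in some bounded chart, as in \cref{rem:principal-germs}. For original-time translation, you must first check that $\beta_N$ is unchanged, because the perturbation is $O(e^s)=o(|s|^{-\kappa})$, before normalizing the scalar parameter.
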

The parameter space is defined in Section~\ref{sec:parameters}, and \cref{rem:principal-germs} gives the coverage of principal germs.

Approximation by an exact reduced trajectory reduces the neutral dynamics to a finite-dimensional ODE. The equation for $b=\Pi_Kh$ still contains a remainder inherited from the PDE. Its associated exact reduced trajectory $\beta_N$ satisfies the exact equation $\beta_s=F^{[N]}(\beta)$ and is uniquely determined by the prescribed approximation rate. We can therefore study admissible trajectories of this ODE in $K$, each determined by its value at one time. The complementary parameter $\zeta_N$ records the exponentially decaying homogeneous part of the complementary equation.

Conversely, a prescribed exact reduced trajectory and complementary parameter determine an approximate solution, and the coupled correction equations recover an exact ancient SMRF. The main estimates control the interaction of the neutral and complementary directions at a fixed finite order, yielding both realization and uniqueness. Thus the principal parametrization reduces the study of ancient solutions to admissible ODE trajectories, finite-dimensional complementary parameters, and their geometric quotients.

\subsection*{Complex projective examples}
Kr\"oncke \cite[Corollary~1.8]{KronckeEinstein} proved dynamical instability of the Fubini--Study metric on every $\CP^N$, $N\ge2$, yielding nontrivial ancient normalized Ricci flows converging to it backward modulo diffeomorphisms. Knopf--\v{S}e\v{s}um \cite{KnopfSesum} gave an independent proof of its entropy instability. Here we determine the local moduli on $\CP^{2n}$, obtain uniqueness of the nontrivial class on $\CP^2$ modulo the geometric symmetries below, and construct a nonprincipal branch with $|s|^{-1/2}$ decay in every odd complex dimension at least three.

For the Fubini--Study shrinker on $\CP^{2n}$, normalized by $\Ric_{\bg}=\frac12\bg$, put $d=2n+1$. Its neutral space $K=\ker(L|_{\Sli})$ is identified with $\Herm_0(d)$, the real vector space of traceless Hermitian $d\times d$ matrices, by
\[
 A\longmapsto h_A=u_A\bg+2\bar\nabla^2u_A,
 \qquad u_A([z])=\frac{z^*Az}{|z|^2}.
\]
Here $[z]\in\CP^{2n}$ denotes homogeneous coordinates, and $\bar\nabla$ is the Levi-Civita connection of $\bg$. Under this correspondence, the quadratic reduced field on $K$ is
\begin{equation}
 F_2(A)=\frac{2}{d+2}\left(A^2-\frac{\tr(A^2)}{d}I\right).
 \label{eq:intro-projective-field}
\end{equation}
Here $I$ is the identity matrix, and the expression in parentheses is the traceless part of $A^2$. Thus \eqref{eq:intro-projective-field} gives the leading neutral evolution in matrix coordinates. Since $d$ is odd, this field has no nonzero zero and gives a uniform quadratic obstruction. Every slow solution is therefore principal. Section~\ref{sec:cp} identifies the possible characteristic matrices and analyzes the stable fibers of the reduced equation.

\begin{theorem}[Projective parametrization]\label[theorem]{thm:cp-intro}
Every nontrivial slow ancient Ricci-flow germ with Fubini--Study tangent on $\CP^{2n}$ has a characteristic type $p\in\{1,\ldots,n\}$. Modulo time-independent diffeomorphisms, parabolic rescaling, and original-time translation, its local type-$p$ moduli are modeled on
\begin{equation}
 \Herm_0(p)/U(p)\cong
 \left\{(\xi_1,\ldots,\xi_p):\xi_1\le\cdots\le\xi_p,
 \ \sum_{i=1}^{p}\xi_i=0\right\}.
 \label{eq:intro-weyl}
\end{equation}
The generic dimension is $p-1$. In particular, ancient Ricci flows with Fubini--Study tangent on $\CP^2$ have exactly two classes modulo these symmetries: the canonical shrinking Ricci flow and one nontrivial slow class.
\end{theorem}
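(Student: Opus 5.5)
The proof applies the general principal parametrization, \cref{thm:main-intro}, to the Fubini--Study shrinker on $\CP^{2n}$ and then computes the geometric quotient explicitly. We take $d=2n+1$, the identification $K\cong\Herm_0(d)$, and the quadratic field \eqref{eq:intro-projective-field} as given.

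\emph{Step 1 (principality and characteristic vectors).} Since $d$ is odd, a traceless Hermitian $A\neq 0$ cannot satisfy $A^2=\frac{\tr(A^2)}{d}I$. Such an $A$ would have eigenvalues $\pm c$ with $c\ne0$, and tracelessness would force $d$ to be even. Hence $F_2$ has no nonzero zero. This is the uniform quadratic obstruction, so $k_{\mathrm{LZ}}=r=2$, and \cref{prop:sandwich} gives $\ell_*=3$. Every slow solution is therefore principal.

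The characteristic vectors solve $F_2(a)=a$. In an eigenbasis this reads $\frac{2}{d+2}(\lambda_i^2-m)=\lambda_i$ with $m=\tr(a^2)/d$. Each eigenvalue is then one of the two roots $\lambda_\pm$ of a fixed quadratic. Tracelessness and the definition of $m$ determine $m$ and the multiplicities. I expect exactly one admissible configuration for each multiplicity $p$ of one root, and the constraint $p\in\{1,\dots,n\}$ should come from the sign or realizability of $m>0$ and from the requirement that the trajectory actually approaches zero backward. This defines the characteristic type $p$.

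\emph{Step 2 (stable fibers and the parameter space).} Linearize the reduced ODE at the self-similar trajectory $|s|^{-1}a_*$, i.e.\ compute $\mathrm{D}F_2(a_*)$ on $\Herm_0(d)$. Decompose $\Herm_0(d)$ under the stabilizer $S(U(p)\times U(d-p))$ into the following pieces: block-diagonal traceless parts in $\Herm_0(p)$ and $\Herm_0(d-p)$, the off-diagonal block $\C^{p\times(d-p)}$, and the scalar direction along $a_*$. The indicial roots on each piece determine which perturbations are admissible, meaning they decay at rate $o(|s|^{-1})$ relative to $a_*|s|^{-1}$.

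I expect the following outcomes. The $\Herm_0(p)$ block yields the free trajectory data. The off-diagonal block is generated by the $U(d)$-action, i.e.\ by diffeomorphisms. The scalar direction corresponds to time translation of the reduced ODE, which matches original-time translation at this order. The $\Herm_0(d-p)$ block is inadmissible. The complementary parameter $\zeta_N$ lives in $E_+$, which for an Einstein background with no other positive modes is $\R\Ric_{\bg}$, and the last sentence of \cref{thm:main-intro} removes it. Checking that $E_+=\R\Ric_{\bg}$ for Fubini--Study, via the known spectrum of the Lichnerowicz Laplacian, is a required input.

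\emph{Step 3 (quotient).} The parametrization is $G$-equivariant with $G=PU(d)$. After fixing $a_*$ up to conjugacy, the residual symmetry acting on the $\Herm_0(p)$ data is $U(p)$, acting by conjugation. Parabolic rescaling is already absorbed by the normalization. Diagonalization identifies $\Herm_0(p)/U(p)$ with the Weyl chamber \eqref{eq:intro-weyl}, whose generic dimension is $p-1$.

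For $\CP^2$ we have $n=1$, so $p=1$ and the moduli space is a point. Together with \cref{thm:fast-intro}, where $E_+$ contributes only time translation and the parameter $\xi$ is removed, this leaves exactly the canonical flow and one slow class.

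The main obstacle is Step 2. It requires matching the indicial analysis of $\mathrm{D}F_2(a_*)$ with the bounded local families of \cref{thm:parametrization}. It also requires showing that the $\Herm_0(p)$-block data survive modulo symmetries, while the symmetry-generated and inadmissible directions are correctly discarded. I would begin by computing $\mathrm{D}F_2(a_*)X=\frac{2}{d+2}\bigl(a_*X+Xa_*-\frac{2\tr(a_*X)}{d}I\bigr)$ blockwise.
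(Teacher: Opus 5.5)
Your outline follows the paper's proof of \cref{thm:cp-moduli,cor:cp2} in its main structure. That includes principality from odd $d$, the matrices $B_p$, and the blockwise spectrum of $\mathrm{D}F_2(B_p)$: $\R B_p\oplus\Herm_0(p)$ is admissible, the off-diagonal block is tangent to $\Sigma_p$, $\Herm_0(q)$ is excluded, and a residual $U(p)$-conjugation remains. The genuine error is the symmetry bookkeeping in Steps~2--3.

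Parabolic rescaling is not absorbed by the normalization. Since $\hat g(s)=e^sg_{\mathrm{RF}}(-e^{-s})$ is taken about the fixed reference time $0$, the flow $\lambda g_{\mathrm{RF}}(t/\lambda)$ normalizes to $\hat g(s+\log\lambda)$. So parabolic rescaling is exactly SMRF time translation, acting by $\beta\mapsto\beta(\cdot+c)$. It is this action that removes the radial direction $\R B_p$.

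Conversely, original-time translation does not match reduced-time translation at any polynomial order. The shift $t\mapsto t+\theta$ moves $s$ only by about $\theta e^{s}$, and \cref{lem:phase-estimate} gives $\|h^{(\theta)}-h\|_3\le C|\theta|e^s$. Hence $\beta_N(h^{(\theta)})=\beta_N(h)$ by \cref{prop:phase}. Its only effect is on the one-dimensional fiber $E_+^\beta(s_*)$, using $E_+=\R\Ric_{\bg}$, where its derivative is $-2+O(|s_*|^{-1/(r-1)})$.

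As written, you let one one-parameter group remove two independent directions (the radial one and $\zeta_N$) and discard the other group. A single one-parameter action removes only one dimension, so taken literally your bookkeeping leaves a one-parameter family of slow classes on $\CP^2$ instead of one class. The correct count starts from $2pq+p^2+1$ marked parameters. Subtract $2pq$ for the marking by $PU(d)$, one for parabolic rescaling, and one for original-time translation. What remains is $\Herm_0(p)$ modulo the stabilizer's $U(p)$-conjugation.

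Two smaller points. First, the range $p\in\{1,\dots,n\}$ is only a labeling convention. Each split $d=p+q$ gives exactly one conjugacy class, because the consistency condition for $m=\tr(a^2)/d$ holds automatically: the two roots satisfy $ab=-m$. Since $d$ is odd, $p\ne q$, and $p$ denotes the smaller multiplicity. Neither the sign of $m$ nor backward convergence restricts anything.

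Second, the indicial analysis of $\mathrm{D}F_2(a_*)$ gives only the linearized picture. To parametrize exact trajectories of the full, non-homogeneous field $F^{[N]}$, the paper passes to the augmented autonomous system \eqref{eq:cp-augmented} in $(A,z)=(|s|\beta,|s|^{-1})$. It notes that $G_p$-equivariance forces $F_j(B_p)\in\R B_p$, so the $z$-coupling is radial. It then applies the stable-foliation theorem to the normally hyperbolic manifold $\Sigma_p\times\{0\}$. This gives a $(p^2+1)$-dimensional stable fiber whose slice at $z=|s_*|^{-1}$ has dimension $p^2$. The reduced-time quotient is realized by the $G_p$-invariant transversal $\langle\beta(s_*),B_p\rangle=a_0|B_p|^2$. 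Finally, one checks that complex conjugation, the other component of the isometry group, adds no identification beyond $U(p)$-conjugation.
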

This is proved in \cref{thm:cp-moduli,cor:cp2}. The characteristic matrices form Grassmannians of marked limits. Fixing one limit gives a stable fiber, and the isometry and time quotients yield the Weyl chambers in \eqref{eq:intro-weyl}. The local asymptotic data determine the maximal smooth ancient continuations.

In odd complex dimension, nonprincipal slow solutions occur.
Section~\ref{sec:cp-nonprincipal} constructs an ancient Ricci flow with
Fubini--Study backward tangent on every $\CP^{2n+1}$, $n\ge1$, whose
normalized perturbation decays as $|s|^{-1/2}$. For these solutions,
$r=2$ and $\ell_*=4$. Thus the equality $\ell_*=r+1$ does not hold for
all slow solutions, even when the compact tangent shrinker is rigid.

\subsection*{Convergence at a finite singular time}
The same method describes convergence to a compact tangent shrinker
at a finite singular time. With $\tau=T-t=e^{-s}$, logarithmic time
now tends to $+\infty$. The fixed-slice equation and its reduced
coefficients are unchanged. The comparison of the three spectral components gives the
exponential--neutral alternative, and the same finite-dimensional
reduction determines the slow rate and limiting tensor.

\begin{theorem}[Forward convergence]\label[theorem]{thm:forward-intro}
Let $(M,g_{\mathrm{RF}}(t))$, $0\le t<T<\infty$, be a closed Ricci flow
with first singular time $T$. Suppose one tangent flow at $T$ is smooth
and compact, and identify its normalized shrinker with $(M,\bg,\bfv)$.
Put $\tau=T-t$. In a horizontal gauge,
$h=\tau^{-1}\phi_t^*g_{\mathrm{RF}}(t)-\bg$ has exactly one of the
following forms as $\tau\searrow0$:
\begin{enumerate}
\item $h=0$ on a tail, corresponding to the canonical shrinking flow.
\item For some $\lambda>0$ and nonzero $a\in\Sli$ with $La=-\lambda a$,
\[
 h=\tau^\lambda(a+o(1)).
\]
\item For some rational $\ell^*\ge3$ and nonzero $a^*\in K$,
\[
 h=|\log\tau|^{-1/(\ell^*-2)}(a^*+o(1)).
\]
\end{enumerate}
All limits hold in every fixed spatial norm.
The possible exponents $\ell^*$ lie in a finite set determined by
the reduced entropy, and every slow solution admits the finite-order
expansion of \cref{thm:expansion-intro}, with
$b=\Pi_Kh$ and $s=-\log\tau$.
\end{theorem}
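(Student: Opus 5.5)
The plan is to reduce the forward statement to the ancient analysis by reversing the direction of logarithmic time. The convergence input is the result of Chan--Ma--Zhang \cite{CMZ}: once one tangent flow at $T$ is smooth and compact, it is unique, and the normalized flow converges to it. After the canonical modification, the normalized metric $\tau^{-1}g_{\mathrm{RF}}(t)$ converges modulo diffeomorphisms to $\bg$ as $s=-\log\tau\to+\infty$. On a forward tail we then construct the horizontal gauge of Section~\ref{sec:geometry}: pass to MRF and choose the time-dependent diffeomorphism keeping $\bg+h$ in $\bg+\Sli$. This gives a solution $h(s)$ of $h_s=\mathcal V(h)$, defined for $s\ge s_0$, with $h\to0$ smoothly. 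The construction is local near $\bg$ and does not care about the direction of time.

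Next comes the trichotomy, via the spectral argument of \cref{prop:dichotomy} with the roles of $E_\pm$ swapped. As $s\to+\infty$, components along negative eigenspaces of $L|_{\Sli}$ decay exponentially, while positive modes would grow. The key steps are as follows.
\begin{enumerate}
\item Prove a discrete Merle--Zaag type ODE lemma for the three quantities $\|\Pi_+h\|$, $\|\Pi_-h\|$ and $|b|$, using the quadratic bounds on $\mathcal V(h)-Lh$. The conclusion is that either $\|\Pi_+h\|$ is negligible and $\Pi_-h$ dominates, or $b$ dominates.
\item In the first case, exponential decay $\|h\|\le Ce^{-\gamma s}$ holds. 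From this we extract a leading eigenmode in the standard way: decompose $\Pi_-h$ into eigenspaces and use Duhamel's formula for the nonlinear term of size $O(\|h\|^2)$. This yields $h=e^{-\lambda s}(a+o(1))$ with $La=-\lambda a$, unless $h\equiv0$.
\item If $h$ vanishes at one time, then $h\equiv0$ on the tail by forward uniqueness. This is the first alternative.
\item In the neutral-dominated case, the estimates of \cref{thm:dichotomy-intro}(2) hold by the same proof.
\end{enumerate}

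For the slow case, \cref{thm:expansion-intro} applies essentially verbatim. Its formal coefficients come from the purely algebraic relation $\mathcal V(\Psi^{[N]})-\mathrm{D}\Psi^{[N]}F^{[N]}=O(|b|^{N+1})$. The estimate on the complement for the actual solution uses that the complement consists of hyperbolic modes, and it is controlled by the defect in either time direction. Using the forward tail instead of the backward tail gives $h=\Psi^{[N]}(b)+O(|b|^{N+1})$ and $b_s=F^{[N]}(b)+O(|b|^{N+1})$. Then we check the hypotheses of the Choi--Hung perturbed-gradient theorem \cite{CH}: $F^{[N]}$ is, up to a controlled error, the gradient of the reduced entropy, and the error terms are $O(|b|^{N+1})$. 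Choi--Hung's theorem gives a rational $\ell^*$ and a limit $a^*$ with $b=s^{-1/(\ell^*-2)}(a^*+o(1))$. The exponent $\ell^*$ is among the finitely many values read off from the Newton data of the reduced entropy at order $\le N$ for $N$ large, which gives the finite set. Since $F_2$ may vanish, the bound $\ell^*\ge3$ follows from $\ell^*\ge r+1\ge3$, exactly as in \cref{thm:ell}. The expansion then transfers $b\sim s^{-1/(\ell^*-2)}a^*$ to $h$ in every $C^k$.

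The main obstacle is the slow case. Its first point is the forward version of the dichotomy, which must rule out $b$ decaying faster than every power while $\Pi_-h$ is not dominant. Its second point is that Choi--Hung's theorem must be applied to a gradient-like flow converging forward, as opposed to backward. For the latter we verify that the reduced motion is an ascent flow of the reduced entropy $\boldsymbol{\nu}$ in forward $s$, which matches their standard setting. The higher-order error control from \cref{thm:actual-expansion} supplies the required smallness of the non-gradient part.
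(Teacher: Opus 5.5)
Your outline follows the paper's architecture: CMZ convergence and the horizontal gauge, the three-norm comparison, forward complement estimates, and Choi--Hung applied to the reduced entropy. However, the exponential alternative has a genuine gap.

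\textbf{The gap: superexponential decay.} The mode-by-mode Duhamel argument only shows the following. If $\|h\|_{L^2}=O(e^{-\mu s})$, then $h$ agrees, up to $O(e^{-(\mu+\gamma-\epsilon)s})$, with a finite sum $\sum e^{-\lambda_is}a_i$ over eigenvalues $\lambda_i<\mu+\gamma$ of $-L$. If every coefficient vanishes, you raise $\mu$ and repeat. If this never terminates, you have only shown that $h$ decays faster than every exponential, and nothing in your outline excludes a nonzero solution of this kind. Forward uniqueness does not help, because such an $h$ need not vanish at any finite time. What is needed is a unique-continuation-at-infinity bound on the decay rate.

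The paper obtains this in \cref{prop:forward-leading} from the Dirichlet quotient $Q=-\langle Lh,h\rangle/\|h\|_{L^2}^2$. Put $D=\|(L+Q)h\|_{L^2}/\|h\|_{L^2}$ and use $\|\mathcal U(h)\|_{L^2}\le Ce^{-\gamma s}\|h\|_{L^2}$. Then
\[
 Q'=-2D^2-\frac{2\langle(L+Q)h,\mathcal U(h)\rangle}{\|h\|_{L^2}^2}\le-D^2+C^2e^{-2\gamma s},
 \qquad
 \partial_s\log\|h\|_{L^2}=-Q+O(e^{-\gamma s}).
\]
Hence $Q$ is bounded above. Since $\int D^2<\infty$, $Q$ converges to some $\lambda\in\Spec(-L)$, and $s^{-1}\log\|h(s)\|_{L^2}\to-\lambda$. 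Only with this finite rate does projection onto the eigenspace of $L$ with eigenvalue $-\lambda$ give a limit $a=\lim e^{\lambda s}P_0h$. That limit is forced to be nonzero by $\|h-e^{-\lambda s}a\|_{L^2}=O(e^{-(\lambda+\sigma)s})$. Without such an argument the trichotomy is not established.

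\textbf{Slow case: nonconstancy of the reduced entropy.} Applying Choi--Hung and defining $r$ both require $\Ent$ to be nonconstant near $0$. This needs the quantitative forward rate $|b(s)|\le Cs^{-\alpha_0}$ of \cite[Corollary~1.4]{CMZ}, not merely convergence. If $\Ent\equiv0$, every $F_j$ vanishes by \cref{lem:gradient-obstruction}. Then $|b_s|\le C_J|b|^J$ for all $J$, which is compatible with $b\to0$ slower than every power. So the real danger is slow decay of $b$. The faster-than-power decay you name as the obstacle is already excluded by $|b_s|\le C|b|^2$, which gives $|b|\ge c/s$.

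\textbf{Slow case: your gradient comparison.} Replacing the induction in \cref{prop:gradient-error} by a comparison of $F^{[N]}$ with $\nabla\Ent$ is a workable and shorter route, provided you prove the comparison in the multiplicative form
\[
 |F^{[N]}(b)-\nabla\Ent(b)|\le C\bigl(|b|\,|\nabla\Ent(b)|+|b|^{N+1}\bigr).
\]
The Taylor coefficients of the two fields need not agree beyond degree $r$, so an additive $O(|b|^{N+1})$ comparison is not available. The multiplicative bound follows from the static estimates behind \cref{lem:static-equations}, by inverting $L_{\Comp}$ on $\Psi^{[N]}(b)-b-W(b)$. Combined with $b_s=F^{[N]}(b)+O(|b|^{N+1})$, it yields \eqref{eq:forward-subordinate} at every order without the weighted induction.
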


The full statement, including the marking and the equation for the leading coefficient in the principal case,
is \cref{thm:forward}.
For the Fubini--Study shrinker on $\CP^{2n}$, the forward slow profiles
are the negatives of the ancient profiles from Section~\ref{sec:cp};
see \cref{cor:forward-cp}.

The finite-order reduction, characteristic asymptotics, and parametrization of principal slow solutions developed here also apply to analytic parabolic gradient flows with the corresponding spectral decomposition and parabolic estimates. Rescaled mean curvature flow near a smooth closed shrinker provides a natural setting for this method: normal graphs give local coordinates, and the Gaussian area supplies the analytic functional. The gradient asymptotics of Choi--Hung \cite{CH} and the invariant-manifold analysis of Sun--Xue \cite{SXclosed} provide the relevant framework.

\subsection*{Organization of the paper}
Section~\ref{sec:geometry} establishes the geometric correspondences and uniqueness of continuation from an ancient interval. Section~\ref{sec:dichotomy} proves the exponential--neutral dichotomy and treats exponentially convergent solutions. Section~\ref{sec:normal} constructs and realizes the finite-order expansion. Section~\ref{sec:exponent} establishes the characteristic exponent, Section~\ref{sec:principal} proves the parametrization of principal slow solutions, and Section~\ref{sec:cp} develops the projective applications. Section~\ref{sec:forward} treats forward convergence to compact tangent shrinkers. Appendix~\ref{app:parabolic} establishes the weighted linear estimates. An index of notation and terminology precedes the references.

\par\vspace{\baselineskip}
\noindent\textbf{Acknowledgements.}
The author thanks Hanbing Fang for helpful discussions during the early stages of this work.
The author is supported by the National Key Research and Development Program of China (Grant No.~2025YFA1018200), NSFC-12522105, and research funds from the University of Science and Technology of China and the Chinese Academy of Sciences.

\par\vspace{\baselineskip}
\noindent\textbf{AI declaration.}
ChatGPT (GPT-6 Astra) was used to assist with drafting, mathematical exposition, revision, and reference checking, in particular in Section~6 and the computations in Section~7. The mathematical ideas are the author's own. Most of the techniques are drawn from the author's previous works and those of other researchers, as cited in the text. The author takes full responsibility for the mathematical content and correctness.

\section{Ricci flow, modified flow, and the geometric quotients}\label{sec:geometry}
We represent ancient Ricci flows in a fixed weighted slice, where the background linearization and its spectral projections remain fixed. The gauge constructions identify Ricci-flow germs modulo time-independent diffeomorphisms with convergent SMRF germs modulo background isometries. The main step is the correspondence in \cref{thm:correspondence}, including convergence of the inverse gauge.

\subsection{Noncollapsed ancient Ricci flows}
We work with smooth ancient Ricci flows $g_{\mathrm{RF}}(t)$ on a closed manifold $M$. Restricting to an interval ending at a smooth time and translating the Ricci-flow time, we take their domain to be $(-\infty,0]$. Write $n=\dim M$. If
\[
 \mathrm{d}\nu_{x,t;t-\tau}
 =(4\pi\tau)^{-n/2}e^{-f_{x,t;\tau}}\mathrm{d}V_{g_{\mathrm{RF}}(t-\tau)}
\]
is the conjugate heat kernel measure based at $(x,t)$, its pointed Nash entropy is
\begin{equation}
 \mathcal N_{x,t}(\tau)=\int_M f_{x,t;\tau}\,\mathrm{d}\nu_{x,t;t-\tau}-\frac n2,
 \qquad \tau>0.
 \label{eq:nash-definition}\index{nu heat kernel@$\mathrm{d}\nu_{x,t;t-\tau}$ (conjugate heat kernel)}\index{N Nash@$\mathcal N_{x,t}(\tau)$}\index{Noncollapsed ancient flow}
\end{equation}
We call the ancient flow \emph{noncollapsed} if there are a spacetime point $(x_*,t_*)$ and $Y<\infty$ such that $\mathcal N_{x_*,t_*}(\tau)\ge-Y$ for every $\tau>0$. This is the entropy formulation used in the tangent-flow theory of Bamler \cite{Bamler} and Fang--Li \cite{FL}.

For closed ancient flows this condition is equivalent to each of
\begin{equation}
 \inf_{\substack{x\in M,\ t\le0\\\tau>0}}\mathcal N_{x,t}(\tau)>-\infty,
 \qquad
 \inf_{t\le0}\boldsymbol{\nu}(g_{\mathrm{RF}}(t))>-\infty,
 \qquad \boldsymbol{\nu}(g)=\inf_{\tau>0}\boldsymbol{\mu}(g,\tau).
 \label{eq:entropy-noncollapse}\index{mu entropy@$\boldsymbol{\mu}(g,\tau)$}\index{nu entropy@$\boldsymbol{\nu}(g)$}\index{Noncollapsed ancient flow}
\end{equation}
Here $\boldsymbol{\mu}(g,\tau)$ and $\boldsymbol{\nu}(g)$ are Perelman's entropy functionals. In fact, Chan--Ma--Zhang \cite[Theorem 1.1]{CMZsob} identify $\inf_{t\le0}\boldsymbol{\nu}(g_{\mathrm{RF}}(t))$ with $\lim_{\tau\nearrow\infty}\mathcal N_{x_*,t_*}(\tau)$; the same lower bound then holds at every spacetime point. See also \cite[Example 11.5]{FL}. We consider such flows with a smooth closed tangent shrinker $(M,\bg,\bfv)$ at backward infinity.

\subsection{Modified Ricci flow and geometric convergence}\label{sec:mrf}
For a metric $g$ near $\bg$, let $f_g$ be the unique minimizer of Perelman's functional
\begin{equation}
 \mathcal W(g,f,1)=(4\pi)^{-n/2}
 \int_M\bigl(\mathrm{R}_g+|\nabla f|_g^2+f-n\bigr)e^{-f}\mathrm{d}V_g,
 \qquad (4\pi)^{-n/2}\int_Me^{-f}\mathrm{d}V_g=1.
 \label{eq:canonical-potential}\index{W Perelman@$\mathcal W(g,f,1)$}\index{background data@$\bg,\bfv$}\index{f potential@$f_g$}\index{mu entropy@$\boldsymbol{\mu}(g,\tau)$}
\end{equation}
The map $g\mapsto f_g$ is analytic on a sufficiently small neighborhood of the normalized shrinker \cite{SW}. We normalize $\bfv=f_{\bg}$ and define
\begin{equation}
 \Phi(g)=\frac12g-\Ric_g-\nabla_g^2f_g.
 \label{eq:Phi-definition}\index{Phi@$\Phi$}
\end{equation}

For the flows under consideration, the minimizer of $\boldsymbol{\mu}(g_{\mathrm{RF}}(t),|t|)$ is unique and varies smoothly on a sufficiently remote ancient tail \cite{SW,CMZ}. We choose the parabolic time unit so that this holds for $t\le-1$. Diffeomorphism invariance of the entropy makes this definition independent of the identifications used in the tangent-flow convergence. Set
\begin{equation}
 t=-e^{-s},\qquad \tau=|t|=e^{-s},\qquad
 \hat g(s)=e^s g_{\mathrm{RF}}(-e^{-s}),\qquad s\le0.
 \label{eq:time-general}\index{g normalized@$\hat g(s)$}\index{s time@$s,t$}
\end{equation}
The minimizer at scale $|t|$ is the scale-one potential $f_{\hat g(s)}$. Since $\mathrm{d}t/\mathrm{d}s=e^{-s}$, direct differentiation gives
\begin{equation}
 \hat g_s=\hat g-2\Ric_{\hat g}.
 \label{eq:normalized-rf}
\end{equation}
Let $\varphi_s$ solve
\begin{equation}
 \partial_s\varphi_s=-\nabla^{\hat g(s)}f_{\hat g(s)}\circ\varphi_s,
 \qquad \varphi_0=\Id.
 \label{eq:potential-gauge}
\end{equation}
Then
\begin{equation}
 g(s)=\varphi_s^*\hat g(s),\qquad
 \partial_sg=g-2\Ric_g-2\nabla_g^2f_g=2\Phi(g).
 \label{eq:mrf-definition}\index{Modified Ricci flow (MRF)}
\end{equation}
This is the modified Ricci flow (MRF). Conversely, pulling an MRF back by the diffeomorphisms generated by $\nabla^{g(s)}f_{g(s)}$, with identity initial value at a finite base time, removes the Hessian term; \eqref{eq:time-general} then recovers a Ricci flow. Different initial diffeomorphisms give representatives related by a time-independent diffeomorphism.

\begin{proposition}\label[proposition]{prop:entrance}
Let $g_{\mathrm{RF}}(t)$ be a noncollapsed ancient Ricci flow on a closed manifold. Suppose one tangent flow at backward infinity is the smooth canonical shrinking flow of $(M,\bg,\bfv)$. After a time-independent spatial identification, the construction \eqref{eq:time-general}--\eqref{eq:mrf-definition} gives an ancient MRF such that
\begin{equation}
 \lim_{s\searrow-\infty}\|g(s)-\bg\|_{k}=0
 \quad\text{for every integer }k\ge0.
 \label{eq:geometric-convergence}
\end{equation}
The tangent flow is independent of the blow-down sequence and the base point. In particular, every sufficiently small neighborhood of $\bg$ in a prescribed spatial regularity contains a whole ancient tail of $g$.
\end{proposition}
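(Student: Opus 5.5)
The proof has three steps: obtain uniqueness of the tangent flow, turn it into uniform convergence of the normalized flow modulo diffeomorphisms, and then transfer that convergence to the potential gauge \eqref{eq:potential-gauge}.

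\textbf{Uniqueness of the tangent flow.} By the noncollapsing condition and \eqref{eq:entropy-noncollapse}, Perelman's entropy $\boldsymbol{\nu}(g_{\mathrm{RF}}(t))$ is bounded below and monotone in $t$, so it has a finite limit as $t\to-\infty$. The hypothesis says that along some blow-down sequence $\tau_i\to\infty$ the rescaled flows $\tau_i^{-1}g_{\mathrm{RF}}(\tau_i t)$ converge, in Bamler's sense \cite{Bamler,FL}, to the smooth canonical shrinking flow of $(M,\bg,\bfv)$. Because the limit is smooth and compact, this convergence is smooth: after diffeomorphisms, on a compact time window. Hence $\hat g(s_i)$ lies in an arbitrarily small $C^k$ neighborhood of $\bg$ modulo $\Diff(M)$ for some $s_i\to-\infty$.

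I would then apply the Chan--Ma--Zhang argument \cite{CMZ}, which rests on the \L{}ojasiewicz inequality of Sun--Wang \cite{SW} for $\boldsymbol{\mu}(\cdot,1)$ near $\bg$. The flow \eqref{eq:mrf-definition} is the gradient flow of $\boldsymbol{\mu}(\cdot,1)$ and $\boldsymbol{\mu}$ is analytic there, so the usual \L{}ojasiewicz--Simon argument applies. Once the flow enters a small neighborhood at $s_i$ with entropy close to its limit, it stays in a slightly larger neighborhood for all earlier $s\le s_i$. Its total $L^2$ length $\int\|\Phi(g)\|\,ds$ is controlled by a power of the entropy gap, and that gap tends to $0$. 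This yields convergence of the whole backward tail to a single shrinker isometric to $\bg$. In particular, every blow-down sequence has the same limit, and the independence of base point follows from the $\boldsymbol{\nu}$ identification of Chan--Ma--Zhang \cite{CMZsob}.

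\textbf{Transfer to the gauge \eqref{eq:potential-gauge}.} This is the step I expect to be the main obstacle. The gradient structure holds most cleanly in the gauge \eqref{eq:potential-gauge}, but the tangent-flow convergence only holds modulo diffeomorphisms. I would handle this as follows. Fix $s_i$ and a diffeomorphism $\psi_i$ with $\psi_i^*\hat g(s_i)$ close to $\bg$, and solve the MRF backward from that initial value. Interior parabolic estimates for MRF give smooth bounds: the equation is parabolic modulo the gauge term, and $f_g$ depends analytically on $g$ near $\bg$. With these bounds, the \L{}ojasiewicz estimate keeps the solution near $\bg$ in every $C^k$ norm by interpolation of $L^2$ length against uniform higher bounds.

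These MRF solutions differ from $\varphi_s^*\hat g(s)$ only by time-independent diffeomorphisms $\varphi_{s_i}^{-1}\circ\psi_i$. Their backward limits are shrinkers in a small neighborhood of $\bg$, hence isometric to $\bg$ by local rigidity of limits along the \L{}ojasiewicz flow. Using the resulting isometries, I would pick one time-independent identification, taking a subsequential limit via Arzel\`a--Ascoli on $\Diff(M)$ with controlled $C^k$ norms, under which $g(s)\to\bg$ in every $\|\cdot\|_k$.

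\textbf{Final assertion.} The statement that every small neighborhood of $\bg$ contains a whole ancient tail then follows directly from \eqref{eq:geometric-convergence}.
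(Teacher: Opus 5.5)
\textbf{Gap in the key step.} You and the paper both rest on Chan--Ma--Zhang. The paper simply cites their uniqueness theorem and their convergence of the canonically modified flow. The mechanism you supply for that step does not work as written.

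You run the \L{}ojasiewicz--Simon argument \emph{backward} from $s_i$, and you say interior parabolic estimates give the uniform higher bounds needed to turn $L^2$ length into $C^k$ closeness. Parabolic smoothing only acts forward in $s$. A bound at a time $s<s_i$ needs control on $[s-1,s]$, which lies further in the past and is exactly what has not been established. At the first backward exit time from the neighborhood, you only know that the metric lies on its boundary in the working norm while being $L^2$-close to $\bg$. That is no contradiction, so the continuity argument does not close.

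The missing idea is to start at the \emph{earlier} blow-down times $s_j<s_i$, where $\hat g(s_j)$ is close to $\bg$ modulo diffeomorphisms, and run the argument \emph{forward} on $[s_j,s_i]$:
\begin{itemize}
\item Put $e(s)=\boldsymbol{\mu}(\hat g(s),1)-\boldsymbol{\mu}(\bg,1)$. Monotonicity gives $0\le e(s)\le e(s_i)$ on the whole tail.
\item While the flow stays in the neighborhood, its length on $[s_j,s_i]$ is therefore at most $Ce(s_i)^{1-\theta}$.
\item Forward smoothing from the good data at $s_j$ supplies the higher bounds.
\item The gauges normalized at $s_j$ and at $s_i$ differ by a fixed diffeomorphism that pulls a metric near $\bg$ back to one near $\bg$, so it is close to an isometry.
\item Letting $j\to\infty$ keeps one gauge near $\bg$ on all of $(-\infty,s_i]$. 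Finite length plus interpolation then give convergence.
\end{itemize}

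\textbf{The gauge step.} The step you flag as the main obstacle is in fact immediate, and several of your justifications there are off.
\begin{itemize}
\item Solutions of the ODE in \eqref{eq:potential-gauge} with different initial values differ by right composition with a fixed diffeomorphism. So all potential gauges are related by time-independent pullbacks. ``Solving MRF backward'' can only mean pulling back the given flow by the gauge normalized at $s_i$; MRF itself is not backward solvable.
\item Once one such MRF converges to some $g_{-\infty}$, the limit is isometric to $\bg$ simply because $\hat g(s_i)$ converges modulo diffeomorphisms both to $g_{-\infty}$ and to $\bg$. No local rigidity is needed, and none holds in general, since compact shrinkers can come in continuous families.
\item No Arzel\`a--Ascoli limit is needed either. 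A single pullback gives \eqref{eq:geometric-convergence}, which is exactly the last sentence of the paper's proof.
\item The entropy identification only shows that the asymptotic Nash entropy is independent of the base point. Independence of the tangent flow itself comes from the base-point-free convergence of $\hat g$, together with the fact that conjugate heat kernels based at different points become $W_1$-close after blow-down.
\end{itemize}
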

\begin{proof}
Chan--Ma--Zhang \cite[Theorem 1.1 and Corollary 1.3]{CMZ} prove uniqueness of the tangent flow and convergence of the canonically modified flow, using the \L{}ojasiewicz inequality of Sun--Wang \cite{SW}. Their estimates apply in arbitrarily high fixed spatial regularity, giving \eqref{eq:geometric-convergence}. A time-independent pullback identifies the limiting metric with $\bg$; applying it also to $g_{\mathrm{RF}}$ conjugates \eqref{eq:potential-gauge} and preserves its identity initial value.
\end{proof}

\subsection{The weighted slice and its evolution}
We seek a vector field $Y(h)$ that generates a pullback keeping MRF in $\bg+\Sli$. The slice condition gives an elliptic equation for $Y(h)$; \cref{prop:slice-field} constructs it, and \eqref{eq:smrf-definition} gives the resulting evolution.

All tensor operations in this subsection use $\bg$. Set
\begin{equation}
 \mathrm{d}\nu=(4\pi)^{-n/2}e^{-\bfv}\mathrm{d}V_{\bg},\qquad
 \ip{u}{v}_{L^2}=\int_M\ip{u}{v}_{\bg}\,\mathrm{d}\nu.
 \label{eq:weighted-pairing}\index{d nu@$\mathrm{d}\nu$}
\end{equation}
Throughout the paper, unlabelled spatial $L^2$ pairings and Sobolev norms use $\bg$, its covariant derivative, and the fixed probability measure $\mathrm{d}\nu$.

We identify one-forms and vector fields through $\bg$ and use the same notation for them. The weighted divergence and its $L^2$ adjoint are
\begin{align}
 (\mathrm{div}_{\bfv}h)_i
 &=\bar\nabla^jh_{ji}-\bar\nabla^j\bfv\,h_{ji},\label{eq:weighted-divergence}\index{div weighted@$\mathrm{div}_{\bfv},\mathrm{div}^{*}$}\\
 (\mathrm{div}^{*}\omega)_{ij}
 &=-\frac12(\bar\nabla_i\omega_j+\bar\nabla_j\omega_i)
 =-\frac12(\Lie_{\omega^\sharp}\bg)_{ij}.
 \label{eq:divergence-adjoint}\index{div weighted@$\mathrm{div}_{\bfv},\mathrm{div}^{*}$}
\end{align}
On one-forms we also use $\mathrm{div}_{\bfv}\omega=\bar\nabla^i\omega_i-\bar\nabla^i\bfv\,\omega_i$. Set
\[
 \Sli=\ker\mathrm{div}_{\bfv},\qquad
 \kk=\ker\mathrm{div}^{*},\qquad
 P=\mathrm{div}_{\bfv}\mathrm{div}^{*}.
\]
Here $\kk$ is the finite-dimensional space of background Killing fields. For every integer $k\ge0$,
\begin{equation}
 P:C^{k+2,1/2}(T^*M)\cap\kk^\perp
   \xrightarrow{\ \cong\ }C^{k,1/2}(T^*M)\cap\kk^\perp,
 \qquad
 \|P^{-1}\omega\|_{k+2}\le C_k\|\omega\|_{k}.
 \label{eq:P-inverse}\index{k Killing@$\kk$}\index{P elliptic@$P$}
\end{equation}
Indeed, $P$ is elliptic and $\ip{P\omega}{\omega}_{L^2}=\|\mathrm{div}^{*}\omega\|_{L^2}^2$. Its kernel is $\kk$, so the Fredholm alternative and Schauder estimates give \eqref{eq:P-inverse}; see \cite[Sections 3 and 5]{CMgauge} and \cite[Proposition 2.3]{LZrigid}. Since $\mathrm{div}_{\bfv}T$ is Killing-orthogonal, the weighted orthogonal projection onto $\Sli$ is
\begin{equation}
 \Pi_{\Sli}T=T-\mathrm{div}^{*}P^{-1}\mathrm{div}_{\bfv}T.
 \label{eq:hodge}\index{Pi slice@$\Pi_{\Sli}$}\index{S slice@$\Sli$ (weighted slice)}
\end{equation}

Metric perturbations lie in $C^{3,1/2}(S^2T^*M)$, and the evolution takes values in $C^{1,1/2}(S^2T^*M)$. These fixed spatial regularities are used in the Banach spaces and smallness conditions throughout the paper, independently of the expansion order. Higher spatial regularity is obtained by parabolic bootstrap. We omit bundle labels when the tensor type is clear, interpret $\Sli$ in the displayed regularity, and write
\begin{equation}
 \|u\|_{k}=\|u\|_{C^{k,1/2}(M,\bg)},\qquad
 Lh=\Delta_{\bfv}h+2\Rm_{\bg}(h),
 \quad \Delta_{\bfv}=\bar\Delta-\bar\nabla_{\bar\nabla\bfv}.
 \label{eq:slice-norms-L}\index{Delta weighted@$\Delta_{\bfv}$}\index{L operator@$L$}\index{norm spatial@$\lVert\cdot\rVert_k$}
\end{equation}
For every integer $j\ge0$, define the Banach space
\begin{equation}
 \Comp_j=\Comp\cap C^{j,1/2}(S^2T^*M),\qquad
 \|u\|_{\Comp_j}=\|u\|_{j}.
 \label{eq:complement-spaces}\index{C complement@$\Comp,\Comp_j$}\index{C complement@$\Comp,\Comp_j$}
\end{equation}
In particular, $\Comp_3$ has the $C^{3,1/2}$-norm, and $\Comp_{1}$ has the $C^{1,1/2}$-norm.
Norms on these target spaces are understood in Banach-valued
function-space notation. We use $I_s=[s-1,s]$ for the time intervals in the
strip norms of Appendix~\ref{app:parabolic}.

Li--Zhang \cite[Theorem 2.4]{LZrigid} place a nearby metric in the affine weighted slice $\bg+\Sli$ by a local diffeomorphism, using the elliptic inverse above. The following proposition constructs the vector field that preserves this slice along the modified flow.

\begin{proposition}\label[proposition]{prop:slice-field}\index{Y gauge@$Y(h)$}
On a sufficiently small neighborhood $U$ of zero in $C^{3,1/2}(S^2T^*M)\cap\Sli$, there is a unique small analytic map
\begin{equation}
 Y:U\longrightarrow C^{2,1/2}(TM)\cap\kk^\perp
 \label{eq:Y-map}\index{Y gauge@$Y(h)$}
\end{equation}
such that
\begin{equation}
 \mathrm{div}_{\bfv}\bigl(2\Phi(\bg+h)+\Lie_{Y(h)}(\bg+h)\bigr)=0.
 \label{eq:slice-equation}
\end{equation}
It satisfies $Y(0)=\mathrm{D}Y(0)=0$ and $\|Y(h)\|_{2}\le C\|h\|_{3}^2$.
\end{proposition}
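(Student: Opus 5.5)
The plan is to apply the analytic implicit function theorem to the map
\[
 \mathcal F(h,Y)=\mathrm{div}_{\bfv}\bigl(2\Phi(\bg+h)+\Lie_{Y}(\bg+h)\bigr),
\]
viewed as a map $U\times\bigl(C^{2,1/2}(TM)\cap\kk^\perp\bigr)\to C^{0,1/2}(T^*M)\cap\kk^\perp$. First we check the target. For any symmetric $2$-tensor $T$ and Killing field $X\in\kk$, we have $\ip{\mathrm{div}_{\bfv}T}{X}_{L^2}=\ip{T}{\mathrm{div}^*X}_{L^2}=0$, so $\mathcal F$ indeed takes values in $\kk^\perp$. This uses only the weighted adjointness \eqref{eq:divergence-adjoint}, applied with $\bg$-operations throughout, as the subsection stipulates. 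Analyticity of $\mathcal F$ follows because $g\mapsto f_g$ is analytic near $\bg$ \cite{SW}, so $\Phi$ is analytic from $C^{3,1/2}$ into $C^{1,1/2}$. The Lie derivative term is bilinear in $(Y,\bg+h)$, and it maps $C^{2,1/2}$ vector fields into $C^{1,1/2}$ tensors.

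Next we compute the linearization in $Y$ at $(0,0)$. Since $\Phi(\bg)=0$, we get $\mathcal F(0,0)=0$. Moreover $\partial_Y\mathcal F(0,0)Y=\mathrm{div}_{\bfv}\Lie_Y\bg=-2\,\mathrm{div}_{\bfv}\mathrm{div}^*Y=-2PY$. By \eqref{eq:P-inverse} with $k=0$, this is an isomorphism $C^{2,1/2}\cap\kk^\perp\to C^{0,1/2}\cap\kk^\perp$. The implicit function theorem then gives a unique small analytic solution $Y(h)$ on a small neighborhood $U$.

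It remains to show $Y(0)=\mathrm{D}Y(0)=0$. Uniqueness together with $\mathcal F(0,0)=0$ gives $Y(0)=0$. Differentiating, we obtain $\mathrm{D}Y(0)h=(2P)^{-1}\partial_h\mathcal F(0,0)h=P^{-1}\mathrm{div}_{\bfv}\mathrm{D}\Phi(\bg)h$, since the $\Lie_Y h$ term is quadratic. So we need $\mathrm{div}_{\bfv}\mathrm{D}\Phi(\bg)h=0$ for $h\in\Sli$. This is the step I expect to be the main obstacle. I would try to deduce it from the diffeomorphism invariance of $\Phi$, namely $\Phi(\phi^*g)=\phi^*\Phi(g)$ because $f_{\phi^*g}=\phi^*f_g$. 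Alternatively, $\Phi$ is (up to a positive factor) the weighted $L^2$ gradient of $\boldsymbol\mu(\cdot,1)$, so $\mathrm{D}\Phi(\bg)$ is self-adjoint in the weighted pairing.

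Either way the argument runs as follows. Differentiating the invariance at $\bg$ along $\Lie_X\bg$ gives $\mathrm{D}\Phi(\bg)\Lie_X\bg=\Lie_X\Phi(\bg)=0$, so $\mathrm{D}\Phi(\bg)$ annihilates $\Ran\mathrm{div}^*$. Self-adjointness then shows $\Ran\mathrm{D}\Phi(\bg)\perp\Ran\mathrm{div}^*$, that is, $\mathrm{div}_{\bfv}\mathrm{D}\Phi(\bg)h=0$ for every $h$. I would verify the self-adjointness via the second variation formula for $\boldsymbol\mu$ at a shrinker (Cao--Zhu). That formula identifies $\mathrm{D}\Phi(\bg)$ on $\Sli$ with $\tfrac12 L$, and on $\Sli$ this is consistent with \eqref{eq:intro-linearization}. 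Its general form is $\tfrac12L$ plus terms in $\mathrm{div}^*\mathrm{div}_{\bfv}h$ and a Hessian of a scalar variation of $f$, and each of those is weighted-orthogonal to $\Sli$ only after projection. So I would rely on the variational-gradient argument rather than on explicit formulas.

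Finally, the quadratic bound follows from analyticity and $Y(0)=\mathrm{D}Y(0)=0$, by Taylor expansion on a slightly smaller neighborhood. This gives $\|Y(h)\|_2\le C\|h\|_3^2$.
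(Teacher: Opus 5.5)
Your proof is correct. The implicit-function part is the paper's own argument: the target lies in $\kk^\perp$, $\partial_Y\mathcal F(0,0)=-2P$ is inverted by \eqref{eq:P-inverse} with $k=0$, and the quadratic bound comes from analyticity. The difference is in how you obtain $\mathrm{D}Y(0)=0$.

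\textbf{The paper's route.} It uses the explicit Li--Zhang linearization
\[
 \mathrm{D}\Phi_{\bg}[h]=\tfrac12Lh+\mathrm{div}^{*}\mathrm{div}_{\bfv}h+\bar\nabla^2u_h,\qquad
 (2\Delta_{\bfv}+1)u_h=\mathrm{div}_{\bfv}\mathrm{div}_{\bfv}h .
\]
For $h\in\Sli$ the middle term vanishes, and $u_h=0$ because $2\Delta_{\bfv}+1$ is injective by the weighted Bochner argument. Hence $2\mathrm{D}\Phi_{\bg}=L$ on $\Sli$, and $L\Sli\subset\Sli$ then gives $\mathrm{D}Y(0)=0$.

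\textbf{Your route.} You argue structurally. $\mathrm{D}\Phi_{\bg}$ annihilates $\Lie_X\bg$ by diffeomorphism invariance and is symmetric in the fixed weighted pairing. So its range is orthogonal to $\Ran\mathrm{div}^{*}$, i.e.\ $\mathrm{div}_{\bfv}\mathrm{D}\Phi_{\bg}[h]=0$ for every $h$, not only $h\in\Sli$. This is exactly the linearization at $\bg$ of the nonlinear Bianchi identity \eqref{eq:nonlinear-bianchi}, which the paper invokes in the proof of \cref{lem:actual-defect}. Differentiating $\mathrm{div}^{g}_{f_g}\Phi(g)=0$ at $\bg$, where $\Phi(\bg)=0$, is the quickest way to state your argument.

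\textbf{Two corrections.}
\begin{itemize}
\item Symmetry of $\mathrm{D}\Phi_{\bg}$ in the fixed pairing comes from $\mathrm{d}^2\boldsymbol{\mu}_{\bg}[h,k]=\ip{\mathrm{D}\Phi_{\bg}[k]}{h}_{L^2}$. This identity holds only because $\Phi(\bg)=0$ kills the variation of the $g$-dependent pairing $\int\ip{\cdot}{\cdot}_g\,\mathrm{d}\nu_g$ with respect to which $\Phi$ is a gradient. No appeal to Cao--Zhu is needed.
\item Your aside about the explicit formula is inaccurate. On $\Sli$ the extra terms vanish outright, not merely after projection; that is precisely the paper's route.
\end{itemize}

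\textbf{Trade-off.} Your approach avoids the explicit linearization and the spectral fact about $2\Delta_{\bfv}+1$. The paper's approach additionally yields $2\mathrm{D}\Phi_{\bg}|_{\Sli}=L$, which is needed immediately afterwards for $\mathrm{D}\mathcal V(0)=L$ in \eqref{eq:smrf-definition} and which your argument does not supply.
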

\begin{proof}
The left-hand side of \eqref{eq:slice-equation} is analytic with values in $C^{1/2}\cap\kk^\perp$. Its derivative with respect to $Y$ at $(h,Y)=(0,0)$ is $-2P$. Equation~\eqref{eq:P-inverse}, with $k=0$, therefore gives the analytic implicit-function theorem.

The canonical-potential linearization is \cite[Lemma 2.6]{LZrigid}:
\begin{align*}
 \mathrm{D}\Phi_{\bg}[h]
 &=\frac12Lh+\mathrm{div}^{*}\mathrm{div}_{\bfv}h
   +\bar\nabla^2\bigl(\frac12\tr_{\bg}h-\mathrm{D}f_{\bg}[h]\bigr),\\
 (2\Delta_{\bfv}+1)\bigl(\frac12\tr_{\bg}h-\mathrm{D}f_{\bg}[h]\bigr)
 &=\mathrm{div}_{\bfv}\mathrm{div}_{\bfv}h.
\end{align*}
The scalar operator has zero kernel: if $\Delta_{\bfv}u=-u/2$, the integrated weighted Bochner identity and $\Ric_{\bg}+\bar\nabla^2\bfv=\bg/2$ give $\bar\nabla^2u=0$. Closedness then implies that $u$ is constant, and the eigenvalue equation gives $u=0$. Consequently $2\mathrm{D}\Phi_{\bg}[h]=Lh$ on $\Sli$. Since $L$ preserves $\Sli$ \cite[Lemma 2.8]{LZrigid}, differentiation of \eqref{eq:slice-equation} in $h$ gives $\mathrm{D}Y(0)=0$. The quadratic bound follows from analyticity.

We also record the weak estimate
\begin{equation}
 \|Y(h)\|_{H^1}\le C\|\Phi(\bg+h)\|_{L^2}.
 \label{eq:gauge-weak}
\end{equation}
We first establish the weighted Korn inequality on $\kk^\perp$. Since $P$ is nonnegative, self-adjoint, and elliptic on the closed manifold, with kernel $\kk$, its spectral gap gives
\[
 \|X\|_{L^2}^2\le C\ip{PX}{X}_{L^2}
 =C\|\mathrm{div}^{*}X\|_{L^2}^2,
 \qquad X\perp\kk.
\]
On the normalized shrinker, the identity in \cite[Lemma 3.1]{CMgauge} reads
\[
 -2PX=\Delta_{\bfv}X+\bar\nabla\mathrm{div}_{\bfv}X+\frac12X.
\]
Taking its weighted pairing with $X$ and integrating by parts yields
\[
 \|\bar\nabla X\|_{L^2}^2+\|\mathrm{div}_{\bfv}X\|_{L^2}^2
 =2\|\mathrm{div}^{*}X\|_{L^2}^2+\frac12\|X\|_{L^2}^2.
\]
Combining these two estimates proves
\[
 \|X\|_{H^1}\le C\|\mathrm{div}^{*}X\|_{L^2},
 \qquad X\in H^1(TM)\cap\kk^\perp.
\]
Now put $Y=Y(h)$. Testing \eqref{eq:slice-equation} against $Y$ and using $\Lie_Y\bg=-2\mathrm{div}^{*}Y$ gives
\begin{align*}
 2\|\mathrm{div}^{*}Y\|_{L^2}^2
 &=2\ip{\Phi(\bg+h)}{\mathrm{div}^{*}Y}_{L^2}
   +\ip{\Lie_Yh}{\mathrm{div}^{*}Y}_{L^2}.
\end{align*}
The Lie derivative satisfies
\[
 \|\Lie_Yh\|_{L^2}
 \le C\|h\|_{C^1}\|Y\|_{H^1}
 \le C\|h\|_{C^1}\|\mathrm{div}^{*}Y\|_{L^2}.
\]
Consequently,
\begin{align*}
 2\|\mathrm{div}^{*}Y\|_{L^2}^2
 &\le 2\|\Phi(\bg+h)\|_{L^2}\|\mathrm{div}^{*}Y\|_{L^2}
   +C\|h\|_{C^1}\|\mathrm{div}^{*}Y\|_{L^2}^2.
\end{align*}
Shrinking $U$ makes $\|h\|_{C^1}$ sufficiently small to absorb the last term. Applying the Korn inequality once more proves \eqref{eq:gauge-weak}.
\end{proof}

Define
\begin{equation}
 \mathcal V(h)=2\Phi(\bg+h)+\Lie_{Y(h)}(\bg+h),\qquad
 \partial_sh=\mathcal V(h),\quad h\in\Sli.
 \label{eq:smrf-definition}\index{h@$h$}\index{V field@$\mathcal V$}\index{Special modified Ricci flow (SMRF)}
\end{equation}
This is the special modified Ricci flow (SMRF). The map $\mathcal V$ is analytic from $C^{3,1/2}\cap\Sli$ to $C^{1,1/2}\cap\Sli$, and $\mathrm{D}\mathcal V(0)=L$.

Set $\mathcal U(h)=\mathcal V(h)-Lh$. Since
$\mathcal U(0)=\mathrm{D}\mathcal U(0)=0$, analyticity and Taylor's formula give,
on a sufficiently small ball in $C^{3,1/2}\cap\Sli$,
\begin{equation}
 \|\mathrm{D}^2\mathcal U(h)[u,v]\|_{1} \le C\|u\|_3\|v\|_3,\qquad \|\mathrm{D}\mathcal U(h)[u]\|_{1} \le C\|h\|_3\|u\|_3.
 \label{eq:spatial-nonlinear}\index{U nonlinearity@$\mathcal U(h)$}
\end{equation}
Integrating the second bound along a segment gives the corresponding
quadratic Lipschitz estimate. Taking suprema over $I_s$ gives the
same estimates in the strip norms defined in Appendix~\ref{app:parabolic},
with constants independent of $s$. The weighted linear estimates
used below are proved in that appendix.

We will use the following uniform interior bounds for the solutions
constructed in Sections~\ref{sec:dichotomy} and~\ref{sec:principal}.

\begin{lemma}[Interior smoothing]\label[lemma]{lem:interior-smoothing}
There is $\delta>0$ with the following property. Suppose $h$ solves
SMRF on $[s-2,s]$, belongs to
\[
 C([s-2,s];C^{3,1/2}\cap\Sli)
 \cap C^1([s-2,s];C^{1,1/2}\cap\Sli),
 \qquad \sup_{[s-2,s]}\|h\|_3\le\delta.
\]
Then $h$ is smooth after the initial time, and, for every fixed
integer $k\ge3$,
\begin{equation}
 \sup_{r\in[s-1,s]}
 \bigl(\|h(r)\|_k+\|h_s(r)\|_{k-2}\bigr)
 \le C_k.
 \label{eq:interior-smoothing}
\end{equation}
The constants depend on $k$ and the background, but not on $s$.
Consequently, convergence to zero in $\|\cdot\|_3$ on a tail is
smooth convergence.
\end{lemma}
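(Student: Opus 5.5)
We prove \cref{lem:interior-smoothing} by undoing the slice gauge, applying standard parabolic regularity to a strictly parabolic equation, and transporting the bounds back. The problem with working directly with SMRF is that $\mathcal V(h)=2\Phi(\bg+h)+\Lie_{Y(h)}(\bg+h)$ is only degenerate parabolic. It also contains the nonlocal terms $f_{\bg+h}$ and $Y(h)$, each defined by an elliptic problem. The main steps are as follows.

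\textbf{Step 1: pass to a Ricci--DeTurck flow.} The metric $g(s)=\bg+h(s)$ solves
\[
 g_s=g-2\Ric_g+\Lie_{W(s)}g,\qquad W=-\nabla_g f_g+Y(h),
\]
since $-2\nabla^2_gf_g=\Lie_{-\nabla f_g}g$. The potential $f_g$ is analytic in $g$, so $\|f_g\|_{C^{3,1/2}}\le C$ with $C$ uniform for $\|h\|_3\le\delta$. Together with $\|Y(h)\|_2\le C\|h\|_3^2$ from \cref{prop:slice-field}, this gives
\[
 \sup_{[s-2,s]}\|W\|_{C^{2,1/2}}\le C.
\]
We then solve harmonic map heat flow from $(M,g(r))$ to $(M,\bg)$ on $[s-2,s]$, starting at the identity. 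Because $g$ stays $\delta$-close to $\bg$ in $C^{3,1/2}$ and $h_s$ is bounded in $C^{1,1/2}$, this flow exists on the whole interval and stays $C^{2,1/2}$-close to the identity. The pushed-forward metric $\tilde g$ solves the normalized Ricci--DeTurck flow with background $\bg$, and it is $C^{2,1/2}$-close to $\bg$. Alternatively, one can combine the diffeomorphisms generated by $W$ with the DeTurck vector field in a single step. In either version $\tilde g$ solves a strictly parabolic quasilinear system whose coefficients are uniformly controlled.

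\textbf{Step 2: interior estimates for $\tilde g$.} We apply standard interior Schauder theory for quasilinear parabolic systems on the closed manifold, as in Shi-type or Bando-type local estimates. This gives $C^k$ bounds for $\tilde g$ on $[s-3/2,s]$ for every $k$, with constants depending only on $k$ and the background. The equation is autonomous, so the constants are translation invariant in $s$.

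\textbf{Step 3: return to $h$, bootstrapping regularity.} This is the main obstacle: showing that the gauge diffeomorphisms are as smooth as $\tilde g$, and that $Y(h)$ and $f_g$ gain regularity together with $h$. We proceed inductively on $k$.
\begin{itemize}
\item Given a $C^{k,1/2}$ bound on $h$, the elliptic equation for $f_g$ gives $f_g\in C^{k+1,1/2}$, since it is the Euler--Lagrange equation of $\mathcal W$.
\item The slice equation \eqref{eq:slice-equation}, combined with \eqref{eq:P-inverse}, gives $Y(h)\in C^{k-1,1/2}$. The top-order term in $\Phi$ loses two derivatives.
\item We rewrite SMRF as $h_s=Lh+\mathcal U(h)$. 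The terms $\Lie_Y$ and $\nabla^2 f$ enter only through $W$, which is a lower-order transport term once Step 2 has given smoothness of $\tilde g$ and of the harmonic-map gauge (which solves a parabolic system with smooth coefficients).
\end{itemize}
The cleanest route is to express $h=\Theta_r^*\tilde g-\bg$. Here $\Theta_r$ solves the ODE generated by $W$ composed with the DeTurck field, and $W$ has the same regularity as $g$. The induction therefore closes with a gain of one derivative per step on the slightly shrunken interval $[s-1-2^{-k},s]$.

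\textbf{Step 4: time derivative and smooth convergence.} The bound on $\|h_s\|_{k-2}$ follows immediately from $h_s=\mathcal V(h)$ and the analyticity of $\mathcal V$ from $C^{k}\cap\Sli$ to $C^{k-2}\cap\Sli$. For the final claim, if $\|h\|_3\to0$ on a tail, we interpolate between $\|h\|_3\to0$ and the uniform $C_{k+1}$ bounds. This gives $\|h\|_k\to0$ for every $k$, which is smooth convergence.
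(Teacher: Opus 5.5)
Steps 1, 2 and 4 are reasonable, but Step~3 does not close, and that is where the difficulty of the lemma lies. Your claim that $W$ has the same regularity as $g$ is false, and your own bullets show it. If $h\in C^{k,1/2}$, the Euler--Lagrange equation gives $f_g\in C^{k,1/2}$, so $\nabla_gf_g\in C^{k-1,1/2}$, and the slice equation gives $Y(h)\in C^{k-1,1/2}$. Integrating a $C^{k-1,1/2}$ field gives $C^{k-1,1/2}$ diffeomorphisms, so $\Theta^*\tilde g$ lies only in $C^{k-2,1/2}$. The induction therefore loses two derivatives per step instead of gaining one. Nor is $\Lie_Wg$ a lower-order transport term: $\nabla W$ contains $\nabla^2f_g$ and $\bar\nabla Y$, which are second order in $h$. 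More fundamentally, no bookkeeping through the gauge can gain regularity. Since $g=\Theta^*\tilde g$, the map $\Theta$ is an isometry from $(M,g)$ to the smooth $(M,\tilde g)$. It therefore has exactly one more derivative than $g$, and pulling back returns the regularity you started with. The harmonic-map formulation is no better, because its coefficients are $g$, $\Gamma(g)$ and $W$. Naturality, $\mathrm{d}F(\nabla_gf_g)=(\nabla_{\tilde g}f_{\tilde g})\circ F$, removes the potential term, but $Y(h)$ is tied to the fixed slice and is not natural. So your claim that the gauge solves a system with smooth coefficients is circular.

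The missing idea is to replace transport of regularity by a uniqueness or rigidity statement. The paper runs the construction forward. It starts the Ricci--DeTurck flow relative to $\bg$, shifted by $-\bar\nabla\bfv$ so that $\bg$ is stationary, at $\bg+h(r_0)$; this flow has interior bounds of every order. At each time it puts the solution into the slice by the static elliptic gauge \eqref{eq:static-slice-gauge}, which costs no derivatives. It then removes the Killing velocity by an ODE in the compact group $G$, which acts by isometries. The result is an SMRF $h_1$ with initial value $h(r_0)$ and all the higher bounds. The difference $w=h_1-h(r_0+\cdot)$ solves $(\partial_\tau-L)w=\mathcal U(h_1)-\mathcal U(h(r_0+\cdot))$ with $w(0)=0$. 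The zero-initial-data Schauder estimate in $C^{3,\alpha}$, $\alpha<1/2$, together with \eqref{eq:spatial-nonlinear}, gives $\sup\|w\|_{C^{3,\alpha}}\le C\delta\sup\|w\|_{C^{3,\alpha}}$, so $w=0$; restarting at $r-\tau_0$ gives \eqref{eq:interior-smoothing}.

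Your backward construction can be repaired in the same spirit. Put $\tilde g(s)$ into the slice by the static elliptic gauge, and apply \cref{lem:strong-affine-slice} to the diffeomorphism relating the result to $\bg+h(s)$. That diffeomorphism must lie in $G$, so every norm of $h(s)$ equals that of the smooth slice representative, with no bootstrap at all. You would still need to justify existence of your harmonic-map heat flow for coefficients of such limited time regularity. You would also need $C^{3,1/2}$-smallness of $\tilde g(s)-\bg$.
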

\begin{proof}
We use the classical Ricci--DeTurck equation and the local elliptic
slice construction in Section~\ref{sec:horizontal}.
For initial data $\tilde g(0)=\bg+h(r_0)$, consider
\[
 \partial_\tau\tilde g
 =\tilde g-2\Ric_{\tilde g}
  +\Lie_{W_{\bg}(\tilde g)-\bar\nabla\bfv}\tilde g,
 \qquad
 W_{\bg}(g)^a=g^{ij}\bigl(\Gamma(g)^a_{ij}-\Gamma(\bg)^a_{ij}\bigr).
\]
This is a strictly parabolic quasilinear system with stationary
solution $\bg$. Classical local existence and interior estimates
\cite{DeTurck,Top} give a fixed $0<\tau_0\le1$ such that, for
sufficiently small $\delta$, the solution remains in the slice
neighborhood on $[0,\tau_0]$ and
\[
 \sup_{\tau\in[\tau_0/2,\tau_0]}
 \|\tilde g(\tau)-\bg\|_k\le C_k,\qquad k\ge3.
\]
Here the lifespan and constants depend only on the background and
the fixed low-norm bound.

Put $\tilde g(\tau)$ into the affine weighted slice using the
local construction in Section~\ref{sec:horizontal}. The displayed
equation differs from MRF by a Lie derivative. Hence the velocity
of its slice representative differs from $\mathcal V$ by
$\Lie_{K_\tau}$, with $K_\tau\in\kk$, by
\eqref{eq:kernel-Ah}. Removing this component by an ODE in the
compact group $G$, starting at the identity, gives an SMRF with
initial value $h(r_0)$. The elliptic slice estimates and the
$G$-invariance of the background norms preserve the displayed
interior bounds. To identify this solution $h_1$ with
$h_2(\tau)=h(r_0+\tau)$, fix $0<\alpha<1/2$ and set
$w=h_1-h_2$ on $J=[0,\tau_0]$. Both solutions are continuous
in $C^{3,\alpha}$ at the initial time, and
\[
 (\partial_\tau-L)w=F,\qquad w(0)=0,\qquad
 F=\mathcal U(h_1)-\mathcal U(h_2).
\]
The zero-initial-data spatial parabolic estimate and
\eqref{eq:spatial-nonlinear} at exponent $\alpha$ give
\[
 A:=\sup_{\tau\in J}\|w(\tau)\|_{C^{3,\alpha}}
 \le C\sup_{\tau\in J}\|F(\tau)\|_{C^{1,\alpha}}
 \le C\delta A.
\]
Choosing $\delta$ so that $C\delta<1$ gives $w=0$.

For each $r\in[s-1,s]$, restart at $r_0=r-\tau_0$. The same
interior estimates give $\|h(r)\|_k\le C_k$, independently of $r$
and $s$. The equation and the elliptic estimates for the potential
and slice field give the bound for $h_s$ in
\eqref{eq:interior-smoothing}. Restarting before any interior time
also proves smoothness there. Finally, interpolation between
$\|h\|_3\to0$ and these uniform higher-order bounds gives
convergence in every fixed spatial norm. The equation then gives
convergence of the time derivatives as well.
\end{proof}
\subsection{Horizontal gauges and geometric quotients}\label{sec:horizontal}\index{G isometries@$G$}
Let $G=\Isom(M,\bg,\bfv)$. Every isometry of $\bg$ preserves its normalized canonical potential, so $G=\Isom(M,\bg)$ and its Lie algebra is $\kk$. The affine slice and the fixed weighted pairing are $G$-invariant.

We first construct local representatives in the affine weighted slice,
as in Li--Zhang \cite[Theorem 2.4]{LZrigid}. Use the local diffeomorphism chart
\[
 \vartheta_X(x)=\exp_x^{\bg}X(x),\qquad
 X\in C^{2,1/2}(TM)\cap\kk^\perp,
\]
and solve
\begin{equation}
 \mathcal F(X,u):=\mathrm{div}_{\bfv}
       \bigl(\vartheta_X^*(\bg+u)-\bg\bigr)=0.
 \label{eq:static-slice-gauge}
\end{equation}
For $u\in C^{3,1/2}(S^2T^*M)$, this is a $C^1$ map with values in
$C^{1/2}(T^*M)\cap\kk^\perp$. Indeed, in coordinates the pullback has
components
\[
 ((\bg+u)_{ab}\circ\vartheta_X)
       \partial_i\vartheta_X^a\partial_j\vartheta_X^b.
\]
After taking the divergence, the expression contains at most two spatial
derivatives of $X$ and one of $u$. Its variation in $X$ contains at most
two derivatives of $u$. The extra derivative in $u\in C^{3,1/2}$ makes
these coefficient variations continuous in $C^{1/2}$, so the H\"older
product and composition estimates give the asserted $C^1$ dependence.
At the origin,
\[
 \mathrm{D}_X\mathcal F(0,0)=-2P:
 C^{2,1/2}(TM)\cap\kk^\perp
 \xrightarrow{\ \cong\ }C^{1/2}(T^*M)\cap\kk^\perp.
\]
Thus \eqref{eq:P-inverse} and the implicit-function theorem give a unique
small correction $X(u)$, with $\|X(u)\|_2\le C\|u\|_3$.
If $u\in\Sli$, then $X(u)=0$.

At a solution, \eqref{eq:static-slice-gauge} is a quasilinear elliptic
equation for $X$, with principal coefficients close to those of $-2P$.
Schauder estimates first give $X\in C^{3,1/2}$ and then
$X\in C^{4,1/2}$, with $\|X(u)\|_4\le C\|u\|_3$.
The slice representative therefore retains $C^{3,1/2}$ regularity.
Applying the same construction in higher spatial regularity and
differentiating its elliptic equation shows that smooth families of
smooth metrics have smoothly varying smooth representatives.

We now state the strong uniqueness property of the affine weighted slice.
\begin{lemma}\label[lemma]{lem:strong-affine-slice}
There exists $\epsilon>0$ such that, whenever $h_1,h_2\in C^{3,1/2}(S^2T^*M)\cap\Sli$ and $\|h_i\|_{3}<\epsilon$ for $i=1,2$, we have
\begin{equation}
 a^*(\bg+h_1)=\bg+h_2,\quad a\in\Diff(M)
 \quad\Longrightarrow\quad a\in G.
 \label{eq:strong-affine-slice}
\end{equation}
\end{lemma}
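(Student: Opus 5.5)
We first show that $a$ is $C^{0}$-close to an isometry, then use the slice condition together with the implicit-function uniqueness of $X(u)$ to force $a\in G$.

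\emph{Step 1: closeness to $G$.} Since $a$ is an isometry from $(M,\bg+h_2)$ to $(M,\bg+h_1)$, it is bi-Lipschitz for $\bg$ with constant $1+O(\epsilon)$. Suppose the claim fails for a sequence $\epsilon_j\to0$ with diffeomorphisms $a_j$. Arzel\`a--Ascoli gives a subsequence $a_j\to a_\infty$ uniformly, and the limit is a distance isometry of $\bg$. By Myers--Steenrod, $a_\infty$ is a smooth isometry, so $a_\infty\in G$. Because $G$ preserves the slice, the weighted pairing and the norms, we may replace $a_j$ by $a_\infty^{-1}a_j$ (and $h_1$ by $(a_\infty^{-1})^*h_1$). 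This reduces to the case $a_j\to\Id$ in $C^0$.

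\emph{Step 2: higher regularity near the identity.} The diffeomorphism $a_j$ is harmonic between $(M,\bg+h_2)$ and $(M,\bg+h_1)$, being an isometry. In harmonic coordinates for the $C^{3,1/2}$ metrics, elliptic regularity upgrades $C^0$-closeness to closeness in $C^{3,1/2}$. Hence for large $j$ we can write $a_j=\vartheta_{Z_j}$ with $\|Z_j\|_{2}\to0$. We then split $Z_j$ into its $\kk$-component and its $\kk^\perp$-component. More precisely, the map
\[
 \kk^\perp\times G\ni(X,k)\longmapsto \vartheta_X\circ k
\]
is a local diffeomorphism near $(0,\Id)$, by the inverse function theorem with differential $(X,K)\mapsto X+K$. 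So after composing with a nearby element $k_j\in G$, we get $a_j=\vartheta_{X_j}\circ k_j$ with $X_j\in\kk^\perp$ small in $C^{2,1/2}$.

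\emph{Step 3: slice uniqueness.} Set $u=k_j^{-1*}h_1\in\Sli$, which is small. Then $\vartheta_{X_j}^*(\bg+u)=\bg+h_2'$, where $h_2'$ is $h_2$ transported by the fixed element of $G$, and $h_2'\in\Sli$. Thus $\mathcal F(X_j,u)=0$. The zero field also solves $\mathcal F(0,u)=0$, because $u\in\Sli$. By the uniqueness part of the implicit-function theorem for \eqref{eq:static-slice-gauge}, $X_j=0$, and therefore $a_j=k_j\in G$. This contradicts the assumption, and the lemma follows for small $\epsilon$.

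\emph{Main obstacle.} The delicate point is that the implicit-function uniqueness only holds in a fixed small neighborhood of $(0,0)$ in $C^{2,1/2}\times C^{3,1/2}$. We therefore need quantitative $C^{2,1/2}$ smallness of $X_j$, not just $C^0$ convergence. That is the purpose of Step 2's regularity: the isometry equation $a^*(\bg+h_1)=\bg+h_2$ with $C^{3,1/2}$ metrics makes $a$ of class $C^{4,1/2}$, with estimates controlled by $\epsilon$ and by $C^0$-distance to the identity. Compactness of $G$ is what allows the passage from an arbitrary $a$ to one near $\Id$.
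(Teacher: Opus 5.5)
Your proposal is correct and follows the paper's proof. A compactness argument with regularity of isometries puts $a$ near $G$; you use Myers--Steenrod and harmonic coordinates where the paper uses the isometry equation and the connection transformation identity. A $G$-factor is then split off so that the remaining field lies in $\kk^\perp$, and local uniqueness for \eqref{eq:static-slice-gauge} forces that field to vanish.

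One step needs repair: the map $(X,k)\mapsto\vartheta_X\circ k$ is not $C^1$ into $C^{2,1/2}$, because its $k$-derivative involves $\nabla X$. So the inverse function theorem does not apply as you state it. Split instead as $a=k\circ\vartheta_X$, as the paper does. You can get this either from the smooth map $(X,k)\mapsto k\circ\vartheta_X$, or from a finite-dimensional implicit-function argument for the Killing projection. The two splittings are related by $\vartheta_X\circ k=k\circ\vartheta_{k^*X}$. The slice equation then reads $\vartheta_X^*(\bg+k^*h_1)=\bg+h_2$, and your Step~3 applies with $u=k^*h_1$.
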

\begin{proof}
First put every such $a$ in a prescribed $C^{2,1/2}$ neighborhood of $G$. Otherwise, take $h_{1,j},h_{2,j}\to0$ in $C^{3,1/2}$ and isometries $a_j$ outside that neighborhood. The isometry equation bounds their first derivatives, and the connection transformation identity gives uniform $C^{4,1/2}$ bounds for $a_j$ and their inverses. A subsequence therefore converges in $C^{2,1/2}$ to a background isometry, a contradiction. This is the usual properness argument for the diffeomorphism action \cite{Ebin}. Near $G$, a finite-dimensional implicit-function argument for the Killing projection gives $a=b\circ\vartheta_X$, with $b\in G$ and $X\perp\kk$ small. Then
\[
 \vartheta_X^*(\bg+b^*h_1)=\bg+h_2.
\]
Since both $b^*h_1$ and $h_2$ lie in $\Sli$, $X$ and zero both solve \eqref{eq:static-slice-gauge} with $u=b^*h_1$. Local uniqueness gives $X=0$, hence $a=b\in G$.
\end{proof}

For $h\in\Sli$, set $A_hX=\mathrm{div}_{\bfv}\Lie_X(\bg+h)$. Its restriction from $C^{2,1/2}\cap\kk^\perp$ to $C^{1/2}\cap\kk^\perp$ is invertible for small $h$. Moreover,
\begin{equation}
 \ker A_h=\kk.
 \label{eq:kernel-Ah}
\end{equation}
A background Killing field preserves $\Sli$, so $\mathrm{div}_{\bfv}\Lie_Kh=0$ and $A_hK=0$. Splitting a vector field into $\kk\oplus\kk^\perp$ proves the reverse inclusion.

For a convergent ancient MRF, the local construction gives a smooth family $\chi_s$ on an ancient tail with
\begin{equation}
 \chi_s^*g(s)=\bg+\hat h(s),\qquad
 \hat h(s)\in\Sli,\qquad \hat h(s)\longrightarrow0.
 \label{eq:pointwise-slice}
\end{equation}
These are its slice representatives at each time, before any velocity condition is imposed. If $X_s=\mathrm{d}\chi_s^{-1}(\partial_s\chi_s)$ is the pulled-back velocity, then
\begin{equation}
 \hat h_s=2\Phi(\bg+\hat h)+\Lie_{X_s}(\bg+\hat h),\qquad
 X_s=Y(\hat h)+K_s,\quad K_s\in\kk.
 \label{eq:slice-velocity}
\end{equation}
The second identity follows from \eqref{eq:slice-equation} and \eqref{eq:kernel-Ah}.

A \emph{horizontal gauge} is a family $\psi_s$ satisfying
\begin{equation}
 \psi_s^*g(s)=\bg+h(s),\qquad h(s)\in\Sli,\qquad
 \mathrm{d}\psi_s^{-1}(\partial_s\psi_s)=Y(h(s)).
 \label{eq:horizontal-gauge}\index{Horizontal gauge}
\end{equation}
To construct it, solve $\partial_sa_s=-K_s\circ a_s$ in the compact Lie group $G$, with $a_{s_*}=\Id$ at a finite base time. Set $\psi_s=\chi_s\circ a_s$ and $h=a_s^*\hat h$. Uniqueness in \cref{prop:slice-field} gives
\begin{equation}
 Y(a^*h)=a^*Y(h),\qquad a\in G.
 \label{eq:Y-equiv}
\end{equation}
The composition rule for velocities now gives \eqref{eq:horizontal-gauge} and $h_s=\mathcal V(h)$. Since $G$ is compact, $h(s)\to0$ in every fixed spatial regularity.

\begin{theorem}[Geometric correspondence]\label[theorem]{thm:correspondence}\index{Ancient germ}
The gauge construction in \eqref{eq:horizontal-gauge} induces a bijection between ancient MRF germs converging smoothly to a metric isometric to $\bg$, modulo time-independent diffeomorphisms, and ancient SMRF germs converging smoothly to zero, modulo $G$.
\end{theorem}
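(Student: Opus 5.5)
The plan is to check that the map $[g]\mapsto[h]$ given by the horizontal gauge \eqref{eq:horizontal-gauge} is well defined on the quotients, to build an explicit inverse, and to show that the two maps undo each other. Everything is done on germs, that is, after restricting to a sufficiently negative tail.

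\textbf{Well-definedness.} Let $g$ be an ancient MRF converging smoothly to $c^*\bg$ for some $c\in\Diff(M)$.
\begin{itemize}
\item Replacing $g$ by $(c^{-1})^*g$ (a time-independent pullback, which commutes with MRF by diffeomorphism invariance of $f_g$ and $\Phi$), we may assume $g(s)\to\bg$. By \cref{lem:interior-smoothing}-type regularity and the local slice construction, the family $\chi_s$ of \eqref{eq:pointwise-slice} then exists on a tail.
\item The construction involves three choices: the base time $s_*$ for $a_s$, the representative within the diffeomorphism class, and the normalizing $c$. All of them change $h$ only by a constant element of $G$.
\item Suppose two horizontal gauges $\psi_s,\psi'_s$ of MRFs related by a fixed diffeomorphism $\phi$ give $h,h'$. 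Then $\psi_s'^{-1}\circ\phi^{-1}\circ\psi_s$ pulls $\bg+h'(s)$ back to $\bg+h(s)$, with both perturbations small in $C^{3,1/2}\cap\Sli$. By \cref{lem:strong-affine-slice} it equals some $b_s\in G$.
\item Differentiating the relation $\phi\circ\psi'_s\circ b_s=\psi_s$ and using that both velocities equal $Y$, together with the equivariance \eqref{eq:Y-equiv}, gives $\mathrm d b_s^{-1}(\partial_s b_s)=Y(h)-b_s^*Y(h')=0$. Hence $b_s$ is constant and $h=b^*h'$.
\end{itemize}

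\textbf{Inverse map.} Given an SMRF $h\to0$, solve $\partial_s\psi_s^{-1}=-(\ldots)$; equivalently, integrate the time-dependent vector field $Y(h(s))$ backward from a base time to obtain $\psi_s$ with velocity $Y(h)$. Then $g(s)=(\psi_s^{-1})^*(\bg+h(s))$ solves MRF, since
\[
\partial_s(\bg+h)=2\Phi(\bg+h)+\Lie_{Y}(\bg+h)
\]
and $\Phi$ is natural. Changing the base time or replacing $h$ by $a^*h$ with $a\in G$ changes $g$ by a time-independent diffeomorphism, so this map descends to the quotients.

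\textbf{Main obstacle: convergence of the inverse gauge.} We must show that $g(s)$ converges smoothly to a metric isometric to $\bg$, which requires $\psi_s$ to converge as $s\to-\infty$. The vector field satisfies $\|Y(h)\|_2\le C\|h\|_3^2$, so it suffices to know $\int_{-\infty}\|h(s)\|_3^2\,ds<\infty$.
\begin{itemize}
\item \textbf{First attempt: \L{}ojasiewicz route.} Use \eqref{eq:gauge-weak}, $\|Y\|_{H^1}\le C\|\Phi(\bg+h)\|_{L^2}$, together with the \L{}ojasiewicz--Simon inequality of Sun--Wang for $\boldsymbol\mu$ along the gradient-like flow. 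The standard estimate $-\frac{d}{ds}(\boldsymbol\mu_\infty-\boldsymbol\mu)^{\theta}\ge c\|\Phi\|_{L^2}$ then gives $\int\|\Phi\|_{L^2}\,ds<\infty$. Interpolating with the uniform $C^k$ bounds of \cref{lem:interior-smoothing} upgrades this to finite length of $Y$ in $C^{2}$, up to a fractional loss that is absorbed by a slightly smaller H\"older exponent.
\item \textbf{Alternative: dichotomy route.} The dichotomy (to be proved later, so only if allowed) gives $|h|\lesssim|s|^{-1}$ at worst in the slow case, and then $\int\|h\|^2\,ds<\infty$ directly.
\end{itemize}
With finite length, $\psi_s\to\psi_{-\infty}$ in $C^{2}$, and bootstrapping with higher-order bounds gives smooth convergence. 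Hence $g(s)\to(\psi_{-\infty}^{-1})^*\bg$.

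\textbf{Mutual inverses.} Applying the horizontal gauge to this $g$ returns $\psi_s$ itself, by the uniqueness established in the well-definedness step. Conversely, for an MRF, reconstructing from its $h$ recovers $g$ up to the fixed diffeomorphism $\psi_{s_*}$. This completes the bijection.
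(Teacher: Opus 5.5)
Your well-definedness argument (comparing two horizontal gauges through \cref{lem:strong-affine-slice} and the equivariance \eqref{eq:Y-equiv}), the reconstruction by integrating $Y(h)$, and the mutual-inverse check are essentially the paper's. The gap is in the convergence of the inverse gauge, which is the one substantive analytic step.

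\textbf{The \L{}ojasiewicz step.} As you state it, this route yields only $\int_{-\infty}^{s_*}v\,\mathrm{d}s<\infty$ for $v=\|\Phi(\bg+h)\|_{L^2}$. But \eqref{eq:gauge-weak} controls $Y$ only in $H^1$. Interpolating against the uniform higher bounds gives $\|Y(h)\|_{C^j}\le C_jv^{1-\epsilon}$ with some $\epsilon>0$, and any such interpolation from an $H^1$ bound to a $C^j$ bound puts an exponent strictly less than one on $v$. Since $v\to0$, we have $v^{1-\epsilon}\ge v$, so $\int v\,\mathrm{d}s<\infty$ does not give $\int\|Y\|_{C^j}\,\mathrm{d}s<\infty$.

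The loss is in the power of $v$, that is, in time integrability. Lowering the spatial H\"older exponent cannot absorb it.

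The missing idea is to use the room in the \L{}ojasiewicz exponent. Put $e=\boldsymbol{\mu}(\bg+h,1)-\boldsymbol{\mu}(\bg,1)\ge0$, with $e_s\ge cv^2$ and $e^\theta\le Cv$ for some $\theta<1$. Choose $\epsilon$ with $(1+\epsilon)\theta<1$. Then
\[
 v^{1-\epsilon}=v^2v^{-(1+\epsilon)}\le Ce^{-(1+\epsilon)\theta}e_s,
\]
and changing variables to $e$ gives \eqref{eq:inverse-gauge-length}. Fix this $\epsilon$ first, and only then choose, for each $j$, a high enough interpolation order to get $\|Y\|_{C^j}\le C_jv^{1-\epsilon}$.

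Two smaller corrections. In the ancient setting $e$ increases from zero as $s$ increases, so you should differentiate a power of $e$, not of $\mu_\infty-\mu$; your formula carries the sign convention of forward convergence. Also, the degenerate case $e(s_*)=0$ must be handled separately: monotonicity forces $\Phi=0$, and hence $Y=0$, on the whole tail.

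\textbf{The dichotomy route.} Your alternative does not work. \cref{prop:dichotomy} gives the lower bound $|b|\ge c/|s|$, not an upper bound. Slow solutions can decay much more slowly than $|s|^{-1}$; for example, the solution of \cref{prop:cp-nonprincipal} decays like $|s|^{-1/2}$. In that case $\int\|h\|_3^2\,\mathrm{d}s$ diverges, so the quadratic bound $\|Y(h)\|_2\le C\|h\|_3^2$ cannot give finite length.

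Moreover, the only polynomial upper bound available in the paper, \eqref{eq:coarse-rates}, comes from Chan--Ma--Zhang applied to the Ricci flow. Using it here would be circular, since that Ricci flow is exactly what the inverse construction is trying to produce.
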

\begin{proof}
The construction above gives the map. We first check that its value is independent of the horizontal gauge. If $\psi_s$ and $\widetilde\psi_s$ are horizontal gauges for the same MRF, put $a_s=\psi_s^{-1}\circ\widetilde\psi_s$. Their slice representatives satisfy
\[
 a_s^*(\bg+h(s))=\bg+\widetilde h(s).
\]
On a sufficiently remote tail, \cref{lem:strong-affine-slice} applies to these representatives and gives $a_s\in G$. The composition rule for velocities reads
\[
 Y(\widetilde h)=a_s^*Y(h)+\mathrm{d}a_s^{-1}(\partial_sa_s).
\]
Since $\widetilde h=a_s^*h$, equivariance \eqref{eq:Y-equiv} gives $\partial_sa_s=0$. Thus the two SMRFs differ by one time-independent element of $G$. A time-independent change of the MRF representative is absorbed by left composition of its gauge.

For the inverse, let $h_s=\mathcal V(h)$ with $h(s)\to0$ smoothly and solve
\[
 \partial_s\phi_s=-Y(h(s))\circ\phi_s,\qquad \phi_{s_*}=\Id.
\]
Then $g(s)=\phi_s^*(\bg+h(s))$ satisfies MRF. We verify its convergence. Set
\[
 e(s)=\boldsymbol{\mu}(\bg+h(s),1)-\boldsymbol{\mu}(\bg,1),\qquad
 v(s)=\|\Phi(\bg+h(s))\|_{L^2}.
\]
Entropy monotonicity and the \L{}ojasiewicz inequality \cite{SW,CMZ} give $e\ge0$, $e\to0$, and
\[
 c\,v(s)^2\le e_s(s)\le C\,v(s)^2,
 \qquad e(s)^{\theta}\le C\,v(s),\qquad \frac12\le\theta<1.
\]
The Lie derivative in SMRF contributes zero to the entropy derivative by diffeomorphism invariance. If $e\equiv0$, then $\Phi=Y=0$. Otherwise choose $\epsilon>0$ with $(1+\epsilon)\theta<1$. Changing variables from $s$ to $e$ yields
\begin{equation}
 \int_{-\infty}^{s_*}v(s)^{1-\epsilon}\dd s
 \le C\int_0^{e(s_*)}z^{-(1+\epsilon)\theta}\dd z<\infty.
 \label{eq:inverse-gauge-length}
\end{equation}
The weak estimate \eqref{eq:gauge-weak}, interpolation, and the uniform higher spatial bounds supplied by $h\to0$ smoothly imply, for each fixed $j$,
\[
 \|Y(h(s))\|_{C^j}\le C_j v(s)^{1-\epsilon}.
\]
Thus $\phi_s$ converges smoothly to a diffeomorphism $\phi_{-\infty}$, and $g(s)\to\phi_{-\infty}^*\bg$. A time-independent identification gives the required limit.

The inverse construction respects $G$ by \eqref{eq:Y-equiv}, and changing its initial diffeomorphism changes $g$ by a time-independent pullback. Starting with an MRF and undoing its horizontal gauge returns its class: the inverse gauge solves the displayed ODE for $\phi_s$. Starting with an SMRF, the inverse of $\phi_s$ is a horizontal gauge for the reconstructed MRF. Hence the two maps are inverse. Finally, any time-independent diffeomorphism relating two SMRFs converging to zero preserves $\bg$ by passage to the limit and therefore belongs to $G$.
\end{proof}

Two closed ancient Ricci flows that agree after a time-independent diffeomorphism on an ancient tail agree on their entire common interval: choose a time on the tail and apply Ricci-flow uniqueness \cite{Hamilton}. Their maximal smooth continuations therefore coincide.

Conversely, an SMRF tail with $h\to0$ reconstructs a Ricci-flow ancient germ by the two finite-time diffeomorphism integrations above. To identify its tangent flow, shift a sequence of modified times to zero and normalize the reconstruction diffeomorphisms at those times. On each bounded shifted interval the coefficients converge smoothly to those of the canonical shrinker, and continuous dependence of the diffeomorphism ODEs gives its canonical shrinking flow as the limit.

The reconstructed flow is noncollapsed: the scale-invariant $\boldsymbol{\nu}$-functional is uniformly bounded below on metrics near the compact shrinker by their positive scalar curvature and uniform Sobolev bounds \cite[Appendix B]{CMZ}, and Perelman's monotonicity propagates this bound forward from the ancient tail.

The constructions and the induced spatial quotients are summarized by
\begin{equation}
 \mathrm{RF}
 \xleftrightarrow{\ \text{normalization and potential gauge}\ }
 \mathrm{MRF}
 \xleftrightarrow{\ \text{horizontal gauge}\ }
 \mathrm{SMRF},
 \label{eq:flow-correspondence-diagram}
\end{equation}
\begin{equation}
 \mathrm{RF}/\Diff(M)
 \ \cong\ \mathrm{MRF}/\Diff(M)
 \ \cong\ \mathrm{SMRF}/G.
 \label{eq:quotient-correspondence-diagram}
\end{equation}
Here RF consists of Ricci-flow germs on intervals $(-\infty,t_*]$, $t_*<0$, with the prescribed tangent shrinker; MRF and SMRF consist of the convergent germs in \cref{thm:correspondence}. The first diagram uses finite-time choices of gauges; the second is independent of these choices. The Ricci-flow reference time is zero, and $\Diff(M)$ acts by time-independent diffeomorphisms.

\section{The exponential--neutral dichotomy}\label{sec:dichotomy}\index{b@$b$}\index{E positive negative@$E_+,E_-$}\index{K neutral@$K$}\index{L operator@$L$}\index{lambda gaps@$\lambda_+,\lambda_-$}\index{Pi spectral@$\Pi_K,\Pi_+,\Pi_-$}
Throughout this section $h_s=\mathcal V(h)$ is an ancient SMRF with $h\to0$ smoothly as $s\searrow-\infty$. The self-adjoint elliptic operator $L|_{\Sli}$ has a discrete spectrum. Let $E_+$ be the sum of its positive eigenspaces, $K=\ker(L|_{\Sli})$, and $E_-$ its negative spectral subspace. Define
\[
 \lambda_+=\min\bigl(\Spec(L|_{\Sli})\cap(0,\infty)\bigr),\qquad
 -\lambda_-=\max\bigl(\Spec(L|_{\Sli})\cap(-\infty,0)\bigr).
\]
Thus $\lambda_+>0$ is the smallest positive eigenvalue, and $-\lambda_-<0$ is the largest negative eigenvalue. Let $\Pi_+,\Pi_K,\Pi_-$ be the corresponding orthogonal projections for the fixed weighted $L^2$ pairing, and write
\begin{equation}
 h=h_++b+h_-,\qquad
 h_\pm=\Pi_\pm h,\qquad b=\Pi_Kh,\qquad
 \Comp=E_+\oplus E_-.
\end{equation}
The spaces $E_+$ and $K$ are finite-dimensional and consist of smooth tensors. The identity $L\Ric_{\bg}=\Ric_{\bg}$ shows that $E_+$ is nonzero.

\subsection{Spectral comparison}
Set
\[
 U_+=\|h_+\|_{L^2},\qquad U_0=|b|,\qquad U_-=\|h_-\|_{L^2}.
\]
The following estimates hold for a small smooth SMRF on any time interval.
We will also use them for forward convergence in Section~\ref{sec:forward}.

\begin{lemma}\label[lemma]{lem:projected-inequalities}
There are $\delta_0>0$ and $C$ such that
\begin{equation}
 \|\mathcal U(h)\|_{L^2}\le C\|h\|_3\|h\|_{L^2},
 \qquad h\in\Sli,\quad \|h\|_3\le\delta_0.
 \label{eq:L2-nonlinear}
\end{equation}
Along a smooth solution $h_s=\mathcal V(h)$ in this neighborhood, put
$\epsilon(s)=C\|h(s)\|_3$. Then
\begin{align}
 \partial_sU_+
 &\ge\lambda_+U_+-\epsilon(s)(U_++U_0+U_-),
 \label{eq:ODIplus}\\*
 |\partial_sU_0|
 &\le |b_s|\le\epsilon(s)(U_++U_0+U_-),
 \label{eq:ODIzero}\\*
 \partial_sU_-
 &\le-\lambda_-U_-+\epsilon(s)(U_++U_0+U_-),
 \label{eq:ODIminus}\\
 |\partial_sU_+|
 &\le C U_++\epsilon(s)(U_++U_0+U_-).
 \label{eq:ODIplus-upper}
\end{align}
The norms $U_+,U_0,U_-$ are locally Lipschitz. The inequalities hold almost
everywhere, with the corresponding one-sided interpretation at zeros.
\end{lemma}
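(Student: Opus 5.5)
The plan is to first prove the nonlinear $L^2$ bound \eqref{eq:L2-nonlinear}, and then to project the equation $h_s=Lh+\mathcal U(h)$ onto the three spectral subspaces. For \eqref{eq:L2-nonlinear}, write $\mathcal U(h)=\int_0^1\mathrm{D}\mathcal U(\sigma h)[h]\dd\sigma$. The second bound in \eqref{eq:spatial-nonlinear} only controls $\|\mathcal U(h)\|_1$ by $\|h\|_3^2$. What we need is one factor in $L^2$, so the plan is to use the structure of $\mathcal U$ directly.

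The term $2\Phi(\bg+h)-Lh$ is a quasilinear expression in $h,\bar\nabla h,\bar\nabla^2h$, together with the nonlocal potential $f_{\bg+h}$. Taylor expansion gives pointwise contributions of the form $\bar\nabla^{\le2}h * \bar\nabla^{\le2}h$ with smooth coefficients. Its $L^2$ norm is therefore bounded by $C\|h\|_{C^2}\|h\|_{H^2}$.

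This is still not $\|h\|_{L^2}$, which is the main obstacle. I would handle it by rewriting the quadratic terms in divergence form where possible. A cleaner route is to argue via the linear operator $A_h(u):=\int_0^1\mathrm{D}\mathcal U(\sigma h)[u]\,\dd\sigma$. This is a second-order differential operator in $u$, plus nonlocal pieces coming from $\mathrm{D}f$ and $\mathrm{D}Y$, with coefficients that are $O(\|h\|_3)$ small in $C^1$. The estimate would then be applied along the solution, where elliptic regularity for $L$ trades derivatives: $\|h\|_{H^2}\le C(\|Lh\|_{L^2}+\|h\|_{L^2})$.

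If a pure $L^2$ bound fails, the fallback is to exploit the structure of the ODIs. The projections onto the finite-dimensional spaces $E_+$ and $K$ only require pairing against smooth eigentensors. After integrating by parts, this gives $|\ip{\mathcal U(h)}{e}|\le C\|h\|_3\|h\|_{L^2}\|e\|_{C^2}$. For $E_-$, one pairs with $h_-$ and integrates by parts once, absorbing the derivative term into the coercive part $-\ip{Lh_-}{h_-}\ge c\|h_-\|_{H^1}^2$. The nonlocal pieces ($f_{\bg+h}$ via the elliptic equation in \cref{prop:slice-field}, and $Y(h)$ via the weak estimate \eqref{eq:gauge-weak}) are bounded by elliptic $L^2$/$H^1$ estimates with one factor of $h$ measured in $C^3$.

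For the ODIs, use that $L$ is self-adjoint on $\Sli$ and commutes with the projections. Since $h_s\in\Sli$, we have $\partial_s h_+=Lh_++\Pi_+\mathcal U(h)$. Then
\[
\tfrac12\partial_s U_+^2=\ip{Lh_+}{h_+}+\ip{\mathcal U(h)}{h_+}\ge\lambda_+U_+^2-\|\mathcal U(h)\|_{L^2}U_+ .
\]
Dividing by $U_+$ where it is positive, and using $\|h\|_{L^2}\le U_++U_0+U_-$, gives \eqref{eq:ODIplus}. Because $E_+$ is finite-dimensional, $\ip{Lh_+}{h_+}\le\lambda_{\max}U_+^2$, which gives \eqref{eq:ODIplus-upper}. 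On $K$ the linear part vanishes, so $b_s=\Pi_K\mathcal U(h)$, yielding \eqref{eq:ODIzero}. On $E_-$ we have $\ip{Lh_-}{h_-}\le-\lambda_-U_-^2$, giving \eqref{eq:ODIminus}; the coercive $H^1$ term is available there if the fallback pairing is needed.

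Finally, Lipschitz regularity follows because $s\mapsto h(s)\in L^2$ is $C^1$, so the norms of the projections are Lipschitz. At zeros, one uses the one-sided Dini derivative of $\|x\|$, which is bounded by $\|x'\|$. This gives the stated almost-everywhere inequalities.
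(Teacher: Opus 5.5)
There is a genuine gap at the step you yourself flag as the main obstacle: you never prove \eqref{eq:L2-nonlinear}. It is part of the statement and is used again in \cref{prop:dichotomy} and \cref{prop:forward-leading}. Your ``cleaner route'' gives at best $\|\mathcal U(h)\|_{L^2}=\|A_hh\|_{L^2}\le C\|h\|_3\|h\|_{H^2}\le C\|h\|_3(\|Lh\|_{L^2}+\|h\|_{L^2})$. But $\|Lh\|_{L^2}$ is not controlled by $\|h\|_{L^2}$, either for a fixed $h\in\Sli$ or along the flow. Writing $\mathcal U(h)$ as a small-coefficient operator applied to $h$ puts the smallness on the wrong factor.

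The missing idea is structural. Because $\Ric$ is quasilinear and $\mathcal U$ vanishes to second order, every local quadratic monomial has total derivative order at most two: $h*\bar\nabla^2h$, $\bar\nabla h*\bar\nabla h$, $h*\bar\nabla h$, and so on, never $\bar\nabla^2h*\bar\nabla^2h$. In $h*\bar\nabla^2h$ you put $\bar\nabla^2h$ in $L^\infty$, bounded by $\|h\|_3$, and the undifferentiated factor in $L^2$. The only monomial without an undifferentiated factor is $\bar\nabla h*\bar\nabla h$. The paper handles it by weighted integration by parts,
\[
 \int_M|\bar\nabla h|^4\,\mathrm{d}\nu
 =-\int_M\ip{h}{\mathrm{div}_{\bfv}\bigl(|\bar\nabla h|^2\bar\nabla h\bigr)}\,\mathrm{d}\nu
 \le C\|h\|_3\|h\|_{L^2}\|\bar\nabla h\|_{L^4}^2,
\]
which gives $\|\bar\nabla h\|_{L^4}^2\le C\|h\|_3\|h\|_{L^2}$.

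The nonlocal terms also need more than you indicate. Estimate \eqref{eq:gauge-weak} only gives $\|Y(h)\|_{H^1}\le C\|\Phi(\bg+h)\|_{L^2}$, which is linear in $h$ and contains $\frac12\|Lh\|_{L^2}$. The paper proceeds in two steps.
\begin{enumerate}
\item It shows $\|E(h)\|_{L^2}\le C\|h\|_3\|h\|_{L^2}$ for $E(h)=\Phi(\bg+h)-\frac12Lh$. This uses $\mathrm{D}f_{\bg}[h]=\frac12\tr_{\bg}h$ on $\Sli$, together with inversion of the augmented linearized potential operator, to get $\|p\|_{H^2}\le C\|h\|_3\|h\|_{L^2}$ for $p=f_{\bg+h}-\bfv-\frac12\tr_{\bg}h$.
\item Since $\mathrm{div}_{\bfv}Lh=0$, the slice equation becomes $-2PY=-2\mathrm{div}_{\bfv}E(h)-\mathrm{div}_{\bfv}\Lie_Yh$. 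Inverting $P$ from $H^{-1}$ to $H^1$ on $\kk^\perp$ gives $\|Y\|_{H^1}\le C\|h\|_3\|h\|_{L^2}$.
\end{enumerate}

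Your fallback does not rescue the lemma. It abandons \eqref{eq:L2-nonlinear} altogether. On $E_-$, a pairing bound involving $\|h_-\|_{H^1}$ can only be absorbed by Young's inequality, which yields something like $\frac12\partial_sU_-^2\le-(\lambda_--C\epsilon)U_-^2+C\epsilon(U_++U_0)^2$ rather than \eqref{eq:ODIminus}. Dividing by $U_-$ leaves a term $(U_++U_0)^2/U_-$, which is unbounded exactly where $U_-\ll U_++U_0$.

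Once \eqref{eq:L2-nonlinear} is established, your derivation of the four inequalities and of the Lipschitz and one-sided statements is the paper's.
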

\begin{proof}
Put $\delta=\|h\|_3$. The local quadratic remainders contain products
such as $h*\bar\nabla^2h$ and $\bar\nabla h*\bar\nabla h$.
The first has $L^2$ norm at most $C\delta\|h\|_{L^2}$.
For the second, weighted integration by parts gives
\[
 \int_M|\bar\nabla h|^4\,\mathrm{d}\nu
 =-\int_M\left\langle h,
   \mathrm{div}_{\bfv}\bigl(|\bar\nabla h|^2\bar\nabla h\bigr)
   \right\rangle\,\mathrm{d}\nu
 \le C\delta\|h\|_{L^2}\|\bar\nabla h\|_{L^4}^2.
\]
Here the differentiated factors are bounded by
$C(\|\bar\nabla^2h\|_{L^\infty}+\|\bar\nabla h\|_{L^\infty})
|\bar\nabla h|^2$, with fixed coefficients from $\bfv$.
Consequently
\[
 \|\bar\nabla h\|_{L^4}^2\le C\delta\|h\|_{L^2}.
\]
Lower-order quadratic products satisfy the same bound.

By \cref{prop:slice-field}, $\mathrm{D}f_{\bg}[h]=\frac12\tr_{\bg}h$ on $\Sli$.
Set
$p=f_{\bg+h}-\bfv-\frac12\tr_{\bg}h$.
Expand the potential's Euler--Lagrange equation and its normalization
to first order. The augmented linear operator on $p$ and the entropy
constant is invertible from $H^2\times\R$ to $L^2\times\R$,
by the scalar invertibility in \cref{prop:slice-field} and the normalization.
The quadratic terms involving only $h$ have the bounds just proved.
The remaining terms containing $p$ are bounded by $C\delta\|p\|_{H^2}$,
since $\|f_{\bg+h}-\bfv\|_2\le C\delta$.
The normalization remainder satisfies the corresponding
scalar bound. Elliptic inversion and absorption yield
\[
 \|p\|_{H^2}\le C\delta\|h\|_{L^2}.
\]
Expanding the Ricci tensor and the potential Hessian therefore gives
\[
 E(h):=\Phi(\bg+h)-\frac12Lh,\qquad
 \|E(h)\|_{L^2}\le C\delta\|h\|_{L^2}.
\]

Since $\mathrm{div}_{\bfv}Lh=0$, the slice equation is
\[
 -2PY=-2\mathrm{div}_{\bfv}E(h)
       -\mathrm{div}_{\bfv}\Lie_Yh.
\]
Elliptic inversion on $\kk^\perp$, from $H^{-1}$ to $H^1$, and
$\|\Lie_Yh\|_{L^2}\le C\delta\|Y\|_{H^1}$ give
\[
 \|Y\|_{H^1}\le C\bigl(\|E(h)\|_{L^2}+\delta\|Y\|_{H^1}\bigr).
\]
After absorption, $\|Y\|_{H^1}\le C\delta\|h\|_{L^2}$.
The identity $\mathcal U(h)=2E(h)+\Lie_Y(\bg+h)$ proves
\eqref{eq:L2-nonlinear}.

Project $h_s=Lh+\mathcal U(h)$ onto $E_+,K,E_-$.
Pairing the first and third equations with $h_+/U_+$ and $h_-/U_-$
where these norms are positive gives \eqref{eq:ODIplus} and
\eqref{eq:ODIminus}, by the spectral gaps and \eqref{eq:L2-nonlinear}.
The neutral equation gives \eqref{eq:ODIzero}.
Since $L|_{E_+}$ is bounded, the positive equation also gives
\eqref{eq:ODIplus-upper}. Smoothness implies local Lipschitz continuity
of the norms, and their one-sided derivatives at zero are bounded by
the norm of the projected velocity.
\end{proof}

\begin{lemma}\label[lemma]{lem:two-mode}
For a nonzero ancient solution as above,
\begin{equation}
 U_-=o(U_++U_0)\qquad(s\searrow-\infty).
 \label{eq:ancient-negative}
\end{equation}
Moreover, either $U_++U_0=O(e^{\lambda_+s/2})$, or
$U_+=o(U_0)$ as $s\searrow-\infty$.
\end{lemma}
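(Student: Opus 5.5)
We argue with the differential inequalities of \cref{lem:projected-inequalities}, using that $\epsilon(s)=C\|h(s)\|_3\to0$ as $s\searrow-\infty$ (smooth convergence). Fix a small $\eta>0$ and restrict to a tail on which $\epsilon(s)\le\eta$.

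\emph{Step 1: the negative component.} Suppose \eqref{eq:ancient-negative} fails. Then there are a constant $\theta>0$ and times $s_j\searrow-\infty$ with $U_-(s_j)\ge\theta(U_++U_0)(s_j)$. We compare the function $W=U_--\mu(U_++U_0)$ for a small fixed $\mu>0$. Wherever $U_-\ge\mu(U_++U_0)$, \eqref{eq:ODIminus} gives $\partial_sU_-\le-(\lambda_--C\eta/\mu)U_-\le-\tfrac{\lambda_-}{2}U_-$ once $\eta\ll\mu\lambda_-$. Using \eqref{eq:ODIzero} and \eqref{eq:ODIplus}, $\partial_s(U_++U_0)\ge-C\eta(U_++U_0+U_-)\ge -C\eta\mu^{-1}U_-$ there. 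Hence $\partial_sW<0$ on $\{W\ge0\}$ (with the one-sided interpretation at zeros). Therefore $\{W\ge0\}$ is backward invariant: if $W(s_j)\ge0$, then $W\ge0$ on $(-\infty,s_j]$. On that ray $U_-$ satisfies $\partial_sU_-\le-\tfrac{\lambda_-}{2}U_-$, so going backward, $U_-(s)\ge U_-(s_j)e^{\lambda_-(s_j-s)/2}$ grows exponentially. This contradicts $h\to0$ unless $U_-(s_j)=0$. If $U_-(s_j)=0$, then $U_+=U_0=U_-=0$ at $s_j$, and $h(s_j)=0$. Backward uniqueness then applies, or forward uniqueness of the parabolic SMRF with zero data combined with the stationarity of $0$. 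I expect to need backward uniqueness, or rather to avoid it: with $\theta$ fixed, we take $\mu<\theta$, so $W(s_j)>0$ whenever $h(s_j)\ne0$. If instead $h(s_j)=0$, forward uniqueness gives $h\equiv0$ after $s_j$, and the same argument at earlier times handles the rest. Letting $\mu\to0$ along with $\eta$ yields $U_-=o(U_++U_0)$.

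\emph{Step 2: the positive--neutral alternative.} On the tail, Step 1 gives $U_-\le\eta(U_++U_0)$. The inequalities then reduce to
\[
\partial_sU_+\ge\lambda_+U_+-2\eta(U_++U_0),\qquad |\partial_sU_0|\le2\eta(U_++U_0).
\]
We apply the same cone argument to $V=U_0-\mu U_+$. On $\{U_+\ge\mu^{-1}U_0\}$ we have $\partial_s U_+\ge(\lambda_+-C\eta/\mu)U_+$ and $\partial_s V\le\dots$. Explicitly, on $\{U_0\le\mu U_+\}$ one checks that $\partial_s(U_0-\mu U_+)<0$, so this set is \emph{forward} invariant. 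Two cases follow. If $U_0\le\mu U_+$ at arbitrarily negative times, forward invariance gives $U_0\le\mu U_+$ on the whole tail. Then $\partial_sU_+\ge(\lambda_+-C\eta)U_+\ge\tfrac{\lambda_+}{2}U_+$, and integrating backward gives $U_+(s)\le Ce^{\lambda_+s/2}$, so $U_++U_0=O(e^{\lambda_+s/2})$. Otherwise $U_0>\mu U_+$ on a tail. Since $\mu$ is arbitrary, a diagonal argument with $\eta\to0$ gives $U_+=o(U_0)$. Here some care is needed: for each $\mu$, either the first alternative holds (which is independent of $\mu$), or $U_+<\mu^{-1}U_0$, which is only a bounded ratio. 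To get $o(U_0)$ we refine. In the region $U_+\ge\mu U_0$ with $\mu$ small, we still have $\partial_s U_+\ge(\lambda_+-2\eta(1+\mu^{-1}))U_+$, while $U_0$ changes at rate at most $2\eta(1+\mu^{-1})U_+\cdot\mu$. Hence the ratio $U_+/U_0$ is increasing there, and the set $\{U_+\ge\mu U_0\}$ is forward invariant once $\eta\ll\mu\lambda_+$. If it is hit at arbitrarily negative times, it holds on the whole tail, and backward integration gives exponential decay of $U_+$ and then of $U_0$ (since $|\partial_sU_0|\le C\eta\mu^{-1}U_+$ and $U_0\to0$). The rate exponent $\lambda_+/2$ is achieved by taking $\eta$ small.

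The main obstacle is handling zeros and the dependence of the smallness threshold $\eta$ on the cone aperture $\mu$. This is resolved by passing to successively more remote tails for each $\mu$, and by the observation that the exponential alternative does not depend on $\mu$.
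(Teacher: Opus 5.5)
Your proposal is correct and follows essentially the paper's argument. Step 1 is the paper's backward-invariant region $U_-\ge\delta(U_++U_0)$, in which $U_-$ grows exponentially backward. Your refined Step 2 region $\{U_+\ge\mu U_0\}$ is exactly the paper's forward-crossing function $U_+-\delta U_0$; the first attempt with $U_0-\mu U_+$ is unnecessary. The one point you leave implicit is the zero set $U_++U_0=0$ in Step 2. The paper handles it by noting $|\partial_s(U_++U_0)|\le C(U_++U_0)$, so by Gronwall $U_++U_0>0$ on the tail unless $h\equiv0$; forward uniqueness, as you used in Step 1, also works.
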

\begin{proof}
Fix $\delta>0$ and put $F_\delta=U_--\delta(U_++U_0)$.
At a zero with $U_++U_0>0$, \cref{lem:projected-inequalities} gives
\[
 \partial_sF_\delta
 \le\bigl[-\lambda_-\delta+(1+2\delta)(1+\delta)\epsilon(s)\bigr]
       (U_++U_0)<0
\]
on a sufficiently negative tail. Thus $F_\delta\ge0$ is backward
invariant there. In this region,
$U_++U_0+U_-\le(1+\delta^{-1})U_-$, so, after moving the tail farther
backward, $\partial_sU_-\le-\lambda_-U_-/2$.
A positive value would then grow exponentially toward backward
infinity, contrary to $h\to0$. Hence $U_-\le\delta(U_++U_0)$
on that tail. Since $\delta$ is arbitrary, this proves
\eqref{eq:ancient-negative}.

Taking $U_-\le U_++U_0$ and enlarging $\epsilon(s)$, the first two
projected inequalities become
\[
 \partial_sU_+\ge(\lambda_+-\epsilon)U_+-\epsilon U_0,\qquad
 |\partial_sU_0|\le\epsilon(U_++U_0).
\]
Together with \eqref{eq:ODIplus-upper}, they give
$|\partial_s(U_++U_0)|\le C(U_++U_0)$.
Gronwall's inequality shows that $U_++U_0$ is positive on the tail:
otherwise it vanishes identically, and so does $h$.

Now set $F_\delta=U_+-\delta U_0$. At each zero,
\[
 \partial_sF_\delta
 \ge\bigl[\lambda_+\delta-(1+\delta)^2\epsilon(s)\bigr]U_0>0
\]
on a sufficiently negative tail. If $F_\delta\ge0$ on an entire
ancient tail for some $\delta$, then $U_0\le\delta^{-1}U_+$ and
$\partial_sU_+\ge\lambda_+U_+/2$ farther backward.
Integrating to a fixed terminal time proves
$U_++U_0=O(e^{\lambda_+s/2})$.
Otherwise, the crossing property forces $F_\delta<0$ on a sufficiently
negative tail for every $\delta>0$, giving $U_+=o(U_0)$.
\end{proof}
\begin{proposition}\label[proposition]{prop:dichotomy}\index{Slow solutions@Slow solutions (neutral-dominated)}
Apart from $h\equiv0$, exactly one of the following alternatives holds on a sufficiently negative tail:
\begin{equation}
 \|h(s)\|_{3}=O(e^{\lambda_+s/4});
 \label{eq:fast}
\end{equation}
or
\begin{equation}
 b(s)\ne0,\qquad
 \|\Pi_{\Comp}h(s)\|_{3}=o(|b(s)|).
 \label{eq:slow}
\end{equation}
In the second case, there are constants $C,c>0$ such that
\begin{equation}
 |b_s|\le C|b|^2,\qquad |b(s)|\ge\frac{c}{|s|},\qquad
 |(\log|b|)_s|\longrightarrow0.
 \label{eq:slow-tempered}
\end{equation}
Moreover $\Pi_{\Comp}h=O(|b|^2)$ in every fixed spatial norm.
\end{proposition}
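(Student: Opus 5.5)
We start from the two alternatives of \cref{lem:two-mode}. In the first, $U_++U_0+U_-=O(e^{\lambda_+s/2})$, since \eqref{eq:ancient-negative} makes $U_-=o(U_++U_0)$. So $\|h(s)\|_{L^2}=O(e^{\lambda_+s/2})$. To upgrade this to $\|h\|_3$, we interpolate between the $L^2$ bound and the uniform $C^k$ bounds of \cref{lem:interior-smoothing}. For $k$ large the interpolation exponent on $\|h\|_{L^2}$ is at least $1/2$, which gives $\|h\|_3=O(e^{\lambda_+s/4})$, i.e.\ \eqref{eq:fast}.

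In the second alternative, $U_+=o(U_0)$ and $U_-=o(U_0)$. In particular $b\ne0$ on a tail, since $U_++U_0>0$. It remains to pass from $L^2$ smallness of $\Pi_{\Comp}h$ to \eqref{eq:slow} and to the quadratic bound. Here we write the complement equation
\[
 \partial_s h_{\Comp}=Lh_{\Comp}+\Pi_{\Comp}\mathcal U(h),
\]
and apply the weighted linear estimates of Appendix~\ref{app:parabolic} on strips $I_s$. For $E_+$ we integrate forward from $-\infty$, using $e^{\lambda_+s}$ decay. For $E_-$ we integrate backward from the ancient limit; this is legitimate because $h\to0$ and the homogeneous negative modes are excluded.

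The strip estimate should give
\[
 \sup_{I_s}\|h_{\Comp}\|_3\le C\sup_{r\le s}e^{-\lambda(s-r)}\|\mathcal U(h(r))\|_1+(\text{boundary terms}),
\]
with the boundary terms vanishing in the ancient limit. By \eqref{eq:spatial-nonlinear}, $\|\mathcal U(h)\|_1\le C\|h\|_3^2\le C(|b|+\|h_{\Comp}\|_3)^2$, using that $K$ is finite-dimensional so $\|b\|_3\le C|b|$. Absorbing the term $\|h_{\Comp}\|_3^2\le\delta\|h_{\Comp}\|_3$ leaves a bound by a weighted supremum of $|b|^2$.

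\textbf{Main obstacle.} This weighted supremum must be compared with $|b(s)|^2$. That needs $|b|$ to be tempered, i.e.\ varying subexponentially. We get this from \eqref{eq:ODIzero}: $|(\log U_0)_s|\le\epsilon(s)(1+o(1))\to0$, so $|b(r)|\le e^{\eta(s-r)}|b(s)|$ for $r\le s$ with arbitrarily small $\eta$. Choosing $\eta<\lambda/2$ makes the convolution bounded by $C|b(s)|^2$. This gives $\|\Pi_{\Comp}h\|_3=O(|b|^2)$, hence \eqref{eq:slow}. Higher norms follow by repeating the argument with the $C^k$ strip estimates, or by interpolation with \cref{lem:interior-smoothing}.

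With $\|h\|_3\le C|b|$ in hand, the neutral equation $b_s=\Pi_K\mathcal U(h)$ and \eqref{eq:L2-nonlinear} give $|b_s|\le C\|h\|_3\|h\|_{L^2}\le C|b|^2$. Then
\[
 \Bigl|\frac{d}{ds}\,|b|^{-1}\Bigr|\le C,
\]
so integrating from a fixed terminal time $s_*$ back to $s$ gives $|b(s)|^{-1}\le C|s|+C'$, hence $|b(s)|\ge c/|s|$. The relation $|(\log|b|)_s|\le C|b|\to0$ also follows.

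Exclusivity holds because $|b|\ge c/|s|$ is incompatible with exponential decay. Alternatives exhaust all cases by \cref{lem:two-mode}.
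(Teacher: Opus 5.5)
Your skeleton is right: \cref{lem:two-mode}, temperedness of $|b|$ from \eqref{eq:ODIzero}, then $|b_s|\le C|b|^2$ and integration. The gap is in the central step, $\|\Pi_{\Comp}h\|_3=O(|b|^2)$, which is not established as written.

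\textbf{The absorption is circular.} To absorb $C\delta\|h_{\Comp}\|_3$ in the norm weighted by $|b|^2$, you must already know $\sup_s\|h_{\Comp}\|_{\XP(I_s)}/|b(s)|^2<\infty$, which is the conclusion. At that stage you only know $\|h_{\Comp}\|_3\to0$, with no rate relative to $|b|$. So the inequality $X\le C+C\delta X$ is vacuous.

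The paper closes this by first using the $L^2$ information $U_\pm=o(U_0)$, which you set aside. The bound $\|h\|_{L^2}\le C|b|$ and the uniform higher bounds of \cref{lem:interior-smoothing} interpolate to $\|h\|_3\le C_\theta|b|^{1-\theta}$. Then \eqref{eq:L2-nonlinear} gives the \emph{explicit} forcing bound $\|\mathcal U(h)\|_{L^2}\le C_\theta|b|^{2-\theta}$. The fixed-operator estimate \eqref{eq:append-L2} yields $\|\Pi_{\Comp}h\|_{L^2}=O(|b|^{2-\theta})$ with nothing to absorb. A second interpolation gives $\|h\|_k\le C_k|b|$. Only then is the forcing explicitly $O(|b|^2)$ in $\YP(I_s)$, and \eqref{eq:append-fixed-bound} gives $O(|b|^2)$.

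Two other repairs also work:
\begin{itemize}
\item Treat $\Pi_{\Comp}\mathcal U(b+h_{\Comp})-\Pi_{\Comp}\mathcal U(b)$ as a small linear coefficient acting on $h_{\Comp}$, with explicit forcing $\Pi_{\Comp}\mathcal U(b)=O(|b|^2)$. Then use \cref{prop:weighted-green} with uniqueness in the bounded class, as in \cref{lem:slaving}.
\item Absorb with the tempered weight $|b|^2+\epsilon$, for which finiteness is automatic, and let $\epsilon\to0$.
\end{itemize}

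\textbf{The $E_+$ direction of your Green function is wrong.} In the slow case $h_+$ is only polynomially small. So $e^{(s-a)L}h_+(a)$ grows as $a\searrow-\infty$, and $\int_{-\infty}^{s}e^{(s-r)L}\Pi_+\mathcal U(h(r))\,\mathrm{d}r$ diverges; there is no $e^{\lambda_+s}$ decay to use. As in \eqref{eq:append-G0}, $E_-$ is integrated from $-\infty$, where the boundary term does vanish, and $E_+$ is integrated backward from $s_*$. Two consequences follow:
\begin{itemize}
\item The $E_+$ part sees the forcing at later times $r\in[s,s_*]$, which your display with $\sup_{r\le s}$ omits.
\item Its homogeneous term $e^{(s-s_*)L}h_+(s_*)$ does not vanish. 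It is $O(e^{\lambda_+(s-s_*)})$, and it is dominated by $|b(s)|^2$ only through temperedness in the form $|b(s)|\ge|b(s_*)|e^{\eta(s-s_*)}$ with $2\eta<\lambda_+$.
\end{itemize}

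\textbf{Higher norms.} Interpolating an $O(|b|^2)$ bound in $C^{3,1/2}$ against bounded higher norms loses a power and gives only $O(|b|^{2-\epsilon})$. To get $O(|b|^2)$ in every $C^{k,1/2}$, you must rerun the strip estimate at each order, as the paper does.
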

\begin{proof}
In the first case of \cref{lem:two-mode}, \eqref{eq:ancient-negative}
gives $\|h(s)\|_{L^2}=O(e^{\lambda_+s/2})$.
Fix a spatial norm $C^{k,1/2}$. Smooth convergence gives a uniform
bound in $C^j$ for arbitrarily large $j$, and interpolation gives
\[
 \|h(s)\|_k
 \le C\|h(s)\|_{L^2}^{1-\theta}\|h(s)\|_{C^j}^{\theta},
 \qquad 0<\theta\le\frac12,
\]
after choosing $j$ sufficiently large. Hence
$\|h(s)\|_k=O(e^{\lambda_+s/4})$.
The equation gives the same decay for $h_s$ in the spatial forcing
norm, and thus for $\|h\|_{\XP(I_s)}$.

In the other case, $U_++U_-=o(|b|)$ and $b\ne0$ on a tail.
Equation~\eqref{eq:ODIzero} gives $|b_s|=o(|b|)$, so
$|\partial_s\log|b||\to0$.
Every fixed power of $|b|$ therefore satisfies
\eqref{eq:append-tempered} with arbitrarily small exponent on a
sufficiently negative tail. In particular, for every $\eta>0$,
\[
 |b(s)|\ge |b(s_*)|e^{\eta(s-s_*)},\qquad s\le s_*,
\]
after decreasing $s_*$.

We first improve the $L^2$ estimate before interpolating the complement.
Since $\|h\|_{L^2}=O(|b|)$, interpolation with smooth convergence and
\eqref{eq:L2-nonlinear} give, for every sufficiently small $\theta>0$,
\[
 \|h(s)\|_3\le C_\theta|b(s)|^{1-\theta},\qquad
 \|\mathcal U(h(s))\|_{L^2}\le C_\theta|b(s)|^{2-\theta}.
\]
Put $q=\Pi_{\Comp}h$. Its equation is
$q_s=L_{\Comp}q+\Pi_{\Comp}\mathcal U(h)$.
The fixed-operator formula \eqref{eq:append-G0} gives
\[
 q(s)=e^{(s-s_*)L_{\Comp}}h_+(s_*)
       +\Gop_0\bigl(\Pi_{\Comp}\mathcal U(h)\bigr)(s).
\]
Apply \eqref{eq:append-L2} with $w=|b|^{2-\theta}$.
The positive homogeneous term is smaller than this weight by the
preceding lower bound for $|b|$. Thus
$\|q(s)\|_{L^2}\le C_\theta|b(s)|^{2-\theta}$.
For each fixed $k$, another interpolation with a uniformly bounded
higher spatial norm yields
\[
 \|q(s)\|_k\le C|b(s)|^{(2-\theta)(1-\theta')}=o(|b(s)|),
\]
where $\theta,\theta'>0$ are chosen so that
$(2-\theta)(1-\theta')>1$.
This proves \eqref{eq:slow} and gives $\|h(s)\|_k\le C_k|b(s)|$
at every fixed spatial order.

The spatial nonlinear estimate \eqref{eq:spatial-nonlinear} now gives
$\|\Pi_{\Comp}\mathcal U(h)\|_{\YP(I_s)}\le C|b(s)|^2$,
using comparability of $|b|$ on unit intervals.
Apply the fixed-operator regularity bound
\eqref{eq:append-fixed-bound} to the same formula for $q$,
now with $w=|b|^2$. The homogeneous term is again smaller than the
weight, so $\|q\|_{\XP(I_s)}\le C|b(s)|^2$.
The argument applies at every fixed spatial order.

Finally, \eqref{eq:L2-nonlinear} and $\|h\|_3\le C|b|$ give
$|b_s|\le C|b|^2$. For a fixed time $s_0$ on this tail,
\[
 |b(s)|\ge
 \frac1{|b(s_0)|^{-1}+C(s_0-s)}
 \ge\frac{c}{|s|},\qquad s\le s_0,
\]
after decreasing $s_0$.
This lower bound excludes the exponential alternative and proves
disjointness.
\end{proof}

\subsection{Parametrization of exponentially convergent solutions}
Fix a terminal time $s_*\le-2$ and
$0<\gamma<\min\{\lambda_-,\lambda_+/4\}$.
Use the strip spaces $\XP(I_s),\YP(I_s)$ of \eqref{eq:append-norms} on fixed-slice paths, and define
\begin{equation}
 \|h\|_{\XP_\gamma^{\exp}}
 =\sup_{s\le s_*}e^{-\gamma(s-s_*)}\|h\|_{\XP(I_s)},\qquad
 \|R\|_{\YP_\gamma^{\exp}}
 =\sup_{s\le s_*}e^{-\gamma(s-s_*)}\|R\|_{\YP(I_s)}.
 \label{eq:fast-spaces}\index{X Y exponential@$\XP_\gamma^{\exp},\YP_\gamma^{\exp}$}
\end{equation}
Thus these spaces measure exponential decay at backward infinity.

The parameter is the terminal positive component
$\xi=\Pi_+h(s_*)\in E_+$.
We construct a map from a neighborhood of zero in $E_+$ to ancient paths,
$\xi\mapsto h_\xi$, whose inverse is $h\mapsto\Pi_+h(s_*)$.
The terminal metrics form a graph over $E_+$, with graph values in
$K\oplus E_-$.
This is the fixed-slice version of the construction and classification of exponentially convergent solutions in
\cite[Theorems 1.3 and 6.1]{CL}.

For $\xi\in E_+$, the equation and the required ancient behavior lead to
\begin{equation}
 h(s)={}e^{(s-s_*)L}\xi +\int_{-\infty}^{s}e^{(s-u)L}(\Pi_K+\Pi_-)\mathcal U(h(u))\dd u -\int_s^{s_*}e^{(s-u)L}\Pi_+\mathcal U(h(u))\dd u.
 \label{eq:fast-LP}
\end{equation}
The two integrals define a linear operator $\mathcal G R$ when
$\mathcal U(h)$ is replaced by a forcing $R$.

\begin{theorem}[Exponential parametrization]\label[theorem]{thm:fast-classification}\index{psi exponential graph@$\psi$ (exponential graph)}\index{E positive negative@$E_+,E_-$}
There are $\rho,\epsilon>0$ such that every $\xi\in E_+$ with $|\xi|<\epsilon$
determines a unique solution $h_\xi$ of \eqref{eq:fast-LP} with
$\|h_\xi\|_{\XP_\gamma^{\exp}}<\rho$.
The map $\xi\mapsto h_\xi$ is smooth and $G$-equivariant, and
\[
 h_\xi(s_*)=\xi+\psi(\xi),\qquad
 \psi(\xi)\in K\oplus E_-,\qquad \psi(0)=\mathrm{D}\psi(0)=0.
\]
Its terminal values form a local manifold of real dimension $\dim E_+$,
tangent to $E_+$ at zero.
Every exponentially convergent ancient SMRF belongs to this family after
restriction to a sufficiently negative tail.
\end{theorem}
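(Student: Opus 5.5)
The proof is a contraction-mapping argument for the Lyapunov--Perron equation \eqref{eq:fast-LP} in the weighted strip space $\XP_\gamma^{\exp}$, followed by a uniqueness argument that places every exponentially convergent solution in this space.

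\textbf{Step 1: linear bounds.} I will first record the linear bound
\[
 \|\mathcal G R\|_{\XP_\gamma^{\exp}}\le C\|R\|_{\YP_\gamma^{\exp}},\qquad
 \|e^{(s-s_*)L}\xi\|_{\XP_\gamma^{\exp}}\le C|\xi|,
\]
taken from the weighted estimates of Appendix~\ref{app:parabolic}. The mechanism is elementary in each spectral block.
\begin{itemize}
\item On $E_+$ the backward integral $\int_s^{s_*}$ carries the factor $e^{\lambda(s-u)}$ with $\lambda\ge\lambda_+>\gamma$. This is integrable against $e^{\gamma(u-s_*)}$ and gives $e^{\gamma(s-s_*)}$.
\item On $K$ the kernel is $1$. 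The integral $\int_{-\infty}^s e^{\gamma(u-s_*)}\dd u$ converges because $\gamma>0$.
\item On $E_-$ the kernel $e^{-\lambda_-(s-u)}$ beats $e^{\gamma u}$ since $\gamma<\lambda_-$.
\end{itemize}
The homogeneous term lies in $E_+$ and decays like $e^{\lambda_+(s-s_*)}\le e^{\gamma(s-s_*)}$. Parabolic regularity on $E_-$ (infinite-dimensional) upgrades the $L^2$-type bounds to the strip norms $\XP(I_s)$.

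\textbf{Step 2: fixed point.} By \eqref{eq:spatial-nonlinear} in strip norms, $\mathcal U$ maps the $\rho$-ball of $\XP_\gamma^{\exp}$ into $\YP_\gamma^{\exp}$ with
\[
 \|\mathcal U(h_1)-\mathcal U(h_2)\|_{\YP_\gamma^{\exp}}\le C\rho\|h_1-h_2\|_{\XP_\gamma^{\exp}}.
\]
The weight passes through because $e^{-\gamma(s-s_*)}\le1$ on the tail, so the quadratic bound costs nothing. Hence the map $T_\xi(h)=e^{(s-s_*)L}\xi+\mathcal G\,\mathcal U(h)$ is a contraction for small $\rho$ and $|\xi|<\epsilon\ll\rho$. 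This gives existence and uniqueness of $h_\xi$.

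Next I will check that a fixed point of \eqref{eq:fast-LP} actually solves SMRF. Differentiating the integral equation gives $h_s=Lh+\mathcal U(h)$. The slice condition and the regularity needed for $\mathcal V$ follow from the strip spaces; Lemma~\ref{lem:interior-smoothing} then yields smoothness.

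\textbf{Step 3: terminal values and symmetry.} Smooth, indeed analytic, dependence on $\xi$ follows from the uniform contraction principle, since $\mathcal U$ is analytic. Evaluating at $s_*$, the $E_+$ component is $\xi$, because the backward $E_+$ integral vanishes at $s=s_*$. So $h_\xi(s_*)=\xi+\psi(\xi)$, where
\[
 \psi(\xi)=\int_{-\infty}^{s_*}e^{(s_*-u)L}(\Pi_K+\Pi_-)\mathcal U(h_\xi(u))\dd u\in K\oplus E_-.
\]
Since $h_\xi=O(|\xi|)$ and $\mathcal U$ is quadratic, $\psi=O(|\xi|^2)$, so $\psi(0)=\mathrm{D}\psi(0)=0$. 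This gives the graph and tangency statement.

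$G$-equivariance follows from uniqueness of the fixed point. Indeed, $G$ commutes with $L$ and with the spectral projections, acts isometrically on the norms, and satisfies $\mathcal U(a^*h)=a^*\mathcal U(h)$ by \eqref{eq:Y-equiv}. Therefore $a^*h_\xi$ is the fixed point for $a^*\xi$.

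\textbf{Step 4: classification.} Let $h$ be an exponentially convergent ancient SMRF. By Proposition~\ref{prop:dichotomy} it satisfies \eqref{eq:fast}, so $\|h\|_{\XP(I_s)}=O(e^{\lambda_+s/4})$. Because $\gamma<\lambda_+/4$, after shifting $s_*$ sufficiently far back we get $\|h\|_{\XP_\gamma^{\exp}}<\rho$ and $|\Pi_+h(s_*)|<\epsilon$.

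It remains to derive \eqref{eq:fast-LP} for $h$. Apply variation of constants on each spectral block of $h_s=Lh+\mathcal U(h)$.
\begin{itemize}
\item On $K\oplus E_-$, integrate from $-\infty$. The boundary terms $e^{(s-s_0)L}\Pi_{K,-}h(s_0)$ vanish as $s_0\to-\infty$, because $\Pi_Kh(s_0)\to0$ and, on $E_-$, $e^{\lambda_-(s_0-s)}\|h(s_0)\|\to0$.
\item On $E_+$, integrate from $s_*$ down to $s$, which produces the term $e^{(s-s_*)L}\xi$.
\end{itemize}
Uniqueness of the fixed point then gives $h=h_\xi$ on $(-\infty,s_*]$.

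I expect the main obstacle to be Step 1: establishing the strip-norm bound for $\mathcal G$ on the infinite-dimensional $E_-$ block with the loss of two derivatives ($\YP\to\XP$). This should be exactly what Appendix~\ref{app:parabolic} provides. A second point to watch is choosing $s_*$ depending on $h$ in Step 4 while keeping $\rho,\epsilon$ uniform. This works because the constants in Step 1 are translation invariant and independent of $s_*\le-2$.
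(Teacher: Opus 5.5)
Your proposal is correct and follows the paper's proof essentially step for step. It uses blockwise spectral bounds for $\mathcal G$ upgraded to strip norms by the local parabolic estimate, and a contraction for $e^{(s-s_*)L}\xi+\mathcal G\,\mathcal U(h)$ in $\XP_\gamma^{\exp}$ using $e^{\gamma(s-s_*)}\le1$. It then obtains smooth dependence, the graph and $G$-equivariance from fixed-point uniqueness, and the classification by rederiving \eqref{eq:fast-LP} through variation of constants after moving $s_*$ back.

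One small correction: on $E_-$ the weighted integral converges for every $\gamma>0$, with constant $1/(\lambda_-+\gamma)$, because both the semigroup factor and the weight decay as $u\searrow-\infty$. So $\gamma<\lambda_-$ is not what makes that block work.
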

\begin{proof}
We first justify the integrals. For $R\in\YP_\gamma^{\exp}$, put
$\bar R=\sup_{u\le s_*}e^{-\gamma(u-s_*)}\|R(u)\|_{L^2}$.
The spectral theorem gives
\begin{align*}
 e^{-\gamma(s-s_*)}
 \int_{-\infty}^s\|e^{(s-u)L}\Pi_-R(u)\|_{L^2}\dd u
 &\le \bar R\int_0^\infty e^{-(\lambda_-+\gamma)v}\dd v
 =\frac{\bar R}{\lambda_-+\gamma},\\
 e^{-\gamma(s-s_*)}
 \int_{-\infty}^s|\Pi_KR(u)|\dd u
 &\le \bar R\int_0^\infty e^{-\gamma v}\dd v
 =\frac{\bar R}{\gamma},\\
 e^{-\gamma(s-s_*)}
 \int_s^{s_*}\|e^{(s-u)L}\Pi_+R(u)\|_{L^2}\dd u
 &\le \bar R\int_0^{s_*-s}e^{-(\lambda_+-\gamma)v}\dd v
 \le\frac{\bar R}{\lambda_+-\gamma}.
\end{align*}
Thus the improper integrals converge absolutely in $L^2$.
Testing the resulting formula against smooth spatial tensors and compactly
supported time test functions gives
$(\partial_s-L)\mathcal G R=R$ in distributions.
The local parabolic estimate \eqref{eq:append-regularity}, which also applies
to $L$ on the full fixed slice, then gives
\begin{equation}
 \|\mathcal G R\|_{\XP_\gamma^{\exp}}
 \le C_\gamma\|R\|_{\YP_\gamma^{\exp}}.
 \label{eq:fast-green-bound}
\end{equation}
The finite-dimensional term $S\xi(s)=e^{(s-s_*)L}\xi$ satisfies
$\|S\xi\|_{\XP_\gamma^{\exp}}\le C_0|\xi|$.

On the closed ball of radius $\rho$ in $\XP_\gamma^{\exp}$,
Taylor's formula, \eqref{eq:spatial-nonlinear}, and
$e^{\gamma(s-s_*)}\le1$ give
\[
 \|\mathcal U(h)\|_{\YP_\gamma^{\exp}}\le C_1\rho^2,\qquad
 \|\mathcal U(h)-\mathcal U(\tilde h)\|_{\YP_\gamma^{\exp}}
 \le2C_1\rho\|h-\tilde h\|_{\XP_\gamma^{\exp}}.
\]
Choose $\rho$ so that $2C_\gamma C_1\rho<1/2$, and then take
$\epsilon\le\rho/(2C_0)$. The map
$h\mapsto S\xi+\mathcal G\mathcal U(h)$ maps this ball into its interior
and has Lipschitz constant less than $1/2$. The contraction theorem gives
$h_\xi$, which solves SMRF and has $\Pi_+h_\xi(s_*)=\xi$.
\Cref{lem:interior-smoothing} gives uniform higher-order bounds and
makes $h_\xi$ converge smoothly to zero on its ancient tail.
Its exponential decay in $\|\cdot\|_3$ rules out the neutral case
of \cref{prop:dichotomy}, which therefore gives exponential decay
in every fixed spatial norm.

The nonlinear map is smooth between the displayed strip spaces, by the
potential and gauge elliptic equations and their successive variations.
The parameter-dependent contraction theorem gives smooth dependence on
$\xi$. At $\xi=0$, $\mathrm{D}\mathcal U(0)=0$, so
$\mathrm{D}_\xi h_0[\xi]=S\xi$. Evaluation at $s_*$ gives the graph statement and
$\mathrm{D}\psi(0)=0$. The terminal projection supplies the inverse, so this graph
is a local manifold of the stated dimension.

Let $h$ be any exponentially convergent ancient SMRF. By
\cref{prop:dichotomy} and its proof,
$\|h\|_{\XP(I_s)}\le Ce^{\lambda_+s/4}$ on a sufficiently negative tail.
Since $\gamma<\lambda_+/4$, its weighted norm on $(-\infty,s_*]$ satisfies
\[
 \|h\|_{\XP_\gamma^{\exp}}
 \le Ce^{\lambda_+s_*/4}
       \sup_{s\le s_*}e^{(\lambda_+/4-\gamma)(s-s_*)}
 =Ce^{\lambda_+s_*/4}.
\]
Choose $s_*$ sufficiently negative so that
$\|h\|_{\XP_\gamma^{\exp}}<\rho$ and
$|\Pi_+h(s_*)|<\epsilon$, and set $\xi=\Pi_+h(s_*)$.

We derive \eqref{eq:fast-LP} from $h_s=Lh+\mathcal U(h)$.
For $a<s$, variation of constants on $K\oplus E_-$ gives
\[
 (\Pi_K+\Pi_-)h(s) ={}e^{(s-a)L}(\Pi_K+\Pi_-)h(a) +\int_a^s e^{(s-u)L}(\Pi_K+\Pi_-) \mathcal U(h(u))\dd u.
\]
Since $L$ has nonpositive spectrum on this subspace,
\[
 \bigl\|e^{(s-a)L}(\Pi_K+\Pi_-)h(a)\bigr\|_{L^2}
 \le\|h(a)\|_{L^2}\longrightarrow0
 \qquad(a\searrow-\infty).
\]
The quadratic estimate for $\mathcal U$ and exponential decay of $h$ imply
\[
 \int_{-\infty}^{s_*}\|\mathcal U(h(u))\|_{L^2}\dd u<\infty.
\]
The same semigroup bound therefore justifies taking $a\searrow-\infty$
in the integral. We obtain
\[
 (\Pi_K+\Pi_-)h(s)
 =\int_{-\infty}^s e^{(s-u)L}(\Pi_K+\Pi_-)
      \mathcal U(h(u))\dd u.
\]
On the finite-dimensional space $E_+$,
\[
 \frac{\mathrm{d}}{\mathrm{d}u}
 \left(e^{(s-u)L}\Pi_+h(u)\right)
 =e^{(s-u)L}\Pi_+\mathcal U(h(u)).
\]
Integrating from $s$ to $s_*$ and using $\Pi_+h(s_*)=\xi$ gives
\[
 \Pi_+h(s)
 =e^{(s-s_*)L}\xi
  -\int_s^{s_*}e^{(s-u)L}\Pi_+\mathcal U(h(u))\dd u.
\]
Adding the two projected formulas gives \eqref{eq:fast-LP}.
Since $h$ lies in the contraction ball with parameter $\xi$,
uniqueness gives $h=h_\xi$.

Finally, $G$ preserves $L$, its spectral projections, and all fixed
background norms. Equivariance of $\mathcal V$ makes \eqref{eq:fast-LP} equivariant,
and uniqueness gives $h_{g\cdot\xi}=g\cdot h_\xi$.
\end{proof}

\begin{corollary}[Linearly stable case]\label[corollary]{cor:fast-Ricci}
If $E_+=\R\Ric_{\bg}$, every exponentially convergent germ, after quotienting original-time translation, is the germ of the canonical shrinking Ricci flow.
\end{corollary}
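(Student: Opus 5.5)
The plan is to combine \cref{thm:fast-classification} with the geometric correspondence \cref{thm:correspondence}, and then to identify the one-parameter family explicitly. Suppose $E_+=\R\Ric_{\bg}$. By \cref{thm:fast-classification}, every exponentially convergent ancient SMRF germ is, after restriction to a sufficiently negative tail, one of the solutions $h_\xi$ with $\xi=c\Ric_{\bg}$ and $|c|<\epsilon$. The family of these germs is therefore at most one-dimensional, and the zero parameter gives the canonical shrinking flow. It remains to show that the whole family is produced by original-time translation of the canonical shrinking Ricci flow.

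First I will construct the translated flows. Consider $g_{\mathrm{RF}}^T(t)=(T-t)\bg$, the canonical shrinking flow translated in original time, with $T$ small. Its normalization \eqref{eq:time-general} with $t=-e^{-s}$ is
\[
 \hat g(s)=e^s(T+e^{-s})\bg=(1+Te^s)\bg .
\]
This is a constant multiple of $\bg$, so its canonical potential is $\bfv$ (the function $f_g$ is scale-invariant in this sense up to the normalization, and $\bfv$ already has the right normalization for $(1+Te^s)\bg$ at scale one up to an additive constant). The potential gauge \eqref{eq:potential-gauge} and the horizontal gauge then give an MRF and an SMRF. I expect the result to be
\[
 h^T(s)=Te^s\bg+\text{(slice correction)}.
\]
Projecting onto the slice, $\Pi_{\Sli}\bg$ is a multiple of $\Ric_{\bg}$ plus a pure gauge term, since $\bg=2\Ric_{\bg}+2\bar\nabla^2\bfv$ and $\bar\nabla^2\bfv=-\mathrm{div}^*\bar\nabla\bfv$ is a Lie derivative. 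Hence
\[
 h^T(s)=2Te^s\Ric_{\bg}+O(T^2e^{2s}).
\]
Simpler still, the geometric representative can be written directly. The metric $(1+Te^s)\bg$ is pulled back by the flow of $\nabla\bfv$, which is exactly what the $\nabla^2 f$ term generates. This produces $\bg+2Te^s\Ric_{\bg}$ modulo a diffeomorphism, and this metric already lies in $\bg+\Sli$ because $\mathrm{div}_{\bfv}\Ric_{\bg}=0$. These germs decay like $e^{s}$, so they are exponentially convergent and are nontrivial for $T\neq0$.

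Next I will match parameters. The map $T\mapsto\Pi_+h^T(s_*)$ has the form
\[
 T\mapsto 2Te^{s_*}\Ric_{\bg}+O(T^2),
\]
so its derivative at $T=0$ is nonzero. Hence it is a local diffeomorphism onto a neighborhood of $0$ in $E_+=\R\Ric_{\bg}$, and its image covers $|c|<\epsilon$ after shrinking $\epsilon$. By the uniqueness in \cref{thm:fast-classification}, each $h_\xi$ coincides on the tail with some $h^T$, provided $\|h^T\|_{\XP^{\exp}_\gamma}<\rho$. This smallness follows from the explicit decay $O(Te^s)$. Passing back through \cref{thm:correspondence} and Ricci-flow uniqueness on the common interval, every exponentially convergent germ is a time translate of the canonical shrinking flow. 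Germs whose terminal parameter is not small can be handled by moving $s_*$ further back, which shrinks $\xi$.

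The main obstacle is making the identification of $h^T$ in the horizontal gauge rigorous. The horizontal gauge is fixed only up to a time-independent element of $G$, and the parameter must depend smoothly on $T$. I would handle this by noting that the SMRF germ class modulo $G$ is well defined by \cref{thm:correspondence}, and that $\Pi_+$ is $G$-equivariant with $G$ acting trivially on $\Ric_{\bg}$. Consequently $\Pi_+h^T(s_*)$ is gauge-independent. Its derivative in $T$ is computed from the linearization: the $O(T)$ part of $h^T$ solves $h_s=Lh$ and is the slice projection of $e^s\bg$, which is $2e^s\Ric_{\bg}$. This computation only needs to be done to first order.
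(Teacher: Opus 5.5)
Your proposal is correct and follows essentially the paper's proof. The original-time translates of the canonical flow form a one-parameter family of exponentially convergent SMRFs with tangent $\pm 2e^s\Ric_{\bg}$, so the terminal map into $E_+=\R\Ric_{\bg}$ has nonzero derivative; the inverse-function theorem and the uniqueness in \cref{thm:fast-classification} then show that these translates exhaust the one-dimensional local family.

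There are three minor points, none of which affects the argument.
\begin{enumerate}
\item For non-Einstein $\bg$, the translated flow is $(T-t)$ times a time-dependent pullback of $\bg$, not $(T-t)\bg$. This is harmless because only its isometry class enters, and your $G$-invariance observation for $\Pi_+$ handles the marking.
\item The metric $(1+Te^s)\bg$ agrees with $\bg+2Te^s\Ric_{\bg}$ modulo diffeomorphism only to first order. That first-order statement is all you actually use.
\item Since $\Pi_+h^T(s_*)$ depends only on $Te^{s_*}$, you should parametrize by $Te^{s_*}$ rather than by $T$. Then the covered neighborhood of $0\in E_+$ does not shrink as you move $s_*$ back.
\end{enumerate}
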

\begin{proof}
The original-time translates of the canonical shrinking Ricci flow give a one-dimensional family through zero with tangent $-2e^s\Ric_{\bg}$. Its terminal positive derivative is nonzero. By the inverse-function theorem its image fills the local manifold of exponentially convergent solutions, and restriction to sufficiently ancient tails gives the assertion for germs.
\end{proof}

\section{Finite-order expansion}\label{sec:normal}
The static graph relates the shrinker defect to an analytic entropy on $K$ and will be used in Section~\ref{sec:exponent} to determine the decay rate. For the evolution, we construct an approximate invariant graph whose invariance defect vanishes through any prescribed finite order. An estimate for the complement equation then turns this construction into the expansion of an actual slow solution in \cref{thm:actual-expansion}.

\subsection{Static Kuranishi reduction}
The static graph is an analytic map $W:K\supset U_K\to\Comp_3$ with $W(0)=\mathrm{D}W(0)=0$ such that
\begin{equation}
 G_b=\bg+b+W(b),\qquad \Pi_{\Comp}\Pi_{\Sli}\Phi(G_b)=0.\label{eq:static-graph}\index{Pi complement@$\Pi_{\Comp}$}\index{G static metric@$G_b$}\index{O obstruction@$\Ob$}\index{W static graph@$W(b)$}
\end{equation}
It follows by the elliptic implicit-function theorem, since the derivative in $W$ is $L|_{\Comp}/2$.

\begin{lemma}\label[lemma]{lem:actual-defect}
There are analytic maps $\Ob(b)\in K$ and $\omega_b\in\kk^\perp$ such that
\begin{equation}
 \Phi(G_b)=\Ob(b)+\mathrm{div}^{*}\omega_b,\qquad
 \|\mathrm{div}^{*}\omega_b\|_{1}\le C|b|\,|\Ob(b)|.\label{eq:actual-defect}
\end{equation}
Furthermore, with $h_b=b+W(b)$,
\begin{align}
 \|Y(h_b)\|_{2}&\le C|b|\,|\Ob(b)|,\label{eq:gauge-static}\\
 \|Y(h_b+q)-Y(h_b)\|_{2}
 &\le C(|b|+\|q\|_{3})\|q\|_{3}.\label{eq:gauge-difference}
\end{align}
\end{lemma}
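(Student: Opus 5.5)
The plan is to start from the defining property \eqref{eq:static-graph} and use the Hodge decomposition \eqref{eq:hodge}. Put $\Phi_b=\Phi(G_b)$. Then \eqref{eq:hodge} gives
\[
 \Phi_b=\Pi_{\Sli}\Phi_b+\mathrm{div}^{*}P^{-1}\mathrm{div}_{\bfv}\Phi_b .
\]
Since $\Pi_{\Comp}\Pi_{\Sli}\Phi_b=0$ and $\Sli=K\oplus\Comp$, the slice part equals $\Ob(b):=\Pi_K\Phi_b\in K$. Set $\omega_b=P^{-1}\mathrm{div}_{\bfv}\Phi_b\in\kk^\perp$. Analyticity of $b\mapsto\Ob(b),\omega_b$ follows from analyticity of $\Phi$, of $W$, and of the bounded operators $\Pi_K$ and $P^{-1}$. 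This gives the decomposition; the real content is the estimate $\|\mathrm{div}^{*}\omega_b\|_1\le C|b|\,|\Ob(b)|$.

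For that estimate I would use the contracted second Bianchi identity of the weighted type. For any metric $g$ with potential $f_g$, the diffeomorphism invariance of $\mathcal W$ gives $\mathrm{div}_{g,f_g}\Phi(g)=0$: the Euler--Lagrange equation for $f_g$ makes the $f$-variation vanish, so the first variation of $\boldsymbol\mu$ along $\Lie_X g$ is $\ip{\Phi(g)}{\Lie_Xg}$ in the $g$-weighted measure. Hence $\mathrm{div}_{G_b,f_{G_b}}\Phi_b=0$. Comparing with the background operator, $\mathrm{div}_{\bfv}\Phi_b=(\mathrm{div}_{\bfv}-\mathrm{div}_{G_b,f_{G_b}})\Phi_b$. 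The coefficient difference is $O(\|G_b-\bg\|_3)=O(|b|)$, using $\|W(b)\|_3\le C|b|^2$ and $\|f_{G_b}-\bfv\|\le C|b|$. So
\[
 \|\mathrm{div}_{\bfv}\Phi_b\|_{C^{0,1/2}}\le C|b|\,\|\Phi_b\|_{1}.
\]
Inverting with \eqref{eq:P-inverse} for $k=0$ gives $\|\mathrm{div}^*\omega_b\|_1\le C\|\omega_b\|_2\le C|b|\|\Phi_b\|_1$.

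Substituting $\|\Phi_b\|_1\le|\Ob(b)|+\|\mathrm{div}^*\omega_b\|_1$ and absorbing for small $|b|$ yields \eqref{eq:actual-defect}; note that on the finite-dimensional $K$ all norms of $\Ob(b)$ are comparable. The main obstacle is this derivative bookkeeping: the divergence loses one derivative, $P^{-1}$ gains two, and $\mathrm{div}^*$ loses one, so one must check that the regularity $G_b\in C^{3,1/2}$ suffices. If it does not, I would use that $W(b)$ is smooth with bounds in every $C^k$, by elliptic regularity of \eqref{eq:static-graph} on the finite-dimensional family, and run the estimate in a higher norm.

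For \eqref{eq:gauge-static}, the slice equation \eqref{eq:slice-equation} at $h_b$ reads
\[
 -2PY=-2\mathrm{div}_{\bfv}\Phi_b-\mathrm{div}_{\bfv}\Lie_Y h_b,
\]
since $\Lie_Y\bg=-2\mathrm{div}^*Y$. The first forcing term has $C^{1/2}$ norm at most $C|b|\,|\Ob(b)|$ by the previous step, and the second is bounded by $C|b|\|Y\|_2$. Inverting $P$ and absorbing the second term gives $\|Y(h_b)\|_2\le C|b||\Ob(b)|$.

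Finally, \eqref{eq:gauge-difference} follows from analyticity of $Y$. Write $Y(h_b+q)-Y(h_b)=\int_0^1\mathrm{D}Y(h_b+tq)[q]\,\dd t$. Since $\mathrm{D}Y(0)=0$ and $Y$ is analytic, $\|\mathrm{D}Y(h)[q]\|_2\le C\|h\|_3\|q\|_3$, which is the same reasoning as \eqref{eq:spatial-nonlinear}. With $\|h_b+tq\|_3\le C(|b|+\|q\|_3)$ this gives the bound.
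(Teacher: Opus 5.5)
Your proposal is correct and follows essentially the same route as the paper. Both arguments use the fixed Hodge decomposition and the nonlinear weighted Bianchi identity to write $P\omega_b$ as an $O(|b|)$ coefficient difference applied to $\Phi(G_b)$, then invert and absorb; \eqref{eq:gauge-static} comes from treating the slice equation as an elliptic perturbation of $-2P$, and \eqref{eq:gauge-difference} from integrating along the segment using $\mathrm{D}Y(0)=0$. The regularity count you were worried about already closes at $G_b\in C^{3,1/2}$, since then $\Phi(G_b)\in C^{1,1/2}$ and $\omega_b\in C^{2,1/2}$, so no higher-norm fallback is needed.
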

\begin{proof}
Use the fixed Hodge decomposition \eqref{eq:hodge} and \eqref{eq:static-graph}.
For $g$ near $\bg$, the nonlinear weighted Bianchi identity \cite[Lemma~3.5]{PS} is
\begin{equation}
 \mathrm{div}^{g}_{f_g}\Phi(g)=0.
 \label{eq:nonlinear-bianchi}
\end{equation}
Since $\mathrm{div}_{\bfv}\Ob=0$,
\[
 P\omega_b=(\mathrm{div}_{\bfv}-\mathrm{div}^{G_b}_{f_{G_b}})\Phi(G_b).
\]
The coefficient difference is $O(|b|)$ as an operator from
$C^{1,1/2}$ to $C^{1/2}$. Elliptic inversion yields
\[
 \|\mathrm{div}^{*}\omega_b\|_{1}
 \le C|b|\bigl(|\Ob(b)|+\|\mathrm{div}^{*}\omega_b\|_{1}\bigr).
\]
Absorption proves \eqref{eq:actual-defect}. The equation for $Y(h_b)$ is an elliptic perturbation of $-2P$, with right side $-2P\omega_b$, proving \eqref{eq:gauge-static}. Finally $\mathrm{D}Y(0)=0$ and boundedness of $\mathrm{D}^2Y$ give \eqref{eq:gauge-difference} by integration along the segment $h_b+\theta q$.
\end{proof}

To describe evolving solutions, seek a graph $h=\Psi(b)$ and a reduced
field $b_s=F(b)$. The chain rule gives $h_s=\mathrm{D}\Psi(b)F(b)$, so invariance
of the graph under SMRF requires
\[
 \mathcal V(\Psi(b))=\mathrm{D}\Psi(b)F(b).
\]
For the static graph $\Psi(b)=b+W(b)$, the right-hand side contains the
transport term $\mathrm{D}W(b)F(b)$, which is absent from \eqref{eq:static-graph}.
We therefore construct polynomial corrections that satisfy the invariance
equation through each prescribed finite degree. The remaining defect
will be controlled by the complement equation below.

\subsection{The finite-order recursion}
Write
\begin{equation}
 \mathcal V(h)=Lh+\sum_{j\ge2}\mathcal V_j(h,\ldots,h),\qquad \mathcal V_j=\frac1{j!}\mathrm{D}^j\mathcal V(0).
\end{equation}
For an analytic map $A$ defined near zero in $K$, denote its homogeneous
term of degree $j$ by
\[
 [A]_j(b):=\frac1{j!}\mathrm{D}^jA(0)[b,\ldots,b].\index{A homogeneous term@$[A]_j$}
\]

\begin{theorem}[Approximate invariant graph]\label[theorem]{thm:formal}
For each integer $N\ge2$ there are unique homogeneous polynomial
maps $H_j:K\to\Comp_3$ and $F_j:K\to K$, of degree $j$,
$2\le j\le N$, such that the maps in \eqref{eq:intro-jets} have defect
\begin{equation}
 E_N(b):=\mathcal V(\Psi^{[N]}(b))-\mathrm{D}\Psi^{[N]}(b)F^{[N]}(b)
 \label{eq:defectN}\index{E defect@$E_N$}
\end{equation}
satisfying, for all sufficiently small $b\in K$ and every $v\in K$,
\[
 \|E_N(b)\|_{1}\le C_N|b|^{N+1},\qquad
 \|\mathrm{D}E_N(b)[v]\|_{1}\le C_N|b|^N|v|.
\]
The construction is $G$-equivariant.
\end{theorem}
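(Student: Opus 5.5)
We construct the coefficients by induction on the degree $j$, cancelling the homogeneous part of the invariance defect one degree at a time. Write $\Psi^{[j]}=\Psi^{[j-1]}+H_j$ and $F^{[j]}=F^{[j-1]}+F_j$, with $\Psi^{[1]}(b)=b$ and $F^{[1]}=0$. Since $L b=0$ for $b\in K$, the degree-one part of $\mathcal V(\Psi^{[N]}(b))-\mathrm{D}\Psi^{[N]}(b)F^{[N]}(b)$ vanishes identically. Suppose $H_i,F_i$ are fixed for $i<j$ and the defect $E_{j-1}$ has no terms of degree $<j$. Expanding analytically, the degree-$j$ part of the defect with the new unknowns inserted is
\[
 [E_{j}]_j(b)=L H_j(b)-F_j(b)+R_j(b),
\]
where $R_j$ is the degree-$j$ homogeneous term of $\mathcal V(\Psi^{[j-1]}(b))-\mathrm{D}\Psi^{[j-1]}(b)F^{[j-1]}(b)$. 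This expression depends only on $H_i,F_i$ with $i<j$ and on $\mathcal V_2,\dots,\mathcal V_j$. Two facts about the new unknowns make the formula correct. First, $H_j$ enters $\mathcal V$ linearly at degree $j$ only through $L$, because all higher $\mathcal V_k$ raise the degree. Second, the transport term $\mathrm{D}\Psi F$ contributes $F_j$ through $\mathrm{D}(b)F_j=F_j$, while every term involving $H_j$ or $F_j$ together with another factor has degree at least $j+1$. The value $R_j(b)$ lies in $\Sli$, since $\mathcal V$ takes values in $\Sli$ and $\mathrm{D}\Psi\, F$ is $\Sli$-valued.

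We then set
\[
 F_j=\Pi_K R_j,\qquad H_j=-(L|_{\Comp})^{-1}\Pi_{\Comp}R_j.
\]
The inverse exists because $L|_{\Comp}$ is a self-adjoint elliptic operator with no kernel on $\Comp=K^\perp\cap\Sli$. Schauder theory gives $(L|_{\Comp})^{-1}:\Comp_1\to\Comp_3$, so $H_j$ takes values in $\Comp_3$. These formulas are forced: $LH_j\in\Comp$ and $F_j\in K$, and the decomposition $\Sli=K\oplus\Comp$ is direct. Hence the degree-$j$ term vanishes exactly when the two displayed equations hold, which gives uniqueness. Each $H_j$ and $F_j$ is a homogeneous polynomial, since it is the composition of bounded multilinear maps restricted to the finite-dimensional space $K$; polynomiality is therefore automatic.

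After step $N$ the defect $E_N(b)$ is an analytic map from a neighbourhood of $0\in K$ into $C^{1,1/2}\cap\Sli$ whose Taylor coefficients vanish through degree $N$. This uses the analyticity of $\mathcal V$ from $C^{3,1/2}\cap\Sli$ to $C^{1,1/2}\cap\Sli$ recorded after \eqref{eq:smrf-definition}, composed with polynomial maps. Taylor's formula with remainder on a small ball in $K$ then gives $\|E_N(b)\|_1\le C_N|b|^{N+1}$. Because the first $N$ derivatives of $E_N$ vanish at $0$, the derivative $\mathrm{D}E_N$ has vanishing jet through degree $N-1$, which gives $\|\mathrm{D}E_N(b)[v]\|_1\le C_N|b|^N|v|$. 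A subtle point is that $N$ is fixed, so only finite-order Taylor expansions are needed and no convergence of the formal series is required. The constants therefore depend on $N$ only through the finitely many derivatives of $\mathcal V$ on a fixed ball.

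For $G$-equivariance we argue by induction. The group $G$ preserves $\Sli$, $K$, $\Comp$, $L$ and the projections, and $\mathcal V(a^*h)=a^*\mathcal V(h)$ follows from \eqref{eq:Y-equiv} together with the diffeomorphism naturality of $\Phi$ and $f_g$. If the lower coefficients are equivariant, then so is $R_j$, and therefore so are $F_j$ and $H_j$. Alternatively, equivariance follows from uniqueness, since $a^*\circ H_j\circ a^{-*}$ and $a^*\circ F_j\circ a^{-*}$ solve the same recursion. The only genuinely delicate step is the bookkeeping showing that the unknowns of degree $j$ enter the degree-$j$ defect exactly as $LH_j-F_j$.
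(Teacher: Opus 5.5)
Your proposal is correct and follows the paper's proof essentially step for step. It uses the same degree-by-degree recursion $F_j=\Pi_KR_j$, $H_j=-L_{\Comp}^{-1}\Pi_{\Comp}R_j$, the same degree count placing $(\mathrm{D}\Psi^{[j-1]}-\Id)F_j$, $\mathrm{D}H_jF^{[j-1]}$ and $\mathrm{D}H_jF_j$ in degree at least $j+1$, uniqueness from the direct splitting $\Sli=K\oplus\Comp$, Taylor remainder bounds for $E_N$ and $\mathrm{D}E_N$, and equivariance because $G$ commutes with $\mathcal V$, the spectral projections and $L_{\Comp}^{-1}$.
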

\begin{proof}
Start with $\Psi^{[1]}(b)=b$ and $F^{[1]}=0$. Since $Lb=0$,
the initial defect is $E_1(b)=\mathcal V(b)=O(|b|^2)$ in $C^{1,1/2}$.
Suppose that the coefficients through degree $j-1$ have been chosen so
that $E_{j-1}$ has no terms of degree less than $j$. Its degree-$j$ term is
\begin{equation}
 R_j=\bigl[\mathcal V(\Psi^{[j-1]})-\mathrm{D}\Psi^{[j-1]}F^{[j-1]}\bigr]_j.
\end{equation}
This is a homogeneous polynomial with values in $\Sli\cap C^{1,1/2}$.
The elliptic inverse on $\Comp$ satisfies
\[
 \|L_{\Comp}^{-1}u\|_{3}\le C\|u\|_{1},
 \qquad u\in\Comp_{1}.
\]
Thus the corrections
\begin{equation}
 F_j=\Pi_KR_j,\qquad H_j=-L_{\Comp}^{-1}\Pi_{\Comp}R_j
 \label{eq:recursion}\index{F reduced@$F^{[N]},F_j$}\index{Psi graph@$\Psi^{[N]},H_j$ (approximate graph)}\index{L complement@$L_{\Comp}$}
\end{equation}
have the stated regularities and are homogeneous of degree $j$.
Set $\Psi^{[j]}=\Psi^{[j-1]}+H_j$ and
$F^{[j]}=F^{[j-1]}+F_j$. In $C^{1,1/2}$, expansion gives
\[
 E_j =\mathcal V(\Psi^{[j-1]}+H_j) -\mathrm{D}(\Psi^{[j-1]}+H_j)(F^{[j-1]}+F_j) =E_{j-1}+LH_j-F_j+O(|b|^{j+1}).
\]
Indeed, $\mathcal V-L$ starts quadratically and $\Psi^{[j-1]}=O(|b|)$, so
$\mathcal V(\Psi^{[j-1]}+H_j)-\mathcal V(\Psi^{[j-1]})-LH_j$ is $O(|b|^{j+1})$.
The remaining transport terms are
\[
 (\mathrm{D}\Psi^{[j-1]}-\Id)F_j,\qquad
 \mathrm{D}H_jF^{[j-1]},\qquad \mathrm{D}H_jF_j.
\]
They have degree at least $j+1$, because
$\mathrm{D}\Psi^{[j-1]}-\Id=O(|b|)$, $F^{[j-1]}=O(|b|^2)$, and
$2j-1\ge j+1$ for $j\ge2$. Consequently
\[
 [E_j]_j=R_j+LH_j-F_j=0
\]
by \eqref{eq:recursion}, and the lower-degree terms remain zero.
Induction constructs the coefficients through degree $N$.

For uniqueness, let $j$ be the first degree at which two choices differ.
Their degree-$j$ equations give
\[
 L(H_j-\widetilde H_j)=F_j-\widetilde F_j.
\]
The left-hand side lies in $\Comp$ and the right-hand side in $K$.
Both therefore vanish, and invertibility of $L_{\Comp}$ gives equality
of the corrections.

The map $E_N:K\to\Sli\cap C^{1,1/2}$ is analytic and its Taylor
coefficients through degree $N$ vanish. Taylor expansion of $E_N$ and
$\mathrm{D}E_N$ gives the two stated remainder estimates.
Finally, $\mathcal V$, the spectral projections, and $L_{\Comp}^{-1}$ commute
with the action of $G$. Starting from the equivariant maps
$\Psi^{[1]}$ and $F^{[1]}$, the recursion preserves equivariance.
\end{proof}

\subsection{Exact coordinates and the complement equation}
Fix $N$ for the remainder of the construction, and abbreviate $\Psi=\Psi^{[N]}$, $F=F^{[N]}$. For an actual solution define
\begin{equation}
 b=\Pi_Kh,\qquad q=h-\Psi(b)\in\Comp,\qquad Q_N(b)=\Id-\mathrm{D}\Psi(b)\Pi_K.
\end{equation}
Then
\begin{equation}
 b_s=\mathcal V_K(b,q):=\Pi_K\mathcal V(\Psi(b)+q),\qquad
 q_s=\mathcal V_{\Comp}(b,q):=Q_N(b)\mathcal V(\Psi(b)+q).\label{eq:exact-coordinates}\index{V coordinate fields@$\mathcal V_K,\mathcal V_{\Comp}$}\index{q complement@$q$}
\end{equation}
Recall from \eqref{eq:defectN} that
$E_N(b)=\mathcal V(\Psi(b))-\mathrm{D}\Psi(b)F(b)$.
Since $\Pi_K\mathrm{D}\Psi=\Id_K$ and $Q_N\mathrm{D}\Psi=0$,
\begin{equation}
 \mathcal V_K(b,0)=F(b)+\Pi_KE_N(b),\qquad
 \mathcal V_{\Comp}(b,0)=Q_N(b)E_N(b).\label{eq:coordinate-defect}\index{Q projection@$Q_N(b)$}
\end{equation}
Taylor's formula gives the following equations for the neutral component
and the remainder.
\begin{lemma}\label[lemma]{lem:exact-coordinates}
Let $h=\Psi(b)+q$ be an ancient SMRF converging smoothly to zero.
Then
\begin{align}
 b_s-F(b)&=\Pi_KE_N(b)
 +\Pi_K\int_0^1\mathrm{D}\mathcal V(\Psi(b)+\theta q)[q]\dd\theta,
 \label{eq:exact-center}\\
 q_s&=B_h(s)q+R_N(s),\label{eq:exact-complement}
\end{align}
where
\begin{align}
 B_h(s)v&=Q_N(b(s))\int_0^1\mathrm{D}\mathcal V(\Psi(b(s))+\theta q(s))[v]\dd\theta,
 \label{eq:exact-complement-operator}\\
 R_N(s)&=Q_N(b(s))E_N(b(s)).\label{eq:exact-complement-defect}
\end{align}
Moreover, $B_h=L_{\Comp}+o(1)$ in the operator norm from $\XP(I_s)$ to $\YP(I_s)$, and $\|R_N(s)\|_{1}\le C_N|b(s)|^{N+1}$.
\end{lemma}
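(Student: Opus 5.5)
The plan is to derive both identities directly from \eqref{eq:exact-coordinates} and \eqref{eq:coordinate-defect} by the fundamental theorem of calculus in the $q$-variable, and then to estimate the two resulting terms. Since $h$ converges smoothly to zero, $b=\Pi_Kh\to0$. Moreover $q=h-\Psi(b)\to0$ in every fixed spatial norm, because $\Psi$ is a polynomial with $\Psi(0)=0$. Hence, on a tail, $\Psi(b)+\theta q$ lies in the ball where \eqref{eq:spatial-nonlinear} holds.

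\emph{Step 1: the identities.} We write
\[
\mathcal V(\Psi(b)+q)=\mathcal V(\Psi(b))+\int_0^1\mathrm{D}\mathcal V(\Psi(b)+\theta q)[q]\dd\theta .
\]
Applying $\Pi_K$ and using $\mathcal V_K(b,0)=F(b)+\Pi_KE_N(b)$ gives \eqref{eq:exact-center}. Applying $Q_N(b)$ and using $\mathcal V_{\Comp}(b,0)=Q_N(b)E_N(b)$ gives $q_s=B_h(s)q+R_N(s)$ with the stated $B_h$ and $R_N$. Two preliminary checks are needed here. First, $q\in\Comp$: indeed $\Pi_Kq=b-\Pi_K\Psi(b)=0$, since $H_j\in\Comp$. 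Second, $Q_N(b)$ maps $\Sli$ into $\Comp$, because $\Pi_KQ_N=\Pi_K-\Pi_K\mathrm{D}\Psi\,\Pi_K=0$. Consequently $B_h(s)$ is an operator on $\Comp$-valued paths.

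\emph{Step 2: the defect bound.} $Q_N(b)$ is bounded on $C^{1,1/2}$ uniformly for small $b$, because $\mathrm{D}\Psi(b)\Pi_K$ is a finite-rank smooth operator. Then \cref{thm:formal} gives $\|R_N(s)\|_1\le C\|E_N(b(s))\|_1\le C_N|b(s)|^{N+1}$.

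\emph{Step 3: $B_h=L_{\Comp}+o(1)$.} We decompose
\[
B_h v - L_{\Comp}v=(Q_N(b)-\Id)Lv+Q_N(b)\int_0^1\mathrm{D}\mathcal U(\Psi(b)+\theta q)[v]\dd\theta .
\]
For $v\in\Comp$ we have $Lv=L_{\Comp}v$. The operator $Q_N(b)-\Id=-\mathrm{D}\Psi(b)\Pi_K$ equals $-\Pi_K-O(|b|)$. Its $\Pi_K$ part annihilates $Lv\in\Comp$, and the remainder has operator norm $O(|b|)$ from $C^{1,1/2}$ to $C^{1,1/2}$. Next, \eqref{eq:spatial-nonlinear} bounds the integral term by $C(\|\Psi(b)\|_3+\|q\|_3)\|v\|_3$. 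Taking suprema over $I_s$ converts these spatial bounds into strip-norm bounds; the paper notes this after \eqref{eq:spatial-nonlinear}, with constants independent of $s$. This gives an operator norm $O\bigl(\sup_{I_s}(|b|+\|q\|_3)\bigr)=o(1)$.

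\emph{Main obstacle.} The one delicate point is the time-derivative component of the strip norms in Appendix~\ref{app:parabolic}, if $\XP,\YP$ include $\partial_s$ or Hölder-in-time control. In that case the coefficient $\Psi(b(s))+\theta q(s)$ must also have small time regularity on $I_s$. I would obtain this from \cref{lem:interior-smoothing}, which bounds $h_s$ uniformly. Interpolating that bound against the smallness of $h$ makes $b_s$ and $q_s$ small on $I_s$, so that products with the coefficient are controlled with $o(1)$ constants.
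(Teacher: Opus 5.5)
Your proposal is correct and matches the paper, which obtains both identities directly from Taylor's formula (the fundamental theorem of calculus in $q$) combined with \eqref{eq:coordinate-defect}, and gets the bounds from \cref{thm:formal} and \eqref{eq:spatial-nonlinear}. The time-regularity obstacle you flag does not arise. The space $\YP(I_s)=C(I_s;\Comp_1)$ carries only the supremum in time, and $B_h(s)$ acts pointwise in time, so the spatial bounds suffice. Moreover, $(Q_N(b)-\Id)Lv=0$ exactly, because $\Pi_KLv=0$ for $v\in\Comp$.
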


\subsection{Weighted complement estimates and asymptotic expansion}
A positive weight $w$ is called exponentially tempered on a tail if
\begin{equation}
 w(u)\le C_we^{\eps|u-s|}w(s).
\end{equation}
If $|\partial_s\log v|\to0$, then every fixed power $v^j$ is tempered with arbitrarily small $\eps$ after restriction to a sufficiently negative tail.

\begin{lemma}[Weighted complement estimate]\label[lemma]{lem:slaving}
Suppose $B=L_{\Comp}+o(1)$, $w\to0$ is tempered with exponent smaller than the background spectral gap, and $\|R\|_{\YP_w}<\infty$. Every ancient solution of $q_s=Bq+R$ with uniformly bounded unit-strip solution norms has a unique decomposition
\begin{equation}
 q=\Gop_BR+z_+,\qquad \|\Gop_BR\|_{\XP_w}\le C\|R\|_{\YP_w},\label{eq:slaving}
\end{equation}
after a positive terminal normalization has been fixed. Here $z_+$ is an exponentially decaying homogeneous solution. If $w=|b|^j$ and \eqref{eq:slow-tempered} holds, then $z_+=o(|b|^A)$ for every fixed $A>0$.
\end{lemma}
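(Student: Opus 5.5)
The plan is to treat $q_s=Bq+R$ as a perturbation of the fixed-operator equation $q_s=L_{\Comp}q+R$. The fixed-operator weighted estimates of Appendix~\ref{app:parabolic} already give a Green operator $\Gop_0$, of the same form as in \eqref{eq:fast-LP} restricted to $\Comp$. It integrates the $E_-$ component from $-\infty$ and the $E_+$ component backward from the terminal time $s_*$, with zero $E_+$-terminal value; this is the positive terminal normalization. It is bounded $\YP_w\to\XP_w$ whenever $w$ is tempered with exponent $\eps<\min\{\lambda_+,\lambda_-\}$: the convolution kernels $e^{-\lambda_\pm|s-u|}$ then absorb $w(u)\le C_we^{\eps|u-s|}w(s)$, and the local estimate \eqref{eq:append-regularity} upgrades the $L^2$ bound to strip norms.

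Next, write $q_s=L_{\Comp}q+(B-L_{\Comp})q+R$ and define $\Gop_BR$ as the fixed point of
\[
 z\longmapsto \Gop_0\bigl((B-L_{\Comp})z+R\bigr)
\]
in $\XP_w$ on a sufficiently negative tail $(-\infty,s_*]$. Since $\|B-L_{\Comp}\|_{\XP(I_s)\to\YP(I_s)}=o(1)$ uniformly on the tail, choosing $s_*$ so negative that $C\sup\|B-L_{\Comp}\|<1/2$ gives a contraction. The resulting solution satisfies $\|\Gop_BR\|_{\XP_w}\le 2C\|R\|_{\YP_w}$, and its $E_+$ component vanishes at $s_*$.

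For the decomposition, put $z_+=q-\Gop_BR$. This is an ancient solution of the homogeneous equation $z_s=Bz$ with bounded strip norms. Apply to $z_+$ the same splitting argument used in \cref{lem:two-mode}: the Lipschitz differential inequalities \eqref{eq:ODIplus}--\eqref{eq:ODIminus}, with $\epsilon(s)$ replaced by $\|B-L_{\Comp}\|$, and with no neutral part since we are in $\Comp$. The $E_-$ part cannot dominate, because it would grow backward, contradicting boundedness. Hence $\|\Pi_-z_+\|\le\delta\|\Pi_+z_+\|$ on a tail, and $\partial_s\|\Pi_+z_+\|\ge\frac{\lambda_+}{2}\|\Pi_+z_+\|$. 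Therefore $z_+=O(e^{\lambda_+(s-s_*)/2})$ in $L^2$, and in strip norms by the local estimate. Equivalently, $z_+=\Gop_B(0)$ plus a homogeneous solution determined by its terminal $E_+$ datum, which is finite-dimensional.

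Uniqueness follows once the normalization is fixed. Two decompositions differ by a solution $v$ of $v_s=Bv$ that lies in $\XP_w$, where $w\to0$ and $w$ is subexponential, and that is exponentially decaying. The terminal normalization removes the $E_+$ terminal freedom, and the contraction then shows $v=0$.

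For the last claim, let $w=|b|^j$ with \eqref{eq:slow-tempered}. By \eqref{eq:slow-tempered}, for every $\eta>0$ we have $|b(s)|\ge c_\eta e^{\eta s}$ on a tail, as in the proof of \cref{prop:dichotomy}. Taking $\eta<\lambda_+/(2A)$ gives $e^{\lambda_+ s/2}=o(|b(s)|^A)$, hence $z_+=o(|b|^A)$.

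I expect the main obstacle to be the homogeneous step. The inequalities must be run with the time-dependent operator $B$, which is controlled only from $\XP$ to $\YP$ rather than in $L^2$, so an $L^2$ version of the $o(1)$ perturbation bound is needed. I would first try to obtain it by pairing in $L^2$ and interpolating with the bounded higher strip norms. If that fails, I would instead apply the contraction directly in an exponentially weighted space $\XP_\gamma^{\exp}$ to characterize $z_+$.
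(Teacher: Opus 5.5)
Your construction of $\Gop_BR$ follows the paper, which invokes \cref{prop:weighted-green}: the contraction $z\mapsto\Gop_0\bigl((B-L_{\Comp})z+R\bigr)$ on a remote tail with $\Pi_+(\cdot)(s_*)=0$. Your uniqueness argument and the final comparison $e^{\gamma s}=o(|b|^A)$ also match the paper.

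The gap is the step you flagged yourself. The cone argument of \cref{lem:two-mode} needs, at each time,
\[
 \|\Pi_\pm(B-L_{\Comp})z(s)\|_{L^2}\le\epsilon(s)\|z(s)\|_{L^2}.
\]
The hypothesis $B=L_{\Comp}+o(1)$ only controls $B-L_{\Comp}$ from $\XP(I_s)$ to $\YP(I_s)$. In the application, $B_h-L_{\Comp}$ contains second-order terms with small coefficients, so no pointwise $L^2\to L^2$ bound holds. Interpolation does not repair this. Only $\XP$-strip bounds (spatial $C^{3,1/2}$) are assumed, so there is no higher norm to interpolate against. Even with one, you would get a sublinear perturbation of size $\|z\|_{L^2}^{1-\theta}$, which destroys the backward exponential-growth contradiction. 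The linear-in-$\|h\|_{L^2}$ bound \eqref{eq:L2-nonlinear} came from the quadratic structure of $\mathcal U$, which a general coefficient $A$ lacks.

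Your fallback is the right route and is the paper's, but as phrased it is incomplete. Solving the contraction in $\XP_\gamma^{\exp}$ produces decaying homogeneous solutions $z_\xi$. It does not show that your given $z_+=q-\Gop_BR$ is one of them, since $z_+$ is only known to be bounded (because $\Gop_BR\in\XP_w$ and $w\to0$). The missing identification step has three parts.
\begin{enumerate}
\item Apply variation of constants on $E_-$ from an initial time $a$, and let $a\searrow-\infty$. Boundedness kills $e^{(s-a)L_{\Comp}}\Pi_-z_+(a)$.
\item Integrate the $E_+$ component from $s$ to $s_*$. Together these show that $z_+$ satisfies $z_+=e^{(s-s_*)L_{\Comp}}\xi+\Gop_0\bigl((B-L_{\Comp})z_+\bigr)$, with $\xi=\Pi_+z_+(s_*)$, in the constant-weight space.
\item Choose the tail so that the contraction works both for the constant weight (tempered with $\epsilon=0$) and for $e^{\gamma(s-s_*)}$. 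Then $z_\xi$ is also a fixed point in the constant-weight space. Uniqueness there gives $z_+=z_\xi$, hence $\|z_+\|_{\XP(I_s)}\le Ce^{\gamma(s-s_*)}|\xi|$.
\end{enumerate}
This is exactly the homogeneous part of \cref{prop:weighted-green} that the paper's proof uses.
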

\begin{proof}
By \cref{prop:weighted-green}, the normalized forced solution is the unique fixed point
\begin{equation}
 \Gop_BR=\Gop_0\bigl((B-L_\Comp)\Gop_BR+R\bigr),\qquad
 \Pi_+\Gop_BR(s_*)=0.
 \label{eq:slaving-Green}
\end{equation}
The same proposition gives its weighted solution estimate. Since $w\to0$, subtracting $\Gop_BR$ from $q$ leaves a bounded homogeneous ancient solution, which decays as $e^{\gamma s}$ for some $\gamma>0$. Finally either the polynomial lower bound in \eqref{eq:slow-tempered} or direct integration of its logarithmic estimate implies $e^{\gamma s}=o(|b|^A)$ for every fixed $A>0$.
\end{proof}

\begin{theorem}[Finite-order expansion]\label[theorem]{thm:actual-expansion}\index{Psi graph@$\Psi^{[N]},H_j$ (approximate graph)}
Every neutral-dominated ancient solution satisfies, for each integer $N\ge2$,
\begin{align}
 h(s)&=\Psi^{[N]}(b(s))+\rho_N(s)+z_{+,N}(s),\\
 \|\rho_N(s)\|_{3}&\le C_N|b(s)|^{N+1},\qquad
 \|z_{+,N}(s)\|_{3}=o(|b(s)|^A),\\
 b_s&=F^{[N]}(b)+O(|b|^{N+1}).\label{eq:actual-normal-reduction}\index{b@$b$}
\end{align}
The estimate for $z_{+,N}$ holds for every $A>0$.
The same estimates hold in every fixed higher spatial norm.
\end{theorem}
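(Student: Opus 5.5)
Fix $N\ge2$ and write $\Psi=\Psi^{[N]}$, $F=F^{[N]}$. For a neutral-dominated solution put $b=\Pi_Kh$ and $q=h-\Psi(b)$. Since $H_j$ takes values in $\Comp$, $q\in\Comp$. By \cref{prop:dichotomy}, $\Pi_{\Comp}h=O(|b|^2)$ in every fixed spatial norm, and $\Psi(b)-b=O(|b|^2)$. Hence $\|q\|_{\XP(I_s)}\le C|b(s)|^2$; in particular $q$ has uniformly bounded strip norms and tends to zero. The plan is a bootstrap in the decay weight of $q$. We apply \cref{lem:exact-coordinates} and \cref{lem:slaving} repeatedly, raising the exponent from $|b|^2$ to $|b|^{N+1}$.

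The first step is to record the structure of the complement equation \eqref{eq:exact-complement}. By \cref{lem:exact-coordinates}, $B_h=L_{\Comp}+o(1)$ as an operator $\XP(I_s)\to\YP(I_s)$, and $\|R_N(s)\|_1\le C_N|b(s)|^{N+1}$. Comparability of $|b|$ on unit intervals, which follows from $|(\log|b|)_s|\to0$, gives $\|R_N\|_{\YP(I_s)}\le C|b(s)|^{N+1}$. By \eqref{eq:slow-tempered}, every weight $w=|b|^j$ is tempered with arbitrarily small exponent on a sufficiently remote tail, so it lies below the spectral gap. \cref{lem:slaving} with $w=|b|^{N+1}$ then gives the decomposition
\[
q=\Gop_{B_h}R_N+z_{+,N}.
\]
Here $\rho_N:=\Gop_{B_h}R_N$ satisfies $\|\rho_N\|_{\XP_w}\le C\|R_N\|_{\YP_w}<\infty$, so $\|\rho_N(s)\|_3\le C_N|b(s)|^{N+1}$. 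The term $z_{+,N}$ is an exponentially decaying homogeneous solution, hence $o(|b|^A)$ for every $A$. This gives the expansion of $h$ directly.

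The main obstacle is that $B_h$ depends on $q$ itself. \cref{lem:slaving} needs only $B=L_\Comp+o(1)$ and uniformly bounded strip norms for $q$, which we have, so no iteration in the weight should be needed. If the $o(1)$ operator error is only controlled relative to the weight actually assumed in \cref{prop:weighted-green}, the fallback is an induction on $j=2,\ldots,N+1$. Assuming $\|q\|_{\XP(I_s)}\le C|b|^j$, we write $B_hq$ as $L_\Comp q$ plus a forcing of size $|b|\cdot|b|^j$, and reapply the fixed-operator estimate to reach the weight $|b|^{\min(j+1,N+1)}$. The homogeneous parts produced along the way are absorbed into $z_{+,N}$.

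For the reduced equation, insert the decomposition into \eqref{eq:exact-center}. The term $\Pi_KE_N(b)$ is $O(|b|^{N+1})$ by \cref{thm:formal}. The integral term is bounded by $C\|q\|_3$ in $L^2$, and this gives $O(|b|^{N+1})+o(|b|^A)$. Taking $A=N+1$ yields $b_s=F(b)+O(|b|^{N+1})$. Finally, the same argument in the strip spaces of higher spatial order gives the higher-norm estimates. \cref{lem:interior-smoothing} supplies the required uniform bounds, together with the fixed-order versions of \eqref{eq:spatial-nonlinear} and the appendix estimates. Alternatively, one can interpolate the $C^3$ bounds against uniform $C^m$ bounds at the cost of an arbitrarily small loss of exponent, and then compensate by working with a larger $N$.
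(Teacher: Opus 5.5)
Your proposal is correct and follows the paper's proof: you apply \cref{lem:slaving} once with the weight $|b|^{N+1}$ to the complement equation of \cref{lem:exact-coordinates} to get $q=\rho_N+z_{+,N}$, and then substitute this into the neutral equation, so the fallback induction in the weight is unnecessary. The only difference is minor: you bound the neutral integral term crudely by $C\|q\|_3$, whereas the paper uses $\Pi_KLq=0$ to get $C(|b|+\|q\|_3)\|q\|_3=O(|b|^{N+2})$, and both suffice for $b_s=F^{[N]}(b)+O(|b|^{N+1})$.
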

\begin{proof}
Set $w(s)=|b(s)|^{N+1}$. The weight is tempered by
\eqref{eq:slow-tempered}. The estimate for $E_N$ gives
\[
 \|R_N(s)\|_{1}\le C_N|b(s)|^{N+1}.
\]
Comparability of $w$ on unit intervals gives
$\|R_N\|_{\YP_w}<\infty$.
Apply \cref{lem:slaving} to the complement equation in
\cref{lem:exact-coordinates} to obtain
$q=\rho_N+z_{+,N}$ with the stated estimates. In particular,
\[
 \|q(s)\|_{3}\le C_N|b(s)|^{N+1}
\]
on a sufficiently negative tail.
For the neutral equation in \cref{lem:exact-coordinates},
$\Pi_KLq=0$ implies
\[
 \left|\Pi_K\int_0^1
       \bigl(\mathrm{D}\mathcal V(\Psi(b)+\theta q)-L\bigr)[q]\dd\theta\right|
 \le C\bigl(|b|+\|q\|_{3}\bigr)\|q\|_{3}
 =O(|b|^{N+2}).
\]
The defect term satisfies $|\Pi_KE_N(b)|\le C_N|b|^{N+1}$.
This proves \eqref{eq:actual-normal-reduction}.
\end{proof}

\subsection{The first coefficients}
We record the first coefficients of the approximate invariant graph. The quadratic coefficient is used in Section~\ref{sec:cp}, and the cubic comparison exhibits the transport correction to the static graph.

\begin{lemma}\label[lemma]{lem:low-order}
Write $\Phi^{(j)}=\mathrm{D}^j\Phi_{\bg}$ and $W_j=[W]_j$.
For $N\ge4$, the coefficients of \cref{thm:formal} satisfy
$\Phi^{(2)}(b,b)\in\Sli$ and
\begin{align}
 H_2(b)&=-L_{\Comp}^{-1}\Pi_{\Comp}\Phi^{(2)}(b,b),\qquad
 F_2(b)=\Pi_K\Phi^{(2)}(b,b),\label{eq:quadratic-coeff}\\
 F_3(b)&=\Pi_K\left(2\Phi^{(2)}(b,H_2(b))+\frac13\Phi^{(3)}(b,b,b)\right).
\end{align}
Moreover, $H_2=W_2$ and
\begin{equation}
 H_3-W_3=L_{\Comp}^{-1}\mathrm{D}W_2F_2.
 \label{eq:static-dynamic-difference}\index{W static graph@$W(b)$}
\end{equation}
Along the static graph, $Y(b+W(b))=Y_3(b)+O(|b|^4)$ in $C^{2,1/2}$,
where $Y_3(b):=[Y(b+W(b))]_3$.
The quartic reduced coefficient is
\begin{align}
 F_4(b)=\Pi_K\bigl(&2\Phi^{(2)}(b,H_3(b))+\Phi^{(2)}(H_2(b),H_2(b))\notag\\
 &+\Phi^{(3)}(b,b,H_2(b))+\frac1{12}\Phi^{(4)}(b,b,b,b)
 +\Lie_{Y_3(b)}b\bigr).
 \label{eq:quartic}
\end{align}
\end{lemma}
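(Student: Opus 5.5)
The plan is to run the recursion \eqref{eq:recursion} of \cref{thm:formal} by hand for $j=2,3,4$, using the Taylor expansion of $\mathcal V$ at zero restricted to the slice. Write $\mathcal V(h)=2\Phi(\bg+h)+\Lie_{Y(h)}(\bg+h)$. Expanding the first term gives $2\Phi(\bg+h)=Lh+\Phi^{(2)}(h,h)+\frac13\Phi^{(3)}(h,h,h)+\frac1{12}\Phi^{(4)}(h,h,h,h)+\dots$, with the normalization $\Phi^{(j)}=\mathrm D^j\Phi_{\bg}$, so that $2\cdot\frac1{j!}$ gives the displayed coefficients. This uses $2\mathrm D\Phi_{\bg}=L$ on $\Sli$ from \cref{prop:slice-field}.

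For the gauge term, the first task is to decide at which order $Y$ enters. Since $Y(0)=\mathrm DY(0)=0$, the slice equation \eqref{eq:slice-equation} at second order reads $-2PY_2=-\mathrm{div}_{\bfv}2\Phi^{(2)}(h,h)/2$. Once $\Phi^{(2)}(b,b)\in\Sli$ is shown, this gives $Y_2(b)=0$ on $K$. To show $\Phi^{(2)}(b,b)\in\Sli$, differentiate the nonlinear Bianchi identity \eqref{eq:nonlinear-bianchi} twice at $\bg$ along $b\in K$. This yields $\mathrm{div}_{\bfv}\Phi^{(2)}(b,b)=-2\,\mathrm D(\mathrm{div}_{f_g}^g)[b]\,\mathrm D\Phi[b]$, and the right side vanishes because $\mathrm D\Phi[b]=\frac12Lb=0$.

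Along the static graph, \cref{lem:actual-defect} gives $\|Y(h_b)\|_2\le C|b||\Ob(b)|$, and $\Ob(b)=O(|b|^2)$. Hence $Y(b+W(b))=O(|b|^3)$, which is the statement about $Y_3$. For the dynamic graph I expect $\Psi^{[N]}-(b+W)=O(|b|^3)$, and \eqref{eq:gauge-difference} then shows that $Y(\Psi(b))$ and $Y(b+W(b))$ agree up to $O(|b|^4)$. Consequently the only gauge contribution through degree four is $\Lie_{Y_3(b)}b$: the terms $\Lie_{Y_3}\bg$ lie in the image of $\mathrm{div}^*$, and I will check that they are absorbed by the slice condition.

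At degree two, $R_2=\Phi^{(2)}(b,b)$, which gives $H_2$ and $F_2$ directly. The static graph satisfies $\Pi_{\Comp}\Pi_{\Sli}\Phi(G_b)=0$. At degree two this reads $\frac12LW_2+\frac12\Pi_{\Comp}\Phi^{(2)}(b,b)=0$, which gives $H_2=W_2$.

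At degree three, $R_3=2\Phi^{(2)}(b,H_2)+\frac13\Phi^{(3)}(b,b,b)-\mathrm DH_2(b)F_2(b)$, where the last term is the transport term coming from $\mathrm D\Psi^{[2]}F^{[2]}$. Since $\mathrm DH_2F_2\in\Comp$, the $K$-projection gives $F_3$. The complement projection gives $H_3$. The static degree-three equation is identical except that the transport term is absent. Subtracting the two and using $H_2=W_2$ yields $H_3-W_3=L_{\Comp}^{-1}\mathrm DW_2F_2$.

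At degree four, collect the quartic terms of $\mathcal V(b+H_2+H_3)$. From $\Phi^{(2)}(h,h)$ they are $2\Phi^{(2)}(b,H_3)+\Phi^{(2)}(H_2,H_2)$. From $\frac13\Phi^{(3)}$ they are $\frac13\cdot3\,\Phi^{(3)}(b,b,H_2)$. From the fourth-order term they are $\frac1{12}\Phi^{(4)}(b,b,b,b)$. The gauge contributes $\Lie_{Y_3}b$. The transport terms $\mathrm DH_2F_3+\mathrm DH_3F_2$ lie in $\Comp$, so they drop out under $\Pi_K$.

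The main obstacle I expect is the gauge bookkeeping. First, one must verify that $\Pi_K\Lie_{Y_3}\bg=0$. This holds because $\Lie_{Y_3}\bg=-2\mathrm{div}^*Y_3$ is orthogonal to $\Sli\supset K$. Second, one must verify that the quartic part of $Y(\Psi(b))$ does not contribute. The quartic $Y_4$ enters only through $\Lie_{Y_4}\bg\perp K$, which disposes of it. Finally, the difference between $Y$ on the static and dynamic graphs appears at degree four only through $\Lie_{Y}\bg$, so it is harmless.
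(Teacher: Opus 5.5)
Your proposal is correct and follows essentially the same route as the paper. The twice-differentiated Bianchi identity gives $\Phi^{(2)}(b,b)\in\Sli$ and hence a vanishing quadratic gauge term, and the degree-two static equation gives $H_2=W_2$. Subtracting the degree-three complement equations then yields \eqref{eq:static-dynamic-difference}, and \eqref{eq:gauge-static}--\eqref{eq:gauge-difference} identify $Y_3$ on both graphs, while every $\Lie_X\bg$ term drops out under $\Pi_K$ and $\Pi_{\Comp}$. One cosmetic point: your displayed $R_3$ omits the term $\Lie_{Y_3}\bg$, which is harmless precisely because of the orthogonality to $\Sli$ that you note at the end.
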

\begin{proof}
Differentiate the nonlinear Bianchi identity \eqref{eq:nonlinear-bianchi}
twice along $\bg+tb$. Since $\Phi(\bg)=0$ and $\mathrm{D}\Phi_{\bg}[b]=0$,
this gives $\mathrm{div}_{\bfv}\Phi^{(2)}(b,b)=0$.
The degree-two part of the slice equation \eqref{eq:slice-equation}
then gives $\mathrm{D}^2Y(0)[b,b]=0$. Hence $\mathcal V_2(b,b)=\Phi^{(2)}(b,b)$,
and \eqref{eq:recursion} gives \eqref{eq:quadratic-coeff}.
The degree-two part of \eqref{eq:static-graph} gives the same equation
for $W_2$, so $H_2=W_2$.

For every vector field $X$, the fixed Hodge decomposition gives
$\Pi_{\Sli}\Lie_X\bg=0$. Also,
$\Pi_K\mathrm{D}\Psi^{[N]}=\Id_K$, so projection of the invariance equation
onto $K$ gives $F_j=[\Pi_K\mathcal V(\Psi^{[N]})]_j$ for $j\le N$.
The quadratic gauge term vanishes, and every cubic gauge contribution
is a Lie derivative of $\bg$, with zero $K$-projection. Expanding $2\Phi$ to degree
three therefore gives the stated formula for $F_3$.
Projection of the degree-three equation onto $\Comp$ gives
\[
 L_{\Comp}H_3+
 \Pi_{\Comp}\left(2\Phi^{(2)}(b,H_2)+\frac13\Phi^{(3)}(b,b,b)\right)
 =\mathrm{D}H_2F_2.
\]
The static equation has the same left-hand side with $W_3$ in place
of $H_3$ and zero on the right. Subtracting proves
\eqref{eq:static-dynamic-difference}.

Since $\Ob(b)=O(|b|^2)$, \eqref{eq:gauge-static} gives
$Y(b+W(b))=O(|b|^3)$ in $C^{2,1/2}$.
Furthermore, $H_2=W_2$ implies
$\Psi^{[N]}(b)-(b+W(b))=O(|b|^3)$ in $C^{3,1/2}$.
Equation~\eqref{eq:gauge-difference} therefore gives
\[
 \|Y(\Psi^{[N]}(b))-Y(b+W(b))\|_{2}
 \le C|b|^4.
\]
Thus $Y_3$ is also the cubic gauge coefficient along the approximate invariant graph.
At degree four, the term involving the quartic gauge coefficient is a
Lie derivative of $\bg$ and has zero $K$-projection. Thus the projected
gauge contribution is $\Pi_K\Lie_{Y_3}b$.
Expanding $2\Phi$ to degree four gives the other four terms in
\eqref{eq:quartic}.
\end{proof}

\section{The characteristic exponent of a slow ancient solution}\label{sec:exponent}
We prove the existence of the characteristic exponent $\ell_*$ by deriving an analytic-gradient equation on $K$ with an error controlled by the reduced gradient and higher powers of $|b|$. For this purpose we return to the static graph: its reduced entropy supplies the analytic potential needed to determine the decay rate and limiting direction.
Recall the static graph from \eqref{eq:static-graph}:
\[
 G_b=\bg+b+W(b),\qquad
 W:K\supset U_K\longrightarrow\Comp_3,\qquad
 W(0)=\mathrm{D}W(0)=0.
\]
It satisfies $\Pi_{\Comp}\Pi_{\Sli}\Phi(G_b)=0$, and its obstruction is
$\Ob(b)=\Pi_K\Phi(G_b)$.

\subsection{Reduced entropy}
Normalize the reduced entropy by
\begin{equation}
 \Ent(b)=2\bigl(\boldsymbol{\mu}(G_b,1)-\boldsymbol{\mu}(\bg,1)\bigr),\qquad \nabla\Ent(b)=2\Pi_K\Phi(G_b) +O\bigl(|b|\,|\Pi_K\Phi(G_b)|\bigr).
 \label{eq:entropy-gradient}\index{E entropy@$\Ent$ (reduced entropy)}
\end{equation}
The gradient is defined using the fixed pairing in
\eqref{eq:weighted-pairing}: for every $\xi\in K$,
\[
 \langle\nabla\Ent(b),\xi\rangle_{L^2}
 =\mathrm{d}\Ent_b[\xi]
 :=\left.\frac{\mathrm{d}}{\mathrm{d}t}\right|_{t=0}\Ent(b+t\xi).
\]
The gradient identity in \eqref{eq:entropy-gradient} follows from the next lemma.

\begin{lemma}[Gradient--obstruction relation]\label[lemma]{lem:gradient-obstruction}\index{O obstruction@$\Ob$}
There is an analytic family $T(b)\in\operatorname{End}(K)$ with
$T(0)=2\Id$ such that
\begin{equation}
 \nabla\Ent(b)=T(b)\Ob(b),\qquad
 c|\Ob(b)|\le|\nabla\Ent(b)|\le C|\Ob(b)|.
\end{equation}
Suppose at least one homogeneous reduced coefficient $F_j$ from
\cref{thm:formal} is nonzero, and set
\[
 r=\min\{j\ge2:F_j\not\equiv0\}.\index{r order@$r$}
\]
Then the first nonconstant homogeneous term of $\Ent$ has degree $r+1$,
and $F_r=\nabla\Ent_{r+1}$.
\end{lemma}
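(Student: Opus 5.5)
Two things are to be proved: the identity $\nabla\Ent=T(b)\Ob(b)$ with comparability, and the identification of the leading homogeneous term of $\Ent$ with $F_r$.

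\textbf{The gradient identity.} Differentiate $\Ent$ using Perelman's first variation at scale one. For $g$ near $\bg$ with canonical minimizer $f_g$, we have
\[
 \mathrm{D}\boldsymbol{\mu}(g,1)[k]=\int_M\ip{\Phi(g)}{k}_g\,(4\pi)^{-n/2}e^{-f_g}\mathrm{d}V_g .
\]
Normalization and the choice of $f_g$ kill the $\delta f$ terms. Apply this with $g=G_b$ and $k=\xi+\mathrm{D}W(b)\xi$. Write $\Phi(G_b)=\Ob(b)+\mathrm{div}^*\omega_b$ as in \cref{lem:actual-defect}, and compare the measure $e^{-f_{G_b}}\mathrm{d}V_{G_b}$ and the metric $G_b$ with the fixed data. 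This gives
\[
 \mathrm{d}\Ent_b[\xi]=2\ip{\Ob(b)}{\xi+\mathrm{D}W(b)\xi}_{L^2}+2\ip{\mathrm{div}^*\omega_b}{\xi+\mathrm{D}W(b)\xi}_{L^2}+\text{(weight corrections)}.
\]

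Each term on the right is then handled separately.
\begin{itemize}
\item The pairing $\ip{\Ob}{\mathrm{D}W\xi}_{L^2}$ vanishes, since $\mathrm{D}W\xi\in\Comp\perp K$.
\item The pairing $\ip{\mathrm{div}^*\omega_b}{\xi+\mathrm{D}W\xi}_{L^2}$ vanishes, because $\xi+\mathrm{D}W\xi\in\Sli$ and $\mathrm{div}^*$ is the adjoint of $\mathrm{div}_{\bfv}$.
\item The weight corrections are bilinear in $\Phi(G_b)$ and $\xi$, with coefficient $O(|b|)$.
\end{itemize}

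For the weight corrections, the key point is that they depend on $\Phi(G_b)$ only through $\Ob(b)$ and $\mathrm{div}^*\omega_b$. Both are linear in $\Ob(b)$ with analytic coefficients: $\omega_b$ is obtained from $\Ob(b)$ by inverting a perturbation of $P$, as in the proof of \cref{lem:actual-defect}. This is the one point needing care. It should follow by writing $P\omega_b=(\mathrm{div}_{\bfv}-\mathrm{div}^{G_b}_{f_{G_b}})\Ob(b)+(\ldots)\mathrm{div}^*\omega_b$ and inverting, which gives $\mathrm{div}^*\omega_b=S(b)\Ob(b)$ with $S$ analytic and $S(0)=0$.

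Collecting terms yields an analytic $T(b)$ with $T(0)=2\Id$ and $\nabla\Ent(b)=T(b)\Ob(b)$. Invertibility of $T(b)$ for small $b$ then gives $c|\Ob|\le|\nabla\Ent|\le C|\Ob|$.

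\textbf{Leading terms.} Next compare $\Ob$ with the reduced field. Projecting the invariance recursion onto $K$ gives $F_j=[\Pi_K\mathcal V(\Psi^{[N]})]_j$, as noted in the proof of \cref{lem:low-order}. Gauge terms $\Lie_Y\bg$ have zero $K$-projection, and $\Pi_K\Lie_Y h$ is of higher order. Since $Y(\Psi)-Y(b+W)$ is controlled by \eqref{eq:gauge-difference} and $Y(b+W(b))=O(|b||\Ob|)$, one expects
\[
 \Pi_K\mathcal V(\Psi^{[N]}(b))=2\Ob(b)+O(|b|\,|\Ob(b)|)+\text{(contributions from }\Psi^{[N]}-b-W(b)).
\]
The main obstacle is the last term. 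By \eqref{eq:static-dynamic-difference}, $H_j-W_j$ is generated by transport terms $\mathrm{D}H\,F$, which involve $F_i$ with $i<j$. I would argue by induction on degree: if $F_i=0$ for $i<r$, then $H_j=W_j$ for $j\le r$, since the recursions for $H_j$ and $W_j$ coincide while no transport term appears. Hence $\Psi^{[N]}-(b+W)=O(|b|^{r+1})$, and its effect on $\Pi_K\mathcal V$ is $O(|b|^{r+2})$, because $\mathrm{D}\mathcal V(h_b)-L=O(|b|)$ and $\Pi_KL=0$ on $\Comp$.

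Conversely, suppose $\Ob$ had a nonzero homogeneous term of degree $m<r$. The same induction applied up to degree $m$ would give $F_m=2[\Ob]_m\neq0$, contradicting the minimality of $r$. So both $[\Ob]_j=0$ for $j<r$ and $F_r=[2\Ob]_r$ hold, in the normalization where the factor is absorbed. Reconciling this normalization with the stated equality $F_r=\nabla\Ent_{r+1}$ requires checking that $\mathcal V=2\Phi+\Lie_Y(\bg+h)$ produces exactly the factor $2$ matching $T(0)=2\Id$.

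Combining with $\nabla\Ent=T(b)\Ob$ and $T(0)=2\Id$, the lowest-degree term of $\nabla\Ent$ is $2[\Ob]_r=F_r$, and it has degree $r$. Therefore the first nonconstant term of $\Ent$ has degree $r+1$, since $\Ent(0)=0$ and homogeneous gradients integrate by the Euler identity, and $\nabla\Ent_{r+1}=F_r$.
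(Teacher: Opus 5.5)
Your argument for the gradient identity is the paper's: the first variation along $G_b$, the splitting $\Phi(G_b)=\Ob(b)+\mathrm{div}^{*}\omega_b$, linear dependence of $\omega_b$ on $\Ob(b)$ via the perturbed $P$-equation from the weighted Bianchi identity, and Riesz representation. The factor $2$ you worry about is automatic: $\mathcal V=2\Phi+\cdots$ gives $F_r=2\Ob_r$, while $T(0)=2\Id$ gives $[\nabla\Ent]_r=2\Ob_r$.

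The gap is in the second part, at the claim that $F_i=0$ for $i<r$ forces $H_j=W_j$ for $j\le r$ ``since the recursions coincide while no transport term appears.'' They do not coincide termwise. The recursion for $H_j$ uses $\Pi_{\Comp}\mathcal V(\Psi^{[j-1]})$, which contains the gauge contribution $\Pi_{\Comp}\Pi_{\Sli}\Lie_{Y(\Psi^{[j-1]})}\Psi^{[j-1]}$. This term is absent from the static equation $\Pi_{\Comp}\Pi_{\Sli}\Phi(G_b)=0$, and you only dispose of gauge terms after projecting to $K$. Along the static graph $Y(b+W(b))=O(|b|\,|\Ob(b)|)$, so this term is $O(|b|^2|\Ob(b)|)$. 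It drops out of degree $j$ only once one knows that $\Ob$ vanishes through degree $j-2$, which is part of what is being proved. As ordered, your first induction therefore presupposes the outcome of your ``Conversely'' paragraph. You would have to interleave the two, deriving $[\Ob]_{j-1}=0$ from $F_{j-1}=0$ at each step, and the proposal does not do this.

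The paper avoids this by starting from the obstruction side.
\begin{itemize}
\item If $\Ob\equiv0$, then $\omega_b=0$ and $Y(b+W(b))=0$. The static graph is then exactly invariant with zero field, and uniqueness in \cref{thm:formal} gives $F_j=0$ for every $j$.
\item Otherwise let $q$ be the first nonzero degree of $\Ob$. \Cref{lem:actual-defect} and \eqref{eq:gauge-static} give $\|\mathcal V(b+W(b))-2\Ob(b)\|_{1}\le C|b|\,|\Ob(b)|$ in all components, not just the $K$-component. Hence $\mathcal V(b+W(b))=2\Ob_q(b)+O(|b|^{q+1})$, and the transport term $\mathrm{D}W(b)[2\Ob_q(b)]$ is also $O(|b|^{q+1})$.
\item So the degree-$q$ truncation of $b+W(b)$, with reduced field $2\Ob_q$, satisfies the invariance equation through degree $q$. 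Uniqueness yields $F_j=0$ for $j<q$ and $F_q=2\Ob_q\neq0$, i.e.\ $q=r$.
\end{itemize}
This handles $q<r$ and $q>r$ at once and never compares $\Psi^{[N]}$ with $b+W(b)$ beyond degree $q$. Your ``Conversely'' paragraph is this same argument, restricted to the case $q<r$.
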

\begin{proof}
The elliptic equation in the proof of \cref{lem:actual-defect} is
\[
 P\omega_b
 =(\mathrm{div}_{\bfv}-\mathrm{div}^{G_b}_{f_{G_b}})
       \bigl(\Ob(b)+\mathrm{div}^{*}\omega_b\bigr).
\]
Project this equation onto the fixed space $\kk^\perp$ and move the term
containing $\omega_b$ on the right to the left. The resulting operator is
an analytic small perturbation of the invertible operator $P$ on
$\kk^\perp$. Its right-hand side is linear in $\Ob(b)$, with coefficients
vanishing at $b=0$. Thus $\omega_b$ is obtained by applying an analytic
family of linear operators of size $O(|b|)$ to $\Ob(b)$. The elliptic equation
for $Y(h_b)$, where $h_b=b+W(b)$, gives the same factorization for
$Y(h_b)$.

The entropy first variation \cite[Section~3]{Perelman}, restricted to
$G_b$, gives
\[
 \mathrm{d}\Ent_b[\xi]
 =2\int_M
 \left\langle\Phi(G_b),\xi+\mathrm{D}W(b)[\xi]\right\rangle_{G_b}
 \mathrm{d}\nu_{G_b},\qquad \xi\in K,
\]
where $\mathrm{d}\nu_g=(4\pi)^{-n/2}e^{-f_g}\mathrm{d}V_g$.
Substitute $\Phi(G_b)=\Ob(b)+\mathrm{div}^{*}\omega_b$ and the
factorization of $\omega_b$. The integral is linear in $\Ob(b)$ and
in $\xi$, and all its coefficients are analytic in $b$.
Riesz representation in the fixed pairing therefore gives
$\nabla\Ent(b)=T(b)\Ob(b)$ with analytic $T$.
At $b=0$, the moving pairing equals the fixed pairing,
$\mathrm{D}W(0)=0$, and the operator giving $\omega_b$ vanishes. Hence
$T(0)=2\Id$. In particular $T(b)=2\Id+O(|b|)$ and is invertible
near zero, proving \eqref{eq:entropy-gradient} and the norm comparison.

If $\Ob\equiv0$, these factorizations give $\Phi(G_b)=0$ and
$Y(h_b)=0$. The static graph is then invariant with zero reduced field,
and uniqueness in \cref{thm:formal} gives $F_j=0$ for every $j$.
Under the hypothesis of the lemma, there is therefore a first nonzero
homogeneous obstruction term $\Ob_q$, with $q\ge2$.
By \eqref{eq:actual-defect}, \eqref{eq:gauge-static}, and the definition
$\mathcal V(h_b)=2\Phi(G_b)+\Lie_{Y(h_b)}G_b$,
\[
 \|\mathcal V(h_b)-2\Ob(b)\|_{1} \le 2\|\mathrm{div}^{*}\omega_b\|_{1} +\|\Lie_{Y(h_b)}G_b\|_{1} \le C|b|\,|\Ob(b)|.
\]
Consequently
\[
 \mathcal V(b+W(b))=2\Ob_q(b)+O_{C^{1,1/2}}(|b|^{q+1}).
\]
Since $\mathrm{D}W(b)=O(|b|)$, the transport term $\mathrm{D}W(b)[2\Ob_q(b)]$
is also $O(|b|^{q+1})$. Replacing $W$ by its Taylor polynomial
through degree $q$ changes the invariance defect only at degree $q+1$
or higher. Thus the graph
$b+\sum_{j=2}^q[W]_j(b)$, with reduced field $2\Ob_q$, satisfies
the invariance equation through degree $q$.
Uniqueness in \cref{thm:formal} gives
\[
 H_j=[W]_j\quad(2\le j\le q),\qquad
 F_j=0\quad(j<q),\qquad F_q=2\Ob_q.
\]
It follows that $q=r$. Finally
$\nabla\Ent(b)=2\Ob_q(b)+O(|b|^{q+1})$, so the first nonconstant
term of $\Ent$ has degree $q+1$ and
$\nabla\Ent_{q+1}=2\Ob_q=F_q$.
\end{proof}

\subsection{Existence of the characteristic exponent}
Let $h(s)$ be a neutral-dominated ancient solution. Write it in static
coordinates as
\begin{equation}
 h=b+W(b)+v,\qquad b=\Pi_Kh,\qquad v\in\Comp.
\end{equation}
By \cref{prop:dichotomy}, $b(s)\ne0$ on a sufficiently negative tail,
$|b_s|\le C|b|^2$, and
\[
 \|v\|_{3}
 \le\|\Pi_{\Comp}h\|_{3}+\|W(b)\|_{3}
 \le C|b|^2.
\]

\begin{lemma}\label[lemma]{lem:static-equations}
On a sufficiently negative tail,
\begin{align}
 |b_s-\nabla\Ent(b)|
 &\le C|b|\,|\nabla\Ent(b)|+C|b|\|v\|_{3},
 \label{eq:gradient-error-basic}\\
 v_s&=(L_{\Comp}+A(s))v+R(b(s)),
 \label{eq:static-v}\\
 \|R(b)\|_{1}&\le C|b|\,|\nabla\Ent(b)|.
 \label{eq:static-forcing}
\end{align}
Here, with $h_b=b+W(b)$, the analytic forcing is
\[
 R(b)=\Pi_{\Comp}\mathcal V(h_b)-\mathrm{D}W(b)\Pi_K\mathcal V(h_b),
\]
and $A$ satisfies
\[
 \|Az\|_{\YP(I_s)}\le\delta(s_*)\|z\|_{\XP(I_s)},\qquad
 s\le s_*,\qquad \delta(s_*)\longrightarrow0
 \quad(s_*\searrow-\infty).
\]
\end{lemma}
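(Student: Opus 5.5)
We work in the static coordinates $h=h_b+v$, $h_b=b+W(b)$, and Taylor-expand $\mathcal V$ about $h_b$:
\[
 \mathcal V(h_b+v)=\mathcal V(h_b)+\int_0^1\mathrm{D}\mathcal V(h_b+\theta v)[v]\dd\theta .
\]
Two earlier inputs do most of the work. The first is the defect bound
\[
 \|\mathcal V(h_b)-2\Ob(b)\|_1\le C|b|\,|\Ob(b)|,
\]
obtained in the proof of \cref{lem:gradient-obstruction} from \eqref{eq:actual-defect} and \eqref{eq:gauge-static}. The second is the comparison $|\Ob|\asymp|\nabla\Ent|$ together with $\nabla\Ent=T(b)\Ob$ and $T(b)=2\Id+O(|b|)$. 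Combining them gives
\[
 \mathcal V(h_b)=\nabla\Ent(b)+O_{C^{1,1/2}}\bigl(|b|\,|\nabla\Ent(b)|\bigr),
\]
where $\nabla\Ent(b)\in K$. Every estimate below is derived from this single identity.

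\textbf{The neutral equation.} Projecting onto $K$ gives
\[
 b_s=\Pi_K\mathcal V(h_b)+\Pi_K\int_0^1\mathrm{D}\mathcal V(h_b+\theta v)[v]\dd\theta .
\]
Since $v\in\Comp$, we have $\Pi_KLv=0$, so only $\mathrm{D}\mathcal U$ contributes to the integral. By \eqref{eq:spatial-nonlinear} and $\|h_b+\theta v\|_3\le C|b|$, this term is bounded by $C|b|\,\|v\|_3$. The first term equals $\nabla\Ent(b)+O(|b|\,|\nabla\Ent|)$ by the identity above. Together these give \eqref{eq:gradient-error-basic}.

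\textbf{The complement equation.} Differentiate $v=h-b-W(b)$:
\[
 v_s=\mathcal V(h)-b_s-\mathrm{D}W(b)b_s=\Pi_{\Comp}\mathcal V(h)-\mathrm{D}W(b)\,\Pi_K\mathcal V(h).
\]
The second equality uses $b_s=\Pi_K\mathcal V(h)$, which makes the $K$-components of $\mathcal V(h)$ and $b_s$ cancel. Now substitute $\mathcal V(h)=\mathcal V(h_b)+\int_0^1\mathrm{D}\mathcal V(h_b+\theta v)[v]\dd\theta$. The terms at $v=0$ form exactly $R(b)$. The linear-in-$v$ terms give the operator
\[
 L_{\Comp}+A(s),\qquad A(s)z=\Pi_{\Comp}\!\int_0^1\mathrm{D}\mathcal U(h_b+\theta v)[z]\dd\theta-\mathrm{D}W(b)\,\Pi_K\!\int_0^1\mathrm{D}\mathcal V(h_b+\theta v)[z]\dd\theta .
\]
This uses $\Pi_{\Comp}Lz=Lz$ for $z\in\Comp$. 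To bound $A$, note three things: $\|\mathrm{D}\mathcal U(\cdot)[z]\|_1\le C|b|\,\|z\|_3$; $\|\mathrm{D}W(b)\|\le C|b|$; and $\mathrm{D}\mathcal V$ is bounded from $C^{3,1/2}$ to $C^{1,1/2}$. Hence $\|A(s)z\|_1\le C|b(s)|\,\|z\|_3$. Taking suprema over $I_s$, and using $|b|\to0$ uniformly on the tail, gives the strip bound with
\[
 \delta(s_*)=C\sup_{u\le s_*}|b(u)|\longrightarrow0 .
\]
One caveat: if the strip norm $\YP$ includes a time-derivative or Hölder-in-time component, we must also control the time variation of the coefficients of $A$. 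We handle this with $|b_s|\le C|b|^2$ and $\|v_s\|_1\le C|b|^2$, which follow from \cref{prop:dichotomy} and \cref{lem:interior-smoothing}.

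\textbf{The forcing bound.} This is where the work lies. Write
\[
 \mathcal V(h_b)=\nabla\Ent(b)+e(b),\qquad \|e(b)\|_1\le C|b|\,|\nabla\Ent|.
\]
Then
\[
 \Pi_{\Comp}\mathcal V(h_b)=\Pi_{\Comp}e(b),
\]
because $\nabla\Ent(b)\in K$; this is bounded by $C|b|\,|\nabla\Ent|$. The other term satisfies
\[
 \|\mathrm{D}W(b)\Pi_K\mathcal V(h_b)\|_3\le C|b|\,|\nabla\Ent|,
\]
since $\|\mathrm{D}W(b)\|\le C|b|$ and $|\Pi_K\mathcal V(h_b)|\le C|\nabla\Ent|$. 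Together these give \eqref{eq:static-forcing}.

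The main obstacle is that the forcing must be bounded by $|\nabla\Ent|$ rather than merely by $|b|^2$. The gain comes from the structure of the defect established earlier: the $\Comp$-component of $\Phi(G_b)$ is the Killing-orthogonal gradient part $\mathrm{div}^{*}\omega_b$, which is controlled by $|b|\,|\Ob|$, and the gauge field $Y(h_b)$ is likewise controlled by $|b|\,|\Ob|$. So the whole argument reduces to invoking \eqref{eq:actual-defect}, \eqref{eq:gauge-static}, and the gradient–obstruction comparison.
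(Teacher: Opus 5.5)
Your proposal is correct and follows the paper's proof essentially step for step. It uses the same exact identities $b_s=\Pi_K\mathcal V(h_b+v)$ and $v_s=(\Pi_{\Comp}-\mathrm{D}W(b)\Pi_K)\mathcal V(h_b+v)$, and the same mean-value definition of $A$; yours agrees with the paper's because $\Pi_KLz=0$ for $z\in\Comp$. It also uses the same static-graph bound $\|\mathcal V(h_b)-2\Ob(b)\|_1\le C|b|\,|\Ob(b)|$, combined with $\nabla\Ent=T(b)\Ob$, to obtain both the gradient error and the forcing estimate. Your caveat about time regularity is unnecessary: $\YP(I_s)=C(I_s;\Comp_{1})$ contains no time derivative, so the pointwise bound $\|A(s)z\|_1\le C|b(s)|\,\|z\|_3$ already gives the strip estimate.
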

\begin{proof}
Project $h_s=\mathcal V(h)$ onto $K$ and differentiate
$v=\Pi_{\Comp}h-W(b)$ to obtain the exact identities
\[
 b_s=\Pi_K\mathcal V(h_b+v),\qquad
 v_s=(\Pi_{\Comp}-\mathrm{D}W(b)\Pi_K)\mathcal V(h_b+v).
\]
For $z\in\Comp$, define
\[
 A(s)z=(\Pi_{\Comp}-\mathrm{D}W(b)\Pi_K)
       \int_0^1\bigl(\mathrm{D}\mathcal V(h_b+\theta v)-L\bigr)[z]\dd\theta.
\]
Since $L\Comp\subset\Comp$ and $\Pi_KL|_{\Comp}=0$, the mean-value
formula gives \eqref{eq:static-v} with the stated $R$.
The bounds \eqref{eq:spatial-nonlinear}, smooth convergence of $h$,
and the evolution equation give the smallness of $A$ in the stated
norms after taking suprema on unit intervals.

The static-graph estimate in the proof of
\cref{lem:gradient-obstruction} and $\nabla\Ent=T(b)\Ob$ give
\[
 \|\Pi_{\Comp}\mathcal V(h_b)\|_{1}\le C|b|\,|\Ob(b)|,
 \qquad
 |\Pi_K\mathcal V(h_b)-\nabla\Ent(b)|\le C|b|\,|\nabla\Ent(b)|.
\]
Also $|\Pi_K\mathcal V(h_b)|\le C|\Ob(b)|$ and $\mathrm{D}W(b)=O(|b|)$.
The formula for $R$ therefore proves \eqref{eq:static-forcing}.
For the change in the neutral equation, $\mathrm{D}\mathcal V(0)=L$ and
$\Pi_KLv=0$ imply
\[
 |\Pi_K\mathcal V(h_b+v)-\Pi_K\mathcal V(h_b)| \le C\bigl(|b|+\|v\|_{3}\bigr)\|v\|_{3} \le C|b|\|v\|_{3},
\]
where the last inequality uses $\|v\|_{3}\le C|b|^2$.
Combining these estimates proves \eqref{eq:gradient-error-basic}.

\end{proof}

The proof of the following proposition is similar to the iteration of
Choi--Hung \cite[Lemma~6.9 and Proposition~6.1]{CH}.

\begin{proposition}\label[proposition]{prop:gradient-error}
For every integer $J\ge2$ there are $C_J$ and a sufficiently negative
tail such that
\begin{equation}
 |b_s-\nabla\Ent(b)|
 \le C_J\bigl(|b|\,|\nabla\Ent(b)|+|b|^J\bigr).
 \label{eq:gradient-subordinate}
\end{equation}
In particular, the estimate
\begin{equation}
 |b_s-\nabla\Ent(b)|
 \le C_\eps\bigl(|b|^{1-\eps}|\nabla\Ent(b)|+|b|^{1/\eps}\bigr)
 \label{eq:manuscript-error}
\end{equation}
holds for every sufficiently small $\eps>0$.
\end{proposition}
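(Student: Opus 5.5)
By \eqref{eq:gradient-error-basic}, it suffices to bound the complement $v$ in static coordinates by
\begin{equation*}
 \|v(s)\|_{3}\le C_J\bigl(|\nabla\Ent(b(s))|+|b(s)|^{J-1}\bigr)
\end{equation*}
on a sufficiently negative tail. Multiplying by $|b|$ then gives \eqref{eq:gradient-subordinate}. The second estimate \eqref{eq:manuscript-error} follows by taking $J\ge1/\eps$ and using $|b|\le|b|^{1-\eps}$ for small $|b|$.

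The plan is to estimate $v$ through the equation \eqref{eq:static-v}, $v_s=(L_{\Comp}+A)v+R(b)$. The forcing satisfies $\|R(b)\|_1\le C|b|\,|\nabla\Ent(b)|$ by \eqref{eq:static-forcing}. Since $|\nabla\Ent(b(s))|$ is not a priori tempered, I would not apply \cref{lem:slaving} with the weight $|\nabla\Ent|$ directly. Instead, following Choi--Hung, I would use the Duhamel representation from \cref{prop:weighted-green}: a kernel decaying like $e^{-\gamma|s-u|}$ with $\gamma$ below the spectral gap, plus an exponentially decaying homogeneous piece $z_+$. The homogeneous piece is $o(|b|^A)$ for every $A$ by \cref{lem:slaving}, hence absorbed into $|b|^{J-1}$. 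The perturbation $A$ is absorbed because $\delta(s_*)\to0$. This gives the convolution bound
\begin{equation*}
 \|v(s)\|_3\le C\int e^{-\gamma|s-u|}\,|b(u)|\,|\nabla\Ent(b(u))|\,\dd u+C_J|b(s)|^{J}.
\end{equation*}

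The key step is comparing $|\nabla\Ent(b(u))|$ with $|\nabla\Ent(b(s))|$ for $|u-s|$ of order one or larger, which requires a Lipschitz-type control in time. I would use $|\nabla\Ent(b(u))-\nabla\Ent(b(s))|\le C\sup|b_s|\,|u-s|$, since $\nabla^2\Ent$ is bounded, together with the current error bound for $b_s$. I would then iterate. Start from the a priori bound $\|v\|_3\le C|b|^2$. By \eqref{eq:gradient-error-basic}, this gives $|b_s|\le C|\nabla\Ent(b)|+C|b|^3$. Inserting this into the convolution bound shows $|\nabla\Ent(b(u))|\le |\nabla\Ent(b(s))|+C|u-s|\bigl(\sup|\nabla\Ent|+|b|^{j}\bigr)$ along the trajectory.

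To avoid circularity in the supremum, I would work with the quantity $M(s)=\sup_u e^{-\gamma'|s-u|}|\nabla\Ent(b(u))|$ for some $0<\gamma'<\gamma$. The Lipschitz bound shows $M(s)\le C\bigl(|\nabla\Ent(b(s))|+|b(s)|^{j+1}\bigr)$, using temperedness of $|b|$ from \eqref{eq:slow-tempered}. Each pass of the argument therefore improves the bound $\|v\|_3\le C(|\nabla\Ent|+|b|^{j})$ to the same bound with $|b|^{j+1}$, gaining one power of $|b|$ from the factor $|b(u)|$ in the forcing. After $J$ steps this yields the claimed bound on $\|v\|_3$.

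I expect the main obstacle to be the Lipschitz comparison of $|\nabla\Ent|$ along the trajectory at different times, i.e.\ controlling $M(s)$ by the pointwise value $|\nabla\Ent(b(s))|$. If this fails near near-critical points, I would instead follow the alternative in Choi--Hung: replace $|\nabla\Ent|$ by the tempered maximal weight $M(s)$ itself throughout, and derive \eqref{eq:gradient-subordinate} with $M$ on the right. I would then recover the pointwise statement via $M\le C(|\nabla\Ent|+|b|^J)$, which holds because $\nabla\Ent$ varies at rate at most $C(M+|b|^J)$ along the trajectory.
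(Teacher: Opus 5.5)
Your architecture is in substance the paper's. You reduce via \eqref{eq:gradient-error-basic} to a bound on $\|v\|_3$, bootstrap a velocity bound of the form $|b_s|\le C_j(|\nabla\Ent(b)|+|b|^j)$, and use it to make the forcing weight slowly varying. You then apply the weighted complement estimate to \eqref{eq:static-v}, absorb the exponentially decaying homogeneous part using $|b|\ge c/|s|$, and feed the improved bound back into \eqref{eq:gradient-error-basic}.

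The gap is exactly at the step you flag as the main obstacle, and the justification you give there does not work. With a merely bounded Hessian and $|b_s|\le C_1(|\nabla\Ent(b)|+|b|^{j+1})$, you only get $\bigl|\partial_s|\nabla\Ent(b)|\bigr|\le C_HC_1(|\nabla\Ent(b)|+|b|^{j+1})$ with an $O(1)$ constant. Gronwall then allows $|\nabla\Ent(b(u))|+|b(u)|^{j+1}$ to vary like $e^{C_HC_1|u-s|}$. Your claimed bound $M(s)\le C(|\nabla\Ent(b(s))|+|b(s)|^{j+1})$ would therefore require $C_HC_1<\gamma'$, and nothing guarantees this. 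Temperedness of $|b|$ controls only the $|b|^{j+1}$ term, not the gradient term. Your fallback has the same defect: from a rate bound $C(M+|b|^J)$ you get $M(s)\le|\nabla\Ent(b(s))|+(C/\gamma')M(s)+\dots$, which closes only if $C/\gamma'<1$.

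The missing observation is that the Hessian is small, not just bounded: $\|\nabla^2\Ent(b)\|\le C|b|$. This holds because $\nabla\Ent=T(b)\Ob(b)$ with $\Ob(b)=O(|b|^2)$, i.e.\ the second variation vanishes on $K$. Combined with the inductive velocity bound $|b_s|\le C_jw_j$, where $w_j=|\nabla\Ent(b)|+|b|^j$, this gives $|\partial_s\log w_j|\le C_j|b|\to0$. Hence $|b|w_j$ is tempered with arbitrarily small exponent. This is how the paper argues.

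Once you have this, the convolution/maximal-function device is unnecessary: the paper applies \cref{prop:weighted-green} directly with weight $|b|w_j$. That also sidesteps your pointwise-kernel claim for the perturbed operator, which that proposition does not literally provide. The resulting bound $\|v\|_3\le C_j|b|w_j$ gains two powers of $|b|$ per step in \eqref{eq:gradient-error-basic}, which is why the paper inducts over even $j$. One power per step, as you propose, would also suffice for your target.
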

\begin{proof}
The induction uses a velocity bound to make the forcing weight slowly varying. The weighted complement estimate then improves the remainder, which gives the next velocity bound.

For each integer $j\ge2$, put
\[
 w_j(s)=|\nabla\Ent(b(s))|+|b(s)|^j.
\]
We prove by induction over the even integers $j\ge2$ that
\[
 |b_s(s)|\le C_jw_j(s)
\]
on a sufficiently negative tail. The case $j=2$ follows from
$|b_s|\le C|b|^2$ in \cref{prop:dichotomy}.

Assume the estimate holds for some even $j\ge2$.
Since $\|\nabla^2\Ent(b)\|\le C|b|$, the inductive velocity bound gives
\[
 \bigl|\partial_s|\nabla\Ent(b)|\bigr| \le C|b|\,|b_s|\le C_j|b|w_j,\qquad \bigl|\partial_s|b|^j\bigr| \le j|b|^{j-1}|b_s|\le C_j|b|w_j.
\]
These derivative inequalities hold almost everywhere. Because $b\ne0$,
$w_j>0$, and hence
\[
 |\partial_s\log w_j|\le C_j|b|\longrightarrow0.
\]
Together with \eqref{eq:slow-tempered}, this implies
$|\partial_s\log(|b|w_j)|\to0$.
Thus $|b|w_j$ is exponentially tempered with arbitrarily small exponent
on a sufficiently negative tail. In particular it is comparable at
any two times in a unit interval.

By \eqref{eq:static-forcing},
\[
 \|R(b(s))\|_{1}\le C|b(s)|w_j(s).
\]
The weight comparability just proved using the inductive velocity
bound therefore gives
\[
 \|R(b(\cdot))\|_{\YP(I_s)}
 \le C_j|b(s)|w_j(s).
\]
The coefficient $A$ satisfies the smallness condition in
\cref{lem:static-equations}, and $v$ has uniformly bounded unit-strip
solution norms by smooth convergence and its evolution equation.
Apply Proposition~\ref{prop:weighted-green} in \cref{app:parabolic},
as formulated in \cref{lem:slaving},
to \eqref{eq:static-v} with weight $|b|w_j$. It gives
\[
 \|v(s)\|_{3}
 \le C_j|b(s)|w_j(s)+C_j e^{\gamma(s-s_*)}
\]
for some $\gamma>0$.
By \eqref{eq:slow-tempered}, $|b(s)|\ge c|s|^{-1}$, so
\[
 |b(s)|w_j(s)\ge|b(s)|^{j+1}\ge c_j|s|^{-(j+1)}.
\]
The exponential term is therefore absorbed on a farther tail, and
\[
 \|v\|_{3}
 \le C_j|b|\,|\nabla\Ent(b)|+C_j|b|^{j+1}.
\]
Substitute this bound into \eqref{eq:gradient-error-basic} to obtain
\[
 |b_s-\nabla\Ent(b)| \le C_j\bigl(|b|\,|\nabla\Ent(b)| +|b|^2|\nabla\Ent(b)|+|b|^{j+2}\bigr) \le C'_j\bigl(|b|\,|\nabla\Ent(b)|+|b|^{j+2}\bigr).
\]
In particular $|b_s|\le C_{j+2}w_{j+2}$, closing the induction.
For any prescribed $J\ge2$, choose an even $j$ with $j+2\ge J$.
The preceding error estimate and $|b|<1$ prove
\eqref{eq:gradient-subordinate}.
Finally, for $0<\eps<1$ and $0<|b|<1$,
$|b|\le|b|^{1-\eps}$; choosing an integer $J\ge1/\eps$ in
\eqref{eq:gradient-subordinate} proves \eqref{eq:manuscript-error}.
\end{proof}

The polynomial convergence estimate of Chan--Ma--Zhang
\cite[Corollary~1.3]{CMZ}, followed by the smooth local slice construction
in Section~\ref{sec:horizontal}, gives
$|b(s)|\le C|s|^{-\alpha_0}$ for some $\alpha_0>0$.
Together with \eqref{eq:slow-tempered},
\begin{equation}
 c|s|^{-1}\le |b(s)|\le C|s|^{-\alpha_0}.
 \label{eq:coarse-rates}
\end{equation}

\begin{theorem}[Characteristic exponent and asymptotic direction]\label[theorem]{thm:ell}\index{ell characteristic@$\ell_*,\ell^*$ (characteristic exponents)}\index{r order@$r$}\index{Slow solutions@Slow solutions (neutral-dominated)}
Every nontrivial neutral-dominated ancient solution has a rational
exponent $\ell_*\ge r+1\ge3$ and $a_*\in K\setminus\{0\}$ such that
\begin{equation}
 b(s)=|s|^{-1/(\ell_*-2)}(a_*+o(1)).
 \label{eq:ell-asymptotic}
\end{equation}
Here $r=\min\{j\ge2:F_j\not\equiv0\}$. On a sufficiently negative tail,
\begin{equation}
 \Ent(b(s))\asymp |b(s)|^{\ell_*}.
 \label{eq:entropy-characteristic}\index{E entropy@$\Ent$ (reduced entropy)}
\end{equation}
\end{theorem}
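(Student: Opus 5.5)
The plan is to reduce the statement to the finite-dimensional perturbed-gradient theorem of Choi--Hung \cite{CH}. Its input is an analytic function on $K$ and a trajectory whose velocity differs from the gradient by an error that is small relative to the gradient plus an arbitrarily high power of $|b|$. Here the analytic function is $\Ent$ from \eqref{eq:entropy-gradient}, which is analytic near $0\in K$ with $\Ent(0)=0$ and $\nabla\Ent(0)=0$. The trajectory is $b(s)$, run backward: as $s\searrow-\infty$, $b\to0$. Setting $\sigma=-s$, the curve $\beta(\sigma)=b(-\sigma)$ satisfies
\[
 \beta_\sigma=-\nabla\Ent(\beta)+\text{error},
\]
and \cref{prop:gradient-error}, in the form \eqref{eq:manuscript-error}, supplies exactly the Choi--Hung error hypothesis $C_\eps(|\beta|^{1-\eps}|\nabla\Ent|+|\beta|^{1/\eps})$ for every small $\eps$. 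So $\beta$ is a perturbed gradient flow of $-\Ent$ converging to the critical point $0$. I would also note that $\Ent(b(s))>0$ on the tail. Entropy monotonicity gives $\boldsymbol\mu$ increasing along the flow toward $0$ as $s\to-\infty$. Comparing the static-graph entropy with the actual entropy through $\|v\|_3\le C|b|^J$ gives $\Ent(b)=2(\boldsymbol\mu(\bg+h,1)-\boldsymbol\mu(\bg,1))+O(|b|^J)$.

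The theorem in \cite{CH} then yields the following conclusions. There is a rational $\ell_*$ coming from the Newton-polygon/\L{}ojasiewicz analysis of $\Ent$. There is a unit direction $\theta_*=\lim b/|b|$ with $|b|\asymp|s|^{-1/(\ell_*-2)}$; more precisely $|s|^{1/(\ell_*-2)}b(s)\to a_*\ne0$. Finally, $\Ent(b)\asymp|b|^{\ell_*}$. I would verify each of their structural hypotheses, namely analyticity, the error form, and the nonvanishing of $b$ supplied by \cref{prop:dichotomy}. The only subtle one is that the polynomial error term $|b|^{1/\eps}$ must lie below every rate relevant to their iteration. This holds because \eqref{eq:coarse-rates} bounds $|b|$ above and below by powers of $|s|$, so $|b|^{1/\eps}$ is $O(|s|^{-\alpha_0/\eps})$, as negligible as desired. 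Rationality of $\ell_*$ is part of their conclusion: it arises as a slope of the Newton polygon of $\Ent$ restricted to analytic arcs through the limit direction.

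For the lower bound $\ell_*\ge r+1$, I would use \cref{lem:gradient-obstruction}: the first nonconstant homogeneous term of $\Ent$ has degree $r+1$, so $|\Ent(b)|\le C|b|^{r+1}$. Combining this with $\Ent(b(s))\asymp|b|^{\ell_*}$ and $|b|\to0$ forces $\ell_*\ge r+1$. Since $r\ge2$, we get $r+1\ge3$. The hypothesis of that lemma requires some $F_j\not\equiv0$. If all $F_j$ vanished, the proof of \cref{lem:gradient-obstruction} shows $\Ob\equiv0$, hence $\Ent\equiv0$. Then \eqref{eq:gradient-subordinate} gives $|b_s|\le C_J|b|^J$ for every $J$, which contradicts $|b|\ge c/|s|$: integrating gives $|b|^{1-J}$ growing at most linearly in $|s|$, so $|b|\ge c|s|^{-1/(J-1)}$, and combined with \eqref{eq:coarse-rates} this is impossible in the limit only if handled carefully. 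More directly, $b$ would have a nonzero limit, contradicting $b\to0$ when $J=3$ yields integrability of $|b|^3\le C|s|^{-3\alpha_0}$... I expect this degenerate-case exclusion to need care, and would instead argue that $\int|b_s|<\infty$ from high $J$ and the coarse upper rate, so $b(s)$ converges; since it tends to $0$, integrate from $-\infty$ to get $|b(s)|\le C\int_{-\infty}^s|b|^J$, which together with the coarse lower bound $c/|s|$ gives a contradiction for $J\ge3$.

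The main obstacle is matching the precise hypotheses of \cite[Theorem]{CH}: time orientation, the sign of $\Ent$, and the exact form of the admissible error. In particular, their iteration needs the error to be $o(|\nabla\Ent|)$ along the curve together with a superpolynomially small remainder, and our \eqref{eq:manuscript-error} holds only with an $\eps$-loss. If their statement does not directly allow this, I would rerun their Lemma~6.9 and Proposition~6.1 argument. That argument uses an angular \L{}ojasiewicz inequality on $\{\Ent\asymp|b|^\ell\}$ shells, and I would track the $|b|^{-\eps}$ factor, which is harmless because all rates are polynomial in $|s|$.
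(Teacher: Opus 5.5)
Your proposal is correct and is essentially the paper's proof: after time reversal $b$ solves a perturbed descending gradient equation for $\Ent$ itself (in Choi--Hung's convention; $-\Ent$ is used only in the forward case), \cref{prop:gradient-error} together with \eqref{eq:coarse-rates} verifies the hypotheses of Choi--Hung's Theorem~7.1, their Lemma~7.10 gives \eqref{eq:entropy-characteristic}, and $\ell_*\ge r+1$ follows from \cref{lem:gradient-obstruction} as you say; for Theorem~7.1 you should also enlarge each $C_J$ so that all orders hold on one fixed tail where $b\ne0$. Two cleanups: first, exclude the degenerate case before invoking Choi--Hung, and note that your initial argument already does it, because $|b|\ge c_J|s|^{-1/(J-1)}$ contradicts the coarse upper bound $|b|\le C|s|^{-\alpha_0}$ (not $|b|\ge c/|s|$) once $1/(J-1)<\alpha_0$, so your later variants are unnecessary (the last one needs $J>2/\alpha_0$, not $J\ge3$); second, drop the positivity aside, since Lemma~7.10 already supplies $\Ent(b(s))>0$, and the bound $\|v\|_3\le C|b|^J$ it rests on does not hold in general, because the proof of \cref{prop:gradient-error} gives only $\|v\|_3\le C(|b|\,|\nabla\Ent(b)|+|b|^J)$ and $v$ typically contains the cubic transport term $H_3-W_3=L_{\Comp}^{-1}\mathrm{D}W_2F_2$.
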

\begin{proof}
First, $\Ent$ is nonconstant near zero. Otherwise
\eqref{eq:gradient-subordinate} gives $|b_s|\le C_J|b|^J$ for every $J$.
Integration yields $|b(s)|\ge c_J|s|^{-1/(J-1)}$.
Choosing $J$ with $1/(J-1)<\alpha_0$ contradicts
\eqref{eq:coarse-rates}.

Fix a tail on which $b\ne0$. For each $J$, enlarging $C_J$
over the remaining compact interval makes
\eqref{eq:gradient-subordinate} valid on this same tail: its
right-hand weight is positive there because $|b|^J>0$.
Reversing time turns the reduced equation into a descending
analytic-gradient equation. Let $\theta<1$ be a \L{}ojasiewicz gradient exponent of $\Ent$
and put $\rho_0=\max\{\theta,3/4\}<1$.
Since $|b|\le|b|^{\rho_0}$ for $|b|<1$,
\eqref{eq:gradient-subordinate} gives the error estimate at every integer
order required by Choi--Hung \cite[Theorem~7.1]{CH}.
Together with \eqref{eq:coarse-rates}, this verifies its hypotheses.
The theorem gives a rational $\ell_*\ge3$, convergence of $b/|b|$,
and a positive limit of $|s|^{1/(\ell_*-2)}|b(s)|$.
These yield \eqref{eq:ell-asymptotic}, and
\cite[Lemma~7.10]{CH} gives \eqref{eq:entropy-characteristic} on the whole tail.

The coefficient comparison in \cref{lem:gradient-obstruction} shows
that nonconstancy of $\Ent$ gives a first nonzero reduced coefficient
$F_r$ and $\Ent(b)=O(|b|^{r+1})$.
Thus \eqref{eq:entropy-characteristic} implies
\[
 c|b(s)|^{\ell_*}\le\Ent(b(s))\le C|b(s)|^{r+1}.
\]
Letting $s\searrow-\infty$ proves $\ell_*\ge r+1$.
\end{proof}

In the principal case $\ell_*=r+1$, the decay rate is $|s|^{-1/(r-1)}$.
By \cref{lem:gradient-obstruction} and
\eqref{eq:gradient-subordinate} with $J=r+1$,
\[
 b_s=F_r(b)+O(|b|^{r+1}).
\]
Set $A(s)=|s|^{1/(r-1)} b(s)$. Homogeneity gives
\[
 s A_s(s)=\frac1{r-1} A(s)-F_r(A(s))+O(|s|^{-1/(r-1)})
 \longrightarrow\frac{a_*}{r-1}-F_r(a_*).
\]
Since $A(s)\to a_*$, its derivative with respect to $\log|s|$ cannot tend
to a nonzero vector. Hence $F_r(a_*)=\frac{a_*}{r-1}$, and the reduced
equation gives
\begin{equation}
 b_s=\frac1{r-1}|s|^{-r/(r-1)}(a_*+o(1)),\qquad
 \frac{b_s}{|b_s|}\longrightarrow\frac{a_*}{|a_*|}.
 \label{eq:principal-velocity}
\end{equation}
\begin{definition}\label[definition]{def:principal}\index{a characteristic@$a_*,a^*$ (characteristic vectors)}\index{r order@$r$}\index{Principal slow solutions}
A nontrivial neutral-dominated ancient SMRF is \emph{principal} if its
characteristic exponent in \cref{thm:ell} satisfies $\ell_*=r+1$.
A nonzero vector $a_*\in K$ satisfying
\[
 F_r(a_*)=\frac{a_*}{r-1},
\]
is called a \emph{principal characteristic vector}.
For a principal ancient SMRF, its characteristic vector is
\[
 a_*=\lim_{s\searrow-\infty}|s|^{1/(r-1)}\Pi_Kh(s).
\]
By \cref{thm:actual-expansion},
$|s|^{1/(r-1)} h(s)\to a_*$ in $C^{3,1/2}$.
\end{definition}

\begin{remark}\label[remark]{rem:nonprincipal-direction}
If a slow ancient solution is nonprincipal, then $F_r(a_*)=0$.
Indeed, the reduced equation and \cref{thm:ell} give
\[
 b_s=F_r(b)+O(|b|^{r+1}),\qquad
 b(s)=|s|^{-1/(\ell_*-2)}(a_*+o(1)).
\]
Integrating over $[2s,s]$ and using homogeneity, we obtain
\[
 |s|^{r/(\ell_*-2)-1}\bigl(b(s)-b(2s)\bigr)
 =\left(\int_1^2t^{-r/(\ell_*-2)}\dd t\right)F_r(a_*)+o(1).
\]
The left side is $O(|s|^{(r-1)/(\ell_*-2)-1})$, which tends to zero
because $\ell_*>r+1$. The integral is positive, so $F_r(a_*)=0$.
\end{remark}

\subsection{The Li--Zhang obstruction bound}
Write $\Ob_j=[\Ob]_j$ for the homogeneous static obstruction terms.\index{k LZ@$k_{\mathrm{LZ}}$ (Li--Zhang order)}\index{O homogeneous terms@$\Ob_j$}
For $K\ne\{0\}$, the minimal finite Li--Zhang obstruction order
$k_{\mathrm{LZ}}$ is the least integer $k\ge2$ for which there are
$c,\delta>0$ such that
\[
 \left|\sum_{j=2}^k\Ob_j(b)\right|\ge c|b|^k,
 \qquad 0<|b|<\delta.
\]
This is the obstruction condition of Li--Zhang
\cite[Definitions~2.17 and~2.20]{LZrigid}:
by uniqueness of the static recursion, the sequence induced by $b$
is $b,2![W]_2(b),3![W]_3(b),\ldots$.
The Taylor remainder is $O(|b|^{k+1})$, so the condition is equivalent,
after adjusting $c$ and $\delta$, to
\[
 |\Ob(b)|\ge c|b|^k,\qquad 0<|b|<\delta.
\]

\begin{proposition}\label[proposition]{prop:sandwich}\index{k LZ@$k_{\mathrm{LZ}}$ (Li--Zhang order)}
If the background has finite obstruction order $k_{\mathrm{LZ}}$, then
every nontrivial slow solution satisfies
\begin{equation}
 \ell_*\le k_{\mathrm{LZ}}+1.
 \label{eq:obstruction-upper}
\end{equation}
If $k_{\mathrm{LZ}}=2$, then $r=2$, $\ell_*=3$, and
\[
 b(s)=|s|^{-1}(a_*+o(1)),\qquad F_2(a_*)=a_*.
\]
\end{proposition}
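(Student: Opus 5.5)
We will compare the two two-sided size estimates for $\Ent$ and $\nabla\Ent$ along the trajectory. From \cref{thm:ell} we have $\Ent(b(s))\asymp|b(s)|^{\ell_*}$ on a tail. The finite obstruction order gives $|\Ob(b)|\ge c|b|^{k}$ with $k=k_{\mathrm{LZ}}$, and then \cref{lem:gradient-obstruction} gives $|\nabla\Ent(b)|\ge c|b|^{k}$ for $0<|b|<\delta$.

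The plan is to turn this lower gradient bound into a lower bound on the decay of $|b|$, and then compare it with \eqref{eq:ell-asymptotic}.

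\emph{Entropy growth.} By \eqref{eq:gradient-subordinate} with $J=k+1$,
\[
 b_s=\nabla\Ent(b)+O\bigl(|b|\,|\nabla\Ent(b)|+|b|^{k+1}\bigr).
\]
The error is $o(|\nabla\Ent(b)|)$ because $|\nabla\Ent(b)|\ge c|b|^k$. Hence, on a sufficiently negative tail,
\[
 \partial_s\Ent(b)=\langle\nabla\Ent(b),b_s\rangle\ge\tfrac12|\nabla\Ent(b)|^2\ge c|b|^{2k},
\]
and also $|b_s|\le C|\nabla\Ent(b)|$.

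\emph{Comparison of exponents.} I would route the argument through the \L{}ojasiewicz-type quantity. Since $\Ent(b)\le C|b|^{\ell_*}$ while $|\nabla\Ent(b)|\ge c|b|^{k}$, we get
\[
 |\nabla\Ent(b)|\ge c\,\Ent(b)^{k/\ell_*}.
\]
Then $\partial_s\Ent\ge c\,\Ent^{2k/\ell_*}$. Integrating backward, with $\Ent\to0$, gives:
\begin{itemize}
\item if $2k/\ell_*<1$, then $\Ent$ decays exponentially in $s$, which contradicts $\Ent\asymp|b|^{\ell_*}\ge c|s|^{-\ell_*}$ from \eqref{eq:slow-tempered};
\item if $2k/\ell_*>1$, then $\Ent(b(s))\le C|s|^{-1/(2k/\ell_*-1)}=C|s|^{-\ell_*/(2k-\ell_*)}$.
\end{itemize}
Since $\Ent\asymp|b|^{\ell_*}\asymp|s|^{-\ell_*/(\ell_*-2)}$, the second case forces $\ell_*/(\ell_*-2)\ge\ell_*/(2k-\ell_*)$, that is, $2k-\ell_*\ge\ell_*-2$, so $\ell_*\le k+1$. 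The borderline case $2k=\ell_*$ is also exponential and is excluded in the same way. This gives \eqref{eq:obstruction-upper}.

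The one point to double-check is that the error term in \eqref{eq:gradient-subordinate} is genuinely absorbed. It is, because $k$ is fixed and we may take $J=k+1$, with the $|b|^{k+1}$ term dominated by $c|b|^k$.

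\emph{The case $k_{\mathrm{LZ}}=2$.} The obstruction order forces $\Ob_2\not\equiv0$. The coefficient comparison in \cref{lem:gradient-obstruction} identifies the first nonzero reduced coefficient with $2\Ob_q$, so $r=2$. Combining \cref{thm:ell} with the bound just proved gives $3=r+1\le\ell_*\le3$, so $\ell_*=3$ and the solution is principal. Then \eqref{eq:ell-asymptotic} reads $b=|s|^{-1}(a_*+o(1))$. The relation $F_r(a_*)=a_*/(r-1)$ from the discussion preceding \cref{def:principal} becomes $F_2(a_*)=a_*$.

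I expect the main care to lie in the exponent comparison, specifically in getting the direction of the inequalities right when integrating toward $s\to-\infty$. The rest is direct.
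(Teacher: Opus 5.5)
Your proof is correct and follows essentially the paper's route: the same absorption with $J=k_{\mathrm{LZ}}+1$ gives $\partial_s\Ent(b)\ge c|b|^{2k_{\mathrm{LZ}}}$, and this is then compared with the rates of \cref{thm:ell}. The paper integrates this inequality directly over $[2s,s]$ against $\Ent(b(s))\asymp|s|^{-\ell_*/(\ell_*-2)}$, which avoids your case split on $2k/\ell_*$, whereas your \L{}ojasiewicz-type inequality $\partial_s\Ent\ge c\,\Ent^{2k/\ell_*}$ reaches the same exponent inequality; your direct derivation of $r=2$ from $\Ob_2\not\equiv0$ via \cref{lem:gradient-obstruction} is also valid, though it already follows from $3\le r+1\le\ell_*\le3$.
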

\begin{proof}
Put $k=k_{\mathrm{LZ}}$. The obstruction inequality and
\cref{lem:gradient-obstruction} give
$|\nabla\Ent(b)|\ge c|b|^k$.
Taking $J=k+1$ in \eqref{eq:gradient-subordinate} therefore yields
\[
 |b_s-\nabla\Ent(b)|
 \le C\bigl(|b|\,|\nabla\Ent(b)|+|b|^{k+1}\bigr)
 \le C'|b|\,|\nabla\Ent(b)|.
\]
On a sufficiently negative tail,
\[
 \frac{\mathrm{d}}{\mathrm{d}s}\Ent(b(s))
 =\langle\nabla\Ent(b),b_s\rangle
 \ge\frac12|\nabla\Ent(b)|^2
 \ge c|b|^{2k}.
\]
By \cref{thm:ell},
\[
 |b(s)|\asymp|s|^{-1/(\ell_*-2)},\qquad
 \Ent(b(s))\asymp|s|^{-\ell_*/(\ell_*-2)}.
\]
Integrating over $[2s,s]$ and using $\Ent(b(2s))>0$ gives
\[
 c|s|^{1-2k/(\ell_*-2)}
 \le\Ent(b(s))-\Ent(b(2s))
 \le C|s|^{-\ell_*/(\ell_*-2)}.
\]
Letting $s\searrow-\infty$ shows that
$1-2k/(\ell_*-2)\le-\ell_*/(\ell_*-2)$.
Rearranging proves
\eqref{eq:obstruction-upper}.

If $k_{\mathrm{LZ}}=2$, then \cref{thm:ell} and
\eqref{eq:obstruction-upper} give
$3\le r+1\le\ell_*\le3$.
Thus $r=2$ and $\ell_*=3$; the equation for the leading coefficient gives
$F_2(a_*)=a_*$.
\end{proof}

\section{Parametrization of principal slow solutions}\label{sec:principal}
We prove the parametrization of principal slow solutions of \cref{thm:main-intro}. A principal ancient solution determines two parameters: the exact reduced trajectory that approximates its neutral component at the prescribed rate and the terminal datum of an exponentially decaying homogeneous complementary solution. We first extract these data, then reconstruct a unique solution from every admissible pair on a common tail. Finally, we identify the actions of background isometries and the two time symmetries.

Fix a principal characteristic vector $a_*\ne0$, and set
\begin{equation}
 F_r(a_*)=\frac{a_*}{r-1},\qquad
 A_{a_*}=\mathrm{D}F_r(a_*),\qquad \rho(a_*)=\lambda_{\max}(A_{a_*}).\index{A characteristic@$A_{a_*},\rho(a_*)$}\index{A characteristic@$A_{a_*},\rho(a_*)$}
\end{equation}
Here $\lambda_{\max}(A_{a_*})$ denotes the largest eigenvalue of $A_{a_*}$.
By \cref{lem:gradient-obstruction}, $F_r=\nabla\Ent_{r+1}$, so $A_{a_*}$
is self-adjoint in the fixed $L^2$ inner product on $K$.
With $|\xi|=\|\xi\|_{L^2}$ on $K$,
\[
 \rho(a_*)=\max_{|\xi|=1}\langle A_{a_*}\xi,\xi\rangle_{L^2}.
\]
Choose an integer $N\ge r$, abbreviate $\Psi=\Psi^{[N]}$ and $F=F^{[N]}$, and set
\begin{equation}
 \kappa=\frac{N+1}{r-1}-1,\qquad \kappa>\rho(a_*).\label{eq:indicial-gap}\index{kappa accuracy@$\kappa$}
\end{equation}
Homogeneity gives $A_{a_*}a_*=\frac{r}{r-1} a_*$, so
\begin{equation}
 \rho(a_*)\ge \frac{r}{r-1},\qquad \kappa>1,\qquad N\ge2r-1.\label{eq:radial-indicial}
\end{equation}
We keep this $N$ fixed throughout the section.

\subsection{Estimates in reduced coordinates}
Recall $h=\Psi(b)+q$. The exact coordinate equations
\eqref{eq:exact-coordinates} are
\[
 \begin{aligned}
 b_s&=\mathcal V_K(b,q)=\Pi_K\mathcal V(\Psi(b)+q),\\
 q_s&=\mathcal V_{\Comp}(b,q)=Q_N(b)\mathcal V(\Psi(b)+q),
 \end{aligned}
 \qquad Q_N(b)=\Id-\mathrm{D}\Psi(b)\Pi_K.
\]
In the following lemma, variations in $b$ use the fixed norm $|\cdot|$
on $K$, and variations in $q\in\Comp$ use $\|\cdot\|_{3}$.
Outputs in $K$ use $|\cdot|$, and outputs in $\Comp$ use
$\|\cdot\|_{1}$. All operator norms are induced by these norms.

\begin{lemma}\label[lemma]{lem:derivative-ledger}
For $(b,q)$ in a sufficiently small neighborhood of $(0,0)$ in
$K\times\Comp_3$, the following estimates hold; both pairs in the
last estimate belong to this neighborhood:
\begin{align}
 \|\mathrm{D}_q\mathcal V_K(b,q)\|&\le C(|b|+\|q\|_{3}),\label{eq:ledger1}\\
 \|\mathrm{D}_b\mathcal V_K(b,q)-\mathrm{D}F(b)\|&\le C(|b|^N+\|q\|_{3}),\label{eq:ledger2}\\
 \|\mathrm{D}_b\mathcal V_{\Comp}(b,q)\|&\le C(|b|^N+\|q\|_{3}),\label{eq:ledger3}\\
 \|\mathrm{D}_q\mathcal V_{\Comp}(b,q)-\mathrm{D}_q\mathcal V_{\Comp}(\widetilde b,\widetilde q)\|
 &\le C(|b-\widetilde b|+\|q-\widetilde q\|_{3}).\label{eq:ledger4}
\end{align}
\end{lemma}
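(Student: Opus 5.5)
The plan is to differentiate the coordinate fields explicitly and compare each term with the invariance identity of \cref{thm:formal}. Write $\mathcal V(\Psi(b)+q)=L(\Psi(b)+q)+\mathcal U(\Psi(b)+q)$, and use the smoothness of $\mathcal V$ from $C^{3,1/2}\cap\Sli$ to $C^{1,1/2}\cap\Sli$ together with \eqref{eq:spatial-nonlinear}. The map $\Psi$ is polynomial from $K$ into $C^{3,1/2}$ with $\mathrm{D}\Psi(b)=\Id_K+O(|b|)$. The factor $Q_N(b)$ is a polynomial family of bounded operators from $C^{1,1/2}$ to itself, because $\Pi_K$ is bounded there and $K$ is finite-dimensional.

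For \eqref{eq:ledger1}, the derivative is $\mathrm{D}_q\mathcal V_K=\Pi_K\mathrm{D}\mathcal V(\Psi(b)+q)|_{\Comp}$. Since $\Pi_KL|_{\Comp}=0$, only $\Pi_K\mathrm{D}\mathcal U(\Psi(b)+q)$ remains. The second bound in \eqref{eq:spatial-nonlinear} then gives $C\|\Psi(b)+q\|_3\le C(|b|+\|q\|_3)$.

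For \eqref{eq:ledger2} and \eqref{eq:ledger3}, first treat $q=0$ and then estimate the change in $q$. At $q=0$, \eqref{eq:coordinate-defect} gives $\mathcal V_K(b,0)=F(b)+\Pi_KE_N(b)$ and $\mathcal V_{\Comp}(b,0)=Q_N(b)E_N(b)$. Differentiating in $b$ and using $\|\mathrm{D}E_N(b)\|\le C_N|b|^N$ from \cref{thm:formal} bounds $\mathrm{D}_b\mathcal V_K(b,0)-\mathrm{D}F(b)$ by $C|b|^N$. For the complement field, the derivative is $\mathrm{D}Q_N(b)[v]E_N(b)+Q_N(b)\mathrm{D}E_N(b)[v]$. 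Here $\mathrm{D}Q_N(b)[v]=-\mathrm{D}^2\Psi(b)[v,\cdot]\Pi_K$ is bounded, and $E_N=O(|b|^{N+1})$, so this derivative is $O(|b|^N|v|)$.

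For the $q$-dependence, the mixed second derivative $\mathrm{D}_q\mathrm{D}_b\mathcal V_K$ is $\Pi_K\mathrm{D}^2\mathcal V(\Psi(b)+q)[\mathrm{D}\Psi(b)v,\cdot]$, which is bounded. Integrating it along the segment from $(b,0)$ to $(b,q)$ gives the additional $C\|q\|_3$.
The complement field needs more care. Its $b$-derivative at $(b,q)$ is
\[
 \mathrm{D}Q_N(b)[v]\,\mathcal V(\Psi(b)+q)+Q_N(b)\,\mathrm{D}\mathcal V(\Psi(b)+q)[\mathrm{D}\Psi(b)v].
\]
Comparing it with the same expression at $q=0$, the first term changes by $C\|\mathcal V(\Psi+q)-\mathcal V(\Psi)\|_1\le C\|q\|_3$. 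The second term changes by $C\|q\|_3|v|$, by boundedness of $\mathrm{D}^2\mathcal V$.

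For \eqref{eq:ledger4}, the derivative is $\mathrm{D}_q\mathcal V_{\Comp}(b,q)=Q_N(b)\mathrm{D}\mathcal V(\Psi(b)+q)|_{\Comp}$. Both factors are $C^1$ in $(b,q)$ with bounded derivatives on the small neighborhood: $Q_N$ is polynomial, and $\mathrm{D}\mathcal V$ is Lipschitz by boundedness of $\mathrm{D}^2\mathcal V$ and of $\mathrm{D}\Psi$. The product rule and the mean value inequality along the segment joining the two pairs, which stays in the convex neighborhood, then give the Lipschitz bound.

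The main obstacle is the bookkeeping in \eqref{eq:ledger3}. There one must check that the $b$-derivative of the complement field has no term of lower order than $|b|^N$ at $q=0$. Differentiating $Q_N$ produces the term $\mathrm{D}Q_N\cdot\mathcal V$, and the cancellation of its leading part relies entirely on the identity $Q_N\mathrm{D}\Psi=0$ built into \eqref{eq:coordinate-defect}. So I will differentiate the defect identity itself, rather than expanding $\mathcal V$ directly.
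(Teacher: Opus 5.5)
Your proposal is correct and follows the paper's proof. The paper also gets \eqref{eq:ledger1} from $\Pi_KL|_{\Comp}=0$ and the mean-value theorem, and obtains \eqref{eq:ledger2}--\eqref{eq:ledger3} at $q=0$ by differentiating \eqref{eq:coordinate-defect} with the $\mathrm{D}E_N$ bound of \cref{thm:formal}; it then pays $O(\|q\|_3)$ through the second-derivative bound and treats \eqref{eq:ledger4} as the local Lipschitz property of $\mathrm{D}_q\mathcal V_{\Comp}$, so you are only supplying the product-rule bookkeeping the paper leaves implicit.
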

\begin{proof}
At zero $\mathrm{D}_q\mathcal V_K=\Pi_KL|_{\Comp}=0$, proving \eqref{eq:ledger1}
by the mean-value theorem. Differentiate \eqref{eq:coordinate-defect}:
$\mathrm{D}_b\mathcal V_K(b,0)=\mathrm{D}F(b)+O(|b|^N)$ and
$\mathrm{D}_b\mathcal V_{\Comp}(b,0)=O(|b|^N)$.
Moving from $q=0$ to $q$ costs $O(\|q\|_{3})$ by the
second derivative bound. Finally \eqref{eq:ledger4} is the local
Lipschitz estimate for $\mathrm{D}_q\mathcal V_{\Comp}$.
\end{proof}

\subsection{Approximation by exact reduced trajectories}\label{sec:shadow}
We replace the approximate reduced motion by an exact trajectory with the same asymptotic data. An \emph{associated exact reduced trajectory of order $N$} for a path $b$ is a trajectory
$\beta_s=F^{[N]}(\beta)$ such that
$|b(s)-\beta(s)|=O(|s|^{-\kappa})$ on a common ancient tail,
where $\kappa=\frac{N+1}{r-1}-1$.
The evolution estimate below controls differences of reduced trajectories and will give existence and uniqueness of the exact reduced trajectory.

For a linear map $B:K\to K$, write
\[
 \|B\|_{\mathrm{op}}=\sup_{|\xi|=1}|B\xi|.
\]
Given a continuous family $A(s)\in\operatorname{End}(K)$, its evolution
operator $U_A(s,u)$ maps data at time $u$ to the homogeneous solution
at time $s$. It is determined by
\[
 \partial_sU_A(s,u)=A(s)U_A(s,u),\qquad U_A(u,u)=\Id_K.\index{U evolution general@$U_A(s,u)$}
\]
For $x_s=A(s)x+v(s)$, variation of constants
\cite[Section~3.4, Theorem~3.12]{Teschl} gives
\[
 x(s)=U_A(s,u)x(u)+\int_u^sU_A(s,\sigma)v(\sigma)\dd\sigma.
\]

\begin{lemma}\label[lemma]{lem:poly-evolution}
Suppose $A:(-\infty,s_*]\to\operatorname{End}(K)$ is continuous and
$A(s)=|s|^{-1}A_{a_*}+o(|s|^{-1})$ in operator norm.
For every $\bar\rho>\rho(a_*)$, on a sufficiently negative tail,
\begin{equation}
 \|U_A(s,u)\|_{\mathrm{op}}
 \le\left(\frac{|u|}{|s|}\right)^{\bar\rho},\qquad u\le s\le s_*.
 \label{eq:poly-evolution}
\end{equation}
More precisely, for any $0<\epsilon_0\le\bar\rho-\rho(a_*)$,
the bound holds on the whole tail whenever
\[
 \sup_{s\le s_*}\||s|A(s)-A_{a_*}\|_{\mathrm{op}}\le\epsilon_0.
\]
It is uniform over all coefficient families satisfying this inequality.
If $x\in C^1((-\infty,s_*];K)$ satisfies $x_s=Ax+v$ and
$|x(s)|=O(|s|^{-\kappa})$ for $\kappa>\bar\rho$, then
\begin{equation}
 x(s)=\int_{-\infty}^sU_A(s,u)v(u)\dd u.
 \label{eq:center-infinity}
\end{equation}
If moreover $|v(s)|\le C_v|s|^{-\kappa-1}$, this integral converges
absolutely and satisfies
\begin{equation}
 \int_{-\infty}^s|U_A(s,u)v(u)|\dd u
 \le\frac{C_v}{\kappa-\bar\rho}|s|^{-\kappa}.
 \label{eq:center-absolute-bound}
\end{equation}
\end{lemma}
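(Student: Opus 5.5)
The evolution bound \eqref{eq:poly-evolution} will come from an energy estimate for homogeneous solutions. The other two assertions then follow by variation of constants. Let $y_s=A(s)y$ with $y(u)=y_0$. Since $A_{a_*}$ is self-adjoint with top eigenvalue $\rho(a_*)$, we have $\langle A(s)\xi,\xi\rangle\le|s|^{-1}\bigl(\rho(a_*)+\||s|A(s)-A_{a_*}\|_{\mathrm{op}}\bigr)|\xi|^2$. Under the hypothesis $\sup_{s\le s_*}\||s|A(s)-A_{a_*}\|_{\mathrm{op}}\le\epsilon_0\le\bar\rho-\rho(a_*)$ this gives
\[
 \partial_s|y|^2=2\langle A(s)y,y\rangle\le 2\bar\rho|s|^{-1}|y|^2 .
\]
Integrating from $u$ to $s\le s_*<0$ uses $\int_u^s|\sigma|^{-1}\dd\sigma=\log(|u|/|s|)$ and yields $|y(s)|\le(|u|/|s|)^{\bar\rho}|y(u)|$, which is \eqref{eq:poly-evolution}. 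Only $\epsilon_0$ and $\rho(a_*)$ enter, so the bound is uniform over every coefficient family satisfying the inequality. For a general $A=|s|^{-1}A_{a_*}+o(|s|^{-1})$ we first choose $s_*$ so negative that the supremum is at most $\epsilon_0:=\bar\rho-\rho(a_*)$. I expect no real obstacle in this step. The only point to watch is the sign convention: for $u\le s<0$ we have $|u|\ge|s|$, so the estimate permits polynomial growth forward in time.

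For \eqref{eq:center-infinity}, fix $s$ and write, for $u\le s$,
\[
 x(s)=U_A(s,u)x(u)+\int_u^sU_A(s,\sigma)v(\sigma)\dd\sigma .
\]
The boundary term satisfies $|U_A(s,u)x(u)|\le(|u|/|s|)^{\bar\rho}C|u|^{-\kappa}=C|s|^{-\bar\rho}|u|^{\bar\rho-\kappa}$. This tends to zero as $u\searrow-\infty$ because $\kappa>\bar\rho$. Hence the integral has a limit, and the limit is $x(s)$, interpreted as an improper integral. The identity therefore holds without any assumption on $v$ beyond continuity, since $v=x_s-Ax$ is continuous.

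Under the additional bound $|v(u)|\le C_v|u|^{-\kappa-1}$, we estimate the integrand directly:
\[
 \int_{-\infty}^s|U_A(s,u)v(u)|\dd u\le C_v|s|^{-\bar\rho}\int_{-\infty}^s|u|^{\bar\rho-\kappa-1}\dd u
 =C_v|s|^{-\bar\rho}\frac{|s|^{\bar\rho-\kappa}}{\kappa-\bar\rho}.
\]
This gives absolute convergence together with \eqref{eq:center-absolute-bound}. The whole argument takes place in finite dimensions, so no parabolic estimates are needed.
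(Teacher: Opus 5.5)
Your proposal is correct and follows the paper's proof essentially line for line. Both arguments use the same three steps: the energy inequality $\partial_s|y|^2\le 2\bar\rho|s|^{-1}|y|^2$ obtained from self-adjointness of $A_{a_*}$; variation of constants from a finite initial time, with boundary term bounded by $C|s|^{-\bar\rho}|u|^{\bar\rho-\kappa}\to0$; and direct integration of $C_v|s|^{-\bar\rho}|u|^{\bar\rho-\kappa-1}$ to get the absolute bound.
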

\begin{proof}
Given $\bar\rho>\rho(a_*)$, fix
$0<\epsilon_0\le\bar\rho-\rho(a_*)$ and choose the tail so that
\[
 \||s|A(s)-A_{a_*}\|_{\mathrm{op}}\le\epsilon_0.
\]
Self-adjointness of $A_{a_*}$ implies that every homogeneous solution
$x_s=A(s)x$ satisfies
\[
 \frac12\frac{\mathrm{d}}{\mathrm{d}s}|x(s)|^2
 =\ip{x(s)}{A(s)x(s)}
 \le\frac{\bar\rho}{|s|}|x(s)|^2.
\]
Integrating from $u$ to $s$ gives \eqref{eq:poly-evolution}.

Now let $x_s=Ax+v$ with $|x(s)|=O(|s|^{-\kappa})$ and
$\kappa>\bar\rho$. At a finite initial time $u_0<s$, variation of
constants gives
\[
 x(s)=U_A(s,u_0)x(u_0)
      +\int_{u_0}^sU_A(s,u)v(u)\dd u.
\]
The initial-data term satisfies
\[
 |U_A(s,u_0)x(u_0)| \le\left(\frac{|u_0|}{|s|}\right)^{\bar\rho}|x(u_0)| \le C|s|^{-\bar\rho}|u_0|^{\bar\rho-\kappa} \longrightarrow0\qquad(u_0\searrow-\infty).
\]
Thus the finite-time identity shows that the improper integral
converges to $x(s)$, proving \eqref{eq:center-infinity}.
For the forcings used below, $|v(u)|\le C_v|u|^{-\kappa-1}$,
and the convergence is absolute:
\[
 \int_{-\infty}^s|U_A(s,u)v(u)|\dd u \le C_v|s|^{-\bar\rho} \int_{|s|}^{\infty}t^{\bar\rho-\kappa-1}\dd t =\frac{C_v}{\kappa-\bar\rho}|s|^{-\kappa}.
\]
\end{proof}

\begin{theorem}[Exact reduced trajectory]\label[theorem]{thm:shadow}\index{F reduced@$F^{[N]},F_j$}\index{beta reduced@$\beta_N$ (exact reduced trajectory)}
Fix $\rho(a_*)<\bar\rho<\kappa$ and
$0<\epsilon_0\le\bar\rho-\rho(a_*)$. Suppose $b$ is a $C^1$ path with
\begin{equation}
 b(s)=|s|^{-1/(r-1)}(a_*+o(1)),\qquad b_s=F(b)+R_b(s),\qquad
 |R_b(s)|\le C_R|s|^{-\kappa-1}.
\end{equation}
On a sufficiently negative tail there is a unique exact trajectory
$\beta_s=F(\beta)$ with $b-\beta=O(|s|^{-\kappa})$, and it satisfies
\begin{equation}
 |b(s)-\beta(s)|
 \le\frac{C_R}{\kappa-\bar\rho}|s|^{-\kappa}.
 \label{eq:shadow-bound}
\end{equation}
It has the same leading coefficient $a_*$.

The estimate is uniform for paths satisfying the preceding
asymptotic hypothesis. There is $\epsilon>0$, depending only on
$F$, $a_*$, and $\epsilon_0$, such that for every common bound
$C_R\ge0$ there is $s_0\le-2$ with this property: if $s_*\le s_0$ and
\begin{equation}
 \sup_{s\le s_*}||s|^{1/(r-1)} b(s)-a_*|\le\frac\epsilon2,
 \qquad
 \sup_{s\le s_*}|s|^{\kappa+1}|R_b(s)|\le C_R,
 \label{eq:shadow-common-bounds}
\end{equation}
then the exact reduced trajectory exists and \eqref{eq:shadow-bound} holds on all of
$(-\infty,s_*]$. The time $s_0$ depends only on the fixed data,
$\bar\rho$, $\epsilon$, and $C_R$.
\end{theorem}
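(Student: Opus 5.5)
We construct $\beta$ as $\beta=b-x$, where $x$ is obtained by a contraction built on \cref{lem:poly-evolution}. Writing $\beta=b-x$, the exact equation $\beta_s=F(\beta)$ becomes
\[
 x_s=F(b)-F(b-x)+R_b
 =A_b(s)x+\bigl[F(b)-F(b-x)-\mathrm{D}F(b)x\bigr]+R_b,
 \qquad A_b(s)=\mathrm{D}F(b(s)).
\]
Since $F=F_r+O(|b|^{r})$ and $\mathrm{D}F_r$ is homogeneous of degree $r-1$, the hypothesis $b=|s|^{-1/(r-1)}(a_*+o(1))$ gives
\[
 |s|A_b(s)=\mathrm{D}F_r\bigl(|s|^{1/(r-1)}b\bigr)+O\bigl(|s|\,|b|^{r}\bigr)=A_{a_*}+o(1).
\]
Under the uniform hypothesis \eqref{eq:shadow-common-bounds}, the error $\||s|A_b-A_{a_*}\|_{\mathrm{op}}$ is bounded by $C\epsilon+C|s_*|^{-1/(r-1)}$. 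We choose $\epsilon$ so that $C\epsilon\le\epsilon_0/2$ and $s_0$ so that the second term is at most $\epsilon_0/2$. The evolution bound \eqref{eq:poly-evolution} then holds uniformly with exponent $\bar\rho$.

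Next we set up the fixed point in the weighted space $\|x\|_\kappa=\sup_{s\le s_*}|s|^\kappa|x(s)|$, with
\[
 \mathcal T(x)(s)=\int_{-\infty}^sU_{A_b}(s,u)\bigl(R_b(u)+N_b(x)(u)\bigr)\dd u,
\]
where $N_b(x)=F(b)-F(b-x)-\mathrm{D}F(b)x$. Taylor's formula gives $|N_b(x)|\le C|x|^2$ and the Lipschitz bound $|N_b(x)-N_b(y)|\le C(|x|+|y|)|x-y|$. For $\|x\|_\kappa\le\rho_1$ we get $|N_b(x)(u)|\le C\rho_1^2|u|^{-2\kappa}$. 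Since $\kappa>1$ by \eqref{eq:radial-indicial}, we have $2\kappa\ge\kappa+1+(\kappa-1)$, so this is bounded by $C\rho_1^2|s_*|^{-(\kappa-1)}|u|^{-\kappa-1}$. Applying \eqref{eq:center-absolute-bound} gives
\[
 \|\mathcal T(x)\|_\kappa\le\frac{C_R+C\rho_1^2|s_*|^{1-\kappa}}{\kappa-\bar\rho},
\]
together with the matching Lipschitz constant $C\rho_1|s_*|^{1-\kappa}/(\kappa-\bar\rho)$. Choosing $\rho_1=2C_R/(\kappa-\bar\rho)+1$ and then $s_0$ sufficiently negative in terms of $C_R$, $\mathcal T$ becomes a contraction of the $\rho_1$-ball. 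Its fixed point $x$ solves the $x$-equation, so $\beta=b-x$ is an exact trajectory. The leading coefficient is unchanged because $\kappa>1/(r-1)$, which is equivalent to $N\ge r$.

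The sharp constant in \eqref{eq:shadow-bound} comes from bounding the nonlinear part more carefully, and this is the delicate point. Rather than treating $N_b$ as forcing, we absorb it into the coefficient: write
\[
 x_s=\tilde A(s)x+R_b,\qquad \tilde A(s)=\int_0^1\mathrm{D}F(b-\theta x)\dd\theta.
\]
This is exact, and $\tilde A$ still satisfies $\||s|\tilde A-A_{a_*}\|_{\mathrm{op}}\le\epsilon_0$, because $|s||x|\,|b|^{r-2}\to0$ uniformly. Applying \eqref{eq:center-infinity} and \eqref{eq:center-absolute-bound} with $\tilde A$ then yields exactly $|x(s)|\le C_R|s|^{-\kappa}/(\kappa-\bar\rho)$, with no additional constant.

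Uniqueness uses the same device. If $\beta,\tilde\beta$ are two exact trajectories with $b-\beta$ and $b-\tilde\beta$ both $O(|s|^{-\kappa})$, their difference $y=\beta-\tilde\beta$ satisfies $y_s=\hat A(s)y$ with $\hat A=\int_0^1\mathrm{D}F(\tilde\beta+\theta y)\dd\theta$. On a farther tail $\||s|\hat A-A_{a_*}\|_{\mathrm{op}}\le\epsilon_0$, and $|y|=O(|s|^{-\kappa})$ with $\kappa>\bar\rho$. Then \eqref{eq:center-infinity} with $v=0$ gives $y=0$ on that tail. Backward uniqueness for the ODE extends this to the whole common interval.

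The main obstacle is keeping every constant uniform, so that $\epsilon$ depends only on $F$, $a_*$ and $\epsilon_0$, and $s_0$ only on the listed data. The key inputs are that the perturbed coefficient families remain inside the uniform class of \cref{lem:poly-evolution}, and that the contraction radius is fixed before $s_0$ is chosen.
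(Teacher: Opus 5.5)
Your proof is correct, but the existence step follows a different route from the paper's; the uniqueness argument and the sharp constant are obtained exactly as there. The paper sets up no fixed point. It solves $\beta_i'=F(\beta_i)$ forward from $\beta_i(s_i)=b(s_i)$ with $s_i\searrow-\infty$, and writes $w_i=b-\beta_i$ as a solution of $(w_i)_s=A_iw_i+R_b$ with the averaged coefficient $A_i=\int_0^1\mathrm{D}F(\beta_i+\theta w_i)\,\mathrm{d}\theta$. It then applies \cref{lem:poly-evolution} with $w_i(s_i)=0$, which yields the sharp bound $\frac{C_R}{\kappa-\bar\rho}|s|^{-\kappa}$ immediately. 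A continuity argument, with $s_0$ chosen so that $\frac{C_R}{\kappa-\bar\rho}|s_0|^{1/(r-1)-\kappa}\le\epsilon/4$, keeps $|s|^{1/(r-1)}\beta_i$ inside the $\epsilon$-ball. The trajectory $\beta$ is finally extracted from the bounded terminal values $\beta_i(s_*)$ by compactness in the finite-dimensional space $K$ and continuous dependence. Your Lyapunov--Perron contraction, with the frozen coefficient $\mathrm{D}F(b)$, replaces this compactness step. It costs two things: you need $\kappa>1$ so that the quadratic term $N_b(x)=O(|u|^{-2\kappa})$ is subordinate to the forcing weight (this holds since $\kappa>\rho(a_*)\ge r/(r-1)$), and it gives only a non-sharp constant, which you then recover a posteriori with the paper's own averaged-coefficient device. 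In exchange, your construction is explicit, does not use finite-dimensionality of $K$, and gives continuous dependence of $\beta$ on $b$ directly from the parameter-dependent contraction. One bookkeeping point concerns the passage from $\mathrm{D}F(b)$ to $\tilde A$. State the coefficient estimate, as the paper does, for every rescaled point in the closed $\epsilon$-ball about $a_*$, and choose $s_0$ so that $\rho_1|s_*|^{1/(r-1)-\kappa}\le\epsilon/2$. Convex combinations of $b$ and $b-x$ are then covered automatically, so you do not have to justify a second split of the $\epsilon_0$ budget by the looser remark that $|s|\,|x|\,|b|^{r-2}\to0$.
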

\begin{proof}
We prove the assertion under the common bounds; the asymptotic
assumption on an individual path gives the first of these bounds
after restriction to a tail.
Write $F=\sum_{j=r}^NF_j$. Homogeneity gives
\begin{equation}
 |s|\mathrm{D}F(|s|^{-1/(r-1)}z)
 =\mathrm{D}F_r(z)+\sum_{j=r+1}^N|s|^{-(j-r)/(r-1)}\mathrm{D}F_j(z).
 \label{eq:shadow-homogeneity}
\end{equation}
For $|z-a_*|\le\epsilon\le1$, the mean-value theorem therefore gives
\[
 \||s|\mathrm{D}F(|s|^{-1/(r-1)}z)-A_{a_*}\|_{\mathrm{op}}
 \le C\bigl(\epsilon+|s|^{-1/(r-1)}\bigr),
\]
where $C$ depends only on the polynomial $F$ and $a_*$.
Choose $\epsilon$ small and then $s_0$ sufficiently negative that
the right side is at most $\epsilon_0$ for $s\le s_0$.
Decrease $s_0$ further so that
\begin{equation}
 \frac{C_R}{\kappa-\bar\rho}|s_0|^{1/(r-1)-\kappa}
 \le\frac\epsilon4.
 \label{eq:shadow-tail-choice}
\end{equation}
This is possible because $\kappa>\frac1{r-1}$.

Let $s_i\searrow-\infty$, and solve $\beta_i'=F(\beta_i)$ with
$\beta_i(s_i)=b(s_i)$. On any interval where
$||s|^{1/(r-1)}\beta_i(s)-a_*|<\epsilon$, put $w_i=b-\beta_i$. Then
\[
 (w_i)_s=A_i(s)w_i+R_b(s),\qquad
 A_i(s)=\int_0^1\mathrm{D}F(\beta_i+\theta w_i)\dd\theta.
\]
Every rescaled point $|s|^{1/(r-1)}(\beta_i+\theta w_i)$ lies in
the $\epsilon$-ball about $a_*$, since it is a convex combination
of $|s|^{1/(r-1)}\beta_i$ and $|s|^{1/(r-1)} b$.
The preceding estimate bounds
$\||s|A_i(s)-A_{a_*}\|_{\mathrm{op}}$ by
$\epsilon_0$. Lemma~\ref{lem:poly-evolution} and $w_i(s_i)=0$
give
\begin{equation}
 |w_i(s)| \le C_R|s|^{-\bar\rho} \int_{s_i}^s|u|^{\bar\rho-\kappa-1}\dd u \le\frac{C_R}{\kappa-\bar\rho}|s|^{-\kappa}.
 \label{eq:shadow-integral}
\end{equation}
Consequently, for $s\le s_*\le s_0$,
\[
 ||s|^{1/(r-1)}\beta_i(s)-a_*|
 \le\frac\epsilon2+
 \frac{C_R}{\kappa-\bar\rho}|s|^{1/(r-1)-\kappa}
 \le\frac{3\epsilon}{4}.
\]
Thus $\beta_i$ cannot reach the boundary of the $\epsilon$-ball
in rescaled coordinates. Its values stay in a bounded region of
the polynomial vector field, so the ODE continuation theorem gives
existence up to $s_*$. The terminal values $\beta_i(s_*)$ have a
convergent subsequence. The same bounds and continuous dependence
give a limiting exact solution on every compact subinterval of
$(-\infty,s_*]$. It satisfies \eqref{eq:shadow-bound}, which also
implies $|s|^{1/(r-1)}\beta(s)\to a_*$.

For uniqueness, let $w=\beta_1-\beta_2$ for two exact reduced trajectories with the prescribed approximation rate.
Then $w_s=A(s)w$, where the mean-value formula and
\eqref{eq:shadow-homogeneity} give
$A(s)=|s|^{-1}A_{a_*}+o(|s|^{-1})$.
Also $|w(u)|\le C|u|^{-\kappa}$. For fixed $s$ on a sufficiently
negative tail, \cref{lem:poly-evolution} yields
\[
 |w(s)|\le\left(\frac{|u|}{|s|}\right)^{\bar\rho}|w(u)|
 \le C|s|^{-\bar\rho}|u|^{\bar\rho-\kappa}
 \longrightarrow0\qquad(u\searrow-\infty).
\]
Hence the exact reduced trajectories agree on that tail, and ODE uniqueness extends
their equality over the common interval.
\end{proof}

By \cref{thm:actual-expansion}, every principal ancient SMRF in the sense of \cref{def:principal} satisfies
the hypothesis of \cref{thm:shadow}. For such a solution $h$, denote
the unique exact reduced trajectory associated with $b=\Pi_Kh$ by $\beta_N(h)$.
The homogeneity identity \eqref{eq:shadow-homogeneity} in the proof
of \cref{thm:shadow}, together with $|s|^{1/(r-1)}\beta(s)\to a_*$, gives
\begin{equation}
 \mathrm{D}F(\beta(s))=\frac{A_{a_*}}{|s|}+o(|s|^{-1}).\label{eq:DF-shadow}
\end{equation}

\subsection{The parameter space}\label{sec:parameters}
The second parameter records the exponentially decaying complementary data that lie below every finite polynomial order. We obtain it by separating the forced and homogeneous solutions of the complement equation along an exact reduced trajectory.

Fix $\rho(a_*)<\bar\rho<\kappa$ and
$0<\epsilon_0\le\bar\rho-\rho(a_*)$ as in \cref{lem:poly-evolution}.
Choose a trajectory radius $\epsilon>0$ as in the proof of
\cref{thm:shadow} and a common terminal time $s_*\le-2$ so that
\[
 C\bigl(\epsilon+|s_*|^{-1/(r-1)}\bigr)\le\epsilon_0.
\]
Here $C$ includes the coefficient bounds from
\eqref{eq:shadow-homogeneity} and \eqref{eq:ledger2}. These choices
give the uniform polynomial evolution bounds used below.

\begin{definition}\label[definition]{def:admissible}\index{s terminal@$s_*$}
Let $\UN(a_*)$ be the set of all exact trajectories $\beta$ on
$(-\infty,s_*]$ satisfying
\begin{equation}
 \beta_s=F(\beta),\qquad |s|^{1/(r-1)}\beta(s)\longrightarrow a_*,
 \qquad \sup_{s\le s_*}||s|^{1/(r-1)}\beta(s)-a_*|<\epsilon.
 \label{eq:admissible}\index{U trajectories@$\UN(a_*)$ (admissible trajectories)}
\end{equation}
\end{definition}

We use the strip norms $\XP(I_s),\YP(I_s)$ of
\eqref{eq:append-norms}, with spatial target spaces $\Comp_3$ and
$\Comp_{1}$ as in \eqref{eq:complement-spaces}.
Along $\beta\in\UN(a_*)$ define
\begin{equation}
 B_\beta(s)=\mathrm{D}_q\mathcal V_{\Comp}(\beta(s),0)
 =Q_N(\beta(s))\mathrm{D}\mathcal V(\Psi(\beta(s)))|_{\Comp}.
 \label{eq:beta-linearization}\index{B beta@$B_\beta$}
\end{equation}
At $\beta=0$ this operator is $L_\Comp$. The bound
\eqref{eq:admissible} and the equation for $\beta$ give
\[
 |\beta(s)|\le C|s|^{-1/(r-1)},\qquad
 |\beta_s(s)|\le C|s|^{-r/(r-1)}.
\]
Analytic dependence of the coordinate field and $B_0=L_\Comp$ give
\begin{equation}
 \|(B_\beta-L_\Comp)u\|_{\YP(I_s)}
 \le C\|\beta\|_{C(I_s;K)}\|u\|_{\XP(I_s)}
 \le C|s|^{-1/(r-1)}\|u\|_{\XP(I_s)}.
 \label{eq:beta-perturbation}
\end{equation}
This verifies the smallness hypothesis of
Proposition~\ref{prop:weighted-green}, uniformly in $\beta$, after
choosing the common tail sufficiently negative.

Let $P_+^\beta(s_*)$ be the projection supplied by that proposition
and put $E_+^\beta(s_*)=\Ran P_+^\beta(s_*)$. This is the space
of terminal values of bounded homogeneous ancient solutions of
$z_s=B_\beta z$; it has dimension $\dim E_+$, and the projection
is along $E_-$. For every $p\ge0$, recall the weighted norms
\begin{equation}
 \|u\|_{\XP_p}=\sup_{s\le s_*}|s|^p\|u\|_{\XP(I_s)},\qquad
 \|R\|_{\YP_p}=\sup_{s\le s_*}|s|^p\|R\|_{\YP(I_s)}.
 \label{eq:polynomial-norms}\index{X Y polynomial@$\XP_p,\YP_p$}
\end{equation}
For each fixed $p$, Proposition~\ref{prop:weighted-green}
with weight $|s|^{-p}$ gives the unique normalized inverse
$y=\Gop_\beta R$ of $y_s=B_\beta y+R$ and
\begin{equation}
 \|\Gop_\beta R\|_{\XP_p}\le C_{G,p}\|R\|_{\YP_p},\qquad
 P_+^\beta(s_*)\Gop_\beta R(s_*)=0.
 \label{eq:complement-Green-estimate}\index{G Green@$\Gop_\beta$}\index{z homogeneous@$z_{\beta,\zeta}$}
\end{equation}
The constant is independent of $\beta$ and of the terminal time
once the common tail bounds are fixed.

For $\zeta\in E_+^\beta(s_*)$, let $z_{\beta,\zeta}$ be the unique
bounded homogeneous ancient solution with terminal value $\zeta$.
The exponential estimate in Proposition~\ref{prop:weighted-green}
gives $z_{\beta,\zeta}\in\XP_{\kappa+1}$. Set
\begin{equation}
 |\zeta|_+=\|z_{\beta,\zeta}\|_{\XP_{\kappa+1}}.
\end{equation}
This is a norm on the finite-dimensional fiber.
Define
\begin{equation}
 \PN(a_*)=\bigsqcup_{\beta\in\UN(a_*)}
 \{\zeta\in E_+^\beta(s_*):|\zeta|_+<1\}.
 \label{eq:parameter-bundle}\index{E positive beta@$E_+^\beta(s_*),P_+^\beta(s_*)$}\index{P parameter@$\PN(a_*)$ (parameter space)}\index{E positive beta@$E_+^\beta(s_*),P_+^\beta(s_*)$}
\end{equation}

For a principal ancient SMRF with characteristic vector $a_*$, put
$b=\Pi_Kh$, $q=h-\Psi(b)$, and $\beta=\beta_N(h)$. On a
sufficiently negative tail, $\beta\in\UN(a_*)$. Define
\begin{equation}
 \zeta_N(h)=P_+^\beta(s_*)q(s_*).
 \label{eq:scattering-definition}\index{E positive beta@$E_+^\beta(s_*),P_+^\beta(s_*)$}
\end{equation}
The next lemma identifies this datum with the homogeneous part of
the complement equation along the exact reduced trajectory.

\begin{lemma}\label[lemma]{lem:shadow-complement}
Let $h$ be a principal ancient SMRF with characteristic vector $a_*$,
and use $b,q,\beta$ as above. On a sufficiently negative tail,
$q\in\XP_{\kappa+1}$ and $R_h\in\YP_{\kappa+1}$, where
\begin{equation}
 R_h(s)=\mathcal V_{\Comp}(b,q)-B_\beta(s)q =Q_N(b)E_N(b) +\int_0^1\bigl[\mathrm{D}_q\mathcal V_{\Comp}(b,\theta q) -\mathrm{D}_q\mathcal V_{\Comp}(\beta,0)\bigr]q\dd\theta.
 \label{eq:shadow-complement-forcing}
\end{equation}
The complement has the decomposition
\begin{equation}
 q=\Gop_\beta R_h+z_{\beta,\zeta_N(h)},\qquad
 \|\Gop_\beta R_h\|_{\XP_{\kappa+1}}
 \le C_{G,\kappa+1}\|R_h\|_{\YP_{\kappa+1}}.
 \label{eq:shadow-complement-decomposition}\index{z homogeneous@$z_{\beta,\zeta}$}
\end{equation}
\end{lemma}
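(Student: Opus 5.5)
We first verify the two membership claims, then identify the homogeneous remainder.

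\emph{Bound on $q$.} By \cref{thm:actual-expansion}, $q=h-\Psi(b)$ splits as $\rho_N+z_{+,N}$. Here $\|\rho_N(s)\|_3\le C_N|b(s)|^{N+1}$ and $z_{+,N}=o(|b|^A)$ for every $A$, in every fixed spatial norm. Since $h$ is principal, $|b(s)|\asymp|s|^{-1/(r-1)}$, so $|b|^{N+1}\asymp|s|^{-(N+1)/(r-1)}=|s|^{-\kappa-1}$ by the definition of $\kappa$. Applying the pointwise bounds at all times of $I_s$ bounds the $C^{3,1/2}$ part of the strip norm. The time-derivative part comes from $q_s=\mathcal V_{\Comp}(b,q)$, estimated through \eqref{eq:ledger3} and the defect bound; alternatively, the interior estimate \eqref{eq:append-regularity} applies after the forcing is bounded. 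Together these give $q\in\XP_{\kappa+1}$.

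\emph{Bound on $R_h$.} The identity \eqref{eq:shadow-complement-forcing} follows from $\mathcal V_{\Comp}(b,0)=Q_N(b)E_N(b)$ in \eqref{eq:coordinate-defect} and the mean-value formula in $q$. The defect term is $O(|b|^{N+1})=O(|s|^{-\kappa-1})$ in $C^{1,1/2}$ by \cref{thm:formal}. For the integral term, \eqref{eq:ledger4} bounds the bracket by $C(|b-\beta|+\|q\|_3)$. By \cref{thm:shadow}, $|b-\beta|=O(|s|^{-\kappa})$, and $\|q\|_3=O(|s|^{-\kappa-1})$. Hence the product with $q$ is $O(|s|^{-2\kappa-1})$, which is even better than needed because $\kappa>1$. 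Taking suprema over $I_s$ and using comparability of $|s|$ on unit intervals gives $R_h\in\YP_{\kappa+1}$. On a sufficiently negative tail one also has $\beta\in\UN(a_*)$, since $|s|^{1/(r-1)}\beta\to a_*$.

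\emph{Decomposition.} By construction $q_s=B_\beta q+R_h$. Set $y=\Gop_\beta R_h$, which exists by \eqref{eq:complement-Green-estimate} with $p=\kappa+1$ and satisfies the displayed estimate. Then $z=q-y$ solves $z_s=B_\beta z$ and lies in $\XP_{\kappa+1}$, so it is a bounded homogeneous ancient solution. By Proposition~\ref{prop:weighted-green}, its terminal value lies in $E_+^\beta(s_*)$, and $z$ is the unique such solution with that terminal value. Applying $P_+^\beta(s_*)$ at $s_*$ and using $P_+^\beta(s_*)y(s_*)=0$ gives $z(s_*)=P_+^\beta(s_*)q(s_*)=\zeta_N(h)$. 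Hence $z=z_{\beta,\zeta_N(h)}$.

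\emph{Main obstacle.} The delicate point is the identification $z(s_*)\in E_+^\beta(s_*)$. We need the characterization in Proposition~\ref{prop:weighted-green} that bounded ancient homogeneous solutions have terminal values exactly in $\Ran P_+^\beta(s_*)$, which the parameter-space construction asserts. The smallness hypothesis \eqref{eq:beta-perturbation} must also hold on the chosen tail, uniformly in $\beta$. Finally, the strip-norm time-derivative control of $q$ requires the parabolic estimates of the appendix.
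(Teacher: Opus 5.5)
Your proposal is correct and follows the paper's proof essentially step for step. You get $q\in\XP_{\kappa+1}$ from \cref{thm:actual-expansion} using $|b|^{N+1}\asymp|s|^{-\kappa-1}$. You bound $\|R_h\|_1\le C\bigl(|b|^{N+1}+(|b-\beta|+\|q\|_3)\|q\|_3\bigr)$ via \eqref{eq:ledger4} and \cref{thm:shadow}. You then identify $z=q-\Gop_\beta R_h$ through \cref{prop:weighted-green} and the normalization $P_+^\beta(s_*)\Gop_\beta R_h(s_*)=0$.

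Two cosmetic points. The time-derivative bound $\|q_s\|_1\le\|Q_N(b)E_N(b)\|_1+C\|q\|_3$ really uses boundedness of $\mathrm{D}_q\mathcal V_{\Comp}$ rather than \eqref{eq:ledger3}. The paper additionally records $b-\beta\in\XP_\kappa$, which the statement itself does not need.
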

\begin{proof}
The weighted construction in \cref{thm:actual-expansion} gives
$q\in\XP_{\kappa+1}$ and
$b_s=F(b)+R_b$ with $R_b\in\YP_{\kappa+1}$, since
$|b(s)|\asymp|s|^{-1/(r-1)}$ and $\frac{N+1}{r-1}=\kappa+1$.
Theorem~\ref{thm:shadow} gives $x=b-\beta=O(|s|^{-\kappa})$.
In the equation
\[
 x_s=\left(\int_0^1\mathrm{D}F(\beta+\theta x)\dd\theta\right)x+R_b,
\]
the coefficient is $O(|s|^{-1})$. The equation therefore gives
$x_s=O(|s|^{-\kappa-1})$, and hence $b-\beta\in\XP_\kappa$.

Subtracting $B_\beta q$ from the complement equation in
\cref{lem:exact-coordinates} gives
\eqref{eq:shadow-complement-forcing}. The derivative estimates yield
\[
 \|R_h(s)\|_{1}
 \le C\left(|b|^{N+1}
       +\bigl(|b-\beta|+\|q\|_{3}\bigr)\|q\|_{3}\right).
\]
The three terms have respective orders
$|s|^{-\kappa-1}$, $|s|^{-2\kappa-1}$, and
$|s|^{-2\kappa-2}$. Taking the supremum on unit intervals
therefore gives $R_h\in\YP_{\kappa+1}$.

We have the exact equation $q_s=B_\beta q+R_h$.
Apply Proposition~\ref{prop:weighted-green} with weight
$|s|^{-\kappa-1}$ to obtain $y=\Gop_\beta R_h$ and the estimate
in \eqref{eq:shadow-complement-decomposition}.
Then $z=q-y$ is a bounded homogeneous ancient solution of
$z_s=B_\beta z$, so the same proposition identifies it with the
exponentially decaying homogeneous solution determined by $z(s_*)$.
Since $P_+^\beta(s_*)y(s_*)=0$ and $P_+^\beta(s_*)z(s_*)=z(s_*)$,
its terminal value is
$z(s_*)=P_+^\beta(s_*)q(s_*)=\zeta_N(h)$.
\end{proof}

\begin{definition}\label[definition]{def:parametrization-map}
For a smooth principal SMRF $h$ on $(-\infty,s_*]$ whose exact reduced
trajectory belongs to $\UN(a_*)$, define the \emph{parametrization map} by
\begin{equation}
 \Theta_N(h)=\bigl(\beta_N(h),\zeta_N(h)\bigr),\qquad \zeta_N(h)=P_+^{\beta_N(h)}(s_*) \bigl[h(s_*)-\Psi(\Pi_Kh(s_*))\bigr].
 \label{eq:parametrization-map}\index{Theta parametrization@$\Theta_N$}\index{zeta datum@$\zeta_N$}\index{beta reduced@$\beta_N$ (exact reduced trajectory)}
\end{equation}
Its restriction to $|\zeta_N(h)|_+<1$ takes values in $\PN(a_*)$;
the bounded domain of the bijection is specified in
\cref{thm:parametrization}.
\end{definition}

\subsection{Green estimate for the neutral equation}\label{sec:green-estimates}
We use the same strip and weighted norms for $K$-valued paths.
Since $K$ has a fixed finite basis of smooth tensors, replacing the
spatial target norms by the fixed $L^2$ norm on $K$ gives
\[
 \|x\|_{\XP(I_s)}\asymp\|x\|_{C^1(I_s;K)},\qquad
 \|v\|_{\YP(I_s)}\asymp\|v\|_{C(I_s;K)}.
\]
The constants depend only on the background.
We use the weighted norms in \eqref{eq:polynomial-norms}.
Along $\beta\in\UN(a_*)$, set
\begin{equation}
 A_\beta(s)=\mathrm{D}_b\mathcal V_K(\beta(s),0),\qquad
 (\Hop_\beta v)(s)=\int_{-\infty}^sU_{A_\beta}(s,u)v(u)\dd u.
 \label{eq:center-Green-definition}\index{A beta@$A_\beta$}\index{H Green@$\Hop_\beta$}\index{U evolution@$U_{A_\beta}$}
\end{equation}
Here $U_{A_\beta}$ is the evolution operator defined in
Section~\ref{sec:shadow}.

\begin{lemma}\label[lemma]{lem:center-green-estimate}
On a sufficiently negative common tail, for every $K$-valued
$v\in\YP_{\kappa+1}$ there is a unique $K$-valued
$x\in\XP_\kappa$ solving $x_s=A_\beta x+v$. It is
$x=\Hop_\beta v$, and
\begin{equation}
 \|\Hop_\beta v\|_{\XP_\kappa}
 \le C_H\|v\|_{\YP_{\kappa+1}}.
 \label{eq:center-Green-estimate}
\end{equation}
The constant is independent of $\beta\in\UN(a_*)$ and of the
terminal time once the common tail bounds are fixed.
\end{lemma}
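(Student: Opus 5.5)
The plan is to reduce the statement to \cref{lem:poly-evolution}, applied with the coefficient family $A=A_\beta$. This requires checking that $A_\beta$ satisfies the uniform smallness hypothesis of that lemma on the common tail.

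By \eqref{eq:ledger2} with $q=0$,
\[
 \|A_\beta(s)-\mathrm{D}F(\beta(s))\|_{\mathrm{op}}\le C|\beta(s)|^N\le C|s|^{-N/(r-1)}.
\]
The homogeneity identity \eqref{eq:shadow-homogeneity} and the admissibility bound \eqref{eq:admissible} give
\[
 \||s|\mathrm{D}F(\beta(s))-A_{a_*}\|_{\mathrm{op}}\le C\bigl(\epsilon+|s|^{-1/(r-1)}\bigr).
\]
Since $N\ge r$, we have $|s|\cdot|s|^{-N/(r-1)}\le|s|^{-1/(r-1)}$. Therefore
\[
 \sup_{s\le s_*}\||s|A_\beta(s)-A_{a_*}\|_{\mathrm{op}}\le C\bigl(\epsilon+|s_*|^{-1/(r-1)}\bigr)\le\epsilon_0,
\]
by the choice of $\epsilon$ and $s_*$ made at the start of Section~\ref{sec:parameters}; that constant $C$ was taken to include the bounds from \eqref{eq:shadow-homogeneity} and \eqref{eq:ledger2}. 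Hence \eqref{eq:poly-evolution} holds for $U_{A_\beta}$ with exponent $\bar\rho$, uniformly in $\beta\in\UN(a_*)$ and in the terminal time.

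Existence and the sup-norm bound come next. For $v\in\YP_{\kappa+1}$ we have
\[
 |v(u)|\le C\|v\|_{C(I_u;K)}\le C|u|^{-\kappa-1}\|v\|_{\YP_{\kappa+1}}.
\]
Estimate \eqref{eq:center-absolute-bound} then shows that the integral defining $\Hop_\beta v$ converges absolutely, with
\[
 |\Hop_\beta v(s)|\le\frac{C}{\kappa-\bar\rho}|s|^{-\kappa}\|v\|_{\YP_{\kappa+1}}.
\]
Differentiating under the integral (justified by the absolute convergence and continuity of $U_{A_\beta}$) gives $x_s=A_\beta x+v$ for $x=\Hop_\beta v$. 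This equation also controls the derivative: since $\|A_\beta(s)\|_{\mathrm{op}}\le C|s|^{-1}$,
\[
 |x_s(s)|\le C|s|^{-1}|x(s)|+|v(s)|\le C|s|^{-\kappa-1}\|v\|_{\YP_{\kappa+1}}.
\]
On each strip $I_s=[s-1,s]$ the quantities $|u|$ and $|s|$ are comparable (recall $s\le-2$), so the $C^1(I_s;K)$ norm of $x$ is at most $C|s|^{-\kappa}\|v\|_{\YP_{\kappa+1}}$. By the norm equivalence $\|x\|_{\XP(I_s)}\asymp\|x\|_{C^1(I_s;K)}$ recorded at the beginning of Section~\ref{sec:green-estimates}, this yields \eqref{eq:center-Green-estimate}, with $C_H$ depending only on $\kappa-\bar\rho$, the coefficient bound, and the background.

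For uniqueness, suppose $x\in\XP_\kappa$ solves $x_s=A_\beta x+v$. Then $|x(s)|=O(|s|^{-\kappa})$ with $\kappa>\bar\rho$, and \eqref{eq:center-infinity} from \cref{lem:poly-evolution} gives $x=\Hop_\beta v$ directly. Equivalently, the difference of two such solutions is a homogeneous solution that is $O(|s|^{-\kappa})$. The evolution bound then makes it at most $C(|u|/|s|)^{\bar\rho}|u|^{-\kappa}$ for every $u\le s$, and this tends to zero as $u\searrow-\infty$.

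I expect the main point requiring care to be the uniformity of the coefficient estimate. The tail and $\epsilon$ must be chosen once, before $\beta$ varies, and the $O(|s|^{-1})$ bound on $A_\beta$ used in the derivative estimate must likewise not depend on $\beta$. Both follow from the sup bound in \eqref{eq:admissible} rather than from the individual limit $|s|^{1/(r-1)}\beta\to a_*$, so the constants are independent of $\beta$ and of $s_*$.
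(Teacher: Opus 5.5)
Your proposal is correct and follows the paper's proof essentially step for step. You verify the uniform smallness of $|s|A_\beta(s)-A_{a_*}$ from \eqref{eq:ledger2}, \eqref{eq:shadow-homogeneity}, and \eqref{eq:admissible}, apply \cref{lem:poly-evolution} for absolute convergence and the $|s|^{-\kappa}$ bound, recover the derivative bound from the equation and $\|A_\beta(s)\|_{\mathrm{op}}\le C|s|^{-1}$, and obtain uniqueness by sending the initial time to $-\infty$ in \eqref{eq:poly-evolution}; the only cosmetic difference is that you derive the ODE by differentiating under the improper integral, whereas the paper uses variation of constants on finite intervals (the cocycle identity $U_{A_\beta}(s,u)=U_{A_\beta}(s,s_0)U_{A_\beta}(s_0,u)$), which is the cleaner justification.
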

\begin{proof}
Choose $\rho(a_*)<\bar\rho<\kappa$. By \eqref{eq:ledger2} and
\eqref{eq:DF-shadow},
\[
 A_\beta(s)=|s|^{-1}A_{a_*}+o(|s|^{-1}).
\]
By \eqref{eq:admissible}, \eqref{eq:shadow-homogeneity}, and
\eqref{eq:ledger2}, the choices made in Section~\ref{sec:parameters}
give
\[
 \sup_{s\le s_*}\||s|A_\beta(s)-A_{a_*}\|_{\mathrm{op}}
 \le C\bigl(\epsilon+|s_*|^{-1/(r-1)}\bigr)\le\epsilon_0.
\]
Thus \cref{lem:poly-evolution} applies uniformly in $\beta$.
For $v\in\YP_{\kappa+1}$, put
\[
 C_v=\sup_{s\le s_*}|s|^{\kappa+1}
       \|v\|_{C(I_s;K)}
 \asymp\|v\|_{\YP_{\kappa+1}}.
\]
Equation~\eqref{eq:center-absolute-bound} gives
\[
 \int_{-\infty}^s|U_{A_\beta}(s,u)v(u)|\dd u
 \le\frac{C_v}{\kappa-\bar\rho}|s|^{-\kappa}.
\]
Thus the integral defining $x=\Hop_\beta v$ converges absolutely.
Variation of constants on finite intervals gives $x_s=A_\beta x+v$,
and $|x(s)|\le CC_v|s|^{-\kappa}$.

The coefficient bound $\|A_\beta(s)\|\le C|s|^{-1}$ and
$|u|\asymp|s|$ for $u\in I_s$ give
\[
 \|x_s\|_{C(I_s;K)}
 \le \|A_\beta x\|_{C(I_s;K)}+\|v\|_{C(I_s;K)}
 \le CC_v|s|^{-\kappa-1}.
\]
Hence $\|x\|_{C^1(I_s;K)}\le CC_v|s|^{-\kappa}$,
which proves \eqref{eq:center-Green-estimate}.
The difference of two such solutions is homogeneous and
$O(|s|^{-\kappa})$; letting the initial time tend to $-\infty$
in \eqref{eq:poly-evolution} gives zero.
\end{proof}

\subsection{Existence and uniqueness for prescribed parameters}\label{sec:realization}
Fix $(\beta,\zeta)\in\PN(a_*)$. The reduced trajectory $\beta$ is
prescribed, and $\Psi(\beta)$ is its approximate realization in the
slice. The homogeneous complement solution $z=z_{\beta,\zeta}$ carries
the prescribed complementary parameter. We correct these two profiles by seeking
\begin{equation}
 b=\beta+x,\qquad q=z+y,\qquad
 h=\Psi(\beta+x)+z+y,\qquad P_+^\beta(s_*)y(s_*)=0.
\end{equation}
Thus $x$ corrects the neutral trajectory, while $y$ is the forced
complement correction. Estimates \eqref{eq:center-Green-estimate}
and \eqref{eq:complement-Green-estimate} suggest the weights $x\in\XP_\kappa$ and $y\in\XP_{\kappa+1}$.

Use the product norm
\begin{equation}
 \|(x,y)\|_* =\|x\|_{\XP_\kappa}+\|y\|_{\XP_{\kappa+1}}.
 \label{eq:correction-norm}
\end{equation}

The graph defects are
\begin{equation}
 e_K=\mathcal V_K(\beta,0)-F(\beta),\qquad e_C=\mathcal V_{\Comp}(\beta,0).
\end{equation}
By \eqref{eq:coordinate-defect} and \cref{thm:formal}, these defects are bounded by $C|\beta|^{N+1}$. Since $\frac{N+1}{r-1}=\kappa+1$, the admissibility bound gives a constant $C_e$, uniform in $\beta$ and on all sufficiently negative tails, such that
\[
 \|e_K\|_{\YP_{\kappa+1}}+\|e_C\|_{\YP_{\kappa+1}}\le C_e.
\]

Fix the defect bound $C_e$ and the uniform constants in
\eqref{eq:center-Green-estimate} and \eqref{eq:complement-Green-estimate},
and set
\begin{equation}
 C_0=(C_H+C_{G,\kappa+1})C_e,\qquad \bar R=2(C_0+1).
 \label{eq:construction-radius}
\end{equation}
These constants are fixed before the final choice of $s_*$.

\begin{theorem}[Fixed-order parametrization]\label[theorem]{thm:parametrization}\index{P parameter@$\PN(a_*)$ (parameter space)}\index{s terminal@$s_*$}
Fix $\bar R$ as in \eqref{eq:construction-radius}, and then choose the
common terminal time $s_*$ sufficiently negative. The map $\Theta_N$
is a bijection onto $\PN(a_*)$ from the set of smooth SMRFs $h$ on
$(-\infty,s_*]$ with the following property: writing
$b=\Pi_Kh$ and $q=h-\Psi(b)$, there is an exact trajectory
$\beta\in\UN(a_*)$ such that
\begin{equation}
 \zeta=P_+^\beta(s_*)q(s_*),\qquad |\zeta|_+<1,\qquad \|b-\beta\|_{\XP_\kappa} +\|q-z_{\beta,\zeta}\|_{\XP_{\kappa+1}}\le \bar R.
 \label{eq:bounded-solution-class}
\end{equation}
In this class $\beta=\beta_N(h)$, and the solution associated with
$(\beta,\zeta)$ is denoted by $h_{\beta,\zeta}$.
\end{theorem}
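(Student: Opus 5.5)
The proof is a contraction argument in the correction variables $(x,y)$ with the norm $\|\cdot\|_*$ of \eqref{eq:correction-norm}, followed by an identification step showing that $\Theta_N$ inverts the construction. Fix $(\beta,\zeta)\in\PN(a_*)$ and write $z=z_{\beta,\zeta}$. Substituting $b=\beta+x$, $q=z+y$ into \eqref{eq:exact-coordinates} and subtracting the linearizations along $(\beta,0)$ gives the coupled system
\[
 x_s=A_\beta x+e_K+\mathcal N_K(x,y),\qquad
 y_s=B_\beta y+e_C+\mathcal N_C(x,y).
\]
Here $\mathcal N_K$ collects $\mathcal V_K(\beta+x,z+y)-\mathcal V_K(\beta,0)-A_\beta x$ together with $F(\beta)-F(\beta+x)+\mathrm{D}F(\beta)x$. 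Similarly, $\mathcal N_C$ is $\mathcal V_{\Comp}(\beta+x,z+y)-\mathcal V_{\Comp}(\beta,0)-B_\beta(z+y)$; this uses $z_s=B_\beta z$. We then define the map
\[
 \mathcal T(x,y)=\bigl(\Hop_\beta(e_K+\mathcal N_K),\ \Gop_\beta(e_C+\mathcal N_C)\bigr).
\]
By \cref{lem:center-green-estimate} and \eqref{eq:complement-Green-estimate}, $\mathcal T$ maps $\YP_{\kappa+1}\times\YP_{\kappa+1}$ into $\XP_\kappa\times\XP_{\kappa+1}$. The normalization $P_+^\beta(s_*)y(s_*)=0$ is built into $\Gop_\beta$.

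The key step is to bound the nonlinearities on the ball $\|(x,y)\|_*\le\bar R$ using \cref{lem:derivative-ledger}. Since $|\beta|\asymp|s|^{-1/(r-1)}$, $|x|\le\bar R|s|^{-\kappa}$, and $\|z+y\|_3\le(1+\bar R)|s|^{-\kappa-1}$, we get the following.
\begin{itemize}
\item \eqref{eq:ledger1} gives $|\mathrm{D}_q\mathcal V_K|\,\|z+y\|_3=O(|s|^{-1/(r-1)-\kappa-1})$.
\item \eqref{eq:ledger2} and the Taylor remainder of the polynomial $F$ give $O\bigl((|\beta|^N+|x|)|x|+\|q\|_3|x|+|\beta|^{r-2}|x|^2\bigr)$.
\item \eqref{eq:ledger3} and \eqref{eq:ledger4} give $\|\mathcal N_C\|_1=O\bigl((|\beta|^N+\|q\|_3)|x|+(|x|+\|q\|_3)\|q\|_3\bigr)$.
\end{itemize}
Each term carries at least one extra factor $|s|^{-\mu}$ with $\mu=\min\{1/(r-1),\kappa-1\}>0$ beyond $|s|^{-\kappa-1}$; this uses $\kappa>1$ from \eqref{eq:radial-indicial}. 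Hence $\|\mathcal N\|_{\YP_{\kappa+1}}\le C(\bar R)|s_*|^{-\mu}$, and the corresponding Lipschitz constant is of the same size. The cross term $\mathrm{D}_q\mathcal V_K\,[z]$ is linear in $z$, not quadratic, but it also carries the smallness $|\beta|$. Choosing $s_*$ negative enough after fixing $\bar R$, $\mathcal T$ maps the ball into the ball of radius $C_0+1<\bar R$ and is a contraction. The fixed point yields an SMRF $h_{\beta,\zeta}$, smooth by \cref{lem:interior-smoothing}. It satisfies \eqref{eq:bounded-solution-class} because $b-\beta=x$ and $q-z=y$, and its projection is $P_+^\beta(s_*)q(s_*)=\zeta$.

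It remains to identify $\beta=\beta_N(h)$ and to prove injectivity. Since $x\in\XP_\kappa$, the trajectory $\beta$ approximates $b$ at rate $|s|^{-\kappa}$. The uniqueness part of \cref{thm:shadow} then gives $\beta=\beta_N(h)$, so $\Theta_N(h_{\beta,\zeta})=(\beta,\zeta)$. Conversely, take $h$ in the class. Then $\beta$ is forced to equal $\beta_N(h)$ and $\zeta=\zeta_N(h)$, so $(x,y)=(b-\beta,q-z)$ lies in the $\bar R$-ball and solves the same system. By the uniqueness statements in \cref{lem:center-green-estimate} and in Proposition~\ref{prop:weighted-green}, it equals $\mathcal T(x,y)$; for the latter we need the normalization $P_+^\beta y(s_*)=0$, which follows from $\zeta=P_+^\beta q(s_*)$. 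Uniqueness of the contraction fixed point then gives $h=h_{\Theta_N(h)}$, which proves that $\Theta_N$ is a bijection.

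I expect the main obstacle to be the uniformity of all constants in $\beta\in\UN(a_*)$ and in $s_*$. The radius $\bar R$ is fixed first, and shrinking $s_*$ must not spoil $C_H$, $C_{G,\kappa+1}$, $C_e$, or the admissibility radius $\epsilon$. Shrinking $s_*$ also changes the fiber $E_+^\beta(s_*)$ and the norm $|\zeta|_+$; this must be handled consistently, for instance by restricting admissible trajectories to the new tail. A second delicate point is the linear cross term $\mathrm{D}_q\mathcal V_K[z]$, whose smallness rests on $\mathrm{D}_q\mathcal V_K(0,0)=\Pi_KL|_{\Comp}=0$.
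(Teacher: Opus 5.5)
Your proposal is the paper's proof: the same contraction in the norm \eqref{eq:correction-norm} through $\Hop_\beta$ and $\Gop_\beta$, and the same gain of a factor $|s_*|^{-\min\{1/(r-1),\kappa-1\}}$ on all coupling and remainder terms. The uniformity you flag is already built into the setup, since $C_H$, $C_{G,\kappa+1}$, $C_e$ and $\epsilon$ are fixed before $s_*$. You also match the paper in identifying $\beta=\beta_N(h)$ by the uniqueness part of \cref{thm:shadow} and in deducing injectivity from uniqueness of the fixed point in the $\bar R$-ball; the paper merely keeps the linear couplings $C_\beta(z+y)$ and $D_\beta x$ separate from the quadratic remainders.

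There is one slip. You should not add $F(\beta)-F(\beta+x)+\mathrm{D}F(\beta)x$ to $\mathcal N_K$, because exactly $x_s=\mathcal V_K(\beta+x,q)-F(\beta)=A_\beta x+e_K+\bigl[\mathcal V_K(\beta+x,q)-\mathcal V_K(\beta,0)-A_\beta x\bigr]$. The Taylor remainder of $F$, with the opposite sign, arises only when you regroup this bracket to estimate it via \eqref{eq:ledger2}. With the extra term included, the fixed point would not solve $b_s=\mathcal V_K(b,q)$.
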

\begin{proof}
\emph{Construction.}
Fix $(\beta,\zeta)\in\PN(a_*)$ and write $z=z_{\beta,\zeta}$. We derive the correction equations and solve them by contraction.

The diagonal linearizations are $A_\beta,B_\beta$ from
\eqref{eq:center-Green-definition} and Section~\ref{sec:parameters}.
The two linear couplings are
\begin{equation}
 C_\beta=\mathrm{D}_q\mathcal V_K(\beta,0),\qquad D_\beta=\mathrm{D}_b\mathcal V_{\Comp}(\beta,0).
\end{equation}
Here and below, all profiles and coefficients are evaluated at time $s$.
Equation~\eqref{eq:coordinate-defect} identifies the errors as
$e_K=\Pi_KE_N(\beta)$ and $e_C=Q_N(\beta)E_N(\beta)$.
Theorem~\ref{thm:formal} bounds the defect by $C|\beta|^{N+1}$,
and \eqref{eq:admissible} gives $|\beta|\le C|s|^{-1/(r-1)}$.
Since $\frac{N+1}{r-1}=\kappa+1$, these estimates give the
first bound below. The other two follow from
\eqref{eq:ledger1} and \eqref{eq:ledger3}, evaluated at $(\beta,0)$:
\begin{equation}
 |e_K|+\|e_C\|_{1}\le C|s|^{-\kappa-1},\qquad
 \|C_\beta\|\le C|s|^{-1/(r-1)},\qquad
 \|D_\beta\|\le C|s|^{-N/(r-1)}.
 \label{eq:profile-errors}
\end{equation}
The operator norms use the spatial input and output norms specified
before \cref{lem:derivative-ledger}. Taking suprema gives the strip estimates and the uniform defect bound used above.

The remaining terms are defined by subtracting these constant and
linear parts:
\[
 \begin{aligned}
 R_K(s,x,q)&=\mathcal V_K(\beta+x,q)-\mathcal V_K(\beta,0)-A_\beta x-C_\beta q,\\
 R_C(s,x,q)&=\mathcal V_{\Comp}(\beta+x,q)-\mathcal V_{\Comp}(\beta,0)-D_\beta x-B_\beta q.
 \end{aligned}
\]
Taylor's formula in the variables $(b,q)$ gives
\[
 R_K(s,x,q)=\int_0^1(1-\theta)
 \mathrm{D}^2\mathcal V_K(\beta+\theta x,\theta q)[(x,q),(x,q)]\dd\theta,
\]
with the same formula for $R_C$ using $\mathcal V_{\Comp}$.
The second derivatives are uniformly bounded from the product of the
spatial input spaces to the corresponding output space on a fixed
small neighborhood. Hence
\begin{equation}
 |R_K(s,x,q)|+\|R_C(s,x,q)\|_{1}
 \le C\bigl(|x|+\|q\|_{3}\bigr)^2.
 \label{eq:quadratic-remainders}
\end{equation}
For two inputs, the mean-value theorem applied to the first derivative
of the remainder gives
\[
 \begin{aligned}
 &|R_K(s,x_1,q_1)-R_K(s,x_2,q_2)|
  +\|R_C(s,x_1,q_1)-R_C(s,x_2,q_2)\|_{1}\\
 &\quad\le C\bigl(|x_1|+|x_2|+\|q_1\|_{3}+\|q_2\|_{3}\bigr)
       \bigl(|x_1-x_2|+\|q_1-q_2\|_{3}\bigr).
 \end{aligned}
\]
Indeed, the derivative of each remainder is the difference between the
coordinate-field derivative at $(\beta+x,q)$ and at $(\beta,0)$.
Taking suprema on unit intervals gives these estimates with
solution norms on the inputs and forcing norms on the outputs.

Subtracting $\beta_s=F(\beta)$ and $z_s=B_\beta z$ from the exact
coordinate equations gives
\[
 x_s=A_\beta x+e_K+C_\beta(z+y)+R_K(s,x,z+y),\qquad y_s=B_\beta y+e_C+D_\beta x+R_C(s,x,z+y).
\]
By \cref{lem:center-green-estimate} and the complement construction
in Section~\ref{sec:parameters}, these equations, with the prescribed
weights and terminal condition, are equivalent to
\begin{align}
 x&=\Hop_\beta\bigl[e_K+C_\beta(z+y)+R_K(s,x,z+y)\bigr],\label{eq:LP-center}\\
 y&=\Gop_\beta\bigl[e_C+D_\beta x+R_C(s,x,z+y)\bigr].\label{eq:LP-complement}
\end{align}
We solve these equations in the product space with norm
\eqref{eq:correction-norm}.

The weighted remainder estimate is
\begin{equation}
 \|R_K(\cdot,x,q)\|_{\YP_{\kappa+1}} +\|R_C(\cdot,x,q)\|_{\YP_{\kappa+1}} \le C\bigl(|s_*|^{-(\kappa-1)}\|x\|_{\XP_\kappa}^2 +|s_*|^{-\kappa}\|x\|_{\XP_\kappa}\|q\|_{\XP_{\kappa+1}} +|s_*|^{-\kappa-1}\|q\|_{\XP_{\kappa+1}}^2\bigr).
 \label{eq:weighted-remainders}
\end{equation}
For example, the term quadratic in $x$ has strip norm bounded by
$C|s|^{-2\kappa}\|x\|_{\XP_\kappa}^2$; multiplying by the forcing
weight $|s|^{\kappa+1}$ gives $|s|^{-(\kappa-1)}$. The mixed and
quadratic $q$ terms give the other two powers in the same way.
The linear couplings satisfy
\begin{equation}
 \|C_\beta q\|_{\YP_{\kappa+1}} \le C|s_*|^{-1/(r-1)}\|q\|_{\XP_{\kappa+1}},\qquad \|D_\beta x\|_{\YP_{\kappa+1}} \le C|s_*|^{-(\kappa-1/(r-1))}\|x\|_{\XP_\kappa}.
 \label{eq:weighted-couplings}
\end{equation}
For the second bound, $\frac{N}{r-1}-1=\kappa-1/(r-1)$.
The difference estimates have the same powers, by the remainder
Lipschitz estimate above. All these exponents are positive by
\eqref{eq:radial-indicial}.

Let $\mathcal T$ be the right side of
\eqref{eq:LP-center}--\eqref{eq:LP-complement}. We apply the
contraction theorem on the closed radius-$\bar R$ ball in the product
space \eqref{eq:correction-norm}. For every pair in this ball,
$\|z+y\|_{\XP_{\kappa+1}}\le1+\bar R$, since $|\zeta|_+<1$.
On each unit strip,
\[
 \|\beta+x\|_{\XP(I_s)}+\|z+y\|_{\XP(I_s)}
 \le C\bigl(|s|^{-1/(r-1)}+\bar R |s|^{-\kappa}+(1+\bar R)|s|^{-\kappa-1}\bigr).
\]
Thus a sufficiently large $|s_*|$ places every trial pair in the
coordinate neighborhood where the preceding estimates hold. It also
makes $\bg+\Psi(\beta+x)+z+y$ positive definite.

The contribution of the two defects to $\mathcal T$ is bounded by
$C_0$. Applying \eqref{eq:center-Green-estimate} and
\eqref{eq:complement-Green-estimate} to
\eqref{eq:weighted-remainders} and \eqref{eq:weighted-couplings}
gives
\begin{align}
 \|\mathcal T(x,y)\|_*
 \le C_0+C\bigl(&|s_*|^{-1/(r-1)}(\bar R+1)+|s_*|^{-(\kappa-1/(r-1))}\bar R
 +|s_*|^{-(\kappa-1)}\bar R^2\notag\\*
 &+|s_*|^{-\kappa}\bar R(\bar R+1)+|s_*|^{-\kappa-1}(\bar R+1)^2\bigr).
 \label{eq:self-map-estimate}
\end{align}
The terms in parentheses tend to zero as $|s_*|\to\infty$.
Choose $|s_*|$ so that their contribution, including $C$, is at most
$1$. Then $\|\mathcal T(x,y)\|_*\le C_0+1=\bar R/2$, so the ball
is invariant.

For two pairs in the ball, $z$ is fixed and therefore
$q_1-q_2=y_1-y_2$. The coupling and remainder difference estimates give
\begin{equation}
 \begin{aligned}
 &\|\mathcal T(x_1,y_1)-\mathcal T(x_2,y_2)\|_*\\
 &\quad\le C\bigl(|s_*|^{-1/(r-1)}+|s_*|^{-(\kappa-1/(r-1))}
 +(\bar R+1)(|s_*|^{-(\kappa-1)}+|s_*|^{-\kappa}+|s_*|^{-\kappa-1})\bigr)
 \|(x_1-x_2,y_1-y_2)\|_*.
 \end{aligned}
 \label{eq:contraction-estimate}
\end{equation}
Increase $|s_*|$ so that the coefficient is at most $1/2$.
The product space is Banach, and the Green normalization ensures
$P_+^\beta(s_*)y(s_*)=0$ for the image of $\mathcal T$.
Banach's theorem gives a unique fixed point in the ball.

\emph{Identification of the parameters.}
Put $b=\beta+x$, $q=z+y$, and $h=\Psi(b)+q$. The integral equations
imply $b_s=\mathcal V_K(b,q)$ and $q_s=\mathcal V_{\Comp}(b,q)$. Since
$\Pi_K\Psi(b)=b$ and $q\in\Comp$, these are the intrinsic
coordinates of $h$, and
\[
 h_s=\mathrm{D}\Psi(b)\Pi_K\mathcal V(h)+Q_N(b)\mathcal V(h)=\mathcal V(h).
\]
The fixed point initially belongs to the solution space $\XP$.
Its $\|\cdot\|_3$ norm is $O(|s|^{-1/(r-1)})$ by the construction.
\Cref{lem:interior-smoothing} gives uniform higher-order bounds and
therefore smooth convergence to zero. Since
$b=\beta+x\sim a_*|s|^{-1/(r-1)}$, \cref{prop:dichotomy} applies
in its neutral case and gives
$\|h(s)\|_j=O(|s|^{-1/(r-1)})$ at every fixed spatial order.

Finally, $x=O(|s|^{-\kappa})$ and $\kappa>\frac1{r-1}$ show that $b$ has
leading coefficient $a_*$ and that the prescribed $\beta$ is its unique
exact reduced trajectory, by \cref{thm:shadow}. The terminal normalization gives
\[
 P_+^\beta(s_*)q(s_*)=P_+^\beta(s_*)z(s_*)=\zeta.
\]
These identify both components of $\Theta_N$ and prove surjectivity.

\emph{Injectivity.}
Let $h_1,h_2$ satisfy
\eqref{eq:bounded-solution-class} with the same $(\beta,\zeta)$.
The weighted neutral bound identifies $\beta$ as their common exact reduced trajectory.
Set $z=z_{\beta,\zeta}$ and
$x_i=\Pi_Kh_i-\beta$, $y_i=h_i-\Psi(\Pi_Kh_i)-z$.
Both pairs lie in the same radius-$\bar R$ ball and satisfy the exact
correction equations. Their neutral decay eliminates the homogeneous
neutral term by \cref{lem:center-green-estimate}, and
$P_+^\beta(s_*)y_i(s_*)=0$ gives the complement Green normalization.
Thus both are fixed points of the same $\mathcal T$. By
\eqref{eq:contraction-estimate},
\[
 \|(x_1-x_2,y_1-y_2)\|_*
 \le\frac12\|(x_1-x_2,y_1-y_2)\|_*.
\]
Hence $h_1=h_2$; uniqueness of continuation from an ancient interval in
Section~\ref{sec:horizontal} gives equality on their common continuation.
\end{proof}

\begin{remark}\label[remark]{rem:principal-germs}\index{Ancient germ}
The construction in \cref{thm:parametrization} works with any fixed
finite fiber radius and any larger fixed correction bound, after
choosing the common tail.
Every principal ancient germ is covered by such a bounded
parametrization: \cref{lem:shadow-complement} gives finite weighted
bounds for $b-\beta$, $q$, and $R_h$. Its Green estimate bounds
$y=\Gop_\beta R_h$, and hence $z=q-y$, uniformly when the terminal
time is moved backward. Choose finite bounds containing these
corrections and homogeneous data; the same contraction then identifies
the germ with its realization.
\end{remark}

\subsection{Equivariance and time symmetries}\label{sec:phase}
\paragraph*{Equivariance under $G$.}
Recall $G=\Isom(M,\bg,\bfv)=\Isom(M,\bg)$, acting on tensors by
pullback. For $\gamma\in G$, the action $\gamma^*$ commutes with $L$,
all fixed projections, and the coefficient recursion. Thus
\begin{equation}
 \Psi(\gamma^*b)=\gamma^*\Psi(b),\qquad F(\gamma^*b)=\gamma^*F(b).
\end{equation}
Uniqueness of the exact reduced trajectory gives
$\beta_N(\gamma^*h)=\gamma^*\beta_N(h)$. The complement evolutions
and their equivariant normalization satisfy
\begin{equation}
 P_+^{\gamma^*\beta}(s)=\gamma^*P_+^\beta(s)(\gamma^*)^{-1},
 \qquad \zeta_N(\gamma^*h)=\gamma^*\zeta_N(h).
\end{equation}
The action sends the chart over $a_*$ to the chart over $\gamma^*a_*$;
in one marked chart the residual group is the stabilizer $G_{a_*}$.
Fixed-point uniqueness makes the inverse parametrization equivariant.
Together with \cref{thm:correspondence}, this proves the spatial
quotient assertion in \cref{thm:main-intro}.

\paragraph*{SMRF-time translation and Ricci-flow rescaling.}
We identify parabolic rescaling with translation of the SMRF time. For $\lambda>0$, set
$g_{\mathrm{RF}}^\lambda(t)=\lambda g_{\mathrm{RF}}(t/\lambda)$ and
$c=\log\lambda$. The normalization \eqref{eq:time-general} gives
\[
 \hat g^\lambda(s)
 =e^s\lambda g_{\mathrm{RF}}(-e^{-s}/\lambda)
 =e^{s+c}g_{\mathrm{RF}}(-e^{-(s+c)})
 =\hat g(s+c).
\]
With compatible horizontal markings, this induces $h_c(s)=h(s+c)$.
Fix a reference solution $h=h_{\beta,\zeta}$ from
\cref{thm:parametrization}. We describe the local action at its
fixed terminal time $s_*$. The smooth solutions $h$ and $\beta$
extend slightly beyond $s_*$ by their equations. For sufficiently
small $|c|$, set
\[
 h_c(s)=h(s+c),\qquad \beta_c(s)=\beta(s+c),\qquad
 q_c(s)=q(s+c),\qquad s\le s_*.
\]
The equations are autonomous, and
\[
 |s|^{1/(r-1)}\beta_c(s)\longrightarrow a_*,\qquad
 \Pi_Kh_c(s)-\beta_c(s)=O(|s+c|^{-\kappa})=O(|s|^{-\kappa}).
\]
Thus \cref{thm:shadow} gives $\beta_N(h_c)=\beta_c$.
The strict admissibility and fiber bounds, together with the
correction bound $\|(x,y)\|_*\le\bar R/2$ in the proof of
\cref{thm:parametrization}, keep the shifted solution in the same
chart for small $|c|$. Its parameters at the fixed endpoint are
\begin{equation}
 \Theta_N(h_c)
 =\left(\beta_c,\ P_+^{\beta_c}(s_*)q(s_*+c)\right).
 \label{eq:modified-time-data}
\end{equation}
In the decomposition $q=z_{\beta,\zeta}+y$ of the reconstructed
solution, continue $z_{\beta,\zeta}$ by its linear equation when
needed and set $y=q-z_{\beta,\zeta}$. The new complementary parameter is
\[
 z_{\beta,\zeta}(s_*+c)
 +P_+^{\beta_c}(s_*)y(s_*+c),
\]
because the shifted homogeneous solution lies in the new positive
space. Thus the forced complement also contributes to the terminal
coordinate.

\paragraph*{Ricci-flow time translation.}
Original Ricci-time translation sends $g_{\mathrm{RF}}(t)$ to
$g_{\mathrm{RF}}(t+\theta)$. We use the same logarithmic normalization
for both flows and fix their relative horizontal marking as follows.
Use the unshifted reconstruction pullback, take the local slice
representative of the translated family, and remove its Killing
velocity by a $G$-valued correction tending to the identity at
$s=-\infty$. This convention fixes the time-independent
$G$-ambiguity throughout the comparison.

\begin{lemma}\label[lemma]{lem:phase-estimate}
Let $h$ be a smooth SMRF on $(-\infty,s_*]$ satisfying
\[
 \lim_{s\searrow-\infty}\||s|^{1/(r-1)} h(s)-a_*\|_3=0,
 \qquad a_*\in K\setminus\{0\},\qquad
 F_r(a_*)=\frac{a_*}{r-1}.
\]
Let $h^{(\theta)}$ represent $g_{\mathrm{RF}}(t+\theta)$ with the
normalization and horizontal marking just specified. On a common
sufficiently negative tail, locally in $\theta$,
\begin{align}
 \|h^{(\theta)}(s)-h(s)\|_3
 &\le C|\theta|e^s,\label{eq:phase-flat}\\
 J_h(s):=\left.\partial_\theta h^{(\theta)}(s)\right|_{\theta=0}
 &=-2e^s\Ric_{\bg}+O(e^s|s|^{-1/(r-1)}).\label{eq:phase-J}
\end{align}
The remainder is measured in $\|\cdot\|_3$.
\end{lemma}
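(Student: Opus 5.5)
Write the translated flow in logarithmic time. With $t=-e^{-s}$, translation by $\theta$ gives
\[
 \hat g^{(\theta)}(s)=e^sg_{\mathrm{RF}}(-e^{-s}+\theta)=(1-\theta e^s)\,\hat g(s_\theta),\qquad s_\theta=-\log(e^{-s}-\theta)=s+\theta e^s+O(\theta^2e^{2s}).
\]
Thus $\hat g^{(\theta)}$ is a reparametrization in time of $\hat g$, scaled by a factor $1-\theta e^s$. Both the time shift and the scale factor are $O(|\theta|e^s)$, uniformly on a common tail. The plan is to push this explicit relation through the three gauge steps: the potential gauge \eqref{eq:potential-gauge}, the local slice construction of Section~\ref{sec:horizontal}, and the $G$-valued Killing correction normalized at $s=-\infty$. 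Then I would differentiate in $\theta$.

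\emph{Step 1 (flat estimate).} In the unshifted reconstruction pullback, the translated MRF differs from $g(s)$ by three effects. The scaling gives a term $-\theta e^s g(s)$. The time shift gives a term $O(|\theta|e^s\|g_s\|)$, and $\|g_s\|_3=O(|s|^{-r/(r-1)})$ by the smooth convergence and the equation. The potential gauge differs by a diffeomorphism whose generator is the difference of the fields $\nabla f$, integrated from $-\infty$. I would check that this generator satisfies $\partial_s\phi\sim O(|\theta|e^s)$; its integral from $-\infty$ converges because $e^s$ is integrable. The uniform Lipschitz bounds on the slice map $u\mapsto X(u)$ and on $Y$ (\cref{prop:slice-field}, \eqref{eq:gauge-difference}) then transfer an $O(|\theta|e^s)$ metric discrepancy to the slice representative. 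For the Killing correction, the velocity difference $K_s$ is also $O(|\theta|e^s)$, so the $G$-valued ODE normalized to the identity at $-\infty$ moves by $O(|\theta|e^s)$. Together these give \eqref{eq:phase-flat}, with constants locally uniform in $\theta$ by the interior bounds of \cref{lem:interior-smoothing}.

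\emph{Step 2 (the derivative).} Differentiating the explicit relation at $\theta=0$ gives
\[
 \partial_\theta\hat g^{(\theta)}\big|_0=e^s\bigl(\hat g_s-\hat g\bigr)=-2e^s\Ric_{\hat g}.
\]
In the slice coordinates, $J_h$ therefore equals $-2e^s$ times the pulled-back Ricci tensor, corrected by a Lie derivative $\Lie_Z(\bg+h)$ from the linearized gauge. Since $\|h\|_3=O(|s|^{-1/(r-1)})$, we get $\Ric_{\bg+h}=\Ric_{\bg}+O(|s|^{-1/(r-1)})$. The remaining task is to show that the gauge term is small. Its projection onto $\Sli$ is governed by $\mathrm{div}_{\bfv}$ of the forcing. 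At $h=0$ the forcing is $-2e^s\Ric_{\bg}$, which lies in $\Sli$ because $\Ric_{\bg}$ is an eigentensor in $\Sli$ (as used in Section~\ref{sec:dichotomy}). The elliptic equation for $Z$ thus has right side $O(e^s|h|)$, and the $P^{-1}$ bound \eqref{eq:P-inverse} gives $\|\Lie_Z\|_3=O(e^s|s|^{-1/(r-1)})$.

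\emph{Main obstacle.} The hardest point is making the potential-gauge and Killing corrections honest. Both are integrated from $-\infty$ with no initial slack, so I must verify that the linearized generator is $e^s$ times a quantity that is at least bounded, and preferably $O(|s|^{-1/(r-1)})$ apart from a pure-scaling part. First I would use the canonical shrinker as a test: there $\nabla f$ is exactly time-invariant under scaling, since $f_{cg}$ at scale one relates to $f_g$ at a different scale. I would then carry out a perturbative comparison around it, using the analytic dependence $g\mapsto f_g$ from \cite{SW}.
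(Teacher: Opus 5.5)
Your Step~2 identity $\partial_\theta\hat g^{(\theta)}|_{\theta=0}=-2e^s\Ric_{\hat g}$ matches the paper, as do the use of $\Ric_{\bg}\in\Sli$ and the last two gauge steps (Lipschitz bounds for the local slice map, then a $G$-valued correction integrated from $s=-\infty$ using integrability of $e^s$). Step~1, however, does not establish \eqref{eq:phase-flat} as written, because it is organized around an object the prescribed marking does not contain. In that marking no potential gauge is solved for the translated flow: the unshifted reconstruction pullback $\phi_s$ is applied unchanged to $e^sg_{\mathrm{RF}}(-e^{-s}+\theta)$.

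Read literally in that frame, the discrepancy is still $O(|\theta|e^s)$, but not for your reasons. We have $\phi_s^*\hat g(s_\theta)=(\phi_{s_\theta}^{-1}\circ\phi_s)^*g(s_\theta)$, and $\phi_s$ has pulled-back velocity $W=Y(h)-\nabla^gf_g$ of size $O(1)$. So the time shift contributes $\theta e^s(g-2\Ric_g)$ to first order, not $O(|\theta|e^s\|g_s\|)$, and there is no potential-gauge effect to control.

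Your Step~1 and your ``main obstacle'' instead treat the translated flow's own potential gauge. Then you are estimating a differently built representative, and you still owe the argument that it agrees with the prescribed one. The estimate you defer (``I would check that this generator satisfies $\partial_s\phi\sim O(|\theta|e^s)$'') is also a genuine unproved step. In your reparametrized comparison it involves minimizers of $\boldsymbol{\mu}(g,\tau)$ at $\tau=(1-\theta e^s)^{-1}$, not just the scale-one map $g\mapsto f_g$. Your test case is also wrong for non-Einstein shrinkers. The scale-one potential of $(1-\theta e^s)\bg$ is the minimizer of $\boldsymbol{\mu}(\bg,(1-\theta e^s)^{-1})$, which differs from $\bfv$ at order $\theta e^s$ unless $\bfv$ is constant. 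This route can be completed, but the proposal does not complete it.

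The missing idea is that your Step~2 identity holds for every $\theta$ once the pullback is frozen. Set $k_\theta(s)=e^s\phi_s^*g_{\mathrm{RF}}(-e^{-s}+\theta)$. Scale invariance of the Ricci tensor gives $\partial_\theta k_\theta=-2e^s\Ric_{k_\theta}$, so integrating in $\theta$ yields $\|k_\theta-g\|_j\le C_j|\theta|e^s$ in every norm. Differentiating in $s$ gives $(k_\theta)_s=2\Phi(k_\theta)+\Lie_{W+\nabla^{k_\theta}f_{k_\theta}}k_\theta$ and $\|(k_\theta)_s-g_s\|_j\le C_j|\theta|e^s$. No time reparametrization and no comparison of potentials across scales is needed, and $k_\theta$ need not be an MRF.

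That $s$-derivative bound is also what your Killing step silently uses. The Killing velocity contains $(\mathrm{d}\chi_\theta)^{-1}\partial_s\chi_\theta$, so you need $\|\partial_sX_\theta\|_{j+1}\le C_j|\theta|e^s$, not only a bound on $X_\theta$.

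Two smaller points in Step~2:
\begin{itemize}
\item The elliptic inverse determines only the $\kk^\perp$-part of $Z$, since Killing fields lie in the kernel of $X\mapsto\mathrm{div}_{\bfv}\Lie_X(\bg+h)$. The $\kk$-part must come from $\partial_\theta a_\theta|_{\theta=0}=O(e^s)$ for the correction normalized at $-\infty$. It then contributes $\Lie_{Z_{\kk}}h=O(e^s\|h\|_4)$.
\item The $\|\cdot\|_3$ remainder needs $\|h\|_j=O(|s|^{-1/(r-1)})$ for some $j>3$. This comes from \cref{lem:interior-smoothing} and the neutral case of \cref{prop:dichotomy}, not from the $\|\cdot\|_3$ hypothesis alone.
\end{itemize}
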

\begin{proof}
Write $g(s)=\bg+h(s)$. The uniform bounds in
\cref{lem:interior-smoothing} first give smooth convergence to zero.
Since $\Pi_Kh(s)\sim a_*|s|^{-1/(r-1)}$, the neutral case of
\cref{prop:dichotomy} then gives
$\|h(s)\|_j=O(|s|^{-1/(r-1)})$ at every fixed higher spatial order.
In particular,
$\|\Ric_{g(s)}-\Ric_{\bg}\|_j=O(|s|^{-1/(r-1)})$.

Let $\phi_s$ be the reconstruction pullback, so that
$g(s)=e^s\phi_s^*g_{\mathrm{RF}}(-e^{-s})$. Hold $\phi_s$ fixed
while varying $\theta$, and put
\[
 k_\theta(s)=e^s\phi_s^*g_{\mathrm{RF}}(-e^{-s}+\theta),
 \qquad k_0(s)=g(s).
\]
Naturality and scale invariance of the Ricci tensor give
\begin{equation}
 \partial_\theta k_\theta(s)=-2e^s\Ric_{k_\theta(s)}.
 \label{eq:phase-exact}
\end{equation}
Consequently $\|k_\theta-g\|_j\le C_j|\theta|e^s$ for every
fixed $j$, locally uniformly in $\theta$ on a common tail.
The pulled-back velocity of $\phi_s$ is
\[
 W(s)=Y(h(s))-\nabla^{g(s)}f_{g(s)}.
\]
Differentiating in $s$ now gives
\begin{equation}
 (k_\theta)_s=k_\theta-2\Ric_{k_\theta}+\Lie_Wk_\theta
 =2\Phi(k_\theta)
  +\Lie_{W+\nabla^{k_\theta}f_{k_\theta}}k_\theta.
 \label{eq:phase-raw-evolution}
\end{equation}
It also gives
$\|(k_\theta)_s-g_s\|_j\le C_j|\theta|e^s$.

Apply the local slice construction of Section~\ref{sec:horizontal}:
let $\chi_\theta=\vartheta_{X_\theta}$, with $X_\theta\perp\kk$,
and set
\[
 \bg+\widetilde h_\theta=\chi_\theta^*k_\theta\in\bg+\Sli,
 \qquad X_0=0,\qquad\widetilde h_0=h.
\]
The inverse in the implicit equation \eqref{eq:static-slice-gauge}
is uniformly bounded near zero. Subtracting that equation at
$\theta$ and at zero gives $\|X_\theta\|_{j+1}\le C_j|\theta|e^s$.
Differentiating it in $s$, subtracting again, and using
\eqref{eq:phase-raw-evolution} gives
$\|\partial_sX_\theta\|_{j+1}\le C_j|\theta|e^s$.
These estimates use higher spatial orders of $k_\theta$ to account
for differentiation of the pullback.
The derivative of the slice map at $\bg$ is $\Pi_\Sli$, and
$\Pi_\Sli\Ric_{\bg}=\Ric_{\bg}$. Its derivative at $g$ differs
from this one by $O(|s|^{-1/(r-1)})$ in the required spatial norms.
Thus \eqref{eq:phase-exact} yields
\begin{equation}
 \left.\partial_\theta\widetilde h_\theta\right|_{\theta=0}
 =-2e^s\Ric_{\bg}+O(e^s|s|^{-1/(r-1)}),\qquad
 \|\widetilde h_\theta-h\|_j\le C_j|\theta|e^s.
 \label{eq:phase-slice-estimate}
\end{equation}

It remains to impose the horizontal velocity. Pulling back
\eqref{eq:phase-raw-evolution} gives
\[
 (\widetilde h_\theta)_s =2\Phi(\bg+\widetilde h_\theta) +\Lie_{Z_\theta}(\bg+\widetilde h_\theta),\qquad Z_\theta =\chi_\theta^*(W+\nabla^{k_\theta}f_{k_\theta}) +(\mathrm{d}\chi_\theta)^{-1}\partial_s\chi_\theta.
\]
Both $(\widetilde h_\theta)_s$ and $\mathcal V(\widetilde h_\theta)$ lie
in $\Sli$. Hence \eqref{eq:kernel-Ah} implies
\[
 K_\theta:=Z_\theta-Y(\widetilde h_\theta)\in\kk.
\]
At $\theta=0$, $Z_0=Y(h)$ and $K_0=0$.
The preceding estimates, together with smooth dependence of $f_g$
and $Y(h)$, give
\[
 |K_\theta(s)|\le C|\theta|e^s,\qquad
 \left|\left.\partial_\theta K_\theta(s)\right|_{\theta=0}\right|
 \le Ce^s,
\]
where $|\cdot|$ is any fixed norm on the finite-dimensional space
$\kk$. The first bound is integrable at backward infinity.
Solve
\[
 \partial_s a_\theta=-K_\theta\circ a_\theta,
 \qquad a_\theta(-\infty)=\Id,
\]
in $G$. Integration from $-\infty$ gives the unique relative
correction, with $a_\theta=\Id+O(|\theta|e^s)$ and
$\left.\partial_\theta a_\theta\right|_0=O(e^s)$ in local
coordinates on $G$.
Then $h^{(\theta)}=a_\theta^*\widetilde h_\theta$ satisfies SMRF.
Since $a_\theta$ preserves $\bg$, its infinitesimal contribution
at $\theta=0$ is a Lie derivative of $h$, bounded by
$Ce^s\|h\|_{4}=O(e^s|s|^{-1/(r-1)})$.
Equation~\eqref{eq:phase-slice-estimate} therefore proves
\eqref{eq:phase-J}. The finite-difference bounds for the slice and
for $a_\theta$ give \eqref{eq:phase-flat}.
\end{proof}

\begin{proposition}\label[proposition]{prop:phase}\index{Ricci-flow time translation}
The Ricci-flow time translation
$g_{\mathrm{RF}}(t)\mapsto g_{\mathrm{RF}}(t+\theta)$ leaves the
associated exact reduced trajectory of order $N$ unchanged:
$\beta_N(h^{(\theta)})=\beta_N(h)$.
If $E_+=\R\Ric_{\bg}$, it gives a local coordinate on each
one-dimensional fiber of complementary parameters. Thus nearby solutions with the
same exact reduced trajectory differ by a unique small Ricci-flow time translation,
and quotienting by this action removes the local scalar parameter.
Near a solution with zero complementary parameter, each such orbit has a
unique local representative with $\zeta_N=0$.
\end{proposition}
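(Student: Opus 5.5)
Using \cref{lem:phase-estimate}, I would argue in three steps: the reduced trajectory is unchanged, the complementary parameter has a nonzero derivative in $\theta$, and then the inverse function theorem applies on each fiber.

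\emph{Invariance of $\beta_N$.} By \eqref{eq:phase-flat}, $\|h^{(\theta)}(s)-h(s)\|_3\le C|\theta|e^s$ on a common tail. Projecting onto $K$ gives
\[
 |\Pi_Kh^{(\theta)}(s)-\beta_N(h)(s)|\le |\Pi_Kh(s)-\beta_N(h)(s)|+C|\theta|e^s=O(|s|^{-\kappa}),
\]
since $e^s$ is smaller than every power of $|s|$. The path $\Pi_Kh^{(\theta)}$ has the same leading coefficient $a_*$, and by \cref{thm:actual-expansion} it satisfies the hypothesis of \cref{thm:shadow}. The uniqueness part of \cref{thm:shadow} then gives $\beta_N(h^{(\theta)})=\beta_N(h)$. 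For small $|\theta|$ the strict admissibility and correction bounds are preserved, so $h^{(\theta)}$ stays in the same chart of \cref{thm:parametrization}.

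\emph{Derivative of the complementary parameter.} Put $\beta=\beta_N(h)$, which is now fixed. Then
\[
 \zeta_N(h^{(\theta)})=P_+^\beta(s_*)\bigl[h^{(\theta)}(s_*)-\Psi(\Pi_Kh^{(\theta)}(s_*))\bigr].
\]
Differentiating at $\theta=0$ gives $P_+^\beta(s_*)Q_N(b(s_*))J_h(s_*)$. By \eqref{eq:phase-J},
\[
 J_h(s_*)=-2e^{s_*}\Ric_{\bg}+O\bigl(e^{s_*}|s_*|^{-1/(r-1)}\bigr).
\]
Also $Q_N(b)=\Id+O(|b|)$, and $\Ric_{\bg}\in E_+\subset\Comp$ because $L\Ric_{\bg}=\Ric_{\bg}$. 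The derivative is therefore $-2e^{s_*}P_+^\beta(s_*)\Ric_{\bg}$ plus a relative error of size $O(|s_*|^{-1/(r-1)})$.

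This is the main obstacle: showing that $P_+^\beta(s_*)\Ric_{\bg}$ is close to $\Ric_{\bg}$ and in particular nonzero. The projection comes from \cref{prop:weighted-green} for $z_s=B_\beta z$. By \eqref{eq:beta-perturbation}, $B_\beta-L_\Comp=O(|s|^{-1/(r-1)})$ on the tail, and at $\beta=0$ the projection is $\Pi_+$. I would first try a perturbation bound, $\|P_+^\beta(s_*)-\Pi_+\|=O(|s_*|^{-1/(r-1)})$, obtained from the fixed-point characterization of bounded homogeneous ancient solutions in that proposition. With it, the derivative equals $-2e^{s_*}\Ric_{\bg}(1+o(1))$. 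When $E_+=\R\Ric_{\bg}$ the fiber $E_+^\beta(s_*)$ is one-dimensional, so this derivative spans it, uniformly for $h$ in the chart. To handle nonzero $\theta$ I would use the group law $h^{(\theta+\theta')}=(h^{(\theta)})^{(\theta')}$, so that the same derivative bound holds at every point of the orbit.

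\emph{Conclusion.} With $\beta$ fixed, $\theta\mapsto\zeta_N(h^{(\theta)})$ is a $C^1$ map into a one-dimensional space with nonvanishing derivative. It is therefore a local diffeomorphism, so $\theta$ is a local coordinate on the fiber. By the bijectivity of $\Theta_N$ in \cref{thm:parametrization}, two nearby solutions with the same $\beta$ differ only in $\zeta$. The local inverse therefore produces a unique small $\theta$ relating them, and quotienting removes the scalar parameter. Near a solution with $\zeta_N=0$, the orbit crosses $\zeta=0$ transversally, and the same local inverse gives a unique small $\theta$ with $\zeta_N(h^{(\theta)})=0$.
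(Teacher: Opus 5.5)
Your proposal is correct and follows the paper's proof. It uses invariance of $\beta_N$ from \eqref{eq:phase-flat} and the uniqueness in \cref{thm:shadow}, then the derivative $P_+^\beta(s_*)Q_N(b(s_*))J_h(s_*)$ together with $P_+^\beta(s_*)=\Pi_++O(|s_*|^{-1/(r-1)})$ from \cref{prop:weighted-green} and \eqref{eq:beta-perturbation}, and finally the inverse-function theorem with injectivity of $\Theta_N$ in \cref{thm:parametrization}. The only difference is that you invoke the group law $h^{(\theta+\theta')}=(h^{(\theta)})^{(\theta')}$ to get the derivative bound along the whole orbit, whereas the paper applies the inverse-function theorem at one reference point. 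The group law does hold, because the $G$-corrections in the marking are normalized at $s=-\infty$, but you should say so rather than assume it.
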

\begin{proof}
By \eqref{eq:phase-flat}, the neutral change is
$O(e^s)=o(|s|^{-\kappa})$. The same exact trajectory $\beta$
therefore approximates both neutral curves at the prescribed rate, and \cref{thm:shadow} gives
$\beta_N(h^{(\theta)})=\beta_N(h)$.

Suppose $E_+=\R\Ric_{\bg}$. Since $L\Ric_{\bg}=\Ric_{\bg}$
and $L$ is self-adjoint, $\Pi_K\Ric_{\bg}=0$.
For $b^{(\theta)}=\Pi_Kh^{(\theta)}$ and
$q^{(\theta)}=h^{(\theta)}-\Psi(b^{(\theta)})$, differentiation
and \eqref{eq:phase-J} give
\[
 \left.\partial_\theta q^{(\theta)}(s)\right|_0
 =Q_N(b(s))J_h(s)
 =-2e^s\Ric_{\bg}+O(e^s|s|^{-1/(r-1)}).
\]
The exact reduced trajectory $\beta$ is fixed, so $P_+^\beta(s_*)$ is fixed in this
differentiation. The graph construction in
Proposition~\ref{prop:weighted-green} and
\eqref{eq:beta-perturbation} give
$P_+^\beta(s_*)=\Pi_++O(|s_*|^{-1/(r-1)})$ on the fixed
finite-dimensional positive space. In particular,
\[
 r_\beta=e^{s_*}P_+^\beta(s_*)\Ric_{\bg}\ne0,
 \qquad E_+^\beta(s_*)=\R r_\beta.
\]
Write $\zeta_N(h)=\sigma(h)r_\beta$. Projecting the preceding
derivative at $s_*$ yields
\[
 \left.\frac{\mathrm{d}}{\mathrm{d}\theta}
 \sigma(h^{(\theta)})\right|_{\theta=0}
 =-2+O(|s_*|^{-1/(r-1)})\ne0
\]
on a sufficiently negative tail. Indeed,
$e^{-s_*}r_\beta\to\Ric_{\bg}$, so division by this basis vector
preserves the stated error bound.

Fix a reference solution $h_0$ in a bounded chart of
\cref{thm:parametrization}, using \cref{rem:principal-germs}
to enlarge its fixed bounds if needed.
The translated solutions stay in the chart for small $\theta$ by
the smooth estimates above. The inverse-function theorem makes
$\theta\mapsto\sigma(h_0^{(\theta)})$ a bijection between
neighborhoods of $0$ and $\sigma(h_0)$. Every parameter in this
fiber neighborhood is realized by \cref{thm:parametrization}.
For its realizing solution, choose the unique small $\theta$ with
the same scalar datum; the exact reduced trajectories already agree, so injectivity
in that theorem identifies the solution with $h_0^{(\theta)}$.
This proves the local orbit assertion. Taking $\sigma(h_0)=0$
gives the stated unique local normalization $\zeta_N=0$.

These local actions agree in overlapping local parametrizations because the
relative marking is fixed at backward infinity. Each orbit is open
in a connected local one-dimensional fiber, so such a fiber is one orbit.
When $\dim E_+>1$, the same nonzero derivative removes one
local direction and leaves the other complementary parameters.
\end{proof}

\section{Complex projective tangent shrinkers}\label{sec:cp}
In this section $(M,\bg)=(\CP^{d-1},g_{\mathrm{FS}})$, with $d\ge3$ and $\Ric_{\bg}=\frac12\bg$. The potential $\bfv$ is constant. We first compute the common spectral data and quadratic reduced field. For $d=2n+1$, we determine the characteristic matrices and stable fibers, whose geometric quotients give the Weyl-chamber models in \cref{thm:cp-moduli} and the two classes on $\CP^2$ in \cref{cor:cp2}. The final subsection constructs a nonprincipal slow solution in odd complex dimension.

\subsection{Neutral tensors and the quadratic reduced field}
The Fubini--Study metric is linearly stable and has nontrivial infinitesimal
solitonic deformations; see \cite[Definition 2.9 and the following discussion]{LZrigid}
and \cite[Lemma 2.6]{LZcp}. Its nonlinear dynamical instability was established by
Kr\"oncke \cite{KronckeEinstein}, with an independent entropy proof by
Knopf--\v{S}e\v{s}um \cite{KnopfSesum}. Its nonlinear rigidity was proved by
Kr\"oncke in even complex dimension \cite[Theorem 1.3]{Kroncke}
and by Li--Zhang in all complex dimensions \cite{LZcp}.
The following lemma collects the spectral facts. The description of the
first eigenfunctions is recalled in \cite[proof of Theorem 6.1]{Kroncke}.

\begin{lemma}\label[lemma]{lem:cp-spectrum}
For $(\CP^{d-1},g_{\mathrm{FS}})$, $d\ge3$, normalized by $\Ric_{\bg}=\bg/2$,
\begin{equation}
 E_+=\R\Ric_{\bg},\qquad L|_{E_+}=\Id,\qquad
 K=\{h_u=u\bg+2\bar\nabla^2u:\bar\Delta u+u=0\}.
 \label{eq:cp-spectrum-input}
\end{equation}
The functions satisfying $\bar\Delta u+u=0$ are exactly
\[
 u_A([z])=\frac{z^*Az}{|z|^2},\qquad A\in\Herm_0(d).\index{u projective@$u_A$}\index{Herm traceless@$\Herm_0(d)$}
\]
Their homogeneous extensions $|z|^2u_A([z])=z^*Az$ to $\C^d$ are
real-valued harmonic polynomials of bidegree $(1,1)$, since
$\Delta_{\C^d}(z^*Az)=4\tr A=0$.
\end{lemma}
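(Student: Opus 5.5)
The proof combines three ingredients: a decomposition of $L$ on symmetric two-tensors adapted to the Einstein condition, the known spectrum of $\bar\Delta$ on $\CP^{d-1}$, and the stability analysis of the Fubini--Study metric recorded in \cite{Kroncke,LZrigid,LZcp}.

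Since $\bfv$ is constant, $\Delta_{\bfv}=\bar\Delta$ and $\Sli$ consists of the divergence-free tensors. Every $h\in\Sli$ splits $L^2$-orthogonally as
\[
 h=h^{\mathrm{TT}}+\bigl(u\bg+2\bar\nabla^2u\bigr).
\]
To see this, a divergence-free tensor has trace part controlled by one scalar $u$. Indeed, on an Einstein manifold with $\Ric=\bg/2$ one computes
\[
 \mathrm{div}(u\bg+2\bar\nabla^2u)=\mathrm d\bigl(u+2\bar\Delta u+\tfrac12 u\cdot 2\bigr)\cdot\text{const}.
\]
Concretely, using $\mathrm{div}\bar\nabla^2u=\mathrm d\bar\Delta u+\Ric(\nabla u)=\mathrm d(\bar\Delta u+u/2)$, we get
\[
 \mathrm{div}(u\bg+2\bar\nabla^2u)=\mathrm d(2\bar\Delta u+2u).
\]
This vanishes exactly when $\bar\Delta u+u$ is constant. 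After subtracting constants (which give $\bg$-multiples, i.e.\ $\Ric_{\bg}$), the scalar part is parametrized by $u$ with $\bar\Delta u+u=0$ or by $u=$ const.

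Next, $L$ preserves both pieces. On the conformal-Hessian piece, the Lichnerowicz identity $\Delta_L\bar\nabla^2u=\bar\nabla^2\bar\Delta u$ on Einstein manifolds gives $L(u\bg+2\bar\nabla^2u)=(\bar\Delta u+u)\bg+2\bar\nabla^2(\bar\Delta u+u)$ up to the precise normalization. This piece therefore contributes the eigenvalue $1$ for constants, i.e.\ for $\Ric_{\bg}=\bg/2$, and the eigenvalue $0$ for $\bar\Delta u=-u$. Higher eigenfunctions have $\bar\Delta u=-\mu u$ with $\mu>1$ and give negative eigenvalues, since the first nonzero eigenvalue on $\CP^{d-1}$ normalized this way is exactly $1$.

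The TT part carries the main input: linear stability of Fubini--Study \cite[Lemma 2.6]{LZcp} (equivalently Kröncke's computation) says $L$ is strictly negative on TT tensors. I would cite this rather than reprove it, since it rests on the Lichnerowicz spectrum of $\CP^{d-1}$ on TT tensors and is the step I expect to be the genuine obstacle. Together these give $E_+=\R\Ric_{\bg}$, $L|_{E_+}=\Id$, and $K$ as stated.

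For the eigenfunctions, I would use the standard harmonic-analysis description of $\CP^{d-1}=S^{2d-1}/S^1$. The $S^1$-invariant spherical harmonics of bidegree $(k,k)$ give the eigenspaces of $\bar\Delta$. With the normalization $\Ric=\bg/2$, which rescales the usual metric with $\Ric=2d\,g$ by a factor of $4d$, the bidegree $(1,1)$ harmonics have eigenvalue exactly $1$. These are precisely $z^*Az$ with $A$ Hermitian and $\tr A=0$. Finally, the Euclidean Laplacian of $z^*Az=\sum A_{jk}\bar z_jz_k$ equals $4\sum_j A_{jj}=4\tr A$, using $\Delta=4\sum\partial_{z_j}\partial_{\bar z_j}$; this is the last claim. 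Realness follows from $A=A^*$.
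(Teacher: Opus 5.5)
Your ingredients are the right ones, and they match how the paper treats this lemma. The paper gives no separate proof: it collects the spectral facts from Li--Zhang and Kr\"oncke and checks only $\Delta_{\C^d}(z^*Az)=4\tr A$. Your eigenvalue normalization, the identification of the first eigenfunctions with $(1,1)$ harmonics, the Laplacian computation, and the identity $L(u\bg+2\bar\nabla^2u)=(\bar\Delta u+u)\bg+2\bar\nabla^2(\bar\Delta u+u)$ are all correct.

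The step that fails is the decomposition of $\Sli$. It is not true that every $h\in\Sli$ equals $h^{\mathrm{TT}}+u\bg+2\bar\nabla^2u$ for some function $u$. As you compute yourself, $u\bg+2\bar\nabla^2u$ is divergence-free only when $\bar\Delta u+u$ is constant. Your decomposition would therefore make the non-TT part of $\Sli$ equal to the finite-dimensional space $\R\bg\oplus K$, which is false. If $\bar\Delta f=-\mu f$ with $\mu>1$, then $2\bar\nabla^2f+(2\mu-1)f\bg$ has divergence $\mathrm{d}(2\bar\Delta f+f)+(2\mu-1)\mathrm{d}f=0$. It also has nonzero trace and is orthogonal to $\R\bg\oplus K$. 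Your remark that higher eigenfunctions give negative eigenvalues concerns the tensors $u\bg+2\bar\nabla^2u$ with $\mu>1$, and these are not in $\Sli$. So, as written, you have not excluded nonnegative spectrum of $L|_{\Sli}$ on an infinite-dimensional family of divergence-free, non-TT tensors. That exclusion is exactly what $E_+=\R\Ric_{\bg}$ and the description of $K$ require.

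The repair uses your own commutation formula. For $h\in\Sli$ one has $\langle h,\Pi_{\Sli}(\phi\bg)\rangle_{L^2}=\int_M\phi\,\tr_{\bg}h\,\mathrm{d}\nu$. Hence the orthogonal complement of the TT tensors inside $\Sli$ is the $L^2$-closure of $\{\Pi_{\Sli}(\phi\bg)\}$. For $\bar\Delta\phi=-\mu\phi$, formula \eqref{eq:hodge} gives
\[
 \Pi_{\Sli}(\phi\bg)=\phi\bg+\frac{2}{2\mu-1}\bar\nabla^2\phi,
\]
where $\mu\ne\frac12$ because the first nonzero eigenvalue is $1$. Since $L(\phi\bg)=(\bar\Delta\phi+\phi)\bg$ and $L\bar\nabla^2\phi=\bar\nabla^2(\bar\Delta\phi+\phi)$, this is an eigentensor of $L$ with eigenvalue $1-\mu$. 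Therefore:
\begin{itemize}
\item $\mu=0$ gives $\R\bg=\R\Ric_{\bg}$, with eigenvalue $1$;
\item $\mu=1$ gives exactly $h_\phi=\phi\bg+2\bar\nabla^2\phi$, with eigenvalue $0$;
\item every $\mu>1$ gives a negative eigenvalue.
\end{itemize}
Equivalently, in the York decomposition the Bochner identity on $\Ric_{\bg}=\bg/2$ forces the coclosed part of the vector field to be Killing. The non-TT part of a divergence-free tensor is then $2\bar\nabla^2f-(2\bar\Delta f+f)\bg+c\bg$ for arbitrary $f$. With this correction, together with the cited strict negativity of $L$ on TT tensors, your argument proves the lemma.
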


The map
\begin{equation}
 A\longmapsto h_A:=u_A\bg+2\bar\nabla^2u_A
 \label{eq:cp-identification}\index{h projective@$h_A$}
\end{equation}
identifies $\Herm_0(d)$ with $K$. It is $PU(d)$-equivariant,
where the action on matrices is unitary conjugation, and
$\dim_{\R}K=d^2-1$. The invariant weighted tensor norm corresponds
to a positive multiple of the Frobenius norm.

For odd $d=2n+1$, Kr\"oncke \cite[Theorem 6.1]{Kroncke} proved that
every nonzero infinitesimal solitonic deformation of $\CP^{2n}$ is
obstructed at second order. In the terminology of
\cite[Definition 2.20 and Proposition 2.22]{LZrigid}, the Fubini--Study
shrinker therefore has an obstruction of order two. We compute the
coefficient of the reduced field in our normalization for every $d\ge3$.

\begin{proposition}\label[proposition]{prop:cp-quadratic}
Under \eqref{eq:cp-identification}, the first reduced field is
\begin{equation}
 F_2(A)=\frac2{d+2}\left(A^2-\frac{\tr(A^2)}d I\right).
 \label{eq:cp-F2}\index{F quadratic@$F_2$}
\end{equation}
If $d$ is odd, there is $c>0$ such that $|F_2(A)|\ge c|A|^2$. In this case $r=k_{\mathrm{LZ}}=2$, and every nontrivial slow Ricci-flow solution has $\ell_*=3$ and rate $|s|^{-1}$.
\end{proposition}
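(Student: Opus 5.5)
The plan is to reduce \eqref{eq:cp-F2} to a one-constant computation using equivariance, and then to read off the remaining assertions from the earlier general results. By \cref{lem:low-order}, $F_2(b)=\Pi_K\Phi^{(2)}(b,b)$. By \cref{thm:formal}, $F_2$ is $G$-equivariant, with $G\supset PU(d)$. Under \eqref{eq:cp-identification} it therefore becomes a $PU(d)$-equivariant quadratic map $\Herm_0(d)\to\Herm_0(d)$, where $PU(d)$ acts by conjugation. Such maps correspond to $SU(d)$-equivariant linear maps $\mathrm{Sym}^2\mathfrak{su}(d)\to\mathfrak{su}(d)$. For $d\ge3$ the adjoint representation occurs exactly once in $\mathrm{Sym}^2\mathfrak{su}(d)$, realized by the symmetric $d$-tensor $A\mapsto A^2-\frac{\tr(A^2)}dI$. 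Hence $F_2(A)=c_d\bigl(A^2-\frac{\tr(A^2)}dI\bigr)$ for a single real constant $c_d$.

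To determine $c_d$, I would use the gradient structure from \cref{lem:gradient-obstruction}. There $F_2=\nabla\Ent_3$ when $r=2$, and in any case $\langle F_2(A),A\rangle_{L^2}=\langle\Phi^{(2)}(h_A,h_A),h_A\rangle_{L^2}$, a cubic invariant. Since $K$ has the Frobenius norm up to a fixed positive factor, this cubic invariant must be a multiple of $\tr(A^3)$, and comparing it with $c_d\tr(A^3)$ times the norm factor fixes $c_d$. The cubic term itself comes from the third variation of $\boldsymbol{\mu}$ at the Einstein metric along the infinitesimal solitonic deformations $h_u=u\bg+2\bar\nabla^2u$. Kr\"oncke's formula \cite[proof of Theorem~6.1]{Kroncke} reduces this variation to a multiple of $\int u^3\,\mathrm{d}V$; the trace and Hessian contributions are simplified using $\bar\Delta u=-u$ and the Bochner identity. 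Finally, $\int_{\CP^{d-1}}u_A^3$ is computed by pulling back to $S^{2d-1}$ and applying the standard moment formulas $\int_{S^{2d-1}}|z_i|^2|z_j|^2|z_k|^2$, which give a multiple of $\tr(A^3)$. Likewise $\|h_A\|^2$ is a multiple of $\tr(A^2)$. \textbf{This bookkeeping of normalization constants, through the weighted measure, the factor $2$ in $\mathcal V=2\Phi+\cdots$, and the $\frac{2}{d+2}$ from the sphere moments, is the main obstacle.} As a check, I would verify it on the diagonal matrix $A=\diag(1,-1,0,\ldots)$ or on a rank-one traceless matrix.

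For odd $d$, suppose $F_2(A)=0$. Then $A^2=\frac{\tr A^2}{d}I$, so every eigenvalue of the Hermitian matrix $A$ is $\pm\sigma$ for a single $\sigma\ge0$. The condition $\tr A=0$ forces equal numbers of each sign, which is impossible when $d$ is odd unless $\sigma=0$, that is, $A=0$. Thus $F_2$ has no zero on the unit sphere of $K$. Compactness and homogeneity then give $|F_2(A)|\ge c|A|^2$ with $c>0$.

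In particular $F_2\not\equiv0$, so $r=2$. \cref{lem:gradient-obstruction} gives $F_2=2\Ob_2$, hence $|\Ob_2(b)|\ge c|b|^2$, and therefore $k_{\mathrm{LZ}}=2$. \cref{prop:sandwich} then yields $\ell_*=3$ and $b(s)=|s|^{-1}(a_*+o(1))$ for every nontrivial slow solution. Through \cref{thm:correspondence}, this is the stated rate for the corresponding Ricci flows.
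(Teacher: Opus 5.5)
Your treatment of the odd-$d$ consequences is the same as the paper's. The eigenvalue-parity argument shows $(A^2)_0$ has no nonzero zero, compactness gives the uniform bound, $F_2=2\Ob_2$ from \cref{lem:gradient-obstruction} gives $k_{\mathrm{LZ}}=2$, and \cref{prop:sandwich} gives $\ell_*=3$.

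For the formula itself you take a genuinely different route.
\begin{itemize}
\item \emph{The paper's route.} It proves the identity $\Pi_K\Phi^{(2)}(h_u,h_u)=h_{\Pi_1(u^2)}$ for every neutral $u$. It starts from Li--Zhang's second variation along $(1+tu)\bg$, replaces $u\bg$ by $h_u$ using the flow of $\bar\nabla u$, and integrates by parts. This gives $\langle\Phi^{(2)}(h_u,h_u),h_v\rangle_{L^2}=2(d-2)\int_Mu^2v\,\mathrm d\nu$ against every $h_v\in K$, which it compares with $\langle h_w,h_v\rangle_{L^2}=2(d-2)\int_Mwv\,\mathrm d\nu$. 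The harmonic decomposition of $(z^*Az)^2$ then yields $\Pi_1(u_A^2)=u_{\frac2{d+2}(A^2)_0}$.
\item \emph{Your route.} You get the shape $c_d(A^2)_0$ from Schur's lemma: the adjoint representation is of real type and occurs once in $\mathrm{Sym}^2\mathfrak{su}(d)$ for $d\ge3$. You then need only the diagonal pairing $\langle\Phi^{(2)}(h_u,h_u),h_u\rangle_{L^2}$, which is exactly the third variation of $\boldsymbol{\mu}(\cdot,1)$ along $\bg+th_u$. Sphere moments replace the harmonic decomposition.
\end{itemize}
This is valid and less computational. The paper's identity buys more: it holds direction by direction, expresses the second-order obstruction of each neutral $u$ through $\Pi_1(u^2)$, and delivers shape, constant and nonvanishing at once. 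In your route, $c_d\neq0$ comes only from the calibration, and $r=2$ and the whole odd-$d$ part rest on it.

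That calibration is the step you have left open, so it must be carried out rather than imported loosely. You need
\[
\langle\Phi^{(2)}(h_u,h_u),h_u\rangle_{L^2}=2(d-2)\int_Mu^3\,\mathrm d\nu,
\]
with the same factor $2(d-2)$ that appears in $\|h_u\|_{L^2}^2$; this is the case $v=u$ of the paper's pairing. For traceless $A,B$ you also have
\[
\int_Mu_A^3\,\mathrm d\nu=\frac{2\tr(A^3)}{d(d+1)(d+2)},\qquad
\int_Mu_Au_B\,\mathrm d\nu=\frac{\tr(AB)}{d(d+1)},\qquad
\tr\bigl((A^2)_0A\bigr)=\tr(A^3).
\]
With these, your comparison gives $c_d=\frac2{d+2}$, so the route closes.

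If you quote Kr\"oncke's formula instead, you must translate his normalizations of the entropy, the Einstein constant and the deformation tensor. This includes the sign, which decides whether $B_p$ or $-B_p$ is characteristic.

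Finally, your proposed check on $\diag(1,-1,0,\ldots,0)$ is vacuous. There $\tr(A^3)=0$, so the cubic pairing vanishes whatever $c_d$ is. Use $\diag(d-1,-1,\ldots,-1)$ instead.
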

\begin{proof}
We first compute the projected second variation, with its normalization.
Let $u,v$ satisfy $\bar\Delta u+u=\bar\Delta v+v=0$.
Consider the conformal path $g(t)=(1+tu)\bg$, and write
$f(t)=f_{g(t)}$.
The potential linearization gives $f'(0)=(d-2)u$; see the proof of
\cite[Lemma A.2]{LZcp}. The second variation in
\cite[Lemma A.2(i), equation (A.1)]{LZcp}, with real dimension $2(d-1)$, is
\begin{equation}
 \Phi^{(2)}(u\bg,u\bg) ={}-\bar\nabla^2f''(0) -(d-2)\mathrm{d}u\otimes\mathrm{d}u -2(d-2)u\bar\nabla^2u +(u^2-|\bar\nabla u|^2)\bg .
 \label{eq:cp-conformal-variation}
\end{equation}
The conformal direction can be replaced by $h_u$ after projection onto
$K$. Indeed, if $\phi_t$ is generated by $\bar\nabla u$, then
$\tilde g(t)=\phi_t^*g(t)$ satisfies $\tilde g'(0)=h_u$.
Since $\Phi(\bg)=\mathrm{D}\Phi_{\bg}[u\bg]=0$, differentiating
$\Phi(\tilde g(t))=\phi_t^*\Phi(g(t))$ twice gives
\[
 \Phi^{(2)}(h_u,h_u)+\mathrm{D}\Phi_{\bg}[\tilde g''(0)]
 =\Phi^{(2)}(u\bg,u\bg).
\]
The linear term pairs to zero with every $h_v\in K$ by the
self-adjointness in \cite[Lemma 2.5]{LZcp}; see also
\cref{prop:slice-field}. Hence it does not affect the neutral projection.

Put $I_{u^2v}=\int_Mu^2v\,\mathrm{d}\nu$.
The weight is constant on the projective background. Using
$\bar\Delta(u^2)=2|\bar\nabla u|^2-2u^2$ and $\bar\Delta v=-v$,
integration by parts gives
\[
 \int_Mv|\bar\nabla u|^2\,\mathrm{d}\nu
 =I_{u^2v}+\frac12\int_Mu^2\bar\Delta v\,\mathrm{d}\nu
 =\frac12 I_{u^2v}.
\]
Integrating one derivative from $\bar\nabla^2v$ and using
$\bar\Delta u=-u$, we obtain
\[
 \begin{aligned}
 \int_M\bar\nabla^2v(\bar\nabla u,\bar\nabla u)\,\mathrm{d}\nu
 &=-\int_M(\bar\Delta u)
       \langle\bar\nabla u,\bar\nabla v\rangle\,\mathrm{d}\nu-\frac12\int_M
       \langle\bar\nabla v,\bar\nabla|\bar\nabla u|^2\rangle\,\mathrm{d}\nu\\
 &=-\frac12\int_Mu^2\bar\Delta v\,\mathrm{d}\nu
   +\frac12\int_M(\bar\Delta v)|\bar\nabla u|^2\,\mathrm{d}\nu\\
 &=\frac12 I_{u^2v}-\frac14 I_{u^2v}
  =\frac14 I_{u^2v}.
 \end{aligned}
\]
Since $\mathrm{div}\,h_v=0$, all Hessian terms pair to zero.
Use also
$u\bar\nabla^2u=\frac12\bar\nabla^2(u^2)-\mathrm{d}u\otimes\mathrm{d}u$
and $\tr_{\bg}h_v=2(d-2)v$ in \eqref{eq:cp-conformal-variation}.
It follows that
\[
 \langle\Phi^{(2)}(h_u,h_u),h_v\rangle=2(d-2)I_{u^2v}.
\]
On the other hand, the integrated Bochner identity gives, for
$\bar\Delta w+w=0$,
\[
 \langle h_w,h_v\rangle
 =2(d-2)\int_Mwv\,\mathrm{d}\nu.
\]
These two identities prove
\begin{equation}
 \Pi_K\Phi^{(2)}(h_u,h_u)=h_{\Pi_1(u^2)},
 \label{eq:cp-neutral-projection}
\end{equation}
where $\Pi_1$ is the scalar projection onto $\{\bar\Delta u+u=0\}$.

To compute the scalar projection, use the harmonic decomposition
recalled in \cite[proof of Theorem 6.1]{Kroncke} and
\cite[proof of Lemma 2.10]{LZcp}. Write
$(A^2)_0=A^2-\tr(A^2)I/d$. We have
\begin{equation}
 (z^*Az)^2={}P_{2,2}(z) +\frac{2|z|^2}{d+2}\,z^*(A^2)_0z +\frac{\tr(A^2)}{d(d+1)}|z|^4,
 \label{eq:cp-harmonic-decomposition}
\end{equation}
where $P_{2,2}$ is harmonic of bidegree $(2,2)$.
Indeed, for the Euclidean Laplacian on $\C^d$ and any traceless
Hermitian $B$,
\[
 \Delta_{\C^d}(z^*Az)^2=8z^*A^2z,\qquad \Delta_{\C^d}(|z|^2z^*Bz)=4(d+2)z^*Bz,\qquad \Delta_{\C^d}|z|^4=8(d+1)|z|^2.
\]
Thus subtraction of the last two terms in
\eqref{eq:cp-harmonic-decomposition} leaves a harmonic polynomial.
On the unit sphere the three terms descend to mutually orthogonal
scalar eigenspaces on $\CP^{d-1}$. Consequently
\[
 \Pi_1(u_A^2)=u_{\frac2{d+2}(A^2)_0}.
\]
Recall from \eqref{eq:quadratic-coeff} that
\[
 F_2(h_A)=\Pi_K\Phi^{(2)}(h_A,h_A).
\]
Together with \eqref{eq:cp-neutral-projection}, the scalar projection
above proves \eqref{eq:cp-F2}.

Now assume $d$ is odd. If $F_2(A)=0$, then $A^2$ is scalar. A nonzero Hermitian $A$ would have
eigenvalues $+\lambda$ and $-\lambda$ with equal multiplicities because
$\tr A=0$. This is impossible when $d$ is odd. Thus $F_2$ has no nonzero
zero. Compactness of the unit sphere gives the uniform estimate.
The first obstruction is one half of $F_2$, so its uniform obstruction
order is two; \cref{prop:sandwich} gives $\ell_*=3$.
\end{proof}

\subsection{Characteristic matrices and their conjugacy orbits}
We now take $d=2n+1$. The leading matrix is
$b(s)=|s|^{-1}(B+o(1))$, with
\begin{equation}
 F_2(B)=B,\qquad B\ne0.\label{eq:cp-characteristic}
\end{equation}

\begin{proposition}\label[proposition]{prop:cp-characteristics}
Every solution of \eqref{eq:cp-characteristic} is unitarily conjugate to exactly one matrix
\begin{equation}
 B_p=\diag(a_pI_p,b_pI_q),\qquad q=d-p,\qquad 1\le p\le n,
 \label{eq:Bp}\index{B p@$B_p,\Sigma_p$ (characteristic type)}
\end{equation}
where
\begin{equation}
 a_p=\frac{(d+2)q}{2(q-p)},\qquad
 b_p=-\frac{(d+2)p}{2(q-p)}.\label{eq:apbp}
\end{equation}
Its unitary conjugacy orbit is
\begin{equation}
 \Sigma_p=PU(d)\cdot B_p
 \simeq PU(d)/P(U(p)\times U(q))\simeq\Gr_{\C}(p,d).
 \label{eq:grassmannian}\index{B p@$B_p,\Sigma_p$ (characteristic type)}
\end{equation}
Here $\Gr_{\C}(p,d)$ denotes the Grassmannian of complex
$p$-dimensional subspaces of $\C^d$.
\end{proposition}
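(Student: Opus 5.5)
The plan is to reduce \eqref{eq:cp-characteristic} to a scalar problem on eigenvalues, using unitary conjugation. By \eqref{eq:cp-F2}, the equation $F_2(B)=B$ reads
\[
 B^2-\frac{\tr(B^2)}{d}I=\frac{d+2}{2}B .
\]
Conjugation by $U(d)$ commutes with $F_2$, so we may diagonalize $B=\diag(\lambda_1,\ldots,\lambda_d)$. Each eigenvalue then satisfies the same quadratic
\[
 \lambda^2-\frac{d+2}{2}\lambda-\tau=0,\qquad \tau=\frac{\tr(B^2)}{d}>0 .
\]
Hence $B$ has at most two distinct eigenvalues $a>b$, with $a+b=\frac{d+2}{2}$ and $ab=-\tau<0$.

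Next we use the constraints. If all eigenvalues were equal, tracelessness would force $B=0$, which is excluded. So there are multiplicities $p,q\ge1$ with $p+q=d$. Tracelessness $pa+qb=0$, together with $a+b=\frac{d+2}{2}$, gives
\[
 a=\frac{(d+2)q}{2(q-p)},\qquad b=-\frac{(d+2)p}{2(q-p)} .
\]
Since $d$ is odd, $p\ne q$, so these are well defined.

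We must also check that $\tau$ is consistent, i.e.\ $-ab=\tr(B^2)/d=(pa^2+qb^2)/d$. Using $pa=-qb$, we get $pa^2+qb^2=-qab-pab=-d\,ab$, so the identity holds automatically. The apparent choice of which eigenvalue is labeled $a$ is resolved by requiring $a>b$. Then $a>0>b$ forces $q>p$, i.e.\ $p\le n$, and the eigenvalue $a_p$ has multiplicity $p$. Conversely, $B_p$ solves the equation by the same computation.

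Uniqueness of the normal form comes from the fact that the eigenvalue multiset, in particular $p$ (the multiplicity of the positive eigenvalue), is a conjugacy invariant. Distinct $p$ give distinct spectra, so exactly one $B_p$ occurs.

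For the orbit, the stabilizer of $B_p$ under unitary conjugation is $U(p)\times U(q)$. Passing to $PU(d)$ gives $P(U(p)\times U(q))$. The map sending $gB_pg^{-1}$ to its $a_p$-eigenspace identifies the orbit with $\Gr_{\C}(p,d)$.

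The argument is elementary. The only point needing care is the sign and ordering bookkeeping that pins down $p\le n$ and excludes the trivial and one-eigenvalue cases; the odd-dimension hypothesis enters exactly through $p\ne q$.
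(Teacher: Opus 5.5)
Your proposal is correct and follows the paper's proof essentially step for step: you diagonalize, observe that every eigenvalue solves the same quadratic, use tracelessness to exclude a single eigenvalue, solve $a+b=\frac{d+2}{2}$ and $pa+qb=0$, order so that $p<q$, and read off the orbit from the stabilizer $P(U(p)\times U(q))$. Your consistency check $pa^2+qb^2=-d\,ab$, which gives the converse that $B_p$ actually solves the equation, is a small addition the paper leaves implicit; note also that $p\ne q$ already follows from the equations, since $p=q$ would force $a+b=0$, so oddness of $d$ is really used only to turn $p<q$ into $1\le p\le n$.
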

\begin{proof}
Every eigenvalue $x$ of $B$ satisfies $\frac2{d+2}(x^2-v)=x$, where $v=\tr(B^2)/d$. There are at most two roots. A single eigenvalue would give $B=0$ by tracelessness. If the two eigenvalues are $a,b$ with multiplicities $p,q$, subtracting their equations gives $a+b=(d+2)/2$, whereas $pa+qb=0$. Solving gives \eqref{eq:apbp}. Relabel so $p<q$. Then $a_p>0$, $b_p<0$, and $1\le p\le n$. Their multiplicities determine $p$ uniquely. The stabilizer gives \eqref{eq:grassmannian}, of real dimension $2pq$.
\end{proof}

\subsection{Linearization and stable fibers}
At $B=B_p$,
\begin{equation}
 \mathrm{D}F_2(B)H=\frac2{d+2}\left(BH+HB-\frac{2\tr(BH)}d I\right).
 \label{eq:cp-linearization}
\end{equation}
Use the orthogonal splitting
\begin{equation}
 \Herm_0(d)=\R B_p\oplus T_{B_p}\Sigma_p
       \oplus\Herm_0(p)\oplus\Herm_0(q).\label{eq:cp-splitting}
\end{equation}
The two final summands are the traceless diagonal blocks; the Grassmannian tangent space is the off-diagonal Hermitian block.

\begin{proposition}\label[proposition]{prop:cp-indicial}
On the four summands in \eqref{eq:cp-splitting}, respectively, the eigenvalues of $\mathrm{D}F_2(B_p)$ are
\begin{equation}
 2,\qquad 1,\qquad
 \lambda_p:=\frac{2q}{q-p},\qquad -\frac{2p}{q-p}.
 \label{eq:cp-indicial-values}
\end{equation}
The third summand is absent if $p=1$.
\end{proposition}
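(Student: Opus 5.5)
This is a direct block-matrix computation with the explicit formula \eqref{eq:cp-linearization}. Write $c=\frac2{d+2}$. Recall that $a_p+b_p=\frac{d+2}2$, so $c(a_p+b_p)=1$, and that $pa_p+qb_p=0$. I would evaluate $\mathrm{D}F_2(B_p)H$ on one representative type of $H$ in each summand of \eqref{eq:cp-splitting}. In each case the trace term will either vanish or be computed directly.

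\emph{Radial summand.} Homogeneity gives $\mathrm{D}F_2(B)B=2F_2(B)=2B$, so the eigenvalue is $2$. As a check, the formula yields $c(2B_p^2-\frac{2\tr(B_p^2)}dI)=2F_2(B_p)=2B_p$.

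\emph{Tangent summand.} Take $H=\begin{pmatrix}0&X\\X^*&0\end{pmatrix}$ off-diagonal. Then $\tr(B_pH)=0$ and $B_pH+HB_p=(a_p+b_p)H$. Multiplying by $c$ gives eigenvalue $1$. Equivalently, this is the derivative of the $PU(d)$-orbit at a fixed point of $F_2(B)=B$, since the tangent vectors $[\Xi,B]$ satisfy $\mathrm{D}F_2(B)[\Xi,B]=[\Xi,F_2(B)]=[\Xi,B]$.

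\emph{Diagonal blocks.} Take $H=\diag(H_1,0)$ with $H_1\in\Herm_0(p)$. Then $\tr(B_pH)=a_p\tr H_1=0$ and $B_pH+HB_p=2a_pH$. The eigenvalue is $2ca_p=\frac{2q}{q-p}=\lambda_p$. Likewise $H=\diag(0,H_2)$ with $H_2\in\Herm_0(q)$ gives $2cb_p=-\frac{2p}{q-p}$. When $p=1$ the space $\Herm_0(1)$ is zero, so the third summand is absent.

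Finally, I would confirm that the four summands are mutually orthogonal in the Frobenius pairing and exhaust $\Herm_0(d)$. The diagonal-block traceless matrices together with $\R B_p$ span all traceless block-diagonal Hermitian matrices: $B_p$ supplies the single missing direction $\diag(\alpha I_p,\beta I_q)$ with $p\alpha+q\beta=0$. The off-diagonal block makes up the rest. The dimensions also match: $1+2pq+(p^2-1)+(q^2-1)=d^2-1$.

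There is no real obstacle here. The only points needing care are that the trace term vanishes in the last three cases and the identity $c(a_p+b_p)=1$, both of which follow from \eqref{eq:apbp}.
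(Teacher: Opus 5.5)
Your proposal is correct and follows the paper's proof: the paper likewise gets the radial eigenvalue $2$ from homogeneity, then computes $\mathrm{D}F_2(B_p)$ directly on off-diagonal matrices and on traceless diagonal blocks, where $\tr(B_pH)=0$, obtaining $\frac{2}{d+2}(a_p+b_p)=1$, $\frac{4a_p}{d+2}=\frac{2q}{q-p}$ and $\frac{4b_p}{d+2}=-\frac{2p}{q-p}$. Your extra equivariance check on the orbit tangent and the orthogonality and dimension count are correct additions that the paper leaves implicit.
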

\begin{proof}
The radial eigenvalue is two by homogeneity. An off-diagonal $H$ has $\tr(B_pH)=0$ and $B_pH+HB_p=(a_p+b_p)H$, giving eigenvalue one. For $H\in\Herm_0(p)$, the trace term vanishes and the eigenvalue is $4a_p/(d+2)=2q/(q-p)$. The same calculation in the $q$-block gives $4b_p/(d+2)=-2p/(q-p)$.
\end{proof}

The largest eigenvalue controlling the exact reduced trajectory is
\begin{equation}
 \rho(B_p)=
 \begin{cases}
 2,&p=1,\\
 \dfrac{2q}{q-p},&p\ge2.
 \end{cases}
 \qquad \kappa_N=N,\qquad N>\rho(B_p).
 \label{eq:cp-threshold}\index{A characteristic@$A_{a_*},\rho(a_*)$}
\end{equation}
When $p=1$, the summand $\Herm_0(p)$ is zero, so the largest eigenvalue is the radial eigenvalue $2$.

To identify the leading stable directions, first consider the
quadratic truncation $b_s=F_2(b)$. Set $\tau=\log|s|$ and
$A(\tau)=|s|b(s)$, where $s=-e^\tau$. Its rescaled equation is
\begin{equation}
 A_\tau=G_2(A):=A-F_2(A).\label{eq:cp-rescaled}
\end{equation}
The full reduced field will be treated by the augmented system
\eqref{eq:cp-augmented}.
Put $\gamma_p=\lambda_p-1=d/(q-p)>1$. The spectrum of $\mathrm{D}G_2(B_p)$ is
\begin{equation}
 -1,\quad 0,\quad-\gamma_p,\quad+\gamma_p.
\end{equation}
The kernel is exactly $T_{B_p}\Sigma_p$. Since $\Sigma_p$ is compact and consists of equilibria, it is a normally hyperbolic invariant manifold. The stable-foliation theorem \cite[Theorem~4.1(e), (h)]{HPS} gives fibers
\begin{equation}
 W^s_{\mathrm{loc}}(\Sigma_p)=\coprod_{B\in\Sigma_p}W^s_{\mathrm{loc}}(B),
 \qquad T_BW^s_{\mathrm{loc}}(B)=E_B^s.
\end{equation}
Here the fiber over $B$ consists of initial points whose forward rescaled trajectories stay local and converge to that specified $B$. In particular,
\begin{equation}
 E^s_{B_p}=\R B_p\oplus\Herm_0(p),\qquad
 \dim W^s_{\mathrm{loc}}(B_p)=p^2.\label{eq:cp-stable-space}\index{W stable fiber@$W^s_{\mathrm{loc}}(B)$ (stable fiber)}\index{E stable@$E_B^s$}\index{B p@$B_p,\Sigma_p$ (characteristic type)}
\end{equation}
The stable-foliation theorem therefore gives a stable fiber over each marked limit. The neutral variables parametrize the Grassmannian base.

\subsection{Local Weyl-chamber models and the required order}
The local quotient that occurs in the type-$p$ moduli is
\begin{equation}
 \Herm_0(p)/U(p)\simeq
 \mathcal W_p:=\{(\xi_1,\ldots,\xi_p):\xi_1\le\cdots\le\xi_p,\ \sum_i\xi_i=0\}.
 \label{eq:Weyl}\index{W Weyl@$\mathcal W_p$ (Weyl chamber)}
\end{equation}
This is a closed Weyl chamber with generic dimension $p-1$; its walls record enhanced symmetry. The other component of the full Fubini--Study isometry group, represented by complex conjugation, preserves the unordered eigenvalues and adds no further identification in \eqref{eq:Weyl}.

\begin{theorem}[Projective parametrization]\label[theorem]{thm:cp-moduli}
For ancient Ricci flows with smooth backward tangent flow $(\CP^{2n},g_{\mathrm{FS}})$, every nontrivial slow germ has a characteristic type $p\in\{1,\ldots,n\}$. After quotienting time-independent diffeomorphisms, parabolic scaling, and original-time translation, the local slow moduli admit compatible local parametrizations on common ancient intervals with type-$p$ model $\mathcal W_p$ from \eqref{eq:Weyl}. The collection of local models is
\begin{equation}
 \coprod_{p=1}^{n}\mathcal W_p,
 \label{eq:cp-models}
\end{equation}
and these asymptotic data determine the maximal ancient continuations.
\end{theorem}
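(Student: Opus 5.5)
By \cref{prop:cp-quadratic}, every nontrivial slow germ on $\CP^{2n}$ is principal with $r=2$ and $\ell_*=3$, and its characteristic matrix solves \eqref{eq:cp-characteristic}. \Cref{prop:cp-characteristics} then places that matrix in exactly one orbit $\Sigma_p$, $1\le p\le n$; this defines the type. Fix $N>\rho(B_p)$ as in \eqref{eq:cp-threshold}. \Cref{thm:parametrization}, together with \cref{rem:principal-germs}, represents every type-$p$ germ by $(\beta_N,\zeta_N)$ over a chart of marked limits $B\in\Sigma_p$. Since $E_+=\R\Ric_{\bg}$ by \cref{lem:cp-spectrum}, \cref{prop:phase} removes the one-dimensional fiber $\zeta_N$ under original-time translation, and we may normalize $\zeta_N=0$. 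It remains to parametrize the admissible exact reduced trajectories $\UN(B)$ and to take their quotient by the isometries $PU(d)$ (together with conjugation) and by SMRF-time translation, which is parabolic rescaling.

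For the trajectories, pass to the rescaled variables $\tau=\log|s|$ and $A=|s|\beta$. The full polynomial field $F^{[N]}$ becomes a nonautonomous perturbation of \eqref{eq:cp-rescaled} whose corrections are $O(e^{-\tau/(r-1)})=O(e^{-\tau})$. Add the variable $\sigma=|s|^{-1}$, so that $\sigma_\tau=-\sigma$; this gives the autonomous augmented system alluded to in \eqref{eq:cp-augmented}. In it, $\Sigma_p\times\{0\}$ is a normally hyperbolic manifold of equilibria. The $\sigma$-direction has eigenvalue $-1$ and sits with the stable directions. The stable foliation \cite{HPS} then shows that trajectories in $\UN(B)$ correspond to the fiber $W^s_{\mathrm{loc}}(B)$ at a fixed $\sigma=\sigma_*$. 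This fiber is a manifold of dimension $p^2$, tangent to $\R B_p\oplus\Herm_0(p)$ by \eqref{eq:cp-stable-space}. Smooth dependence on $B$ gives the fibration over $\Sigma_p$.

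Now take the quotients. Equivariance (\cref{sec:phase}) reduces the $\Sigma_p$ base to the single marked limit $B_p$, with residual group the stabilizer $P(U(p)\times U(q))$. SMRF-time translation moves a trajectory along itself, and to first order its infinitesimal generator is the radial direction $B_p$ (the velocity is $|s|^{-2}B_p$). So locally it acts transversally to a slice tangent to $\Herm_0(p)$, and quotienting by it leaves this $(p^2-1)$-dimensional slice. The $U(q)$ factor acts trivially on the $\Herm_0(p)$-block, while $U(p)$ acts by conjugation. Conjugation by $U(p)$ is the key step: I would use equivariance of the stable-fiber graph, which follows from uniqueness of stable fibers, to linearize the action on the slice $\Herm_0(p)$. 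Diagonalization then gives $\Herm_0(p)/U(p)\cong\mathcal W_p$. Complex conjugation adds nothing, as noted after \eqref{eq:Weyl}. Finally, the maximal continuations are determined by the Ricci-flow uniqueness argument in \cref{sec:horizontal}.

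The main obstacle is making the two time quotients commute with the stable-fiber description. The SMRF-time shift changes $\zeta_N$ through the forced complement, by \eqref{eq:modified-time-data}, and Ricci time translation must be performed after rescaling. I would first use \cref{prop:phase} to fix $\zeta_N=0$ for every trajectory. I would then check that rescaling composed with the unique compensating Ricci time translation acts on $\UN(B_p)$ exactly as the time shift, that this action is free, and that it is transverse to $\Herm_0(p)$ uniformly near $B_p$. This transversality is what makes the quotient a local slice, and hence compatible across overlapping charts.
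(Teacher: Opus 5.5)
Your proposal is correct and follows the paper's proof essentially step for step: the augmented rescaled system with $z=|s|^{-1}$, normal hyperbolicity of $\Sigma_p\times\{0\}$ and the $p^2$-dimensional slice of the stable fiber at $z=|s_*|^{-1}$, equivariant reduction to $B_p$ with residual group $P(U(p)\times U(q))$, a $G_p$-invariant section transverse to the radial reduced-time velocity, conjugation by $U(p)$ on $\Herm_0(p)$, and \cref{prop:phase} for the scalar parameter in the Ricci direction. The paper makes explicit two points your sketch leaves implicit. First, $F_j(B_p)\in\R B_p$ by stabilizer equivariance, so the $z$-coupling is radial and the stable space is exactly $(\R B_p\oplus\Herm_0(p))\oplus\R_z$. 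Second, the realization depends smoothly on the parameters in $\XP_1$, via $\Gop_\beta-\Gop_{\widetilde\beta}=\Gop_\beta(B_\beta-B_{\widetilde\beta})\Gop_{\widetilde\beta}$ and its neutral analogue; this is what makes the local charts homeomorphisms and compatible. Conversely, your treatment of rescaling modulo Ricci-time translation by a compensating translation is slightly more explicit than the paper's.
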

\begin{proof}
By \cref{prop:cp-quadratic}, every slow solution is principal and its
marked limit belongs to exactly one Grassmannian \eqref{eq:grassmannian}.
Fix $B_p$ and choose $N>\rho(B_p)$. We first describe the admissible
trajectories of the full reduced field $F^{[N]}$.
For an exact reduced trajectory $\beta$, set
$A=|s|\beta$, $z=|s|^{-1}$, and $\tau=\log|s|$. The augmented equation is
\begin{equation}
 A_\tau=A-\sum_{j=2}^{N}z^{j-2}F_j(A),\qquad z_\tau=-z.
 \label{eq:cp-augmented}
\end{equation}
Every $F_j$ is $PU(d)$-equivariant. The stabilizer
$G_p=P(U(p)\times U(q))$ fixes exactly $\R B_p$, so
$F_j(B_p)\in\R B_p$. Hence the coupling from the $z$-direction
at $(B_p,0)$ is radial. By \cref{prop:cp-indicial},
$\Sigma_p\times\{0\}$ is normally hyperbolic and its stable space at
$(B_p,0)$ is
\[
 \bigl(\R B_p\oplus\Herm_0(p)\bigr)\oplus\R_z.
\]
The stable-foliation theorem \cite[Theorem~4.1(e), (h)]{HPS} gives a stable fiber of
dimension $p^2+1$ over $(B_p,0)$. Its intersection with
$z=|s_*|^{-1}$ is transverse and has dimension $p^2$.
It parametrizes the local exact reduced trajectories with marked
limit $B_p$. Every such germ enters this local family on a sufficiently
remote tail. Orthogonal projection onto
$\R B_p\oplus\Herm_0(p)$ gives an equivariant coordinate chart
on this intersection with $z=|s_*|^{-1}$, after shrinking the neighborhood.

We make explicit the topology of these local models. On a common
tail, give the bounded SMRF classes the topology of $\XP_1$, using
the fixed-slice version of the norms in
Appendix~\ref{app:parabolic}. Parametrize the reduced stable fiber
by its value at the fixed terminal time $s_*$, and identify each complementary parameter space $E_+^\beta(s_*)$ with $E_+$
by $\xi=\Pi_+\zeta\in E_+$. The stable-fiber theorem gives smooth
dependence of $A(\tau)$, with all parameter derivatives bounded
for $\tau\ge\log|s_*|$ on a smaller chart. Since $\beta=|s|^{-1}A$,
this gives smooth dependence of $\beta$ in $\XP_1$.

The Green operators also depend smoothly on these parameters.
The complement inverse has the fixed terminal normalization
$\Pi_+y(s_*)=0$, and the neutral inverse is uniquely determined
in $\XP_\kappa$. Subtracting the corresponding equations gives
\[
 \Gop_\beta-\Gop_{\widetilde\beta}
 =\Gop_\beta(B_\beta-B_{\widetilde\beta})\Gop_{\widetilde\beta},
 \qquad
 \Hop_\beta-\Hop_{\widetilde\beta}
 =\Hop_\beta(A_\beta-A_{\widetilde\beta})\Hop_{\widetilde\beta}.
\]
The first identity acts from $\YP_{\kappa+1}$ to
$\XP_{\kappa+1}$ and the second from $\YP_{\kappa+1}$ to
$\XP_\kappa$. For the latter, the required coefficient norm is
$\sup_{s\le s_*}|s|\|A_\beta-A_{\widetilde\beta}\|_{\mathrm{op}}$,
which is controlled by $\|\beta-\widetilde\beta\|_{\XP_1}$.
The homogeneous equation with fixed $\xi$ has the same parameter
dependence. The defect and remainder maps in
\eqref{eq:LP-center}--\eqref{eq:LP-complement} are smooth between
the displayed weighted spaces. Their uniform contraction
therefore makes the realizing solution depend smoothly on the
finite-dimensional parameters in $\XP_1$.

Restrict to a parameter chart with compact closure inside the
admissible family and its fiber bound.
The realization is continuous and injective on that closure by
\cref{thm:parametrization}, hence is a homeomorphism onto its image
in the Hausdorff space $\XP_1$. In particular, extraction of the exact reduced trajectory and complementary
parameter is continuous on each such local solution
chart. Restriction to a common smaller tail gives compatible
charts. The isometry and time actions described in
Section~\ref{sec:phase} are continuous in this topology; their
smoothness on the marked parameter charts follows from this
dependence and the smooth dependence at positive times in the
parabolic and elliptic constructions used in the proof of
\cref{lem:interior-smoothing}.
Thus the transverse section and compact-group quotient below
carry the local quotient topology.

The line $\R B_p$ is invariant under the full reduced field, and
$\beta=aB_p$ satisfies $a_s=a^2+O(a^3)$. Choose a point $a_0B_p$ on the invariant line $\R B_p$ in this local family, with $a_0>0$. The reduced-time
translation has a nonzero radial velocity there. In the intersection with $z=|s_*|^{-1}$, take the
$G_p$-invariant section
\[
 \langle\beta(s_*),B_p\rangle=a_0|B_p|^2.
\]
It is transverse to this velocity and represents the local quotient
by reduced-time translation. The remaining orthogonal-projection
coordinate identifies the section with a neighborhood of zero in
$\Herm_0(p)$. The residual action is $H\mapsto UHU^*$,
$U\in U(p)$, which gives \eqref{eq:Weyl}.

The isometry quotient removes the Grassmannian of marked limits,
and Ricci-flow parabolic scaling is the reduced-time translation
just removed. By \eqref{eq:cp-spectrum-input} and \cref{prop:phase},
original Ricci-flow time translation removes the remaining scalar parameter
in the Ricci direction. Realization and uniqueness follow from
\cref{thm:parametrization}; \cref{prop:entrance}
and \cref{rem:principal-germs} place every slow germ in a bounded
local parametrization. Finally, uniqueness of continuation from an ancient interval in
Section~\ref{sec:horizontal} determines the maximal continuation.
\end{proof}

\begin{remark}\label[remark]{rem:kahler-rigidity}
Every nontrivial slow solution in \cref{thm:cp-moduli} is non-K\"ahler.
Indeed, suppose an ancient flow in this tangent class is K\"ahler
with respect to a fixed complex structure $J'$. Along a backward
tangent sequence, the pulled-back complex structures are parallel
and orthogonal for metrics converging smoothly to $g_{\mathrm{FS}}$.
They therefore converge subsequentially to the standard complex
structure, after composition with complex conjugation if necessary.
Thus that complex structure is adjacent to $J'$ in the sense of
Sun--Wang, and $c_1(M,J')\ne0$.
Since $H^2(\CP^{2n};\R)$ is one-dimensional and
$\partial_t[\omega]=-4\pi c_1(M,J')$, a Ricci-flow time translation
arranges that $\hat g(s)=e^s g(-e^{-s})$ has K\"ahler class
$4\pi c_1(M,J')$.
Starting at a sufficiently negative time, apply
\cite[Theorem 1.1]{SW} to $\hat g/2$. Its adjacency clause gives
forward convergence of $\hat g$ modulo diffeomorphisms to
$g_{\mathrm{FS}}$. The backward tangent gives the same entropy limit at the
other end. Perelman's monotonicity \cite{Perelman} forces the entropy
to be constant, so the flow is the canonical shrinking Ricci flow generated by the Fubini--Study shrinker.
\end{remark}
\Cref{tab:cp-orders} compares the local moduli dimensions and the least expansion orders allowed by \eqref{eq:cp-threshold} for representative characteristic types.

\begin{table}[htbp]
\centering
\caption{Indicial thresholds and dimensions of the projective local slow models.}
\label{tab:cp-orders}
\begin{tabular}{cccccc}
\toprule
Space & $p$ & $q$ & $\rho(B_p)$ & Least $N$ & Moduli dimension\\
\midrule
$\CP^2$ & 1 & 2 & 2 & 3 & 0\\
$\CP^4$ & 1 & 4 & 2 & 3 & 0\\
$\CP^4$ & 2 & 3 & 6 & 7 & 1\\
$\CP^6$ & 2 & 5 & $10/3$ & 4 & 1\\
$\CP^6$ & 3 & 4 & 8 & 9 & 2\\
$\CP^8$ & 2 & 7 & $14/5$ & 3 & 1\\
$\CP^8$ & 3 & 6 & 4 & 5 & 2\\
$\CP^8$ & 4 & 5 & 10 & 11 & 3\\
\bottomrule
\end{tabular}
\end{table}
For fixed $n\ge2$, the common choice $N=2n+3$ covers every type, since the extremal type $p=n$, $q=n+1$ has $\rho=2n+2$. For $n=1$ the least order is instead $N=3$.

For $\CP^2$, there is only $p=1$, $q=2$, and
\begin{equation}
 B_* = \diag(5,-5/2,-5/2).
\end{equation}
The stable fiber is radial; $\Herm_0(1)=0$. The scalar reduced equation has one nonzero ancient orbit germ modulo translation: separation of variables in $a_s=a^2+O(a^3)$ shows that its only integration constant is the time-translation parameter. Original-time translation removes the scalar parameter in the Ricci direction.

\begin{corollary}[The $\CP^2$ case]\label[corollary]{cor:cp2}
The ancient Ricci-flow germs with smooth backward tangent $(\CP^2,g_{\mathrm{FS}})$ have exactly two classes modulo time-independent diffeomorphism, parabolic rescaling, and original-time translation: the canonical shrinking Ricci flow generated by the Fubini--Study shrinker and one nontrivial slow class.
\end{corollary}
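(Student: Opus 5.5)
We reduce the statement to the cases treated above and then check the $p=1$, $d=3$ specialization by hand. First, by \cref{thm:correspondence} and \eqref{eq:quotient-correspondence-diagram}, Ricci-flow germs modulo time-independent diffeomorphisms correspond to SMRF germs converging to zero modulo $G=\Isom(\CP^2,g_{\mathrm{FS}})$. By \cref{prop:dichotomy}, every nonzero such germ is exponentially convergent or slow. For the exponential case, \cref{lem:cp-spectrum} gives $E_+=\R\Ric_{\bg}$, so \cref{cor:fast-Ricci} shows that after original-time translation every exponential germ is the canonical shrinking flow. This identifies the first class, which contains $h\equiv0$.

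For the slow case, $d=3$ is odd, so \cref{prop:cp-quadratic} makes every slow solution principal with $r=2$. \Cref{prop:cp-characteristics} with $n=1$ leaves only $p=1$, $q=2$, so the marked limit $a_*$ lies in the single $G$-orbit $\Sigma_1\cong\CP^2$ of $B_*=\diag(5,-5/2,-5/2)$. Hence the $G$-quotient lets us fix $a_*=B_*$, with residual stabilizer $P(U(1)\times U(2))$.

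Next we apply \cref{thm:cp-moduli} with $p=1$, or redo its argument directly. The stable fiber of \eqref{eq:cp-augmented} over $(B_*,0)$ has dimension $p^2+1=2$. Its slice at $z=|s_*|^{-1}$ is one-dimensional and radial, since $\Herm_0(1)=0$, so it consists of the trajectories $\beta=a(s)B_*$ on the invariant line with $a_s=a^2+O(a^3)$. Any two admissible solutions of this scalar autonomous ODE with $a\sim|s|^{-1}$ differ by a translation in $s$, because the one-dimensional flow has no other integration constant. Reduced-time translation is parabolic rescaling, as shown in Section~\ref{sec:phase}, so modulo rescaling there is one exact reduced trajectory $\beta$. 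The complementary fiber $E_+^\beta(s_*)$ is one-dimensional. By \cref{prop:phase}, original-time translation acts transitively on a neighbourhood in this fiber, so we may normalize $\zeta_N=0$. \Cref{thm:parametrization} then realizes $(\beta,0)$ by a unique solution, and \cref{rem:principal-germs} places every principal germ in such a bounded chart. Therefore all slow germs are equivalent to this single solution, and it is nontrivial because it is slow.

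The main obstacle is the passage from local to global. \Cref{prop:phase} and \cref{thm:parametrization} give uniqueness only inside bounded charts on a common tail. We handle this as follows. Given two slow germs, first rescale so that their reduced trajectories coincide; this is allowed because both lie on the same scalar orbit. Then push the terminal time $s_*$ backward until both lie in one chart. To see this is possible, note that \cref{lem:shadow-complement} bounds $q-z$ uniformly and that the homogeneous datum $z_{\beta,\zeta}(s)$ decays exponentially as $s\searrow-\infty$. The parameter $\zeta$ at the new terminal time is therefore small, so a small original-time translation matches the two $\zeta$'s. Injectivity of \cref{thm:parametrization} then identifies the two germs. Finally, we must check that the slow class differs from the canonical flow: the lower bound $|b|\ge c/|s|$ distinguishes it, and it is unchanged by both the symmetries and $G$. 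Uniqueness of continuation from Section~\ref{sec:horizontal} passes the identification from germs to maximal flows.
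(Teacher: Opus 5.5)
Your reductions are the same as the paper's. These are the correspondence, the dichotomy, \cref{cor:fast-Ricci} for the exponential case, uniqueness of the type $p=1$, the radial stable fiber, the scalar ODE $a_s=a^2+O(a^3)$ with its removal by parabolic rescaling, and realization via \cref{thm:parametrization} and \cref{rem:principal-germs}. The gap is in the final matching step, which you rightly flag as the main obstacle. Your resolution of it does not work.

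\textbf{Why the smallness argument fails.} When you move the terminal time from $s_*$ to $s_*'<s_*$, the new parameter is $\zeta'=z_{\beta,\zeta}(s_*')+P_+^\beta(s_*')y(s_*')$. Here $y=\Gop_\beta R_h$ is normalized at the old time $s_*$, not at $s_*'$.
\begin{itemize}
\item Only the first term is exponentially small. The second is only $O(|s_*'|^{-\kappa-1})$ in $\|\cdot\|_3$, which is exactly the fiber scale, since $|\zeta'|_+\asymp|s_*'|^{\kappa+1}\|\zeta'\|_3$. So $|\zeta'|_+$ stays bounded, but there is no reason for it to become small.
\item More seriously, smallness in $\|\cdot\|_3$ is the wrong measure. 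By \eqref{eq:phase-J}, a Ricci-time translation by $\theta$ changes $\zeta$ by about $-2\theta e^{s_*'}P_+^\beta(s_*')\Ric_{\bg}$. The inverse-function step of \cref{prop:phase} works for bounded $\theta$, so it reaches only a part of the fiber of $|\cdot|_+$-size $O(|s_*'|^{\kappa+1}e^{s_*'})$.
\item A priori you only know $\|\zeta_1'-\zeta_2'\|_3=O(|s_*'|^{-\kappa-1})$. In translation units this can be as large as $e^{|s_*'|}|s_*'|^{-\kappa-1}$.
\end{itemize}
Conceptually, two germs related by the translation $\theta_0=1$ need exactly that translation at every terminal time. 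So no choice of tail makes the required translation small. The same objection applies to your earlier sentence ``so we may normalize $\zeta_N=0$''. \cref{prop:phase} gives that only for parameters already in the local orbit of zero.

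\textbf{The missing idea.} What you need is the connectedness argument at the end of the proof of \cref{prop:phase}. The paper's proof relies on it when it says original-time translation removes the scalar parameter.
\begin{itemize}
\item By \cref{rem:principal-germs}, the bounds on $y$ and $z$ are uniform as the terminal time moves backward.
\item So first fix a fiber radius $M$ and a correction bound large enough for both rescaled germs with common $\beta$. Then choose one common terminal time at which both lie in a single chart.
\item The fiber $\{\zeta\in E_+^\beta(s_*):|\zeta|_+<M\}$ is an interval. Applying \cref{prop:phase} at every point of it, not only near $\zeta=0$, shows that each translation orbit is open in it.
\item By connectedness the fiber is a single orbit, containing both germs and $\zeta=0$. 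Injectivity in \cref{thm:parametrization} then identifies the two germs.
\end{itemize}
This composes small translations into one of arbitrary size. That is legitimate, because on the tail only $|\theta|e^{s}$ has to be small, not $|\theta|$ itself.
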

\begin{proof}
The exponentially convergent case is the canonical shrinking Ricci flow by \cref{cor:fast-Ricci}. All slow solutions have the unique characteristic type and the scalar reduced orbit just described. Surjectivity produces a nontrivial representative. After quotienting by parabolic rescaling and Ricci-flow time translation, injectivity identifies all such slow germs. Its nonzero neutral leading term distinguishes it from the canonical shrinking flow.
\end{proof}
A marked leading representative is
\begin{equation}
 h(s)=\frac{u_*\bg+2\nabla^2u_*}{|s|}+o(|s|^{-1}),\qquad
 u_*([z])=\frac52\,
 \frac{2|z_0|^2-|z_1|^2-|z_2|^2}{|z_0|^2+|z_1|^2+|z_2|^2}.
\end{equation}

\subsection{A nonprincipal slow solution in odd complex dimension}
\label{sec:cp-nonprincipal}\index{Nonprincipal slow solution}
We now take $d=2n+2$, so that $M=\CP^{2n+1}$, with $n\ge1$.
Set
\[
 A_0=\diag(I_{n+1},-I_{n+1}),\qquad
 \Gamma=P(U(n+1)\times U(n+1))\subset PU(2n+2).\index{Gamma projective symmetry@$\Gamma$ (projective symmetry)}
\]
The two blocks have equal size. The quadratic field vanishes in this
direction, as required by \cref{rem:nonprincipal-direction} for a
nonprincipal characteristic vector. The cubic term produces an ancient
solution with this direction.

\begin{proposition}
\label[proposition]{prop:cp-nonprincipal}
For every $n\ge1$, there is a smooth noncollapsed ancient Ricci-flow germ
on $\CP^{2n+1}$ with Fubini--Study backward tangent Ricci shrinker and a
$\Gamma$-invariant horizontal representative satisfying
\begin{equation}
 h(s)=\sqrt{\frac{(n+2)(2n+5)}{8n}}\,|s|^{-1/2}h_{A_0}
       +O_{C^k}(|s|^{-1})
 \label{eq:cp-nonprincipal-profile}
\end{equation}
for every fixed $k$. For the full fixed-slice equation, $r=2$ and this
solution has $\ell_*=4>r+1$.
\end{proposition}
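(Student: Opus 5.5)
The plan is to reduce to the $\Gamma$-invariant part of the fixed-slice equation. By equivariance of $\mathcal V$, of the spectral projections, and of the coefficients in \cref{thm:formal}, the space of $\Gamma$-fixed tensors in $\Sli$ is invariant under SMRF. First I identify $K^\Gamma$. Under \eqref{eq:cp-identification}, a traceless Hermitian matrix commuting with $\diag(U(n+1),U(n+1))$ is block-scalar, so $K^\Gamma=\R h_{A_0}$. On $K^\Gamma$ every $F_j$ maps the line to itself, so $F^{[N]}(ah_{A_0})=\sum_j c_ja^jh_{A_0}$. Since $A_0^2=I$, \eqref{eq:cp-F2} gives $c_2=0$, while on the full slice $r=2$ by \cref{prop:cp-quadratic}. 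Within the invariant subsystem the first nonzero coefficient should therefore be $c_3$. The block swap in $PU(2n+2)$ normalizes $\Gamma$ and sends $A_0\mapsto-A_0$, so the even coefficients $c_{2k}$ all vanish; this is not needed, but it simplifies the scalar equation to $a_s=c_3a^3+c_5a^5+\cdots$.

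The main obstacle is computing $c_3$ and showing $c_3>0$, with the value $c_3=\frac{4n}{(n+2)(2n+5)}$. For that value, $a_s=c_3a^3$ has the ancient solution $a=(2c_3|s|)^{-1/2}$, which matches the constant in \eqref{eq:cp-nonprincipal-profile}. I would compute $c_3$ from \cref{lem:low-order}: $F_3(b)=\Pi_K\bigl(2\Phi^{(2)}(b,H_2(b))+\frac13\Phi^{(3)}(b,b,b)\bigr)$ with $H_2=-L_{\Comp}^{-1}\Pi_{\Comp}\Phi^{(2)}(b,b)$. Alternatively, and probably more efficiently, I would use $F_3=\nabla\Ent_4$ on the line, which follows from \cref{lem:gradient-obstruction} applied to the restricted analytic function, so that $c_3$ is read off from the quartic term of the reduced entropy along $ah_{A_0}$. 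Either way, the computation reduces via the conformal-path trick from the proof of \cref{prop:cp-quadratic} to scalar integrals of $u_{A_0}^2$ and $u_{A_0}^4$ over $\CP^{2n+1}$. Here $u_{A_0}^2$ has no degree-one harmonic part, and one uses the bidegree $(2,2)$ and constant components from \eqref{eq:cp-harmonic-decomposition}, together with the inverses of $L$ on the corresponding eigenspaces (including the Ricci direction, where $L=\Id$). The sign $c_3>0$ is what makes the solution ancient, since $a$ then increases forward from $0$.

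Given $c_3>0$, I would rerun Section~\ref{sec:principal} inside the $\Gamma$-fixed slice, where the restricted system is principal of effective order $r_{\mathrm{eff}}=3$. The exact reduced trajectory is the scalar solution $\beta=a(s)h_{A_0}$ of $a_s=\sum_jc_ja^j$ with $|s|^{1/2}a\to(2c_3)^{-1/2}$; it exists by separation of variables. The linearization is $3c_3a^2\sim\frac{3}{2|s|}$, so $\rho=3/2$. I would choose $N$ with $\kappa=\frac{N+1}{2}-1>3/2$, i.e.\ $N\ge5$. \cref{lem:poly-evolution}, \cref{lem:center-green-estimate} and Proposition~\ref{prop:weighted-green} hold verbatim on $\Gamma$-invariant tensors, because the Green operators commute with $\Gamma$ and so preserve fixed subspaces. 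The contraction of \cref{thm:parametrization}, with $\zeta=0$, then produces a $\Gamma$-invariant SMRF $h=\Psi(\beta+x)+y$ with $x=O(|s|^{-\kappa})$.

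It remains to read off the stated properties. The error in \eqref{eq:cp-nonprincipal-profile} is $O(|s|^{-1})$: it comes from $H_2(b)=O(|b|^2)$, the correction $x$, and the subleading terms of $a(s)$, and it holds in every $C^k$ by \cref{lem:interior-smoothing} and \cref{thm:actual-expansion}. \cref{thm:correspondence} and the remarks after it reconstruct a noncollapsed Ricci-flow germ with Fubini--Study tangent. Finally, the decay $|b|\asymp|s|^{-1/2}=|s|^{-1/(\ell_*-2)}$ forces $\ell_*=4$ in \cref{thm:ell}, while $r=2$ for the full slice, so $\ell_*>r+1$.
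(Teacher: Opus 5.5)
Your reduction to $\Gamma$-invariant tensors, the vanishing of the even restricted coefficients, the scalar trajectory with $\rho=\tfrac32$ and $N=5$, and the restricted use of \cref{thm:parametrization} with zero complementary parameter all match the paper's argument. There is a genuine gap, at the step you yourself flag as the main obstacle. You never show that the restricted cubic coefficient is positive, let alone that it equals $\frac{4n}{(n+2)(2n+5)}$; that value is read off from the constant in \eqref{eq:cp-nonprincipal-profile}.

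This step is not a formality. The sign of $c_3$ decides whether the balanced line carries any ancient trajectory at this rate. If $c_3<0$, no trajectory on the line converges to zero backward. If $c_3=0$, the rate is not $|s|^{-1/2}$. Nothing in the general theory forces the sign. On $\CP^{2n+1}$ the instability is already witnessed by $F_2$ in unbalanced directions, so it gives no information on this line.

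Your sketch does not close the gap either. The route via $F_3=\nabla\Ent_4$ on the line is legitimate. But evaluating $\Ent_4$ needs $H_2(h_{A_0})=-L_{\Comp}^{-1}\Pi_{\Comp}\Phi^{(2)}(h_{A_0},h_{A_0})$. This is an inversion of $L$ on a tensor-valued second variation: \eqref{eq:cp-conformal-variation} already contains $\mathrm{d}u\otimes\mathrm{d}u$ and $u\bar\nabla^2u$. So your claim that everything reduces to scalar integrals of $u_{A_0}^2$ and $u_{A_0}^4$ is unsupported. As written, you prove only the conditional statement: if $c_3>0$, a solution exists with leading coefficient $(2c_3)^{-1/2}$.

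The paper supplies the missing input as follows.
\begin{itemize}
\item Along $g_x=\bg+xh_{A_0}+x^2H_2(h_{A_0})$, the defect $\Phi(g_x)$ vanishes through degree two. Its third derivative $T=\Phi^{(3)}(h_{A_0},h_{A_0},h_{A_0})+6\Phi^{(2)}(h_{A_0},H_2(h_{A_0}))$ satisfies $\Pi_KT=3F_3(h_{A_0})$ by \cref{lem:low-order}.
\item Differentiating \eqref{eq:nonlinear-bianchi} gives $\mathrm{div}_{\bg}T=0$. Hence $T$ pairs to zero with $2\bar\nabla^2u$, and $\langle T,h_{A_0}\rangle=\langle T,u\bg\rangle$, where $u=u_{A_0}$.
\item Pulling back by the flow of $-\bar\nabla u$ turns the first variation into $u\bg$ without changing $T$, because the lower defects vanish. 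The term linear in the third variation pairs to zero with $u\bg$. The second variation then solves the second-order conformal deformation equation.
\item Li--Zhang's signed identity \cite[Theorem~3.15]{LZcp} then evaluates $\langle T,u\bg\rangle=\frac{48n^2}{(n+2)(2n+3)(2n+5)}$.
\item Dividing by $3\|h_{A_0}\|_{L^2}^2=\frac{12n}{2n+3}$ gives $\gamma_n=\frac{4n}{(n+2)(2n+5)}>0$.
\end{itemize}
You need this external identity, or an equivalent complete third-order computation, to make the proof unconditional.
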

\begin{proof}
Restrict the entire fixed-slice equation to $\Gamma$-invariant tensors.
For each tensor space, the superscript $\Gamma$ denotes the subspace
fixed by pullback under every element of $\Gamma$.
A traceless Hermitian matrix commuting with $U(n+1)\times U(n+1)$ is a
multiple of $A_0$, so
\[
 K^\Gamma=\R h_{A_0},\qquad\index{K invariant neutral@$K^\Gamma$}
 \Sli^\Gamma=\R h_{A_0}\oplus\Comp^\Gamma,
 \qquad \Comp^\Gamma=\Comp\cap\Sli^\Gamma.
\]
Averaging over $\Gamma$ commutes with $L$ and its spectral projections.
Hence $L$ is invertible on $\Comp^\Gamma$, with the same elliptic
estimates. Equivariance of $\mathcal V$ and uniqueness in the graph and Green
constructions show that the expansions and the realization argument of
Section~\ref{sec:principal} restrict to these spaces. Every
$F_j(xh_{A_0})$ lies in $\R h_{A_0}$. Block interchange sends $h_{A_0}$ to
$-h_{A_0}$ and normalizes $\Gamma$, so all even restricted coefficients
vanish. In particular, $F_2(xh_{A_0})=0$.

We compute the first nonzero restricted coefficient:
\begin{equation}
 F_3(xh_{A_0})=\gamma_nx^3h_{A_0},\qquad
 \gamma_n=\frac{4n}{(n+2)(2n+5)}>0.
 \label{eq:cp-balanced-cubic}
\end{equation}
Put $g_x=\bg+xh_{A_0}+x^2H_2(h_{A_0})$.
By \cref{lem:low-order} and $F_2(h_{A_0})=0$, the defect
$\Phi(g_x)$ vanishes through degree two. Its third derivative is
\[
 T=\Phi^{(3)}(h_{A_0},h_{A_0},h_{A_0}) +6\Phi^{(2)}(h_{A_0},H_2(h_{A_0})),\qquad \Pi_KT=3F_3(h_{A_0}).
\]
Differentiating the nonlinear Bianchi identity
\eqref{eq:nonlinear-bianchi} gives $\mathrm{div}_{\bg}T=0$.
Write $u=u_{A_0}$. Pulling $g_x$ back by the flow of $-\bar\nabla u$
changes its first variation to $u\bg$ and leaves its third defect $T$
unchanged, since the lower defects vanish. The resulting second
variation solves the second-order conformal deformation equation.
The linearized term involving the third metric variation pairs to zero
with $u\bg$, by self-adjointness and $\mathrm{D}\Phi_{\bg}[u\bg]=0$. Also
$\langle T,2\bar\nabla^2u\rangle_{L^2}=0$.
Consequently the signed identity of Li--Zhang
\cite[Theorem 3.15]{LZcp}, divided by the background volume, gives
\begin{equation}
 3\langle F_3(h_{A_0}),h_{A_0}\rangle_{L^2}
 =\langle T,u\bg\rangle_{L^2}
 =\frac{48n^2}{(n+2)(2n+3)(2n+5)}.
 \label{eq:cp-balanced-pairing}
\end{equation}
The integrated Bochner identity and
$\int_Mu^2\,\mathrm{d}\nu=(2n+3)^{-1}$
\cite[Lemma 3.4(c)]{LZcp} give
\[
 \|h_{A_0}\|_{L^2}^2=4n\int_Mu^2\,\mathrm{d}\nu
                   =\frac{4n}{2n+3}.
\]
Dividing \eqref{eq:cp-balanced-pairing} by this norm and by three proves
\eqref{eq:cp-balanced-cubic}.

The restricted first nonzero degree is therefore three. Its order-five
reduced equation is
\[
 \beta=xh_{A_0},\qquad x_s=\gamma_nx^3+c_5x^5
\]
for a real coefficient $c_5$. Choose $x(s_0)=x_0>0$ so small that
$\gamma_n+c_5x^2>0$ for $0\le x\le x_0$. Backward in time, $x$ decreases
toward zero, and separation of variables gives
\[
 s_0-s=\int_{x(s)}^{x_0}
       \frac{\mathrm{d}t}{\gamma_nt^3+c_5t^5}.
\]
The integral diverges as its lower endpoint tends to zero, so the
trajectory exists for all $s\le s_0$. Moreover,
\[
 \frac{\mathrm{d}}{\mathrm{d}s}(x^{-2})
       =-2\gamma_n-2c_5x^2.
\]
For sufficiently small $x_0$, this first gives $x^{-2}\asymp|s|$ as
$s\to-\infty$. Thus $x^2=O(|s|^{-1})$, and integrating once more gives
$x^{-2}=2\gamma_n|s|+O(\log|s|)$. Taking the inverse square root yields
\[
 x(s)=(2\gamma_n|s|)^{-1/2}
          +O(|s|^{-3/2}\log|s|).
\]
At its characteristic vector $a_*=(2\gamma_n)^{-1/2}h_{A_0}$, the
restricted indicial operator is $\mathrm{D}F_3(a_*)|_{K^\Gamma}=\frac32\Id$.
The order $N=5$ satisfies the threshold \eqref{eq:indicial-gap}, since
\[
 \kappa_5=(5+1)\frac12-1=2>\frac32.
\]
Apply \cref{thm:parametrization} to the invariant equation with this
trajectory and zero complementary parameter. The resulting smooth SMRF has
$h=\beta+O_{C^k}(|s|^{-1})$, which is
\eqref{eq:cp-nonprincipal-profile}.
The reconstruction in Section~\ref{sec:horizontal} gives a noncollapsed
ancient Ricci-flow germ with the prescribed tangent.
Finally, \eqref{eq:cp-F2} is nonzero on the full neutral space, so
$r=2$, whereas \eqref{eq:cp-nonprincipal-profile} gives $\ell_*=4$.
\end{proof}

\section{Forward convergence to compact Ricci shrinkers}\label{sec:forward}
We consider Ricci flows with a smooth compact tangent flow at a finite singular time. Chan--Ma--Zhang \cite[Theorems~1.2 and~1.7, Corollary~1.4]{CMZ} prove uniqueness of this tangent flow, the Type~I curvature bound, and polynomial convergence of the modified flow in logarithmic time. For integrable shrinkers, Choi--Lai \cite[Theorem~1.8]{CL} give an exponential convergence estimate determined by the spectrum. We determine the leading tensor and convergence rate without an integrability assumption, and obtain the finite-order expansion of every slow solution. The equation and its reduced coefficients are those of Sections~\ref{sec:geometry}--\ref{sec:exponent}.

\subsection{Normalization and the main theorem}
Let $(M,g_{\mathrm{RF}}(t))$, $0\le t<T<\infty$, be a closed Ricci
flow with first singular time $T$. Suppose one tangent flow at $T$ is
smooth and compact, and identify its normalized shrinker with $(M,\bg,\bfv)$.
Set
\begin{equation}
 \tau=T-t=e^{-s},\qquad
 \hat g(s)=\tau^{-1}g_{\mathrm{RF}}(T-\tau),\qquad
 \partial_s\hat g=\hat g-2\Ric_{\hat g}.
 \label{eq:forward-normalization}
\end{equation}
Thus $s\nearrow+\infty$ as $t\nearrow T$. We use $s^*\ge2$ for the starting time of a forward SMRF. In a horizontal gauge its relation to the Ricci flow is
\begin{equation}
 \tau^{-1}\phi_t^*g_{\mathrm{RF}}(t)=\bg+h(s),\qquad
 h_s=\mathcal V(h),\qquad h(s)\longrightarrow0,
 \quad s\ge s^*.
 \label{eq:forward-gauge}
\end{equation}
The spaces $\Sli,K,\Comp$, the operator $L$, and the group $G$ retain their definitions from Section~\ref{sec:geometry}. All tensor asymptotics below hold in $C^k$ for every integer $k\ge0$.

\begin{theorem}[Forward rate and tensor asymptotics]\label[theorem]{thm:forward}
Let $g_{\mathrm{RF}}(t)$, $0\le t<T<\infty$, be a Ricci flow on a closed manifold, with first singular time $T$. Suppose one tangent flow at $T$ is smooth and compact, and identify its normalized shrinker with $(M,\bg,\bfv)$. There is a horizontal representative \eqref{eq:forward-gauge}. Exactly one of the following holds.
\begin{enumerate}
\item The slice solution is zero on a tail, and the Ricci flow there is the canonical shrinking flow up to a time-independent spatial identification.
\item There are $\lambda\ge\lambda_->0$ and a nonzero smooth tensor $a\in\Sli$ such that
\begin{equation}
 La=-\lambda a,\qquad
 h(s)=e^{-\lambda s}\bigl(a+o_{C^k}(1)\bigr).
 \label{eq:forward-spectral}
\end{equation}
\item There are a rational number $\ell^*\ge3$ and $a^*\in K\setminus\{0\}$ such that
\begin{equation}
 b(s)=s^{-1/(\ell^*-2)}(a^*+o(1)),\qquad
 h(s)=s^{-1/(\ell^*-2)}\bigl(a^*+o_{C^k}(1)\bigr),
 \quad b=\Pi_Kh.
 \label{eq:forward-neutral}\index{ell characteristic@$\ell_*,\ell^*$ (characteristic exponents)}\index{a characteristic@$a_*,a^*$ (characteristic vectors)}
\end{equation}
For each integer $N\ge2$, the maps of \cref{thm:formal} satisfy
\begin{equation}
 h(s)=\Psi^{[N]}(b(s))+O_{C^k}(|b(s)|^{N+1}),\qquad b_s=F^{[N]}(b)+O(|b|^{N+1}).
 \label{eq:forward-expansion}
\end{equation}
The first nonzero reduced degree $r=\min\{j\ge2:F_j\not\equiv0\}$ exists, and $\ell^*\ge r+1$. The possible exponents $\ell^*$ lie in a finite set determined by the reduced entropy. In the principal case $\ell^*=r+1$,
\begin{equation}
 F_r(a^*)=-\frac{a^*}{r-1}.
 \label{eq:forward-balance}
\end{equation}
\end{enumerate}
Changing the horizontal marking acts on the limiting tensor by one element of $G$.
\end{theorem}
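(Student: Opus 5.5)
The plan is to run the analysis of Sections~\ref{sec:geometry}--\ref{sec:exponent} with time reversed, so that $s\nearrow+\infty$, and with the roles of $E_+$ and $E_-$ interchanged.

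\emph{Gauge.} By Chan--Ma--Zhang \cite{CMZ}, the normalized flow \eqref{eq:forward-normalization}, pulled back by the potential gauge \eqref{eq:potential-gauge}, converges smoothly to $\bg$ after a time-independent identification. The local slice construction of Section~\ref{sec:horizontal} then gives slice representatives on a forward tail. Removing the Killing velocity by a $G$-valued ODE produces the horizontal representative \eqref{eq:forward-gauge}. This forward gauge still converges: the entropy is now increasing toward $\boldsymbol{\mu}(\bg,1)$, and the \L{}ojasiewicz length bound \eqref{eq:inverse-gauge-length} holds with integration over $[s^*,\infty)$. Uniqueness in \cref{lem:strong-affine-slice} shows that two markings differ by one element of $G$, which gives the last sentence of the theorem.

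\emph{Trichotomy.} \cref{lem:projected-inequalities} holds on any interval. I would apply its crossing arguments in forward time. Since positive modes grow forward, one gets $U_+=o(U_0+U_-)$. Next, the function $F_\delta=U_--\delta U_0$ has the crossing property forward in time. Hence either $U_0\le\delta^{-1}U_-$ on a tail, in which case $h$ decays exponentially, or $U_-=o(U_0)$.

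\emph{Exponential case.} Here $U_0$ and $U_+$ are dominated by $U_-$, and the forward analogue of \cref{lem:two-mode} gives $\|h\|_{L^2}=O(e^{-cs})$. I would then consider the Rayleigh quotient
\[
 \Lambda(s)=-\frac{\langle Lh,h\rangle_{L^2}}{\|h\|_{L^2}^2},
\]
for which the quadratic nonlinearity \eqref{eq:L2-nonlinear} is exponentially small. The planned steps are:
\begin{enumerate}
\item Show, by the usual frequency-monotonicity argument, that $\Lambda(s)$ converges to an eigenvalue $\lambda\ge\lambda_-$ of $-L$ on $E_-$.
\item Compute the frequency derivative: it is $-2\|(L+\Lambda)h\|_{L^2}^2/\|h\|_{L^2}^2$ plus an integrable error, and the integrability of this error should be the key input giving convergence of $\Lambda$.
\item Deduce that $e^{\lambda s}h(s)$ converges in $L^2$ to an eigentensor $a$, and upgrade the convergence by \eqref{eq:append-fixed-bound} and interpolation with \cref{lem:interior-smoothing}.
\item Rule out $\Lambda\to\infty$ by a log-convexity (backward uniqueness) argument for the perturbed equation; in that case $h$ vanishes at some time and hence identically, by uniqueness of continuation. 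This yields alternative (1).
\end{enumerate}
I expect this superexponential exclusion and the justification of the frequency limit to be the main obstacle. I would first try to adapt the Agmon--Nirenberg estimate, since the nonlinearity is relatively bounded by $\epsilon(s)\|h\|_{L^2}$ with $\epsilon\to0$ exponentially.

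\emph{Slow case.} I would redo \cref{prop:dichotomy} forward. The complement $q=\Pi_{\Comp}h$ is now solved with its $E_+$ component integrated from $+\infty$ and its $E_-$ part normalized at the initial time $s^*$. This gives $|b_s|\le C|b|^2$, $|b|\ge c/s$, and $|(\log|b|)_s|\to0$. The weighted complement estimate \cref{lem:slaving} and \cref{thm:actual-expansion} transfer with the time-reversed Green operator. The homogeneous part is now an exponentially decaying $E_-$ solution, which is again $o(|b|^A)$ for every $A>0$; this gives \eqref{eq:forward-expansion}.

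\emph{Exponent and balance.} \cref{prop:gradient-error} holds verbatim, and $b_s\approx\nabla\Ent(b)$ is now an ascending gradient flow in forward time converging to $0$. Choi--Hung \cite[Theorem~7.1 and Lemma~7.10]{CH} then give the rational exponent $\ell^*$, the limit direction $a^*$, and membership of $\ell^*$ in the finite set determined by the Taylor data of $\Ent$. The coefficient comparison of \cref{thm:ell}, using \cref{lem:gradient-obstruction}, gives the existence of $r$ and $\ell^*\ge r+1$. The tensor statement in \eqref{eq:forward-neutral} follows from the expansion.

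Finally, in the principal case I would set $A(s)=s^{1/(r-1)}b(s)$. Since $b_s=F_r(b)+O(|b|^{r+1})$ and $F_r$ is homogeneous of degree $r$,
\[
 sA_s=\frac{A}{r-1}+F_r(A)+o(1)\longrightarrow\frac{a^*}{r-1}+F_r(a^*).
\]
Because $A(s)\to a^*$, its derivative with respect to $\log s$ cannot tend to a nonzero vector. Hence $F_r(a^*)=-a^*/(r-1)$, which is \eqref{eq:forward-balance}.
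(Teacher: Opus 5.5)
Your route is the paper's: forward crossing arguments from \cref{lem:projected-inequalities}, a Dirichlet quotient in the exponential case, and the forward Green operator, with $E_-$ normalized at $s^*$ and $E_+$ integrated from $+\infty$. It continues with the forward version of \cref{prop:gradient-error}, Choi--Hung applied to $-\Ent$, and the balance computation for $s^{1/(r-1)}b$. There is, however, one missing input.

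\textbf{Missing coarse upper rate.} The existence of $r$ does not follow from the coefficient comparison alone. \cref{lem:gradient-obstruction} produces $r$ only once $\Ent$ is known to be nonconstant near $0$. The paper proves nonconstancy, as in the first paragraph of the proof of \cref{thm:ell}, by contradiction with a polynomial upper bound on $|b|$. If $\Ent\equiv0$, the gradient estimate gives $|b_s|\le C_J|b|^J$ for every $J$, hence $|b(s)|\ge c_Js^{-1/(J-1)}$, which is incompatible with $|b(s)|\le Cs^{-\alpha_0}$. The same two-sided coarse rate $cs^{-1}\le|b(s)|\le Cs^{-\alpha_0}$ is also part of what must be checked before invoking \cite[Theorem~7.1]{CH}.

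Your proposal never establishes the upper half. It is available from the polynomial forward convergence of Chan--Ma--Zhang \cite[Corollary~1.4]{CMZ}, transported through the slice construction. Alternatively, the \L{}ojasiewicz argument you already invoke for the gauge bounds $\|h(s)\|_{L^2}$ by $\int_s^\infty v\le C|e(s)|^{1-\theta}$ with $|e(s)|$ polynomially small. Either way it must be supplied. Without it, for instance when the static obstruction vanishes identically, nothing in your argument rules out a neutral-dominated solution for which $r$ does not exist. In the paper this bound is exactly what excludes the neutral alternative in that case.

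\textbf{The exponential case.} The obstacle you anticipate here does not exist, and your step~4 uses the wrong logic. Forward in time the quotient is monotone up to an integrable error: with $D=\|(L+\Lambda)h\|_{L^2}/\|h\|_{L^2}$,
\[
 \Lambda'=-2D^2-\frac{2\langle(L+\Lambda)h,\mathcal U(h)\rangle_{L^2}}{\|h\|_{L^2}^2}\le -D^2+Ce^{-2\gamma s}.
\]
So $\Lambda$ is bounded above by its initial value plus a constant. It is bounded below because the spectrum of $L$ is bounded above. This already gives $\Lambda\to\lambda<\infty$ and $\int D^2<\infty$, with no separate superexponential exclusion.

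Alternative~(1) is simply the case $h(s_1)=0$ for some $s_1$, where forward uniqueness gives a zero tail. It is not reached through $\Lambda\to\infty$, and superexponential decay would not by itself make $h$ vanish at a finite time.

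In step~3 you also need $a\ne0$. The paper obtains this by combining $\lim_{s\to\infty}s^{-1}\log\|h(s)\|_{L^2}=-\lambda$ with the remainder bound $\|h-e^{-\lambda s}a\|_{L^2}=O(e^{-(\lambda+\sigma)s})$ from the spectral projections.

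\textbf{Smaller points.} The forward gauge needs no length argument. The MRF already converges by \cite{CMZ}, and the slice representatives and the compact $G$-valued correction preserve convergence. Also, \cref{lem:strong-affine-slice} only shows that two markings differ by some $a_s\in G$ at each time. Constancy of $a_s$ uses the equivariance $Y(a^*h)=a^*Y(h)$, as in \cref{thm:correspondence}.
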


In the original Ricci-flow time, the two nonstationary alternatives give, respectively,
\begin{equation}
 \phi_t^*g_{\mathrm{RF}}(t) =\tau\bg+\tau^{1+\lambda}\bigl(a+o_{C^k}(1)\bigr),\qquad \phi_t^*g_{\mathrm{RF}}(t) =\tau\bg+\tau|\log\tau|^{-1/(\ell^*-2)} \bigl(a^*+o_{C^k}(1)\bigr).
 \label{eq:forward-ricci-rates}
\end{equation}

\subsection{The spectral alternatives}
Let $h_s=\mathcal V(h)$ be a smooth solution with $h(s)\to0$ smoothly as
$s\to+\infty$. Use the three $L^2$ norms
$U_\pm=\|\Pi_\pm h\|_{L^2}$ and $U_0=|b|$, where $b=\Pi_Kh$.
They satisfy \cref{lem:projected-inequalities}, with
$\epsilon(s)\to0$.

\begin{proposition}\label[proposition]{prop:forward-dichotomy}
Apart from a stationary tail, either $\|h(s)\|_k=O(e^{-\gamma s})$
for some $\gamma>0$ and every fixed $k$, or $b(s)\ne0$ on a future tail and
\begin{equation}
 \|\Pi_{\Comp}h(s)\|_k=o(|b(s)|),\qquad
 |b_s|\le C|b|^2,\qquad |b(s)|\ge\frac{c}{s},\qquad
 |\partial_s\log|b(s)||\longrightarrow0.
 \label{eq:forward-slow}
\end{equation}
The alternatives are disjoint.
\end{proposition}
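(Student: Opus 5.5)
We follow the proof of \cref{lem:two-mode} and \cref{prop:dichotomy}, with the time direction reversed. As $s\to+\infty$ the roles of $E_+$ and $E_-$ interchange. Negative modes now decay exponentially forward. Positive modes grow forward, so they must be dominated for a solution converging to zero.

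The first step shows $U_+=o(U_-+U_0)$ as $s\to+\infty$. Fix $\delta>0$ and put $F_\delta=U_+-\delta(U_-+U_0)$. At a zero of $F_\delta$ with $U_-+U_0>0$, \eqref{eq:ODIplus} gives $\partial_sF_\delta>0$ once $\epsilon(s)$ is small. Hence $F_\delta\ge0$ is forward invariant on a tail. Inside that region, \eqref{eq:ODIplus} gives $\partial_sU_+\ge\lambda_+U_+/2$, so $U_+$ grows exponentially. This contradicts $h\to0$, and therefore $U_+\le\delta(U_-+U_0)$ eventually.

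Next we handle the stationary alternative. Using \eqref{eq:ODIzero}, \eqref{eq:ODIminus} and the analogue of \eqref{eq:ODIplus-upper} for $U_-$ (valid after truncating $L$ on $E_-$ via the first step and parabolic smoothing), we get $|\partial_s(U_-+U_0)|\le C(U_-+U_0)$ where needed. Backward uniqueness is not needed here. If $U_-+U_0$ vanishes at some time, then $h$ vanishes there by the first step, and forward uniqueness gives a stationary tail. Otherwise $U_-+U_0>0$ on the whole tail.

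We then compare $U_-$ with $U_0$ using $F_\delta=U_--\delta U_0$. At a zero, $\partial_sF_\delta\le[-\lambda_-\delta+C\epsilon]U_0<0$, so $F_\delta\le 0$ is forward invariant. Either $U_-\le\delta U_0$ eventually for every $\delta$, giving $U_-=o(U_0)$, or for some $\delta$ we have $U_->\delta U_0$ for all large $s$. In the latter case \eqref{eq:ODIminus} gives $\partial_sU_-\le-\lambda_-U_-/2$, hence exponential decay of $U_-$, of $U_0\le\delta^{-1}U_-$, and of $U_+$. Interpolation against the uniform higher bounds of \cref{lem:interior-smoothing}, exactly as in the proof of \cref{prop:dichotomy}, gives $\|h\|_k=O(e^{-\gamma s})$.

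In the neutral case $U_\pm=o(|b|)$ and $b\ne0$. From \eqref{eq:ODIzero} we get $|b_s|=o(|b|)$, so $|\partial_s\log|b||\to0$ and powers of $|b|$ are tempered. For the complement $q=\Pi_{\Comp}h$ we repeat the bootstrap of \cref{prop:dichotomy}, with the forward Green operator: integrate the $E_-$ part from an initial time $s^*$ and the $E_+$ part from $+\infty$. The initial homogeneous term $e^{(s-s^*)L}h_-(s^*)$ decays exponentially, and temperedness makes it $o(|b|^{2})$. Weighted estimates \eqref{eq:append-L2} and \eqref{eq:append-fixed-bound} then give $\|q\|_k\le C|b|^{2-\theta}$ and subsequently $C|b|^2$. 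From this, \eqref{eq:L2-nonlinear} yields $|b_s|\le C|b|^2$. Integrating forward, $|b(s)|^{-1}\le|b(s_0)|^{-1}+C(s-s_0)$, which gives $|b|\ge c/s$ and excludes exponential decay; this proves disjointness.

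The main obstacle is checking that the appendix Green estimates hold with reversed time orientation. This should follow by the symmetry $s\mapsto-s$, $L\mapsto -L$ applied to the fixed-operator formulas.
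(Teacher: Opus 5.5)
Your proposal is correct and follows the paper's proof essentially step for step. You use the forward version of \cref{lem:two-mode}: first $U_+=o(U_0+U_-)$ by forward invariance of $\{U_+\ge\delta(U_0+U_-)\}$, then the crossing argument for $U_--\delta U_0$. In the neutral case you use the forward Green formula \eqref{eq:forward-green-formula} with a tempered weight, followed by $|b_s|\le C|b|^2$ and $|b|\ge c/s$, which also gives disjointness.

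Two side remarks should be corrected, though neither is needed for the argument:
\begin{enumerate}
\item The bound $|\partial_s(U_-+U_0)|\le C(U_-+U_0)$ does not follow from an analogue of \eqref{eq:ODIplus-upper}, because $L$ is unbounded on the infinite-dimensional space $E_-$. The degenerate case is excluded by forward uniqueness, applied at the outset (as in the paper) to get $h(s)\ne0$.
\item The reflection $s\mapsto-s$, $L\mapsto-L$ produces a backward heat equation. It therefore transfers only the coefficientwise $L^2$ bound \eqref{eq:append-L2}, not the parabolic regularity behind \eqref{eq:append-fixed-bound}. That regularity does hold, but for a different reason: the interior estimate \eqref{eq:append-regularity} is unchanged for the forward equation, with an initial-data version on the first strip. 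In any case the paper avoids it here by interpolating the $L^2$ bound against the assumed smooth convergence, which already gives $\|\Pi_{\Comp}h\|_k=o(|b|)$.
\end{enumerate}
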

\begin{proof}
The comparison is the forward version of \cref{lem:two-mode}.
Since a stationary tail is excluded, $h(s)\ne0$ by uniqueness.
For fixed $\delta>0$, the function $U_+-\delta(U_0+U_-)$ has positive
derivative at every zero on a sufficiently late tail.
Its nonnegative region is forward invariant, and there
$\partial_sU_+\ge\lambda_+U_+/2$ after increasing the starting time.
Convergence excludes this region. Hence $U_+=o(U_0+U_-)$.
At a zero of $U_--\delta U_0$, the projected inequalities
now give a negative derivative. Either $U_-=o(U_0)$, or
$U_-\ge\delta U_0$ on an entire future tail for some $\delta>0$.
In the latter case,
$\partial_sU_-\le-\lambda_-U_-/2$ on a later tail,
so $\|h\|_{L^2}=O(e^{-\lambda_-s/2})$.
Interpolation with smooth convergence gives exponential decay in
every fixed spatial norm, as in \cref{prop:dichotomy}.

In the remaining case, $U_++U_-=o(|b|)$.
Hence $b\ne0$ on a late tail, and \eqref{eq:ODIzero} gives
$|b_s|=o(|b|)$.
The regularity argument is the one in \cref{prop:dichotomy}.
Indeed, interpolation and \eqref{eq:L2-nonlinear} give
\[
 \|\mathcal U(h(s))\|_{L^2}\le C_\theta|b(s)|^{2-\theta}
\]
for every sufficiently small $\theta>0$.
For $q=\Pi_{\Comp}h$, the fixed-operator formula
\eqref{eq:forward-green-formula} gives
\[
 q(s)=e^{(s-s^*)L_{\Comp}}h_-(s^*)
       +\Gop_0\bigl(\Pi_{\Comp}\mathcal U(h)\bigr)(s).
\]
The same spectral estimate as \eqref{eq:append-L2}, with
$w=|b|^{2-\theta}$, bounds the forced term by $C_\theta|b|^{2-\theta}$
in $L^2$. Slow variation of $|b|$ makes the exponentially decaying
homogeneous term smaller than this weight.
Thus $\|q\|_{L^2}=O(|b|^{2-\theta})$, and the second interpolation
in \cref{prop:dichotomy} gives $\|q\|_k=o(|b|)$ at every fixed order.
In particular $\|h\|_3\le C|b|$, so \eqref{eq:L2-nonlinear} gives
$|b_s|\le C|b|^2$. Integration yields
\[
 |b(s)|\ge\frac1{|b(s^*)|^{-1}+C(s-s^*)}\ge\frac{c}{s}.
\]
This lower bound also proves disjointness.
\end{proof}
\begin{proposition}\label[proposition]{prop:forward-leading}
Every nonstationary exponentially convergent forward solution satisfies \eqref{eq:forward-spectral}.
\end{proposition}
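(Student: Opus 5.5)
We have to show that a nonstationary forward solution with $\|h(s)\|_k=O(e^{-\gamma s})$ satisfies $h=e^{-\lambda s}(a+o_{C^k}(1))$ with $La=-\lambda a$, $a\neq0$, $\lambda\ge\lambda_-$. The approach is a standard spectral leading-mode extraction: first rule out the positive and neutral components, then show that $\|h\|_{L^2}$ decays at an exact eigenvalue rate, and finally pass to the limit in the eigenmode expansion.

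\emph{Step 1.} Exponential decay forces $U_0$ to decay exponentially. The lower bound $|b|\ge c/s$ belongs to the other alternative of \cref{prop:forward-dichotomy}, so it is unavailable here. Instead we use the comparison in the proof of that proposition: $U_+=o(U_0+U_-)$, and either $U_-=o(U_0)$ or $U_-\ge\delta U_0$ on a tail.

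\begin{itemize}
\item If $U_-=o(U_0)$, then \eqref{eq:ODIzero} gives $|\partial_s\log U_0|\le C\epsilon(s)\to0$. Hence $U_0$ is subexponential, contradicting exponential decay.
\item Therefore $U_0+U_+\le CU_-$ on a tail.
\item Repeating the crossing argument with every $\delta>0$ should upgrade this to $U_0+U_+=o(U_-)$.
\end{itemize}

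\emph{Step 2: the rate is an eigenvalue.} Consider the frequency quotient $\Lambda(s)=-\langle Lh,h\rangle/\|h\|_{L^2}^2$. Compute $\partial_s\Lambda$ along $h_s=Lh+\mathcal U(h)$. The main term $-2\|(L+\Lambda)h\|^2/\|h\|^2\le0$ gives monotonicity. The error is controlled by $\|\mathcal U(h)\|_{L^2}\le C\|h\|_3\|h\|_{L^2}$ from \eqref{eq:L2-nonlinear}, and $\|h\|_3=O(e^{-\gamma s})$ makes it integrable. This is the standard Agmon--Nirenberg argument. It yields $\Lambda(s)\to\lambda$, and since $\int\|(L+\Lambda)h\|^2/\|h\|^2<\infty$, the limit $\lambda$ is an eigenvalue of $L|_{\Sli}$. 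By Step 1, $\lambda\ge\lambda_-$. It then follows that $\|h\|_{L^2}=e^{-\lambda s+o(s)}$.

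\emph{Step 3: the leading tensor.} Project onto the $\lambda$-eigenspace $E_\lambda$, which is finite-dimensional, and write $p=\Pi_\lambda h$. Then
\[
 \partial_s(e^{\lambda s}p)=e^{\lambda s}\Pi_\lambda\mathcal U(h)=O\bigl(e^{\lambda s}e^{-\gamma s}\|h\|_{L^2}\bigr),
\]
which is integrable because $\|h\|_{L^2}\le e^{-(\lambda-\gamma/2)s}$. Hence $e^{\lambda s}p\to a$. For the remaining spectral components, apply variation of constants. Modes with eigenvalue below $-\lambda$ decay faster than $e^{-\lambda s}$ after Duhamel, using the forcing bound $e^{-\gamma s}\|h\|$. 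Modes with eigenvalue in $(-\lambda,0]$, together with the positive modes, are written as integrals from $+\infty$, and they are likewise $o(e^{-\lambda s})$. The gap between $\lambda$ and the neighbouring eigenvalues absorbs the $e^{o(s)}$ loss once $\|h\|\le Ce^{-(\lambda-\eta)s}$ with $\eta<\gamma$. Bootstrapping gives $\|h-e^{-\lambda s}a\|_{L^2}=o(e^{-\lambda s})$.

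\emph{Main obstacle: $a\ne0$.} I would get this from $\Lambda\to\lambda$ together with the spectral concentration $\|(L+\lambda)h\|=o(\|h\|)$ in an averaged sense: the $E_\lambda$ component carries a fixed fraction of $\|h\|$, so the exact rate $\|h\|\ge ce^{-\lambda s}$ must be proved, not just $e^{-\lambda s+o(s)}$. This should follow from integrability of $\partial_s(\Lambda-\lambda)$-type errors, giving $\log\|h\|+\lambda s$ convergent.

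\emph{Step 4.} Upgrade the $L^2$ statement to $C^k$. Interpolate against the uniform higher bounds of \cref{lem:interior-smoothing}, applied to the rescaled quantity $e^{\lambda s}h$, which solves a linear parabolic equation with decaying coefficients. Parabolic Schauder estimates on unit strips then give convergence in every $C^k$.
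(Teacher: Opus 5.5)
Your Steps 2 and 3 are essentially the paper's proof. You use the Dirichlet quotient with $\Lambda'\le-D^2+Ce^{-2\gamma s}$, where $D=\|(L+\Lambda)h\|_{L^2}/\|h\|_{L^2}$. You identify $\lim\Lambda=\lambda$ with a point of $\Spec(-L|_{\Sli})$ along a sequence where $D\to0$. You then split spectrally, with Duhamel from $s^*$ below $-\lambda$ and from $+\infty$ above it.

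The gap is the step you call the main obstacle. You record the outcome of Step 3 only as $\|h-e^{-\lambda s}a\|_{L^2}=o(e^{-\lambda s})$. Together with $\|h\|_{L^2}=e^{-\lambda s+o(s)}$, this cannot exclude $a=0$, and your proposed remedy does not work as stated. Convergence of $\log\|h\|_{L^2}+\lambda s$ is equivalent to $\int^\infty(\Lambda-\lambda)\dd s<\infty$. The differential inequality bounds $\Lambda-\lambda$ only from below, by $-Ce^{-2\gamma s}$. Writing $\Lambda(s)-\lambda=\int_s^\infty(-\Lambda')$ shows that the needed upper bound amounts to $\int^\infty vD(v)^2\dd v<\infty$. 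This is not implied by $\int^\infty D^2\dd v<\infty$, and it is essentially the lower bound $\|h\|_{L^2}\ge ce^{-\lambda s}$ you are trying to prove. So $a\ne0$ is left unproved.

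The exact-order lower bound is not needed. Choose $\eta$ smaller than $\gamma$ and than the two spectral gaps adjacent to $-\lambda$. Your Duhamel estimates then give, in one pass and without bootstrapping, a genuine exponential gain: $\|h(s)-e^{-\lambda s}a\|_{L^2}=O(e^{-(\lambda+\sigma)s})$ for some $\sigma>0$, reduced slightly if a resonant factor $s$ appears. If $a=0$, this gives $\|h\|_{L^2}=O(e^{-(\lambda+\sigma)s})$, contradicting $s^{-1}\log\|h(s)\|_{L^2}\to-\lambda$. This is exactly how the paper concludes. Your concentration idea closes the same way: along the sequence with $D(s_i)\to0$, $\Pi_\lambda h(s_i)$ carries most of $\|h(s_i)\|_{L^2}$, while $a=0$ would force $\|\Pi_\lambda h(s)\|_{L^2}=O(e^{-(\lambda+\gamma-\eta)s})$.

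The same gain reduces Step 4 to a single interpolation of the $O(e^{-(\lambda+\sigma)s})$ bound against uniformly bounded higher norms, giving $o(e^{-\lambda s})$ in every $C^k$. With only $o(e^{-\lambda s})$ in $L^2$, this interpolation falls short by an exponential factor. Your route through $e^{\lambda s}h$ is workable, but it needs its own uniform higher bounds for the rescaled equation $\partial_s(e^{\lambda s}h)=(L+\lambda)(e^{\lambda s}h)+e^{\lambda s}\mathcal U(h)$. \Cref{lem:interior-smoothing} bounds $h$ itself and does not provide these.

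Finally, Step 1 is unnecessary. Exponential decay and the logarithmic rate give $\lambda\ge\gamma>0$. It is $-\lambda$, not $\lambda$, that lies in $\Spec(L|_{\Sli})$, so this negativity gives $\lambda\ge\lambda_-$ directly.
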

\begin{proof}
Exponential convergence and \eqref{eq:L2-nonlinear} give
\[
 h_s=Lh+\mathcal U(h),\qquad
 \|\mathcal U(h(s))\|_{L^2}\le Ce^{-\gamma s}\|h(s)\|_{L^2}
\]
for some $\gamma>0$.
Since a stationary tail is excluded, uniqueness implies $h(s)\ne0$.
Define
\[
 Q=-\frac{\langle Lh,h\rangle}{\|h\|_{L^2}^2},\qquad
 D=\frac{\|(L+Q)h\|_{L^2}}{\|h\|_{L^2}}.
\]
Differentiating the Dirichlet quotient gives
\begin{equation}
 Q'=-2D^2-
 \frac{2\langle(L+Q)h,\mathcal U(h)\rangle}{\|h\|_{L^2}^2}
 \le-D^2+C^2e^{-2\gamma s}.
 \label{eq:forward-dirichlet}
\end{equation}
Thus $Q(s)+\int_s^\infty C^2e^{-2\gamma v}\dd v$ decreases.
The spectrum of $-L$ is bounded below, so $Q$ is bounded below.
Consequently $Q\to\lambda<\infty$ and
$\int_{s^*}^\infty D^2\dd s<\infty$.
Along a sequence $s_i\nearrow\infty$, $D(s_i)\to0$. Since
\[
 \operatorname{dist}(Q(s_i),\Spec(-L))\le D(s_i),
\]
the discrete spectrum gives $\lambda\in\Spec(-L)$. Moreover,
\begin{equation}
 \partial_s\log\|h(s)\|_{L^2}=-Q+O(e^{-\gamma s}),\qquad
 \lim_{s\to\infty}\frac{\log\|h(s)\|_{L^2}}s=-\lambda<0.
 \label{eq:forward-log-rate}
\end{equation}
The strict inequality follows from exponential decay, so
$\lambda\ge\lambda_-$.

Let $P_0,P_>,P_<$ project onto the eigenvalues of $L$ equal to,
greater than, and smaller than $-\lambda$.
For each sufficiently small $\delta>0$, \eqref{eq:forward-log-rate} gives
\[
 \|h(s)\|_{L^2}\le C_\delta e^{-(\lambda-\delta)s},\qquad
 \|\mathcal U(h(s))\|_{L^2}\le
 C_\delta e^{-(\lambda+\gamma-\delta)s}.
\]
Choose $\delta<\gamma/2$ smaller also than the spectral gap above
$-\lambda$. Integrating
$\partial_s(e^{\lambda s}P_0h)=e^{\lambda s}P_0\mathcal U(h)$
gives a limit $a$ with $La=-\lambda a$.
Variation of constants on the other two spaces gives
\begin{equation}
 P_>h(s)=-\int_s^\infty e^{(s-v)L}P_>\mathcal U(h(v))\dd v,\qquad P_<h(s)=e^{(s-s^*)L}P_<h(s^*) +\int_{s^*}^{s}e^{(s-v)L}P_<\mathcal U(h(v))\dd v.
 \label{eq:forward-spectral-integrals}
\end{equation}
The boundary term at infinity vanishes by
\eqref{eq:forward-log-rate}.
The spectral gaps and the forcing bound therefore give
\begin{equation}
 \|h(s)-e^{-\lambda s}a\|_{L^2}
 =O(e^{-(\lambda+\sigma)s})
 \label{eq:forward-spectral-remainder}
\end{equation}
for some $\sigma>0$, decreased if a resonant factor $s$ occurs.
If $a=0$, this contradicts \eqref{eq:forward-log-rate}.
Hence $a\ne0$.

The remainder has uniformly bounded spatial norms of every fixed order.
For a prescribed $k$, interpolate \eqref{eq:forward-spectral-remainder}
with a sufficiently high such norm to obtain
\[
 \|h(s)-e^{-\lambda s}a\|_k
 \le C e^{-(\lambda+\sigma)(1-\theta)s}
 =o(e^{-\lambda s}),
 \qquad 0<\theta<\frac{\sigma}{\lambda+\sigma}.
\]
This proves \eqref{eq:forward-spectral} in every fixed spatial norm.
\end{proof}

\subsection{Complement estimates and finite-order expansion}
Use the strip norms $\XP(I_s)$ and $\YP(I_s)$ of \eqref{eq:append-norms}, with $I_s=[s-1,s]$. On the forward half-line their weighted versions are
\begin{equation}
 \|u\|_{\XP_w}=\sup_{s\ge s^*+1}
       \frac{\|u\|_{\XP(I_s)}}{w(s)},\qquad
 \|R\|_{\YP_w}=\sup_{s\ge s^*+1}
       \frac{\|R\|_{\YP(I_s)}}{w(s)}.
 \label{eq:forward-weighted-norms}
\end{equation}
The positive weight $w$ satisfies \eqref{eq:append-tempered} for time arguments in $[s^*,\infty)$. Fix $0<\gamma<\min\{\lambda_+,\lambda_-\}$. We use the following estimate with smooth coefficients and data arising from the fixed-slice equation.

\begin{lemma}\label[lemma]{lem:forward-green}
Let $R$ be smooth with $\|R\|_{\YP_w}<\infty$, and let $B(s)=L_{\Comp}+A(s)$, where
\[
 \|Au\|_{\YP(I_s)}\le\delta\|u\|_{\XP(I_s)},
 \qquad s\ge s^*+1.
\]
For $\delta$ sufficiently small, the normalized forced solution $\rho=\Gop_BR$ satisfies
\begin{equation}
 \rho_s=B\rho+R,\qquad \Pi_-\rho(s^*)=0,\qquad
 \|\rho\|_{\XP_w}\le C_w\|R\|_{\YP_w}.
 \label{eq:forward-green-estimate}
\end{equation}
It is unique in the weighted class. For $\delta$ small enough also for the constant and exponential weights, every smooth homogeneous solution with uniformly bounded unit-strip norms satisfies
\begin{equation}
 \|z\|_{\XP(I_s)}
 \le Ce^{-\gamma(s-s^*)}\|\Pi_-z(s^*)\|_3.
 \label{eq:forward-homogeneous}
\end{equation}
Consequently, if $w\to0$, every smooth solution of $q_s=Bq+R$ with uniformly bounded unit-strip norms decomposes as $q=\Gop_BR+z_-$ with these bounds. The constants are independent of the strip location and, for fixed weight and coefficient bounds, of $s^*$.
\end{lemma}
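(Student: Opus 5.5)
The plan is to mirror the backward construction of \cref{prop:weighted-green}, with the roles of $E_+$ and $E_-$ exchanged. On the forward half-line, the $E_-$ modes decay as $s\nearrow\infty$ and carry free initial data at $s^*$. The $E_+$ modes grow, so they must be integrated in from $+\infty$. I would first build the fixed-operator Green operator for $B=L_\Comp$:
\[
 \Gop_0R(s)=\int_{s^*}^{s}e^{(s-v)L}\Pi_-R(v)\dd v-\int_s^\infty e^{(s-v)L}\Pi_+R(v)\dd v .
\]
This is the formula \eqref{eq:forward-green-formula} already used in \cref{prop:forward-dichotomy}, and it satisfies $\Pi_-\Gop_0R(s^*)=0$ by construction. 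For a weight $w$ tempered with exponent $\eps<\min\{\lambda_+,\lambda_-\}$, the spectral theorem gives $L^2$ bounds of the same form as \eqref{eq:append-L2}. In the $E_-$ integral one uses $w(v)\le C_we^{\eps(s-v)}w(s)$ against the decay $e^{-\lambda_-(s-v)}$. In the $E_+$ integral one uses $w(v)\le C_we^{\eps(v-s)}w(s)$ against $e^{-\lambda_+(v-s)}$. Both integrals are then bounded by $C\|R\|_{\YP_w}w(s)/(\lambda_\pm-\eps)$.

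Next I would upgrade these pointwise $L^2$ bounds to strip norms. The local parabolic estimate \eqref{eq:append-regularity} on $[s-2,s]$ bounds $\|\Gop_0R\|_{\XP(I_s)}$ by its $L^2$ size on the enlarged strip plus $\|R\|_{\YP}$ there. Temperedness makes the weight comparable on the enlarged strip, which gives $\|\Gop_0R\|_{\XP_w}\le C_w\|R\|_{\YP_w}$. The only new point relative to the appendix is the first strip $[s^*,s^*+1]$. Near $s^*$ I would use the zero-initial-data estimate for the $E_-$ part and the smooth finite-dimensional $E_+$ part. This is why the weighted norms in \eqref{eq:forward-weighted-norms} start at $s^*+1$.

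With the fixed operator in hand, I would define $\Gop_BR$ as the fixed point of $\rho=\Gop_0(A\rho+R)$ in $\XP_w$. The hypothesis $\|Au\|_{\YP(I_s)}\le\delta\|u\|_{\XP(I_s)}$ gives $\|A\rho\|_{\YP_w}\le\delta\|\rho\|_{\XP_w}$. Hence for $C_w\delta<1/2$ the map is a contraction, which yields \eqref{eq:forward-green-estimate} with constant $2C_w$. For uniqueness in the weighted class, take the difference $d$ of two normalized weighted solutions. It solves $d_s=L_\Comp d+Ad$ with $\Pi_-d(s^*)=0$. Variation of constants gives $\Pi_-d=\int_{s^*}^s(\cdots)$, and the weighted bound forces the $E_+$ boundary term at infinity to vanish, since that term would grow like $e^{\lambda_+s}$. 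Therefore $d=\Gop_0(Ad)$, and the contraction gives $d=0$.

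For \eqref{eq:forward-homogeneous}, I would write a homogeneous solution $z$ with bounded strip norms as
\[
 z=e^{(s-s^*)L}\Pi_-z(s^*)+\Gop_0(Az).
\]
This representation is justified as above: boundedness kills the $E_+$ contribution from infinity. I would then solve it by contraction in the exponential weight $e^{-\gamma(s-s^*)}$. This weight is tempered with exponent $\gamma<\min\{\lambda_\pm\}$, and the free term is bounded by $C\|\Pi_-z(s^*)\|_3$ in that weight. To identify $z$ with the exponentially weighted fixed point, note that their difference solves the homogeneous fixed-point equation with zero data in the constant-weight class. It therefore vanishes by the constant-weight contraction, which is why $\delta$ must also be small for the constant and exponential weights. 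Finally, if $w\to0$, then $q-\Gop_BR$ is a bounded homogeneous solution, and the previous step gives $q=\Gop_BR+z_-$.

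The main obstacle I expect is the bookkeeping at the initial strip together with the justification of the infinite integral on $E_+$ for merely bounded solutions. I would handle the latter first, by writing finite-interval variation of constants on the finite-dimensional space $E_+$ and letting the endpoint tend to $+\infty$. The uniform bound on $\Pi_+z$ makes $e^{(s-v)L}\Pi_+z(v)\to0$ in that limit. Independence of the constants from $s^*$ is automatic, since every estimate depends only on the spectral gaps, $C_w$, and the strip estimates.
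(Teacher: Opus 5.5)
Your proposal is correct and follows essentially the same route as the paper: the forward fixed-operator Green formula (with $E_-$ data at $s^*$ and $E_+$ data at infinity), spectral $L^2$ bounds upgraded to strip norms by the local regularity estimate with an initial-data version near $s^*$, the contraction $\rho=\Gop_0(A\rho+R)$, and the homogeneous integral equation solved in the weight $e^{-\gamma(s-s^*)}$ and identified with $z$ through constant-weight uniqueness. One small imprecision: the initial-data estimate is needed on the whole layer $[s^*,s^*+2]$, not just the first strip, because the enlarged strips in \eqref{eq:append-regularity} have length two; your splitting into a zero-initial-data $E_-$ part and a finite-dimensional $E_+$ part handles that layer equally well.
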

\begin{proof}
We adapt \cref{prop:weighted-green} and the contraction argument in \cref{thm:fast-classification}. For the fixed operator, the negative component is determined at $s^*$ and the positive component at infinity:
\begin{equation}
 (\Gop_0R)(s)
 =\int_{s^*}^{s}e^{(s-u)L_{\Comp}}\Pi_-R(u)\dd u
 -\int_s^\infty e^{(s-u)L_{\Comp}}\Pi_+R(u)\dd u.
 \label{eq:forward-green-formula}
\end{equation}
The spectral bounds used in \eqref{eq:append-L2} give
\[
 \sup_{s\ge s^*}\frac{\|\Gop_0R(s)\|_{L^2}}{w(s)}
 \le C_w\left(\frac1{\lambda_--\epsilon}
              +\frac1{\lambda_+-\epsilon}\right)
       \|R\|_{\YP_w}.
\]
On $s\ge s^*+2$, the spatial regularity estimate \eqref{eq:append-regularity} applies unchanged. On the initial interval $[s^*,s^*+2]$, its version with initial data adds $\|\Gop_0R(s^*)\|_3$. This is controlled by the displayed $L^2$ bound because $\Pi_-\Gop_0R(s^*)=0$ and $E_+$ is finite-dimensional. Weight comparability on adjacent strips therefore gives
$\|\Gop_0R\|_{\XP_w}\le C_w\|R\|_{\YP_w}$, including the initial strip. Smooth data give the time continuity required by these norms. Uniqueness follows coefficient by coefficient: the negative initial value vanishes, and a nonzero positive homogeneous coefficient violates the weighted bound at infinity.

Now solve $\rho=\Gop_0(A\rho+R)$ by the same contraction as in the appendix. For smooth negative initial data $\eta$, apply it to
\[
 z(s)=e^{(s-s^*)L_{\Comp}}\eta+\Gop_0(Az)(s)
\]
in the weight $e^{-\gamma(s-s^*)}$. The initial-data estimate gives \eqref{eq:forward-homogeneous}. Every bounded homogeneous solution satisfies this integral equation with $\eta=\Pi_-z(s^*)$; uniqueness in the constant weight identifies it with the exponentially decaying solution. Subtraction proves the decomposition.
\end{proof}

\begin{proposition}\label[proposition]{prop:forward-expansion}
Every neutral-dominated forward solution satisfies \eqref{eq:forward-expansion}. More precisely, for each fixed $N\ge2$,
\begin{equation}
 h=\Psi^{[N]}(b)+\rho_N+z_{-,N},\qquad \|\rho_N(s)\|_k\le C_{N,k}|b(s)|^{N+1},\qquad \|z_{-,N}(s)\|_k\le C_{N,k}e^{-\gamma(s-s^*)}.
 \label{eq:forward-slaving}
\end{equation}
In particular, $h=b+O_{C^k}(|b|^2)$.
\end{proposition}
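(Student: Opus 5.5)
The plan is to transplant the proof of \cref{thm:actual-expansion} to the forward half-line, using \cref{lem:forward-green} in place of \cref{lem:slaving}. Fix $N\ge2$ and write $h=\Psi^{[N]}(b)+q$ with $b=\Pi_Kh$ and $q\in\Comp$. The coordinate identities \eqref{eq:exact-coordinates}--\eqref{eq:coordinate-defect} and \cref{lem:exact-coordinates} are purely algebraic, so they hold verbatim. Thus $q_s=B_h(s)q+R_N(s)$, with $R_N=Q_N(b)E_N(b)$, and \cref{thm:formal} gives $\|R_N(s)\|_1\le C_N|b(s)|^{N+1}$.

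The coefficient satisfies $B_h=L_\Comp+A(s)$. Its strip-norm smallness follows from \eqref{eq:spatial-nonlinear} and the smooth convergence $h\to0$. Indeed, $\|\Psi^{[N]}(b)+\theta q\|_3\to0$, and $q$ has bounded unit-strip norms by smooth convergence and the equation. So after moving $s^*$ forward, $\delta$ is as small as \cref{lem:forward-green} requires, both for the weight $w=|b|^{N+1}$ and for the constant and exponential weights.

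Next I check that $w$ is admissible. By \cref{prop:forward-dichotomy}, $|\partial_s\log|b||\to0$, so $|b|^{N+1}$ is tempered with arbitrarily small exponent on a late tail. In particular it is comparable on adjacent unit strips, which gives $\|R_N\|_{\YP_w}<\infty$. Applying \cref{lem:forward-green} yields the decomposition $q=\rho_N+z_{-,N}$, where $\rho_N=\Gop_{B_h}R_N$ satisfies $\|\rho_N\|_{\XP_w}\le C_N$, and $z_{-,N}$ obeys \eqref{eq:forward-homogeneous}. This gives \eqref{eq:forward-slaving} in $\|\cdot\|_3$.

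Two further steps finish the argument.
\begin{enumerate}
\item \emph{Absorbing the exponential term.} By \eqref{eq:forward-slow}, $|b(s)|\ge c/s$, so $e^{-\gamma(s-s^*)}=o(|b|^{N+1})$. Hence $\|q(s)\|_3\le C_N|b(s)|^{N+1}$ on a late tail. This is the first half of \eqref{eq:forward-expansion}.
\item \emph{The neutral equation.} In \eqref{eq:exact-center}, $\Pi_KL q=0$ and the mean-value bound give a $q$-contribution of $O((|b|+\|q\|_3)\|q\|_3)=O(|b|^{N+2})$. The defect contributes $O(|b|^{N+1})$. Together these prove $b_s=F^{[N]}(b)+O(|b|^{N+1})$.
\end{enumerate}

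Taking $N=2$ gives $h=b+H_2(b)+O(|b|^3)=b+O(|b|^2)$.

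Higher spatial norms $\|\cdot\|_k$ are handled in one of two ways. The first is to repeat the Green estimate in higher-regularity strip spaces, since the coefficients are smooth and the appendix estimates hold at every fixed order. The second is to interpolate the $\|\cdot\|_3$ bound at order $N'>N$ against uniform $C^j$ bounds from smooth convergence, using $|b|^{(N'+1)(1-\theta)}\le|b|^{N+1}$ for $N'$ large.

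The main obstacle is the initial strip in \cref{lem:forward-green}. In the ancient case the homogeneous part decays toward the tail end. Here, instead, the normalization $\Pi_-\rho(s^*)=0$ at the starting time must be reconciled with the weighted estimate near $s^*$, and the homogeneous remainder must be shown to be genuinely exponentially small relative to the polynomial weight. I would handle this exactly as in that lemma's proof, relying on the finite-dimensionality of $E_+$ together with the lower bound $|b|\ge c/s$. A secondary point is that $A(s)$ depends on the solution itself. This is harmless, because the lemma is applied to a known coefficient and no fixed-point argument is needed.
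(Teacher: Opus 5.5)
Your proposal is correct and follows the paper's proof essentially step for step. You rerun the argument of \cref{thm:actual-expansion} with \cref{lem:forward-green} in place of \cref{lem:slaving}, use the temperedness of $|b|^{N+1}$ from \eqref{eq:forward-slow}, absorb the exponentially decaying homogeneous part via $|b|\ge c/s$, and substitute into the neutral identity; the initial-strip issue you flag is already handled inside the proof of \cref{lem:forward-green}, so you can simply invoke that lemma.
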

\begin{proof}
The proof is that of \cref{thm:actual-expansion}, using \cref{lem:forward-green} in place of \cref{lem:slaving}. The formal coefficients and the exact coordinate identities in \cref{lem:exact-coordinates} are unchanged, since $h_s=\mathcal V(h)$. Thus $q=h-\Psi^{[N]}(b)$ satisfies
\[
 q_s=(L_{\Comp}+A_N(s))q+R_N(s),\qquad
 \|R_N(s)\|_{1}\le C_N|b(s)|^{N+1},
\]
with a small coefficient $A_N$. By \eqref{eq:forward-slow}, the weight $|b|^{N+1}$ is tempered with arbitrarily small exponent on a sufficiently late tail. The forward complement estimate gives \eqref{eq:forward-slaving}. Since $|b|\ge c/s$, the homogeneous remainder is smaller than every fixed power of $|b|$. Substitution into the neutral identity gives the second equation in \eqref{eq:forward-expansion}, exactly as in \cref{thm:actual-expansion}. The argument applies at every fixed spatial regularity.
\end{proof}

\subsection{The reduced entropy and decay exponent}
Use the static graph $b+W(b)$ and reduced entropy $\Ent$ from
Section~\ref{sec:exponent}.
The polynomial convergence estimate of Chan--Ma--Zhang
\cite[Corollary~1.4]{CMZ}, followed by the smooth slice construction
in Section~\ref{sec:horizontal}, gives
$\|h(s)\|_{L^2}\le Cs^{-\alpha_0}$ for some $\alpha_0>0$.
Combining this with \eqref{eq:forward-slow} and decreasing $\alpha_0$
if necessary, we obtain
\begin{equation}
 cs^{-1}\le|b(s)|\le Cs^{-\alpha_0},\qquad 0<\alpha_0\le1.
 \label{eq:forward-coarse-rates}
\end{equation}

\begin{proposition}\label[proposition]{prop:forward-exponent}
A neutral-dominated forward solution has the asymptotics \eqref{eq:forward-neutral}, with $\ell^*\ge r+1$, and
\begin{equation}
 -\Ent(b(s))\asymp|b(s)|^{\ell^*}.
 \label{eq:forward-entropy-rate}
\end{equation}
The possible exponents form a finite set. In the principal case,
\begin{equation}
 F_r(a^*)=-\frac{a^*}{r-1},\qquad
 b_s=-\frac1{r-1}s^{-r/(r-1)}(a^*+o(1)).
 \label{eq:forward-principal-velocity}
\end{equation}
\end{proposition}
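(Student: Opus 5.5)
The plan is to transport the backward argument of \cref{thm:ell} to the forward half-line, with the roles of the two time directions exchanged. First I will establish the forward analogue of \cref{prop:gradient-error}: in static coordinates $h=b+W(b)+v$, the identities of \cref{lem:static-equations} are unchanged, since they are purely spatial. The induction over the weights $w_j=|\nabla\Ent(b)|+|b|^j$ then goes through verbatim. The one change is that the complement estimate used at each step is \cref{lem:forward-green} rather than \cref{prop:weighted-green}. Its homogeneous term $e^{-\gamma(s-s^*)}$ is absorbed using the lower bound $|b|\ge c/s$ from \eqref{eq:forward-slow}. This yields, for every $J$,
\[
 |b_s-\nabla\Ent(b)|\le C_J\bigl(|b|\,|\nabla\Ent(b)|+|b|^J\bigr)
\]
on a late tail. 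Here $b$ now ascends $\Ent$ forward in time. Because $b\to0$ and the entropy is monotone nondecreasing along the flow (Perelman), this forces $\Ent(b(s))<0$ approaching $0$ from below, which explains the sign in \eqref{eq:forward-entropy-rate}.

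Next I will show that $\Ent$ is nonconstant. If it were constant, the error estimate would give $|b_s|\le C_J|b|^J$ for every $J$, hence $|b(s)|\ge c_Js^{-1/(J-1)}$. Taking $1/(J-1)<\alpha_0$ contradicts \eqref{eq:forward-coarse-rates}. So $r$ exists, and by \cref{lem:gradient-obstruction} the first term of $\Ent$ has degree $r+1$ with $F_r=\nabla\Ent_{r+1}$. Since the forward equation is already an analytic-gradient flow with small error, converging to the critical point $0$ as $s\to+\infty$, I will apply Choi--Hung \cite[Theorem~7.1]{CH} directly; the backward case needed a time reversal, this one does not. The \L{}ojasiewicz exponent and the $J$-order error estimates verify its hypotheses as in the proof of \cref{thm:ell}.

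That theorem supplies the rational $\ell^*\ge3$, the convergence of $b/|b|$, and a positive limit of $s^{1/(\ell^*-2)}|b|$. These give \eqref{eq:forward-neutral} for $b$, and \cref{prop:forward-expansion} ($h=b+O(|b|^2)$) upgrades this to $h$. \cite[Lemma~7.10]{CH} gives $-\Ent(b)\asymp|b|^{\ell^*}$. Finiteness of the set of exponents is part of Choi--Hung's conclusion: the exponents are determined by the leading homogeneous parts of $\Ent$ along its finitely many Puiseux-type branches. Combining $|\Ent(b)|\le C|b|^{r+1}$ with \eqref{eq:forward-entropy-rate} gives $\ell^*\ge r+1$.

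For the principal case, I take $J=r+1$, so that $b_s=F_r(b)+O(|b|^{r+1})$. Setting $A(s)=s^{1/(r-1)}b(s)$, homogeneity gives
\[
 sA_s=\tfrac1{r-1}A+F_r(A)+o(1)\to\tfrac{a^*}{r-1}+F_r(a^*).
\]
A convergent $A$ cannot have a nonzero derivative in $\log s$ in the limit, so $F_r(a^*)=-a^*/(r-1)$. The velocity formula in \eqref{eq:forward-principal-velocity} follows by substitution. The main obstacle is the first step, the forward induction. There I must check that the positive and negative components are now determined at opposite ends, and that the constants of \cref{lem:forward-green} stay uniform as the weights $|b|w_j$ vary. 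Tempering holds by $|\partial_s\log w_j|\le C|b|\to0$, exactly as in the backward case.
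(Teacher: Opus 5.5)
Your proposal is correct and follows the paper's proof essentially step for step. The shared steps are the forward rerun of the induction of \cref{prop:gradient-error} using \cref{lem:forward-green}, absorption of the homogeneous term via $|b|\ge c/s$, the nonconstancy argument against \eqref{eq:forward-coarse-rates}, Choi--Hung applied without time reversal to $f=-\Ent$, the comparison with $\Ent(b)=O(|b|^{r+1})$, and the same $A(s)=s^{1/(r-1)}b(s)$ balance argument for the principal case. One small caution: read the sign $\Ent(b(s))<0$ off \cite[Lemma~7.10]{CH} for $f=-\Ent$, as your argument in fact does, rather than from Perelman's monotonicity, which controls $\boldsymbol{\mu}(\bg+h(s),1)$ and not the static-graph entropy $\boldsymbol{\mu}(G_{b(s)},1)$.
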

\begin{proof}
The induction in \cref{prop:gradient-error}, using
\cref{lem:forward-green} for the complement equation, gives
\begin{equation}
 |b_s-\nabla\Ent(b)|
 \le C_J\bigl(|b|\,|\nabla\Ent(b)|+|b|^J\bigr),
 \qquad J\ge2.
 \label{eq:forward-subordinate}
\end{equation}
Indeed, the static coordinate identities and the inductive velocity
bound are unchanged. The forward complement estimate adds an
exponentially decaying homogeneous term, which is absorbed into
$|b|(|\nabla\Ent(b)|+|b|^j)$ by $|b|\ge c/s$.
This gives the same induction. Since $b\ne0$, increasing each $C_J$
over a finite interval makes all orders valid on one fixed tail.

The nonconstancy argument in \cref{thm:ell}, using
\eqref{eq:forward-coarse-rates}, shows that $\Ent$ is nonconstant.
Apply the remainder of that proof to $f=-\Ent$:
\eqref{eq:forward-subordinate} and \eqref{eq:forward-coarse-rates}
verify the hypotheses of Choi--Hung \cite[Theorem~7.1]{CH}.
That theorem gives the limiting direction and rational decay exponent,
with exponents in a finite set, and \cite[Lemma~7.10]{CH} gives
\eqref{eq:forward-entropy-rate}.
By \cref{lem:gradient-obstruction}, the first nonzero reduced
coefficient $F_r$ exists and $\Ent(b)=O(|b|^{r+1})$.
Comparison with \eqref{eq:forward-entropy-rate} gives
$\ell^*\ge r+1$.
The tensor limit follows from \cref{prop:forward-expansion}.

If $\ell^*=r+1$, put $A(s)=s^{1/(r-1)}b(s)$.
The reduced equation gives
\[
 sA_s=\frac1{r-1}A+F_r(A)+O(s^{-1/(r-1)})
 \longrightarrow\frac{a^*}{r-1}+F_r(a^*).
\]
Since $A(s)\to a^*$, this limit must vanish.
Thus $F_r(a^*)=-\frac{a^*}{r-1}$, and substitution into
$b_s=F_r(b)+O(|b|^{r+1})$ gives \eqref{eq:forward-principal-velocity}.
\end{proof}

\subsection{Geometric consequences}
\begin{proof}[Proof of \cref{thm:forward}]
By \cite[Theorems~1.2 and~1.7, Corollary~1.4]{CMZ}, the tangent is the canonical shrinking flow of a compact Ricci shrinker, and the canonically modified flow converges smoothly to it. The compact tangent manifold is diffeomorphic to the original manifold: the embeddings in smooth convergence are eventually defined on the whole compact limit, and their images are open and closed.

Normalize by \eqref{eq:forward-normalization} and use the potential gauge to obtain MRF. The slice construction and removal of the Killing component in Section~\ref{sec:horizontal} apply on $[s^*,\infty)$. The Killing correction is an ODE in the compact group $G$, so it exists along the whole future tail and preserves convergence. This gives \eqref{eq:forward-gauge}. The comparison of two horizontal gauges in that section is local in time; their relative element of $G$ is constant. Hence it also gives the marking assertion here.

Apply \cref{prop:forward-dichotomy}. The spectral alternative follows from \cref{prop:forward-leading}; the neutral expansion and rational decay exponent follow from \cref{prop:forward-expansion,prop:forward-exponent}. A zero slice tail reconstructs the canonical shrinking Ricci flow. Finally, $e^{-\lambda s}=\tau^\lambda$ and $s=|\log\tau|$ give \eqref{eq:forward-ricci-rates}.
\end{proof}

\begin{corollary}\label[corollary]{cor:forward-obstruction}
If the tangent shrinker has finite Li--Zhang obstruction order $k_{\mathrm{LZ}}$, every slow forward solution satisfies
\begin{equation}
 r+1\le\ell^*\le k_{\mathrm{LZ}}+1,\qquad
 \|h(s)\|_k=O\bigl(s^{-1/(k_{\mathrm{LZ}}-1)}\bigr).
 \label{eq:forward-obstruction-bound}
\end{equation}
If $k_{\mathrm{LZ}}=2$, then $r=2$, $\ell^*=3$, and
\[
 h(s)=s^{-1}\bigl(a^*+o_{C^k}(1)\bigr),\qquad
 F_2(a^*)=-a^*.
\]
\end{corollary}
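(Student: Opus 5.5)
The plan is to transcribe the ancient-side proof of \cref{prop:sandwich} to forward time, using \cref{prop:forward-exponent} and the subordinate gradient estimate \eqref{eq:forward-subordinate}. Put $k=k_{\mathrm{LZ}}$. The lower bound $r+1\le\ell^*$ is already part of \cref{prop:forward-exponent}, so only the upper bound and the decay rate remain.

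\textbf{Upper bound on $\ell^*$.} The obstruction inequality $|\Ob(b)|\ge c|b|^k$ and \cref{lem:gradient-obstruction} give $|\nabla\Ent(b)|\ge c|b|^k$. Taking $J=k+1$ in \eqref{eq:forward-subordinate} absorbs the $|b|^{k+1}$ error into $C|b|\,|\nabla\Ent(b)|$. On a late tail this yields
\[
 \frac{\mathrm d}{\mathrm ds}\Ent(b(s))\ge\tfrac12|\nabla\Ent(b)|^2\ge c|b|^{2k}.
\]
By \eqref{eq:forward-neutral} and \eqref{eq:forward-entropy-rate}, we have $|b|\asymp s^{-1/(\ell^*-2)}$ and $-\Ent(b(s))\asymp s^{-\ell^*/(\ell^*-2)}$, with $\Ent<0$ on the tail. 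Integrate over $[s,2s]$ and use $\Ent(b(2s))<0$:
\[
 c\,s^{1-2k/(\ell^*-2)}\le\Ent(b(2s))-\Ent(b(s))\le -\Ent(b(s))\le Cs^{-\ell^*/(\ell^*-2)}.
\]
Letting $s\to\infty$ forces $1-2k/(\ell^*-2)\le-\ell^*/(\ell^*-2)$, that is, $\ell^*\le k+1$. The sign bookkeeping is the only change from the ancient case: the entropy increases to $0$ from below, rather than decreasing to $0$ from above.

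\textbf{Decay rate.} Since $\ell^*\le k+1$, we have $1/(\ell^*-2)\ge1/(k-1)$. Then \eqref{eq:forward-neutral} gives $\|h(s)\|_k\le Cs^{-1/(\ell^*-2)}\le Cs^{-1/(k-1)}$ for $s\ge1$, in every fixed spatial norm. The other alternatives of \cref{thm:forward} decay even faster, so the same bound holds for them. The statement concerns slow solutions anyway.

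\textbf{Case $k_{\mathrm{LZ}}=2$.} Here $3\le r+1\le\ell^*\le3$, so $r=2$ and the solution is principal. Then \eqref{eq:forward-balance} gives $F_2(a^*)=-a^*$, and \eqref{eq:forward-neutral} gives $h=s^{-1}(a^*+o_{C^k}(1))$.

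The step needing the most care is confirming that \eqref{eq:forward-subordinate} holds uniformly on one fixed late tail, including the absorption of the exponentially decaying homogeneous term. \cref{prop:forward-exponent} already provides this, so I expect no real obstacle.
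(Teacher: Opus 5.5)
Your proposal is correct and is essentially the paper's own proof: you run the argument of \cref{prop:sandwich} toward the future, use \eqref{eq:forward-subordinate} with $J=k_{\mathrm{LZ}}+1$ to get $\partial_s\Ent(b(s))\ge c|b|^{2k_{\mathrm{LZ}}}$, integrate over $[s,2s]$ against \eqref{eq:forward-entropy-rate} with $\Ent<0$, and read the remaining assertions off \cref{thm:forward}. The one fix is notational: setting $k=k_{\mathrm{LZ}}$ clashes with the spatial index in $\|h(s)\|_k$, which is why the paper writes $k_0$.
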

\begin{proof}
The proof is that of \cref{prop:sandwich}, with integration toward the future. Put $k_0=k_{\mathrm{LZ}}$. The obstruction inequality and \eqref{eq:forward-subordinate} give
$\partial_s\Ent(b(s))\ge c|b(s)|^{2k_0}$. Using \eqref{eq:forward-entropy-rate} and integrating over $[s,2s]$ yields
\[
 cs^{1-2k_0/(\ell^*-2)}
 \le\Ent(b(2s))-\Ent(b(s))
 \le-\Ent(b(s))\le Cs^{-\ell^*/(\ell^*-2)}.
\]
Thus $1-2k_0/(\ell^*-2)\le-\ell^*/(\ell^*-2)$, equivalently $\ell^*\le k_0+1$. The remaining assertions follow from \cref{thm:forward}.
\end{proof}

\begin{corollary}\label[corollary]{cor:forward-cp}
Suppose the compact singularity tangent is $(\CP^{2n},g_{\mathrm{FS}})$, normalized by $\Ric_{g_{\mathrm{FS}}}=g_{\mathrm{FS}}/2$. Every slow forward solution satisfies, after a background isometry,
\begin{equation}
 h(s)=-\frac{h_{B_p}}s+o_{C^k}(s^{-1})
 \label{eq:forward-cp-profile}
\end{equation}
for exactly one $p\in\{1,\ldots,n\}$, where $B_p$ is given by \eqref{eq:Bp}. Its finite-order expansions are \eqref{eq:forward-expansion}. For $\CP^2$,
\begin{equation}
 \tau^{-1}\phi_t^*g_{\mathrm{RF}}(t) =\bg-\frac{u_*\bg+2\bar\nabla^2u_*}{|\log\tau|} +o_{C^k}(|\log\tau|^{-1}),\qquad u_*([z])=\frac52\, \frac{2|z_0|^2-|z_1|^2-|z_2|^2} {|z_0|^2+|z_1|^2+|z_2|^2}.
 \label{eq:forward-cp2-profile}
\end{equation}
\end{corollary}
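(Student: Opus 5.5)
The plan is to combine the forward results already established with the projective computations of Section~\ref{sec:cp}. By \cref{prop:cp-quadratic}, $F_2$ has no nonzero zero for $d=2n+1$, so the first obstruction is uniform of order two and $k_{\mathrm{LZ}}=2$. Hence \cref{cor:forward-obstruction} applies to every slow forward solution: it gives $r=2$, $\ell^*=3$, and
\[
 h(s)=s^{-1}\bigl(a^*+o_{C^k}(1)\bigr),\qquad F_2(a^*)=-a^*,
\]
while the finite-order expansions are exactly \eqref{eq:forward-expansion} from \cref{thm:forward}.

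Next I classify the forward characteristic vectors. Since $F_2$ is homogeneous quadratic, $F_2(-a)=F_2(a)$. Putting $B=-a^*$ therefore gives $F_2(B)=F_2(a^*)=-a^*=B$, so $B$ is a nonzero solution of \eqref{eq:cp-characteristic}. By \cref{prop:cp-characteristics}, $B$ is unitarily conjugate to exactly one $B_p$ with $1\le p\le n$, so $a^*$ is conjugate to $-B_p$ for exactly one such $p$. The identification \eqref{eq:cp-identification} is $PU(d)$-equivariant and intertwines unitary conjugation with pullback by isometries of $g_{\mathrm{FS}}$. Changing the horizontal marking by that isometry, which \cref{thm:forward} permits, turns the leading tensor into $-h_{B_p}$. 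This proves \eqref{eq:forward-cp-profile}.

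Uniqueness of $p$ uses two facts. The characteristic vector is determined up to the $G$-action by \cref{thm:forward}. The full isometry group, including the component represented by complex conjugation, preserves the unordered eigenvalue multiplicities. Since $\{p,q\}$ with $p<q$ is fixed by those multiplicities, $p$ is an invariant of the solution.

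For $\CP^2$ we have $n=1$, so only $p=1$ occurs, and $B_1=\diag(5,-5/2,-5/2)$ from \eqref{eq:apbp} with $d=3$, $p=1$, $q=2$. Evaluating $u_{B_1}([z])=z^*B_1z/|z|^2$ gives exactly the function $u_*$ in the statement. Then $h_{B_1}=u_*\bg+2\bar\nabla^2u_*$. Substituting $s=-\log\tau=|\log\tau|$ (valid since $\tau<1$ on the tail) into \eqref{eq:forward-gauge} gives \eqref{eq:forward-cp2-profile}.

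I expect the main point of care to be the sign bookkeeping. The forward balance $F_2(a^*)=-a^*$ must be matched with the ancient equation $F_2(B)=B$ through the evenness of $F_2$. One also has to confirm that the equivariance of $A\mapsto h_A$ lets a change of marking realize the conjugation, with no residual sign from complex conjugation.
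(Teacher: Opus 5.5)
Your proposal is correct and follows the paper's route: \cref{prop:cp-quadratic} and \cref{cor:forward-obstruction} give $F_2(a^*)=-a^*$, evenness of $F_2$ makes $B=-a^*$ a solution of $F_2(B)=B$, \cref{prop:cp-characteristics} identifies $B$ with exactly one $B_p$ up to conjugation, and the $\CP^2$ formula is the case $p=n=1$. Your extra points are that the conjugation is realized by a change of horizontal marking and that the eigenvalue multiplicities are invariant under complex conjugation; these make explicit what the paper's short proof leaves implicit.
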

\begin{proof}
By \cref{prop:cp-quadratic,cor:forward-obstruction}, the matrix $A^*$ corresponding to $a^*$ under \eqref{eq:cp-identification} satisfies $F_2(A^*)=-A^*$. Since $F_2$ is quadratic, $B=-A^*$ satisfies $F_2(B)=B$. Thus \cref{prop:cp-characteristics} identifies $B$ with exactly one $B_p$ up to unitary conjugation. This gives \eqref{eq:forward-cp-profile}; its specialization to $p=n=1$ is \eqref{eq:forward-cp2-profile}.
\end{proof}

If the static obstruction vanishes identically, then $\Ent\equiv0$ by \cref{lem:gradient-obstruction}. The proof of \cref{prop:forward-exponent} excludes the neutral alternative, and every nonstationary convergent flow has the spectral asymptotics \eqref{eq:forward-spectral}. In particular, the rate bound $\lambda\ge\lambda_-$ recovers the integrable case of \cite[Theorem~1.8]{CL}.

\begin{appendices}
\titleformat{\section}{\normalfont\Large\bfseries}{\appendixname~\thesection}{0.75em}{}
\section{Weighted estimates for the complement equation}\label[appendix]{app:parabolic}
The spectral theorem gives weighted $L^2$ estimates for the fixed operator $L_\Comp$. Spatial Schauder estimates then give the spatial regularity, and the equation controls the time derivative. We first treat the fixed operator and then absorb small perturbations.

Fix $s_*\le-2$. Recall from \eqref{eq:complement-spaces}
that $\Comp_j$ is the subspace of $\Comp$ with its spatial
$C^{j,1/2}$ norm. On $I_s=[s-1,s]$, define
\begin{equation}
 \|u\|_{\XP(I_s)} =\|u\|_{C(I_s;\Comp_3)} +\|\partial_su\|_{C(I_s;\Comp_{1})},\qquad \|R\|_{\YP(I_s)} =\|R\|_{C(I_s;\Comp_{1})}.
 \label{eq:append-norms}\index{I interval@$I_s$}\index{X Y strip@$\XP(I_s),\YP(I_s)$}
\end{equation}
Thus
\[
 \XP(I_s)=C(I_s;\Comp_3)
          \cap C^1(I_s;\Comp_{1}),\qquad
 \YP(I_s)=C(I_s;\Comp_{1}).
\]
The norms on the target spaces are understood. Continuity and
differentiability in time are taken in these Banach spaces.
For fixed-slice paths we use the same definitions with
$\Comp_j$ replaced by $\Sli\cap C^{j,1/2}$; for $K$-valued paths
we use the fixed norm on $K$.
For a positive weight $w$, put
\begin{equation}
 \|u\|_{\XP_w}=\sup_{s\le s_*}\frac{\|u\|_{\XP(I_s)}}{w(s)},\qquad
 \|R\|_{\YP_w}=\sup_{s\le s_*}\frac{\|R\|_{\YP(I_s)}}{w(s)}.
 \label{eq:append-weights}\index{X Y weighted@$\XP_w,\YP_w$}
\end{equation}

Write $\XP_p,\YP_p$ when $w(s)=|s|^{-p}$. Let $\lambda_+>0$ and $-\lambda_-<0$ be the smallest positive and largest negative eigenvalues of $L_\Comp$, as in Section~\ref{sec:dichotomy}. We use weights satisfying
\begin{equation}
 w(r)\le C_we^{\epsilon|r-s|}w(s),\qquad
 0\le\epsilon<\min\{\lambda_-,\lambda_+\}.
 \label{eq:append-tempered}\index{Tempered weight}
\end{equation}
These include the constant weight, polynomial weights on a sufficiently remote tail, and the slowly varying weights of Sections~\ref{sec:normal}--\ref{sec:exponent}. Fix also $0<\gamma<\min\{\lambda_-,\lambda_+\}$; the weight $e^{\gamma(s-s_*)}$ satisfies \eqref{eq:append-tempered}.

\begin{proposition}[Weighted Green estimate]\label[proposition]{prop:weighted-green}\index{L complement@$L_{\Comp}$}
Let $B(s)=L_\Comp+A(s)$ act on $\Comp$ at each time, with
\begin{equation}
 \|Au\|_{\YP(I_s)}\le\delta\|u\|_{\XP(I_s)},\qquad s\le s_*.
 \label{eq:append-smallness}
\end{equation}
For each weight satisfying \eqref{eq:append-tempered}, if $\delta$ is sufficiently small, every $R\in\YP_w$ has a unique solution $u=\Gop_BR\in\XP_w$ of
\begin{equation}
 u_s=B(s)u+R,\qquad \Pi_+u(s_*)=0,\qquad
 \|u\|_{\XP_w}\le C_w\|R\|_{\YP_w}.
 \label{eq:append-green}\index{G Green appendix@$\Gop_0,\Gop_B$}
\end{equation}
For the same choice of sufficiently small $\delta$, homogeneous ancient solutions with uniformly bounded unit-strip norms are parametrized by $\xi=\Pi_+z(s_*)\in E_+$ and satisfy
\begin{equation}
 \|z_\xi\|_{\XP(I_s)}\le Ce^{\gamma(s-s_*)}|\xi|.
 \label{eq:append-positive}\index{E positive B@$E_+^B(s_*),P_+^B(s_*)$}\index{E positive B@$E_+^B(s_*),P_+^B(s_*)$}
\end{equation}
Their terminal values form a graph $E_+^B(s_*)$ over $E_+$ along $E_-$. Projection onto this graph along $E_-$ defines $P_+^B(s_*)$, and the terminal condition in \eqref{eq:append-green} is equivalent to $P_+^B(s_*)u(s_*)=0$.
\end{proposition}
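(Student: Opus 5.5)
The plan is to treat the fixed operator $L_\Comp$ first by an explicit Green formula, then absorb the perturbation $A$ by a contraction in $\XP_w$. For $A=0$ define
\[
 (\Gop_0R)(s)=\int_{-\infty}^{s}e^{(s-u)L_\Comp}\Pi_-R(u)\dd u-\int_s^{s_*}e^{(s-u)L_\Comp}\Pi_+R(u)\dd u,
\]
which satisfies $\Pi_+\Gop_0R(s_*)=0$.

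\textbf{Step 1: weighted $L^2$ bound for $\Gop_0$.} The spectral theorem gives $\|e^{tL}\Pi_-\|\le e^{-\lambda_-t}$ for $t\ge0$ and $\|e^{-tL}\Pi_+\|\le e^{-\lambda_+t}$ for $t\ge0$. Combined with the temperedness \eqref{eq:append-tempered}, this yields
\[
 \|\Gop_0R(s)\|_{L^2}\le C_w\Bigl(\frac1{\lambda_--\epsilon}+\frac1{\lambda_+-\epsilon}\Bigr)w(s)\|R\|_{\YP_w}.
\]
This is the estimate labelled \eqref{eq:append-L2} elsewhere in the paper.

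\textbf{Step 2: regularity on unit strips.} Upgrade Step 1 to the strip norm. On $[s-2,s]$, interior parabolic Schauder estimates for $\partial_s-L$, which is a Lichnerowicz-type operator with smooth coefficients, give
\[
 \|u\|_{C(I_s;C^{3,1/2})}\le C\bigl(\sup_{[s-2,s]}\|u\|_{L^2}+\|R\|_{C([s-2,s];C^{1,1/2})}\bigr).
\]
The $C^{1,1/2}$ bound on $u_s$ then follows from the equation. This is \eqref{eq:append-regularity}. Two points need care. The fixed-time H\"older class in $x$ with only continuous dependence in $t$ is not the standard parabolic H\"older space, so I would use the semigroup or interpolation form of Schauder theory, for instance the analytic semigroup on the little H\"older space with $C^{1,1/2}\to C^{3,1/2}$ maximal regularity in time-continuous form (Lunardi); alternatively I would raise the regularity of $R$ slightly. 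The $\Pi_+$ part lies in the finite-dimensional smooth space $E_+$ and is trivial. Weight comparability on adjacent strips then gives $\|\Gop_0R\|_{\XP_w}\le C_w\|R\|_{\YP_w}$. I expect this step to be the main technical obstacle.

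\textbf{Step 3: perturbation and uniqueness.} Solve $u=\Gop_0(Au+R)$. By \eqref{eq:append-smallness}, the map $u\mapsto\Gop_0(Au)$ has norm at most $C_w\delta<1/2$ on $\XP_w$, so the contraction principle gives existence and the bound. For uniqueness, take two solutions in $\XP_w$ and let $v$ be their difference, so $v_s=L_\Comp v+Av$ with $\Pi_+v(s_*)=0$. Variation of constants on each spectral part shows $v=\Gop_0(Av)$. The $\Pi_-$ boundary term $e^{(s-a)L}\Pi_-v(a)$ tends to $0$ as $a\to-\infty$, because $w$ grows at most like $e^{\epsilon|a|}$ with $\epsilon<\lambda_-$. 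Hence $v=0$ by the contraction.

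\textbf{Step 4: homogeneous solutions and the graph $E_+^B(s_*)$.} For $\xi\in E_+$, solve
\[
 z=e^{(s-s_*)L_\Comp}\xi+\Gop_0(Az)
\]
in the weight $e^{\gamma(s-s_*)}$, which is tempered. The free term has $\XP$-norm at most $Ce^{\lambda_+(s-s_*)}|\xi|\le Ce^{\gamma(s-s_*)}|\xi|$, so the contraction gives \eqref{eq:append-positive}. Conversely, let $z$ be a homogeneous ancient solution with bounded strip norms. Using the constant weight, $z-e^{(s-s_*)L}\Pi_+z(s_*)$ satisfies the same fixed-point identity, since the $\Pi_-$ boundary term vanishes by boundedness. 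Uniqueness for the constant weight therefore identifies $z$ with $z_\xi$, where $\xi=\Pi_+z(s_*)$; the constant and exponential weights need a common small $\delta$.

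The terminal map $\xi\mapsto z_\xi(s_*)=\xi+\Pi_-z_\xi(s_*)$ is linear, so its image $E_+^B(s_*)$ is a graph over $E_+$ along $E_-$ of dimension $\dim E_+$. Let $P_+^B(s_*)$ be the projection onto this graph along $E_-$. Its kernel is $E_-$, so $P_+^B(s_*)u=0$ exactly when $\Pi_+u=0$, which is the asserted equivalence of terminal conditions.
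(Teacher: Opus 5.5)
Your proposal is correct and follows the paper's proof essentially step for step. The shared steps are the fixed-operator Green formula with the weighted $L^2$ bound, the interior Schauder upgrade on $[s-2,s]$, the contraction $u=\Gop_0(Au+R)$ with uniqueness by variation of constants, and the exponentially weighted fixed point for $z_\xi$, which gives the graph and $P_+^B(s_*)v=z_{\Pi_+v}(s_*)$; for the Schauder step the paper cites Dong--Zhang's spatial estimates for data merely continuous in time and adds a time-difference argument to get continuity in $\Comp_3$. One caveat: your fallback of raising the time regularity of $R$ would not work, because the contraction feeds $Au\in\YP_w$, which is only continuous in time, back into $\Gop_0$, so the Schauder estimate for time-continuous data (your Lunardi-type route) is genuinely required.
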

\begin{proof}
First take $A=0$. The negative component is determined by its growth at backward infinity, and the positive component by its terminal value. Thus
\begin{equation}
 (\Gop_0R)(s)=\int_{-\infty}^s e^{(s-r)L_\Comp}\Pi_-R(r)\dd r
 -\int_s^{s_*}e^{(s-r)L_\Comp}\Pi_+R(r)\dd r.
 \label{eq:append-G0}\index{G Green appendix@$\Gop_0,\Gop_B$}
\end{equation}
To justify the improper integral, put
$C_R=\sup_{r\le s_*}\|R(r)\|_{L^2}/w(r)$.
The spectral theorem and \eqref{eq:append-tempered} give the absolute bounds
\begin{align*}
 \int_{-\infty}^s
 \|e^{(s-r)L_\Comp}\Pi_-R(r)\|_{L^2}\dd r
 &\le C_wC_Rw(s)\int_0^\infty
        e^{-(\lambda_--\epsilon)v}\dd v=\frac{C_wC_Rw(s)}{\lambda_--\epsilon},\\
 \int_s^{s_*}
 \|e^{(s-r)L_\Comp}\Pi_+R(r)\|_{L^2}\dd r
 &\le C_wC_Rw(s)\int_0^{s_*-s}
        e^{-(\lambda_+-\epsilon)v}\dd v\le\frac{C_wC_Rw(s)}{\lambda_+-\epsilon}.
\end{align*}
Thus the negative integral converges
absolutely in $L^2$; the positive integral is over a finite interval in
the finite-dimensional space $E_+$. The triangle inequality gives
\begin{equation}
 \sup_{s\le s_*}\frac{\|\Gop_0R(s)\|_{L^2}}{w(s)}
 \le C_w\left(\frac1{\lambda_--\epsilon}+\frac1{\lambda_+-\epsilon}\right)
 \sup_{s\le s_*}\frac{\|R(s)\|_{L^2}}{w(s)}.
 \label{eq:append-L2}
\end{equation}
The integrals define a locally continuous $L^2$ path. Each spectral
coefficient satisfies its scalar inhomogeneous ODE, so testing against
finite sums of eigentensors and then using their density gives
$u_s=L_\Comp u+R$ in distributions. The terminal condition is
$\Pi_+u(s_*)=0$.

We now improve the regularity of this $L^2$ solution on the enlarged
past interval $I_s^*=[s-2,s]$. The required estimate is
\begin{equation}
 \|u\|_{\XP(I_s)}\le C\left(
 \sup_{r\in I_s^*}\|u(r)\|_{L^2}+\|R\|_{\YP(I_s^*)}\right).
 \label{eq:append-regularity}
\end{equation}
Here the norms on $I_s^*$ have the same definitions as
\eqref{eq:append-norms}.
For the fixed smooth operator $L_\Comp$, interior spatial Schauder
estimates uniform in time give
\[
 \sup_{r\in I_s}\|u(r)\|_3
 \le C\left(\sup_{r\in I_s^*}\|u(r)\|_{L^2}
             +\sup_{r\in I_s^*}\|R(r)\|_{1}\right).
\]
These are the spatial Schauder estimates for parabolic equations
with data continuous in time; see \cite[Theorem~2.2(I)]{DongZhang}
and \cite{Brandt,Lorenzi}.
Apply the second-order case of that theorem with spatial exponent
$1/2$, localize in fixed coordinate charts, and commute one spatial
derivative with $L_\Comp$.
The commutators have lower spatial order and are absorbed by
interpolation; the interior $L^2$-to-$L^\infty$ estimate controls
the remaining lower-order term. The equation then gives the same
bound for $\sup_{r\in I_s}\|u_s(r)\|_{1}$.

We also verify continuity in the full target norms. For
$0<\tau<1/2$, the backward time difference
$v_\tau(r)=u(r-\tau)-u(r)$ satisfies
\[
 \partial_rv_\tau=L_\Comp v_\tau+R(r-\tau)-R(r).
\]
Apply the same interior spatial estimate on nested intervals inside
$I_s^*$. The forcing tends to zero uniformly in $\Comp_{1}$,
and $v_\tau$ tends to zero uniformly in $L^2$, by continuity of
$R$ and the previously established $L^2$ continuity of $u$.
Consequently $u\in C(I_s;\Comp_3)$. Since
$L_\Comp:\Comp_3\to\Comp_{1}$ is bounded, the equation
gives $u_s\in C(I_s;\Comp_{1})$ as well.
This proves \eqref{eq:append-regularity}. The fixed coefficients
and interval lengths make the constant independent of $s$.

Comparability of $w$ on adjacent unit intervals now combines
\eqref{eq:append-L2} and \eqref{eq:append-regularity} into
\begin{equation}
 \|\Gop_0R\|_{\XP_w}\le C_w\|R\|_{\YP_w}.
 \label{eq:append-fixed-bound}
\end{equation}

This constructs the required solution for $R\in\YP_w$.
For uniqueness, let $v\in\XP_w$ solve the homogeneous equation and
$\Pi_+v(s_*)=0$. At fixed $s$ and $a<s$, the negative component satisfies
\[
 \|\Pi_-v(s)\|_{L^2}
 \le e^{-\lambda_-(s-a)}\|v(a)\|_{L^2}
 \le C_w\|v\|_{\XP_w}w(s)
       e^{-(\lambda_--\epsilon)(s-a)}.
\]
Letting $a\searrow-\infty$ gives $\Pi_-v(s)=0$.
The positive component vanishes by its terminal condition, so $v=0$.

For general $A$, solve
\begin{equation}
 u=\Gop_0(Au+R).
 \label{eq:append-Neumann}
\end{equation}
By \eqref{eq:append-smallness} and \eqref{eq:append-fixed-bound}, this is a contraction when $C_w\delta<1$, and
\[
 \|u\|_{\XP_w}\le\frac{C_w}{1-C_w\delta}\|R\|_{\YP_w}.
\]
The fixed-operator equation for $\Gop_0$ shows that \eqref{eq:append-Neumann} solves $u_s=(L_\Comp+A)u+R$. Conversely, the fixed-operator uniqueness shows that every solution in this weighted class satisfies \eqref{eq:append-Neumann}. This proves \eqref{eq:append-green}. The estimate allows $A$ to contain second spatial derivatives, since its output is measured in $\YP$.

For $\xi\in E_+$, solve
\begin{equation}
 z_\xi(s)=e^{(s-s_*)L_\Comp}\xi+\Gop_0(Az_\xi)(s)
 \label{eq:append-homogeneous}
\end{equation}
in the weight $e^{\gamma(s-s_*)}$. Choose $\delta$ small enough for this contraction and the unweighted one as well. This proves \eqref{eq:append-positive}. For a homogeneous solution with uniformly bounded unit-strip norms, variation of constants gives \eqref{eq:append-homogeneous} in the unweighted space, where uniqueness identifies it with $z_\xi$. Since $\Pi_+z_\xi(s_*)=\xi$, the terminal values form the stated graph, and
\[
 P_+^B(s_*)v=z_{\Pi_+v}(s_*)
\]
is its projection along $E_-$. The construction is equivariant under background isometries acting on the coefficients.
\end{proof}

\end{appendices}

\printindex
\clearpage

\begingroup
\fontsize{10.5pt}{13pt}\selectfont

\endgroup

\vskip10pt
\noindent
Yu Li, Institute of Geometry and Physics, University of Science and Technology of China, No. 96 Jinzhai Road, Hefei, Anhui Province, 230026, China; Hefei National Laboratory, No. 5099 West Wangjiang Road, Hefei, Anhui Province, 230088, China; E-mail: \href{mailto:yuli21@ustc.edu.cn}{yuli21@ustc.edu.cn}.

\end{document}